\theoremstyle{plain}
\newtheorem{thm}{Theorem}[section]
\newtheorem{prop}[thm]{Proposition}
\newtheorem{cor}[thm]{Corollary}
\newtheorem{lem}[thm]{Lemma}
\newtheorem*{thm*}{Theorem}
\newtheorem*{conj*}{Conjecture}
\newtheorem*{prop*}{Proposition}
\newtheoremstyle{narrow}
  {.5em} % Space above
  {.5em} % Space below
  {\itshape} % Body font
  {} % Indent amount
  {\bfseries} % Theorem head font
  {.} % Punctuation after theorem head
  {.5em} % Space after theorem head
  {} % Theorem head spec (can be left empty, meaning `normal')
\theoremstyle{narrow}
\newtheorem*{thn*}{Theorem}
\newtheorem*{conjn*}{Conjecture}
\theoremstyle{definition}
\newtheorem{defi}[thm]{Definition}
\newtheorem*{nota*}{Notation}
\newtheorem{rem}[thm]{Remark}
\newcommand{\Q}{\mathbb{Q}}
\newcommand{\Z}{\mathbb{Z}}
\newcommand{\Ns}{\mathbb{Z}_{>0}}
\newcommand{\N}{\mathbb{Z}_{\geq0}}
\newcommand{\C}{\mathbb{C}}
\renewcommand{\H}{\mathbb{H}}
\renewcommand{\i}{\mathrm{i}}% imaginary unit
\newcommand{\e}{\mathrm{e}}% Euler's number
\newcommand{\Aut}{\operatorname{Aut}}
\newcommand{\Hom}{\operatorname{Hom}}
\newcommand{\rk}{\operatorname{rk}}
\newcommand{\voa}{vertex operator algebra}
\newcommand{\VOA}{Vertex Operator Algebra}
\newcommand{\vosa}{vertex operator subalgebra}
\newcommand{\fpvosa}{fixed-point vertex operator subalgebra}
\newcommand{\gdh}{generalised deep hole}
\newcommand{\GDH}{Generalised Deep Hole}
\newcommand{\vac}{\textbf{1}}
\newcommand{\id}{\operatorname{id}}
\newcommand{\amgis}{\zeta}
\newcommand{\eps}{\varepsilon}
\newcommand{\ee}{\mathfrak{e}}
\newcommand{\g}{\mathfrak{g}}
\newcommand{\hh}{\mathcal{H}}%Cartan
\newcommand{\h}{\mathfrak{h}}%Complexified lattice
\newcommand{\ad}{\operatorname{ad}}
\newcommand{\Inn}{\operatorname{Inn}}
\newcommand{\Out}{\operatorname{Out}}
\newcommand{\orb}{\operatorname{orb}}
\newcommand{\strathol}{strongly rational, holomorphic}
\newcommand{\strat}{strongly rational}
\newcommand{\II}{I\!I}
\newcommand{\spn}{\operatorname{span}}
\newcommand{\Co}{\operatorname{Co}}
\renewcommand{\O}{\operatorname{O}}
\newcommand{\Com}{\operatorname{Com}}
\newcommand{\good}{short}
\newcommand{\Good}{Short}
\newcommand{\sAA}{1^{24}}
\newcommand{\sBB}{1^82^8}
\newcommand{\sCC}{1^63^6}
\newcommand{\sDD}{2^{12}}
\newcommand{\sEE}{1^42^24^4}
\newcommand{\sFF}{1^45^4}
\newcommand{\sGG}{1^22^23^26^2}
\newcommand{\sHH}{1^37^3}
\newcommand{\sII}{1^22^14^18^2}
\newcommand{\sJJ}{2^36^3}
\newcommand{\sKK}{2^210^2}
\newcommand{\gAA}{\II_{24,0}}
\newcommand{\gBB}{\II_{16,0}(2_{\II}^{+10})}
\newcommand{\gCC}{\II_{12,0}(3^{-8})}
\newcommand{\gDD}{\II_{12,0}(2_{\II}^{-10}4_{\II}^{-2})}
\newcommand{\gEE}{\II_{10,0}(2_{2}^{+2}4_{\II}^{+6})}
\newcommand{\gFF}{\II_{8,0}(5^{+6})}
\newcommand{\gGG}{\II_{8,0}(2_{\II}^{+6}3^{-6})}
\newcommand{\gHH}{\II_{6,0}(7^{-5})}
\newcommand{\gII}{\II_{6,0}(2_{5}^{-1}4_{1}^{+1}8_{\II}^{-4})}
\newcommand{\gJJ}{\II_{6,0}(2_{\II}^{+4}4_{\II}^{-2}3^{+5})}
\newcommand{\gKK}{\II_{4,0}(2_{\II}^{-2}4_{\II}^{-2}5^{+4})}
\newlength{\myl}
\newcommand{\s}{\hspace{\myl}}
\renewcommand{\arraystretch}{1.2}
\begin{document}
\title[]{Systematic Orbifold Constructions of Schellekens' Vertex Operator Algebras from Niemeier Lattices}
\author[Gerald Höhn and Sven Möller]{Gerald Höhn\textsuperscript{\lowercase{a}} and Sven Möller\textsuperscript{\lowercase{b},\lowercase{c}}}
\thanks{\textsuperscript{a}{Kansas State University, Manhattan, KS, United States of America}}
\thanks{\textsuperscript{b}{Rutgers University, Piscataway, NJ, United States of America}}
\thanks{\textsuperscript{c}{Research Institute for Mathematical Sciences, Kyoto University, Kyoto, Japan}}
\thanks{Email: \href{mailto:gerald@monstrous-moonshine.de}{\nolinkurl{gerald@monstrous-moonshine.de}}, \href{mailto:math@moeller-sven.de}{\nolinkurl{math@moeller-sven.de}}}

\begin{abstract}
We present a systematic, rigorous construction of all $70$ \strathol{} \voa{}s $V$ of central charge $24$ with non-zero weight-one space $V_1$ as cyclic orbifold constructions associated with the $24$ Niemeier lattice \voa{}s $V_N$ and certain $226$ \good{} automorphisms in $\Aut(V_N)$.

We show that up to algebraic conjugacy these automorphisms are exactly the \gdh{}s, as introduced in \cite{MS23}, of the Niemeier lattice \voa{}s with the additional property that their orders are equal to those of the corresponding outer automorphisms.

Together with the constructions in \cite{Hoe17} and \cite{MS23} this gives three different uniform constructions of these \voa{}s, which are related through $11$ algebraic conjugacy classes in $\Co_0$.

Finally, by considering the inverse orbifold constructions associated with the $226$ \good{} automorphisms, we give the first systematic proof of the result that each \strathol{} \voa{} $V$ of central charge $24$ with non-zero weight-one space $V_1$ is uniquely determined by the Lie algebra structure of $V_1$.
\end{abstract}

\maketitle

\setcounter{tocdepth}{1}
\tableofcontents
\setcounter{tocdepth}{2}

%%%%%%%%%%%%%%%%%%%%%%%%%%%%%%%%%%%
%%%%%%%%%%%%%%%%%%%%%%%%%%%%%%%%%%%
%%%%%%%%%%   Section 1   %%%%%%%%%%
%%%%%%%%%%%%%%%%%%%%%%%%%%%%%%%%%%%
%%%%%%%%%%%%%%%%%%%%%%%%%%%%%%%%%%%

\section{Introduction}\label{sec:intro}
The programme to classify the \strathol{} \voa{}s of central charge $24$ was initiated by Schellekens in $1993$. He showed that the weight-one subspace $V_1$ of such a \voa{} $V$ is one of $71$ reductive Lie algebras called Schellekens' list \cite{Sch93} (see also \cite{DM04,DM06b,EMS20a}). He conjectured that all potential Lie algebras are realised and that the $V_1$-structure fixes the \voa{} $V$ up to isomorphism.

By contributions of many authors over the last three decades the following classification result is now proved:
\begin{thn*}
Up to isomorphism there are exactly $70$ \strathol{} \voa{}s $V$ of central charge $24$ with $V_1\neq\{0\}$. Such a \voa{} is uniquely determined by its $V_1$-structure.
\end{thn*}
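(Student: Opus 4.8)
The plan is to combine an a priori upper bound with a constructive lower bound and a rigidity argument. Schellekens' analysis (made rigorous in the works cited in the introduction) already restricts $V_1$ to one of $70$ non-zero reductive Lie algebras, so it suffices to prove two things: \emph{realisation}, that each of these $70$ Lie algebras actually occurs as the weight-one space of some \strathol{} \voa{} of central charge $24$, and \emph{uniqueness}, that two such \voa{}s with isomorphic $V_1$ are themselves isomorphic. I would derive both from the cyclic orbifold correspondence between the $24$ Niemeier lattice \voa{}s $V_N$ and the holomorphic \voa{}s of central charge $24$, together with its inverse.

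For realisation I would argue constructively. For each Niemeier lattice $N$ and each of the $226$ \good{} automorphisms $g\in\Aut(V_N)$, the \good{} (equivalently \gdh{}) hypothesis ensures that $g$ has finite order, that the fixed-point \vosa{} $V_N^{g}$ is strongly rational, and that the conformal weights of the $g^i$-twisted modules are large enough for the standard cyclic orbifold machinery to return a \strathol{} \voa{} $V_N^{\orb(g)}$ of unchanged central charge $24$. It then remains to identify the weight-one Lie algebra of each orbifold: using the explicit (twisted) modules of lattice \voa{}s and the dimension and character formulas for $(V_N^{\orb(g)})_1$, the Lie algebra structure can be read off case by case, and one verifies that the $226$ constructions together realise all $70$ non-zero Lie algebras of Schellekens' list.

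Uniqueness I would attack by reversing the construction. Let $V$ be an arbitrary \strathol{} \voa{} of central charge $24$ with $V_1\neq\{0\}$. The idea is to exhibit inside $V$ a distinguished finite-order automorphism $\sigma$---a \gdh{} of $V$---associated with a suitable element of a Cartan subalgebra of the reductive Lie algebra $V_1$, and to show that the reverse orbifold $V^{\orb(\sigma)}$ has weight-one Lie algebra of rank $24$, the maximal possible value, and is therefore isomorphic to one of the $24$ Niemeier lattice \voa{}s $V_N$. Orbifold duality then re-identifies $V$ as the orbifold of $V_N$ by the dual automorphism, so that $V$ is recovered from the pair $(N,\sigma)$. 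Since Niemeier lattices, and hence the \voa{}s $V_N$, are classified by their root systems, uniqueness reduces to the claim that the abstract Lie algebra $V_1$ determines the conjugacy class of $\sigma$---equivalently the \gdh{} datum---up to algebraic conjugacy, which by the earlier identification corresponds to one of the $226$ \good{} automorphisms and ultimately to the $11$ classes in $\Co_0$.

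The main obstacle is exactly this rigidity step. I expect the genuine difficulty to lie in proving that the Lie-algebra structure of $V_1$ alone forces the reverse-orbifold automorphism $\sigma$ to land on a lattice \voa{} and simultaneously pins down its conjugacy class: one must produce the correct Cartan element, control in each case the conformal weights of the twisted sectors so that $V^{\orb(\sigma)}$ has weight-one space of full rank $24$, and rule out the possibility that inequivalent orbifold data yield the same $V_1$. This is a global, case-analytic argument resting on the classification of \good{} automorphisms and \gdh{}s established earlier, rather than a routine computation, and it is where existence of the required $\sigma$ and injectivity of the assignment $V\mapsto V_1$ must both be secured.
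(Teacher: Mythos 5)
Your proposal is correct and follows essentially the same route as the paper: existence via the $226$ \good{} (generalised-deep-hole) automorphisms of the Niemeier lattice \voa{}s and case-by-case identification of $(V_N^{\orb(g)})_1$, and uniqueness via the reverse-orbifold strategy of \cite{LS19}, producing from $V_1$ an inner automorphism whose orbifold is a Niemeier lattice \voa{} and whose inverse orbifold automorphism is pinned down up to algebraic conjugacy by invariants determined by $V_1$. You also correctly locate the main difficulty in the rigidity step (\autoref{lem:unique2} and \autoref{lem:unique3} in the paper), so no further comparison is needed.
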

The original proof relies mainly on cyclic orbifold constructions but is based on a case-by-case analysis with a variety of different approaches.

\medskip

In \cite{Hoe17}, a uniform proof of the existence part of the theorem was given, depending on a conjecture on orbifolds of lattice \voa{}s proved in~\cite{Lam20}. Each \voa{} $V$ is realised as a simple-current extension of a certain dual pair in $V$.

Another systematic proof of the existence part of the theorem was recently given in \cite{MS23} (see also \cite{CLM22}) by considering orbifold constructions $V_\Lambda^{\orb(g)}$ associated with \gdh{}s $g$ of the Leech lattice \voa{} $V_\Lambda$. The corresponding inverse orbifold constructions are described in \cite{ELMS21} and are used to give a simpler proof of Schellekens' list of $71$ Lie algebras.

In this work we shall describe a third systematic construction of the \strathol{} \voa{}s $V$ of central charge $24$ with $V_1\neq\{0\}$, namely as orbifold constructions starting from the \voa{}s $V_N$ associated with the $24$ Niemeier lattices $N$, the positive-definite, even, unimodular lattices of rank $24$, which include the Leech lattice~$\Lambda$.

At the centre of these three constructions, as was first observed in \cite{Hoe17}, are $11$ algebraic conjugacy classes (i.e.\ conjugacy classes of cyclic subgroups, see \cite{CCNPW85}) in the isometry group $\O(\Lambda)\cong\Co_0$ of the Leech lattice~$\Lambda$, namely those uniquely specified by the Frame shapes $\sAA$, $\sBB$, $\sCC$, $\sDD$, $\sEE$, $\sFF$, $\sGG$, $\sHH$, $\sII$, $\sJJ$ and $\sKK$ (see \autoref{table:11}). Evidently, these Frame shapes have only non-negative exponents, but they are not characterised in $\Co_0$ by this property (see \autoref{table:25} in the appendix).
% Indeed, there are exactly $25$ Frame shapes in $\O(\Lambda)$ with only non-negative exponents. Moreover, these are exactly the Frame shapes appearing in the quotients $H=\O(N)/W$ for all $23$ Niemeier lattices $N$ other than the Leech lattice if we (naturally) realise $H$ as the subgroup of $\O(N)$ fixing a choice of (exactly $24$) simple roots $\Delta$ (of the root system of $N$) so that the elements of $H$ permute $\Delta$.

\medskip

Given a Niemeier lattice $N$, the outer automorphism group $\Aut(V_N)/K$ of the corresponding lattice \voa{} $V_N$ is isomorphic to $H=\O(N)/W$ where $W$ is the Weyl group of $N$, and $H$ can be embedded into $\O(N)$.

Let $g$ be an automorphism of finite order $n$ of a Niemeier lattice \voa{} $V_N$. Up to conjugation $g$ is of the form $\hat{\nu}\,\e^{-(2\pi\i)h(0)}$ with $\nu\in H$ and $h\in\pi_\nu(N\otimes_\Z\Q)$ where $\pi_\nu$ is the projection onto the elements that are fixed by $\nu$ (see \autoref{thm:latconj}). Then $g$ is called \emph{\good{}} (see \autoref{defi:good}) if
\begin{enumerate}
\item $g$ has type~$0$ (so that the cyclic orbifold construction $V_N^{\orb(g)}$ exists),
\item $\nu$ (i.e.\ the projection of $g$ to $\Aut(V_N)/K\cong H$) has order $n$ and
\item $h\bmod{(N^\nu)'}$ has order $n$.
\end{enumerate}
As the main result of the present text we establish the existence part of the classification theorem in a systematic way:
\begin{thn*}[\autoref{thm:main}, \autoref{cor:main}]
The cyclic orbifold constructions $V_N^{\orb(g)}$, where $N$ runs through the $24$ Niemeier lattices and $g$ through the \good{} automorphisms of the corresponding lattice \voa{}s $V_N$, realise all $70$ non-zero Lie algebras $\g$ on Schellekens' list as weight-one spaces $(V_N^{\orb(g)})_1\cong\g$.
\end{thn*}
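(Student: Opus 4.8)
The plan is to exploit that type-$0$ automorphisms yield holomorphic orbifolds, thereby reducing the statement to a character computation. Since each \good{} automorphism $g$ of $V_N$ has type~$0$ by condition~(1), the cyclic orbifold $V_N^{\orb(g)}$ exists and is again a \strathol{} \voa{} of central charge $24$; by the classification theorem its weight-one space $(V_N^{\orb(g)})_1$ is one of the Lie algebras on Schellekens' list. The task is therefore twofold: first, to compute $(V_N^{\orb(g)})_1$ explicitly for each $g$, and second, to verify that as $(N,g)$ ranges over all \good{} automorphisms the resulting Lie algebras exhaust the $70$ non-zero entries of the list. For the first part I would invoke the dimension formula for cyclic orbifolds, which expresses $\dim(V_N^{\orb(g)})_1$ in terms of $\dim(V_N)_1$, the graded traces $\tr_{(V_N)_1}g^i$, and the conformal weights $\rho_i$ of the $g^i$-twisted $V_N$-modules for $i=1,\dots,n-1$.

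All the required input data are computable from the pair $(\nu,h)$. Writing $g=\hat\nu\,\e^{-(2\pi\i)h(0)}$ as in \autoref{thm:latconj}, the eigenvalues of $\nu$ on $N\otimes_\Z\C$ are encoded by its Frame shape, which determines the graded trace of $g$ on $(V_N)_1=\h\oplus\bigoplus_{(\alpha,\alpha)=2}\C\,\e^\alpha$; the conformal weights $\rho_i$ of the twisted modules are given by the standard vacuum-anomaly expression built from the Frame shape of $\nu^i$ together with the projection of $h$. Feeding these into the dimension formula yields $\dim(V_N^{\orb(g)})_1$, which already narrows the possibilities to finitely many entries of the list. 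To pin down the exact semisimple type and the level of each simple ideal I would additionally track the fixed-point subalgebra $(V_N)_1^\nu$ and the values $\rho_i$, using the constraint characteristic of \strathol{} \voa{}s of central charge $24$ that the ratio of the dual Coxeter number to the level is constant across the simple ideals.

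It remains to enumerate the \good{} automorphisms up to algebraic conjugacy and to establish surjectivity onto the $70$ types. Since condition~(2) forces $\nu$ to have full order $n$ in $H=\O(N)/W$, the candidates are organised by the conjugacy classes of cyclic subgroups of $H$ together with the finitely many admissible vectors $h\bmod(N^\nu)'$ of matching order required by condition~(3); this reduces the problem to a finite computation producing the $226$ classes. I expect the main obstacle to be surjectivity, i.e.\ proving that no entry of Schellekens' list is missed: the dimension formula alone leaves ambiguities among Lie algebras of equal dimension, so the precise identification of type and levels, combined with the bookkeeping that each of the $70$ cases is genuinely attained, is the delicate part. The companion result identifying the \good{} automorphisms (up to algebraic conjugacy) with the \gdh{}s whose order equals that of the corresponding outer automorphism furnishes the organising principle that makes this enumeration uniform and ties it, through the $11$ algebraic conjugacy classes in $\Co_0$, to the Leech-lattice construction of \cite{MS19}.
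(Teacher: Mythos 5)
Your proposal follows essentially the same route as the paper: \good{} $\Rightarrow$ type~$0$ $\Rightarrow$ the orbifold exists and is \strathol{} of central charge $24$; the $226$ classes are enumerated by a finite computation over the conjugacy classes of $H=\O(N)/W$ and the admissible vectors $h\bmod (N^\nu)'$; the dimension of $(V_N^{\orb(g)})_1$ is obtained from the dimension formula with lattice-theoretic input (Frame shapes, vacuum anomalies, shifted cosets); the Lie algebra is then identified by exclusion against Schellekens' list; and surjectivity is read off the resulting tables. The one substantive divergence is in the identification step, and there your plan is under-resourced. First, the dimension formula of \autoref{thm:dimform} is only an upper bound; the paper must first establish extremality, which it does by verifying $\pi_\nu(s_i)+ih\notin\pi_\nu(N)$ for all $i\neq 0$ --- the same verification that, via \autoref{prop:latrankcrit}, yields the orbifold rank condition $\rk((V_N^{\orb(g)})_1)=\rk((V_N^g)_1)$. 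That rank is the paper's primary disambiguating invariant and is missing from your list: knowing that $(V_N^g)_1$ is a fixed-point subalgebra of $(V_N^{\orb(g)})_1$ under an automorphism of order dividing $n$ only bounds the rank from below, since the rank drops exactly when the automorphism is outer. Second, your proposed substitute --- constancy of $h_i^\vee/k_i$ across the simple ideals --- is equivalent to $h_i^\vee/k_i=(\dim V_1-24)/24$ and is therefore already built into Schellekens' list; it adds nothing beyond the dimension. The paper instead pins down the levels by computing the action of the Cartan subalgebra $\h^\nu$ on the weight-one subspaces of the twisted modules $V_N(g^i)$ and matching the lengths $2/k_i$ of the resulting root vectors. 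So the skeleton is right, but to close the argument you need to add the extremality/rank verification and the explicit root data from the twisted modules; dimension plus the fixed-point subalgebra alone do not separate all cases.
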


We classify the \good{} automorphisms:
\begin{thn*}[\autoref{prop:226}]
There are exactly $226$ algebraic conjugacy classes of \good{} automorphisms of the Niemeier lattice \voa{}s $V_N$, listed in \autoref{table:11x24}. The Frame shapes of their projections to $\Aut(V_N)/K\cong H$ are given by the $11$ Frame shapes in \autoref{table:11}.
\end{thn*}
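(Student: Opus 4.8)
The plan is to enumerate the \good{} automorphisms directly from the conjugacy parametrisation of \autoref{thm:latconj}, which reduces every finite-order automorphism of $V_N$ to a pair $(\nu,h)$ with $\nu\in H$ and $h\in\pi_\nu(N\otimes_\Z\Q)=N^\nu\otimes_\Z\Q$. First I would translate the three defining conditions of \autoref{defi:good} into lattice data: condition~(2) fixes the rational conjugacy class of $\nu$ on $N\otimes_\Z\Q$, and hence its Frame shape; condition~(3) constrains $h$ to have exact order $n$ in $(N^\nu\otimes_\Z\Q)/(N^\nu)'$; and the type-$0$ requirement~(1) imposes an integrality condition on the vacuum anomaly $\rho$ of the unique irreducible $g$-twisted $V_N$-module, which is computable from the Frame shape of $\nu$ together with $(h,h)$.

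The first substantive step is to show that only the eleven Frame shapes of \autoref{table:11} can occur. Condition~(2) forces the eigenvalues of $\nu$ on $N\otimes_\Z\Q$ to be $n$-th roots of unity with a primitive one occurring, so that its rational conjugacy class, and hence its Frame shape, is determined by the multiplicities of the primitive $d$-th roots of unity for $d\mid n$; this in turn fixes the rank and the genus of the positive-definite even fixed-point lattice $N^\nu$. Condition~(3) then requires $N^\nu$ to admit an element $h$ of exact order $n$ in $(N^\nu\otimes_\Z\Q)/(N^\nu)'$, and the type-$0$ condition pins down $\rho\bmod\Z$. A finite genus-theoretic case check across the $24$ Niemeier lattices shows that exactly the eleven Frame shapes $\sAA,\ldots,\sKK$, whose fixed-point lattices $N^\nu$ lie in the genera $\gAA,\ldots,\gKK$, satisfy all three constraints simultaneously, thereby recovering the eleven algebraic classes in $\O(\Lambda)\cong\Co_0$ isolated in \cite{Hoe17}; the present argument is the lattice-VOA counterpart of that observation, now carried out over all $24$ Niemeier lattices rather than $\Lambda$ alone.

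With the admissible Frame shapes fixed, for each Niemeier lattice $N$ and each $H$-conjugacy class of $\nu$ realising one of them I would count the inner parts $h$ yielding \good{} automorphisms, up to algebraic conjugacy. Concretely this is an orbit count of the order-$n$ elements $h\in(N^\nu\otimes_\Z\Q)/(N^\nu)'$ satisfying the type-$0$ condition, under the combined action of the centraliser $C_H(\nu)$ acting on $N^\nu$ (ordinary conjugacy) and of the Galois-type action $g\mapsto g^k$ with $\gcd(k,n)=1$, which (using that $\hat\nu$ and $\e^{-2\pi\i h(0)}$ commute because $\nu h=h$) sends $(\nu,h)\mapsto(\nu^k,kh)$. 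Summing these orbit counts over all $24$ lattices $N$ and all admissible $\nu$ produces the entries of \autoref{table:11x24}, the total of $226$, and the fact that all eleven Frame shapes genuinely occur.

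I expect the main obstacle to be this last step: making the enumeration simultaneously exhaustive and free of redundancy. On the one hand, ruling out every Frame shape outside the eleven demands a uniform anomaly-and-genus argument that is tight enough to leave no exceptional case. On the other hand, the orbit counts require carefully identifying which pairs $(\nu,h)$ and $(\nu',h')$ — possibly attached to different lattices or different $H$-classes — actually give algebraically conjugate automorphisms. Handling the $g\mapsto g^k$ identification correctly, so that the passage to algebraic conjugacy neither over- nor under-counts the $226$ classes, is the delicate, computation-heavy heart of the proof, and is precisely where the explicit lattice-by-lattice data recorded in \autoref{table:11x24} does the decisive work.
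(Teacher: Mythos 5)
Your strategy coincides in essence with the paper's: both reduce the problem to the parametrisation of finite-order automorphisms by pairs $(\nu,h)$ from \autoref{thm:latconj} and then perform a finite, computer-assisted orbit count over all $24$ Niemeier lattices (the paper finds $230$ orbits under ordinary conjugacy and $226$ after the normaliser/Galois identification of \autoref{rem:latalgconj}). There are, however, two concrete problems. First, you let the inner parts $h$ be acted on only by $C_H(\nu)$, whereas \autoref{thm:latconj} requires orbits under the full centraliser $C_{\O(N)}(\nu)$ in $\O(N)\cong W{:}H$; for every Niemeier lattice other than the Leech lattice the Weyl group $W$ is nontrivial and contributes elements that genuinely fuse orbits, so with the smaller group your total would come out strictly larger than $226$. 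Relatedly, the Galois identification is not simply $(\nu,h)\mapsto(\nu^k,kh)$: powers of $\hat\nu$ pick up the correction vectors $s_k$ (order doubling), which must be tracked. Your proposed preliminary reduction to the eleven Frame shapes by a uniform anomaly-and-genus argument is also only asserted, not carried out; in the paper this fact is an \emph{outcome} of the exhaustive search, not an input, and nothing is lost by simply letting $\nu$ range over all classes of $H$.

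The more serious gap is logical: \autoref{thm:latconj} asserts only that every finite-order automorphism is conjugate to \emph{some} $\hat\nu\,\e^{-(2\pi\i)h(0)}$ from the given list, not that distinct orbit representatives yield non-conjugate automorphisms. Your orbit count therefore bounds the number of algebraic conjugacy classes of \good{} automorphisms from \emph{above} by $226$, but does not show the bound is attained. The paper flags exactly this point and closes it with a separate step, checking that suitable invariants distinguish the $226$ candidates pairwise. Without an analogous lower-bound argument your proof establishes ``at most $226$'' rather than ``exactly $226$''.
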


We then show that the \good{} automorphisms $g$ of the Niemeier lattice \voa{}s $V_N$ are all \gdh{}s as introduced in \cite{MS23}, i.e.
\begin{enumerate}
\item\label{item:prop1} $g$ has type~$0$, % (so that the cyclic orbifold construction $V_N^{\orb(g)}$ exists),
\item\label{item:prop2} the dimension of $(V_N^{\orb(g)})_1$ attains the upper bound provided by the dimension formula in \cite{MS23} (see \autoref{thm:dimform}), and
\item\label{item:prop3} the weight-one Lie algebras satisfy the orbifold rank condition $\rk((V_N^g)_1)=\rk((V_N^{\orb(g)})_1)$, % i.e.\ the rank of $(V_N^{\orb(g)})_1$ is minimal,
\end{enumerate}
and additionally
\begin{enumerate}[resume]
\item\label{item:prop4} the order of $g$ equals the order of the projection of $g$ to $\Aut(V_N)/K\cong H$.
\end{enumerate}
Conditions \eqref{item:prop2} to \eqref{item:prop4} should be understood as extremality requirements.

The last condition \eqref{item:prop4} entails that the orders of the automorphisms of $V_N$ that we consider are relatively small, namely equal to the orders of the corresponding $11$ Frame shapes listed above. We contrast this to the uniform construction in \cite{MS23}, which uses \gdh{}s of only the Leech lattice \voa{} $V_\Lambda$ projecting to the same $11$ Frame shapes in $\O(\Lambda)$ but with orders up to $46$.

We then prove that the $226$ \good{} automorphisms are already characterised by these four properties:
\begin{thn*}[\autoref{thm:class}]
The \good{} automorphisms of the Niemeier lattice \voa{}s $V_N$ are exactly the automorphisms of $V_N$ satisfying \eqref{item:prop1} to \eqref{item:prop4} (and with $\rk((V_\Lambda^g)_1)>0$ in the case of the Leech lattice $\Lambda$).
\end{thn*}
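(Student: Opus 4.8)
The reverse inclusion, that every \good{} automorphism satisfies \eqref{item:prop1}--\eqref{item:prop4}, is already settled in the discussion preceding the theorem, so the plan is to prove the forward inclusion: any $g\in\Aut(V_N)$ satisfying \eqref{item:prop1}--\eqref{item:prop4} (and $\rk((V_\Lambda^g)_1)>0$ in the Leech case) is \good{}. By \autoref{thm:latconj} I may assume $g=\hat\nu\,\e^{-(2\pi\i)h(0)}$ with $\nu\in H$ and $h\in\pi_\nu(N\otimes_\Z\Q)$, and I set $n=\ord(g)$. Matching against \autoref{defi:good}, condition \eqref{item:prop1} is exactly the first defining property of a \good{} automorphism, and condition \eqref{item:prop4} says precisely that $\nu$ has order $n$, which is the second defining property. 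Thus everything comes down to deducing the third property, namely that $h\bmod (N^\nu)'$ has order $n$, from the extremality conditions \eqref{item:prop2} and \eqref{item:prop3}.

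First I would record that $m:=\ord(h\bmod (N^\nu)')$ divides $n$: since $h$ is $\nu$-fixed the two factors of $g$ commute, so the inner part $\e^{-(2\pi\i)h(0)}$ has order dividing $n$, and reducing further modulo $(N^\nu)'$ can only decrease this order, giving $m\mid n$. I then argue by contradiction, assuming $m<n$. The rank condition \eqref{item:prop3} enters first, to pin down the outer part: because $\e^{-(2\pi\i)h(0)}$ acts trivially on the Cartan subalgebra $\mathfrak{h}\subseteq(V_N)_1$ while $\hat\nu$ acts as $\nu$, the fixed-point Lie algebra $(V_N^g)_1$ has Cartan subalgebra $\mathfrak{h}^\nu$, so $\rk((V_N^g)_1)=\dim\mathfrak{h}^\nu$ is governed by the Frame shape of $\nu$ alone. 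Together with \eqref{item:prop4} and \eqref{item:prop3}, this restricts $\nu$ to the eleven Frame shapes of \autoref{table:11}.

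The decisive input is the dimension formula of \autoref{thm:dimform}. There $\dim(V_N^{\orb(g)})_1$ is expressed through the lowest conformal weights $\rho_i$ of the $g^i$-twisted $V_N$-modules, and for $g=\hat\nu\,\e^{-(2\pi\i)h(0)}$ each $\rho_i$ splits as a contribution depending only on the Frame shape of $\nu^i$ plus a shift term governed by $\min_{\beta}\tfrac12\langle ih+\beta,ih+\beta\rangle$ over the relevant coset of $N^\nu$ in $\pi_\nu(N\otimes_\Z\Q)$. I would show that $m<n$ makes the profile $(\rho_i)_i$ non-extremal: for the index $i=m$ the shift $mh$ lies in $(N^\nu)'$ and the $g^m$-twisted sector degenerates to the $\hat\nu^m$-twisted one, so $\rho_m$ leaves the value required for equality and $\dim(V_N^{\orb(g)})_1$ drops strictly below the upper bound, contradicting \eqref{item:prop2}. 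Hence $m=n$ and $g$ is \good{}.

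The main obstacle is precisely this strict-inequality step: controlling, uniformly enough, how the order of $h$ enters the twisted-sector weights $\rho_i$ to guarantee that a sub-maximal order strictly lowers the orbifold weight-one dimension. I expect this to require the explicit fixed lattices $N^\nu$ and their duals $(N^\nu)'$ for the eleven Frame shapes, so that the argument reduces to a finite verification along \autoref{table:11}. Finally, the Leech lattice has no roots, so $(V_\Lambda^g)_1=\mathfrak{h}^\nu$ is abelian, and the supplementary hypothesis $\rk((V_\Lambda^g)_1)>0$ is needed to exclude the fixed-point-free isometries, for which $\mathfrak{h}^\nu=\{0\}$ renders the third \good{} condition degenerate; this boundary case I would dispatch separately.
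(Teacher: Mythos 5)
Your reduction is the right one: writing $g=\hat\nu\,\e^{-(2\pi\i)h(0)}$ as in \autoref{thm:latconj}, conditions \eqref{item:prop1} and \eqref{item:prop4} are literally the first two defining properties of a \good{} automorphism, so everything hinges on extracting the third property, $\ord(h\bmod(N^\nu)')=n$, from extremality and the orbifold rank condition. The mechanism you propose for this step, however, does not work as stated. You argue that $m:=\ord(h\bmod(N^\nu)')<n$ makes the $g^m$-twisted sector degenerate and thereby destroys extremality; but the only usable necessary condition for extremality (the one invoked in the paper, from \cite{MS19}) is $\rho(V_N(g^i))\geq1$ for $i$ \emph{coprime} to $n$, and the paper explicitly warns after \autoref{cor:dimform} that extremal automorphisms may have twisted sectors of conformal weight less than $1$ — the obstruction is a linear combination whose contribution from sectors with $(i,n)>1$ can vanish. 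Since $m\mid n$ and $m<n$, the index $i=m$ you single out is exactly of this uncontrolled kind; and for $(i,n)=1$ the coset $ih+(N^\nu)'$ is nontrivial whenever $m>1$, so those sectors need not have small weight either. Your claim that \eqref{item:prop3} and \eqref{item:prop4} already confine $\nu$ to the eleven Frame shapes of \autoref{table:11} is also unjustified for Niemeier lattices other than the Leech lattice: in the paper that restriction is an \emph{output} of the classification, not an input.

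The paper's proof is instead a matching count that sidesteps any direct implication. For every Niemeier lattice and every algebraic conjugacy class $\nu\in H$ (not only the eleven) one enumerates, via \texttt{Magma}, all candidates $g=\hat\nu\,\e^{-(2\pi\i)h(0)}$ of type~$0$ with $\ord(g)=\ord(\nu)$, and eliminates those violating either the necessary extremality condition $\rho(V_N(g^i))\geq1$ for $(i,n)=1$ or the necessary rank criterion of \autoref{rem:latrankcrit} combined with the Lie-theoretic constraints from the proof of \autoref{thm:main} and Schellekens' list. Exactly $226$ classes survive; since the $226$ \good{} automorphisms all satisfy \eqref{item:prop1}--\eqref{item:prop4} by \autoref{prop:226gdh}, the two sets must coincide. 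You concede that your strict-inequality step would reduce to a finite verification anyway; the genuine gap is that the verification cannot be organised as a sector-by-sector contradiction with extremality, but must be an upper-bound enumeration matched against the count of \autoref{prop:226}.
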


Finally, we give the first uniform proof of the uniqueness statement in the classification theorem at the beginning of the introduction:
\begin{thn*}[\autoref{thm:unique}]
Let $\g$ be a non-zero Lie algebra on Schellekens' list. Then there is a Niemeier lattice $N$ and a \good{} automorphism $g\in\Aut(V_N)$ such that any \strathol{} \voa{} $V$ of central charge $24$ with $V_1\cong\g$ satisfies $V\cong V_N^{\orb(g)}$. In particular, the \voa{} structure of $V$ is uniquely determined by the Lie algebra structure of $V_1$.
\end{thn*}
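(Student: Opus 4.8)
The plan is to reverse the orbifold construction: starting from an abstract \voa{} $V$ with $V_1\cong\g$, I would build an automorphism out of the Lie algebra structure of $V_1$ alone, show that its orbifold recovers a Niemeier lattice \voa{}, and then undo this to identify $V$ with $V_N^{\orb(g)}$. First, using the existence statement (\autoref{thm:main}, \autoref{cor:main}) I fix once and for all a \good{} automorphism $g\in\Aut(V_N)$ of some order $n$ with $(V_N^{\orb(g)})_1\cong\g$; write $W:=V_N^{\orb(g)}$. Since $g$ has type~$0$ and order $n$, the cyclic orbifold theory supplies the inverse orbifold: $W$ carries a canonical order-$n$ automorphism $\zeta$ acting as $\e^{2\pi\i k/n}$ on the component of $W$ arising from the $g^k$-twisted module of $V_N$, with $W^{\orb(\zeta)}\cong V_N$ and the reverse automorphism conjugate to $g$. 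By the \gdh{} structure the restriction $\zeta|_{W_1}$ is the inner automorphism $\e^{2\pi\i\,x(0)}$ of $\g$ for a rational Cartan element $x$ singled out by the deep-hole data; this $x$ depends only on the Lie algebra $\g$.

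Now let $V$ be any \strathol{} \voa{} of central charge $24$ with $V_1\cong\g$. Transporting $x$ along a fixed isomorphism $V_1\cong\g$ yields $x_V\in V_1$, and since $V_1$ acts on all of $V$ through its zero modes, $\sigma:=\e^{2\pi\i\,x_V(0)}$ is a well-defined automorphism of $V$ of order $n$ whose restriction to $V_1$ is exactly the inner automorphism fixed above. I would then form the orbifold $V^{\orb(\sigma)}$, again \strathol{} of central charge $24$, and claim $V^{\orb(\sigma)}\cong V_N$. The decisive point is that the weight-one space of an orbifold depends only on $V_1$ and on the algebraic conjugacy class of the orbifolding automorphism: the fixed-point part is the $\sigma$-invariant subalgebra of $\g$, while the twisted contributions are controlled by the dimension formula (\autoref{thm:dimform}) and the orbifold rank condition, all of which are functions of $\g$ and $x$ alone. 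Read in this reverse direction, they force $(V^{\orb(\sigma)})_1$ to be a rank-$24$ semisimple Lie algebra with exactly the root system of $N$, independently of the unknown internal structure of $V$.

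Once $(V^{\orb(\sigma)})_1$ is known to contain a Cartan subalgebra of full rank $24=c$, the Heisenberg sub\voa{} it generates has full rank, so $V^{\orb(\sigma)}$ is a lattice \voa{} $V_L$ for a positive-definite, even, unimodular lattice $L$ of rank $24$, that is, a Niemeier lattice; and since a Niemeier lattice is determined by its root system, $L\cong N$ and $V^{\orb(\sigma)}\cong V_N$. Applying the inverse orbifold to $\sigma$ then expresses $V\cong V_N^{\orb(g')}$, where $g'$ is the reverse automorphism of $\sigma$ on $V^{\orb(\sigma)}\cong V_N$. Because the entire reverse procedure takes only $\g$ and the intrinsic element $x$ as input, the pair $(V_N,g')$ it produces is the same, up to conjugacy in $\Aut(V_N)$ (whose structure is described in \autoref{thm:latconj}), for every such $V$; specialising to the reference \voa{} $V=W$, where reversing the reverse returns the original datum, identifies this conjugacy class with that of $g$. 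Hence $g'$ is algebraically conjugate to $g$ and $V\cong V_N^{\orb(g')}\cong V_N^{\orb(g)}$, giving the uniqueness (the Leech case, where $\g$ has rank below $24$, being the base case handled separately).

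The main obstacle is the reverse direction of the second paragraph: proving that $(V^{\orb(\sigma)})_1$ is forced to have full rank $24$ and the correct Niemeier root system with the Lie algebra $\g$ as the only input, i.e.\ that $\sigma$ is a \gdh{} of $V$ whose inverse orbifold regains precisely the rank lost in passing from $V_N$ to $W$. This is exactly where the extremality content of the \good{} condition — the dimension formula together with the orbifold rank condition, now applied in the opposite direction — must be exploited, and where the classification of \good{} automorphisms (\autoref{prop:226}, \autoref{thm:class}) is needed to guarantee that the reverse datum $(V_N,g')$ is rigid and matches the fixed $g$.
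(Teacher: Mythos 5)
Your overall strategy is the one the paper follows: fix a \good{} automorphism $\bar g$ realising $\g$, transport its inverse orbifold automorphism --- which is inner, hence expressible through the Lie algebra $V_1$ alone --- to an arbitrary $V$ with $V_1\cong\g$, orbifold back to a Niemeier lattice \voa{}, and pin down the reverse automorphism. But two of your steps have genuine gaps. First, the claim that the dimension formula and the rank condition ``force $(V^{\orb(\sigma)})_1$ to be a rank-$24$ semisimple Lie algebra with exactly the root system of $N$'' is false for an arbitrary choice of $\bar g$: the data actually available (the value $\dim(V^{\orb(\sigma)}_1)=\dim((V_N)_1)$ from the dimension formula, together with the fact that $V_1^{\sigma}$ is a fixed-point Lie subalgebra of $(V^{\orb(\sigma)})_1$ under an automorphism of order dividing $n$) single out $(V_N)_1$ on Schellekens' list only for $157$ of the $226$ \good{} automorphisms (\autoref{lem:unique2}); for the remaining $69$ the invariants are ambiguous. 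One therefore has to verify, case by case, that at least one usable \good{} automorphism exists for each of the $70$ Lie algebras. Note also that extremality of $\sigma$ (needed for equality in the dimension formula) is not literally ``a function of $\g$ and $x$ alone'': it is obtained because $\sigma$ is inner, so the $\sigma^i$-twisted modules decompose into twisted $\langle V_1\rangle$-modules, and it is Schellekens' result that the decomposition of $V$ into irreducible $\langle V_1\rangle$-modules is determined by $V_1$ up to outer automorphisms which makes $\rho(V(\sigma^i))$ computable from $\g$.

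Second, your identification of the reverse automorphism $g'$ with $g$ rests on the assertion that ``the entire reverse procedure takes only $\g$ and the intrinsic element $x$ as input,'' so its output is independent of $V$. That is circular: the procedure also takes $V$ itself as input (the twisted modules $V(\sigma^i)$ and the extension structure of $V^{\orb(\sigma)}$ depend on all of $V$, which is precisely the unknown), so you cannot conclude that the algebraic conjugacy class of $g'$ in $\Aut(V_N)$ is independent of $V$ without a separate rigidity argument. The paper supplies this as \autoref{lem:unique3}: every \good{} automorphism is determined up to algebraic conjugacy by six concrete invariants (order, type~$0$, extremality, the fixed-point Lie subalgebra, the orbifold Lie algebra, and the dimension of one eigenspace on $(V_N)_1$), each of which $g'$ provably shares with $\bar g$ because it descends from a common property of $\sigma$ and the model inverse orbifold automorphism. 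Proving that lemma in turn requires showing that any automorphism with these invariants is again \good{} (ruling out spurious Frame shapes such as $3^8$ and $4^6$) and then checking uniqueness within the explicit list of \autoref{prop:226}. Without these two verifications your argument does not close.
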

The proof follows the strategy laid out in \cite{LS19} and uses the inverse orbifold constructions associated with certain $157$ of the $226$ \good{} automorphisms. Moreover, the uniqueness
% up to outer automorphisms
of the decomposition of $V$ into $\langle V_1\rangle$-modules proved in \cite{Sch93} is used.

We also describe how the constructions in \cite{Hoe17,MS23} and in this text are related (see end of \autoref{sec:gdh} and \autoref{prop:leechkurz}).

\subsection*{Outline}
The paper is organised as follows: In \autoref{sec:lataut} we review lattice \voa{}s. Then we prove some results about automorphism groups of \voa{}s and apply them to lattice \voa{}s.

In \autoref{sec:orbdimrk} we recall the cyclic orbifold theory for holomorphic \voa{}s developed in \cite{EMS20a,Moe16} and state a dimension formula from \cite{EMS20b,MS23} for central charge $24$. We also prove an orbifold rank criterion.

In \autoref{sec:schellekens} we summarise what is known about the classification of \strathol{} \voa{}s $V$ of central charge $24$ and provide the context, mainly from \cite{Hoe17,MS23}, for the uniform description in this text.

In \autoref{sec:cons} we define the notion of a \good{} automorphism of a unimodular lattice \voa{} and prove that all $70$ \strathol{} \voa{}s $V$ of central charge $24$ with $V_1\neq\{0\}$ can be obtained as orbifold constructions from the $24$ Niemeier lattice \voa{}s $V_N$ associated with such automorphisms. We also study some properties of these automorphisms, classify them and show that they can be characterised as certain \gdh{}s in the sense of \cite{MS23}.

In \autoref{sec:uniqueness} we give a uniform proof of the uniqueness of a \strathol{} \voa{} $V$ of central charge $24$ with any given non-zero weight-one Lie algebra $V_1$.

\subsection*{Acknowledgements}
The authors thank Jethro van Ekeren, Shashank Kanade, Ching Hung Lam, Geoffrey Mason, Nils Scheithauer and Hiroki Shimakura for helpful discussions. They would also like to extend their gratitude to the anonymous referee. The first author was supported by the Simons Foundation (Award ID: 355294), the second author by an AMS-Simons Travel Grant.

\subsection*{Computer Calculations}
We remark that some of the computations in \autoref{sec:cons} and \autoref{sec:uniqueness} involving the isometry groups $\O(N)$ of the Niemeier lattices $N$ were performed on the computer using \texttt{Magma} \cite{Magma}.

\medskip

All \voa{}s are assumed to be complex. Lie algebras are complex and finite-dimensional.

%%%%%%%%%%%%%%%%%%%%%%%%%%%%%%%%%%%
%%%%%%%%%%%%%%%%%%%%%%%%%%%%%%%%%%%
%%%%%%%%%%   Section 2   %%%%%%%%%%
%%%%%%%%%%%%%%%%%%%%%%%%%%%%%%%%%%%
%%%%%%%%%%%%%%%%%%%%%%%%%%%%%%%%%%%

\section{Lattice \VOA{}s and Automorphism Groups}\label{sec:lataut}
In this section we review lattice \voa{}s \cite{Bor86,FLM88,Don93} (for details see, e.g., \cite{Kac98,LL04}). Then we describe automorphism groups of \voa{}s, in particular the finite-order conjugacy classes, and apply these results to lattice \voa{}s.

\medskip

For an introduction to \voa{}s and their representation theory we refer the reader to \cite{FLM88,FHL93,LL04}.
% Following the notation in \cite{DM04b}
A \voa{} $V$ is called \emph{\strat{}} if it is rational (as defined in, e.g., \cite{DLM97}), $C_2$-cofinite (or lisse), self-contragredient (or self-dual) and of CFT-type. This also implies that $V$ is simple. A \voa{} $V$ is called \emph{holomorphic} (or self-dual or meromorphic) if it is rational and the only irreducible $V$-module is $V$ itself (implying that $V$ is simple and self-contragredient). The definition of twisted modules follows the sign convention in, e.g., \cite{DLM00} as opposed to some older texts. A \vosa{} is called \emph{full} if it has the same Virasoro vector as the containing \voa{}.

In a \voa{} $V=\bigoplus_{n=0}^\infty V_n$ of CFT-type the weight-one space $V_1$ carries the structure of a (complex, finite-dimensional) Lie algebra via $[u,v]:=u_0v$ for all $u$, $v\in V_1$. If $V$ is \strat{}, then this Lie algebra is reductive, i.e.\ a direct sum of an abelian and a semisimple Lie algebra~\cite{DM04b}.

If a \voa{} $V$ is self-contragredient and of CFT-type,
% This implies that $V$ is simple.
then there exists a non-degenerate, invariant bilinear form $\langle\cdot,\cdot\rangle$ on $V$, which is unique up to a non-zero scalar and symmetric \cite{FHL93,Li94}. This bilinear form restricts to a non-degenerate, invariant bilinear form on the Lie algebra $V_1$. It is common to choose the normalisation $\langle\vac,\vac\rangle=-1$ where $\vac$ denotes the vacuum vector.

%%%%%%%%%%%%%%%%%%%%%%%%%%%%%%%%%%%
%%%%%%%%%%%%%%%%%%%%%%%%%%%%%%%%%%%

\subsection{Lattice \VOA{}s}\label{sec:lat}
Let $L$ be a positive-definite, even lattice, i.e.\ a free abelian group $L$ of finite rank $\rk(L)$ equipped with a positive-definite, symmetric bilinear form $\langle\cdot,\cdot\rangle\colon L\times L\to\Z$ such that $\langle\alpha,\alpha\rangle\in 2\Z$ for all $\alpha\in L$. By $\h:=L\otimes_\Z\C$ we denote the complexified lattice. The discriminant form $L'/L$ with dual lattice $L'=\{\alpha\in L\otimes_\Z\Q\mid\langle\alpha,\beta\rangle\in\Z\text{ for all }\beta\in L\}$ naturally carries the structure of a non-degenerate finite quadratic space. The lattice $L$ is called \emph{unimodular} if $L'=L$, i.e.\ if the discriminant form is trivial.

The lattice \voa{} $V_L=M_{\hat\h}(1)\otimes\C_\eps[L]$ associated with $L$ is \strat{} and of central charge $c=\rk(L)$. The definition of $V_L$ involves a choice of group $2$-cocycle $\eps\colon L\times L\to\{\pm1\}$ satisfying $\eps(\alpha,\beta)/\eps(\beta,\alpha)=(-1)^{\langle\alpha,\beta\rangle}$ for all $\alpha$, $\beta\in L$.

The irreducible $V_L$-modules $V_{\alpha+L}$, $\alpha+L\in L'/L$, are indexed by the elements of the discriminant form $L'/L$. In particular, if $L$ is unimodular, then $V_L$ is holomorphic.

\medskip

We now describe the weight-one Lie algebra $(V_L)_1$ of a lattice \voa{} $V_L$. Let $L$ be an even, positive-definite lattice. Then the set of norm-two vectors $\Phi:=\{\alpha\in L\mid\langle\alpha,\alpha\rangle=2\}$ forms a simply-laced root system. Let $R:=\spn_\Z(\Phi)\subseteq L$ denote the root sublattice generated by $\Phi$.

The weight-one Lie algebra of $V_L$ is given by
\begin{equation*}
(V_L)_1=\hh\oplus\spn_\C\left(\{1\otimes\ee_\alpha\mid\alpha\in\Phi\}\right)
\end{equation*}
where $\hh:=\{h(-1)\otimes\ee_0\mid h\in\h\}\cong\h$ is a choice of a Cartan subalgebra of $(V_L)_1$. Note that the restriction to $\hh$ of the invariant bilinear form $\langle\cdot,\cdot\rangle$ on $V_L$ normalised such that $\langle\vac,\vac\rangle=-1$ is precisely the bilinear form $\langle\cdot,\cdot\rangle$ on $L$ bilinearly extended to the complexification $\h=L\otimes_\Z\C$ under the identification of $\hh$ with $\h$.

It is easy to verify (see, e.g., Section~7.8 in \cite{Kac90}) that $(V_L)_1$ is a reductive Lie algebra of rank $\rk(L)$ with a semisimple part of rank $\rk(R)$ and an abelian part of rank $\rk(L)-\rk(R)$ and that the root system of the semisimple part of $(V_L)_1$ is exactly $\Phi$ (viewed in $\hh^*\cong\h^*$ via $\langle\cdot,\cdot\rangle$).

%%%%%%%%%%%%%%%%%%%%%%%%%%%%%%%%%%%
%%%%%%%%%%%%%%%%%%%%%%%%%%%%%%%%%%%

\subsection{Automorphism Groups}\label{sec:aut}
% Recall that for any \voa{} $V=\bigoplus_{n=0}^\infty V_n$ of CFT-type the definition $[u,v]:=u_0v$ for $u,v\in V_1$ endows the weight-one space $V_1$ with the structure of a (complex, finite-dimensional) Lie algebra.
For any \voa{} $V$ of CFT-type $K:=\langle\{\e^{v_0}\mid v\in V_1\}\rangle$ defines a normal subgroup of $\Aut(V)$, called the \emph{inner automorphism group} (see \cite{DN99}, Section~2.3).
% Note that since $V$ is of CFT-type, $v_0\omega=0$ for all $v\in V_1$ so that the inner automorphisms preserve the Virasoro vector $\omega$, which is included in the definition of a \voa{} automorphism.
We call $\Aut(V)/K$ the \emph{outer automorphism group} of $V$.

Since \voa{} automorphisms are grading-preserving, there is a restriction homomorphism $r\colon\Aut(V)\to\Aut(V_1)$, i.e.\ automorphisms of the \voa{} $V$ restrict to Lie algebra automorphisms of $V_1$. This homomorphism $r$ is in general not surjective. However, it follows from the definition of $K$ and the relation $r(\e^{v_0})=\e^{\ad_v}$ that $r(K)=\Inn(V_1)$, i.e.\ every inner automorphism of $V_1$ can be extended to an inner automorphism of $V$.

Since $r(K)\subseteq\Inn(V_1)$, $r$ induces a homomorphism $\tilde{r}\colon\Aut(V)/K\to\Out(V_1)$.
% Proof: Define $\tilde{r}(gK):=r(g)\Inn(V_1)$ for $g$ in $\Aut(V)$. Well-defined: Suppose $gK=hK$ for $g,h\in\Aut(V)$. Then $gh^{-1}\in K$ and hence $r(g)r(h)^{-1}=r(gh^{-1})\in r(K)\subseteq\Inn(V_1)$. Hence $r(g)\Inn(V_1)=r(h)\Inn(V_1)$. Homomorphism: Clear.
Moreover, $\tilde{r}$ is injective if and only if
\begin{equation*}\label{eq:ass1}
\ker(r)\subseteq K.\tag{A}
\end{equation*}
% Proof: Suppose $\tilde{r}(gK)=\id\Inn(V_1)$. Then $r(g)\in\Inn(V_1)$. By the assumption this implies that $g\in K$, i.e.\ $gK=\id K$. On the other hand, if there exists $g\notin K$ with $r(g)=\id$, then the commutativity of the diagram implies that $\tilde{r}$ cannot be injective.
In the following we assume that \eqref{eq:ass1} holds, or equivalently that $r^{-1}(\Inn(V_1))=K$.
% Proof: One implication is trivial. For the other implication assume that $\ker(r)\subseteq K$. Then let $g\in\Aut(V)$ with $r(g)\in\Inn(V_1)$. Let $h\in K$ with $r(h)=r(g)$. Then $r(gh^{-1})=\id$, i.e.\ $gh^{-1}\in K$. But then also $g\in K$.
This excludes, in particular, the case of a \voa{} with $V_1=\{0\}$ and $\Aut(V)\neq\{\id\}$, like, for example, the Moonshine module $V^\natural$. Then we have the following commutative diagram with exact columns:
\begin{equation*}
\begin{tikzcd}
K\arrow[two heads]{rr}{\phantom{|_K}r|_K}\arrow[hookrightarrow]{d}&&\Inn(V_1)\arrow[hookrightarrow]{d}\\
\Aut(V)\arrow{rr}{r}\arrow[two heads]{d}&&\Aut(V_1)\arrow[two heads]{d}\\
\Aut(V)/K\arrow[hookrightarrow]{rr}{\tilde{r}}&&\Out(V_1)
\end{tikzcd}
\end{equation*}

Since $\tilde{r}$ is injective, the outer automorphism group $\Aut(V)/K$ is finite whenever $\Out(V_1)$ is, for example, if $V_1$ is semisimple.
% This fails, for example, if $V_1$ has an abelian part (like $V_\Lambda$, but $\Aut(V_\Lambda)/K$ is still finite).
We shall see that $\Aut(V)/K$ is finite if $V$ is a lattice \voa{}, even when $V_1$ is not semisimple. One might speculate that $\Aut(V)/K$ is finite for any sufficiently regular (e.g., \strat{}) \voa{} $V$.

Note that $\ker(r)$, the subgroup of $\Aut(V)$ acting trivially on $V_1$, was introduced as \emph{inertia group} $I(V)$ in \cite{LS20b} and studied for a few examples of \strathol{} \voa{}s of central charge $24$ (see also \autoref{rem:kernel}).

\medskip

First, we describe finite-order, inner automorphisms of \voa{}s up to conjugacy. The following result is immediate:
\begin{prop}\label{prop:equiv1}
Let $V$ be a \voa{} of CFT-type and $g\in\Aut(V)$, and assume that \eqref{eq:ass1} holds. Then the following are equivalent:
\begin{enumerate}
\item The automorphism $g$ is inner, i.e.\ $g\in K$.
\item The restriction $r(g)$ is inner, i.e.\ $r(g)\in\Inn(V_1)$.
\end{enumerate}
\end{prop}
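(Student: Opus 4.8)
The plan is to show that the two conditions are equivalent by exploiting the two structural facts already established in the text: first, that the restriction homomorphism satisfies $r(K)=\Inn(V_1)$ (because $r(\e^{v_0})=\e^{\ad_v}$ and $K$ is generated by the $\e^{v_0}$), and second, that assumption \eqref{eq:ass1}, namely $\ker(r)\subseteq K$, is equivalent to the identity $r^{-1}(\Inn(V_1))=K$. Once these two facts are in hand, the proposition is just the assertion that $g\in K$ if and only if $g\in r^{-1}(\Inn(V_1))$, which unwinds immediately.

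Concretely, I would argue both implications. For the direction from (1) to (2), suppose $g\in K$; then $r(g)\in r(K)=\Inn(V_1)$, so $r(g)$ is inner, and this direction needs nothing beyond $r(K)=\Inn(V_1)$ and does not invoke \eqref{eq:ass1}. For the converse, suppose $r(g)\in\Inn(V_1)=r(K)$. Then there is some $k\in K$ with $r(g)=r(k)$, whence $r(gk^{-1})=\id$, i.e.\ $gk^{-1}\in\ker(r)$. By assumption \eqref{eq:ass1} we have $\ker(r)\subseteq K$, so $gk^{-1}\in K$, and therefore $g=(gk^{-1})k\in K$, which is (1). Equivalently, one may simply cite the stated fact that \eqref{eq:ass1} amounts to $r^{-1}(\Inn(V_1))=K$ and read off both inclusions from it.

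I expect no genuine obstacle here, in keeping with the claim that the result is immediate; the only point requiring care is to make explicit \emph{where} assumption \eqref{eq:ass1} enters. It is used exactly once, in passing from $gk^{-1}\in\ker(r)$ to $gk^{-1}\in K$ in the converse direction, and it is precisely this step that fails for VOAs with $V_1=\{0\}$ and nontrivial automorphism group such as $V^\natural$, which is why that case is excluded at the outset. The forward implication, by contrast, holds unconditionally.
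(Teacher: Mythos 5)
Your proof is correct and is exactly the unwinding the paper has in mind when it declares the result ``immediate'': the forward direction from $r(K)=\Inn(V_1)$, the converse from $\ker(r)\subseteq K$ (equivalently $r^{-1}(\Inn(V_1))=K$). Nothing to add.
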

% \begin{proof}
% $(1)\implies(2)$: Clear.
%
% $(2)\implies(1)$: This follows from the assumption \eqref{eq:ass1}.
% \end{proof}

From now on, we assume that $V_1$ is reductive, i.e.\ a direct sum of an abelian and a semisimple part.
% This is the case, for example, if $V$ is \strat{} \cite{DM04b}.
If necessary, we fix a choice $\hh$ of Cartan subalgebra of $V_1$.

We then consider the subgroup $T:=\langle\{\e^{v_0}\mid v\in\hh\}\rangle=\{\e^{v_0}\mid v\in\hh\}$ of $K$, which is abelian since $\hh$ is and because $[u,v]=0$ implies that $[u_0,v_0]=0$ for $u,v\in V_1$ by Borcherds' identity. We shall assume in the following, strengthening \eqref{eq:ass1}, that
\begin{equation*}\label{eq:ass2}
\ker(r)\subseteq T\tag{B}
\end{equation*}
for the choice $\hh$ of Cartan subalgebra of $V_1$. But then \eqref{eq:ass2} already holds for all Cartan subalgebras of $V_1$. Conditions \eqref{eq:ass1} and \eqref{eq:ass2} are satisfied, for example, for any lattice \voa{} by Lemma~2.5 in \cite{DN99}.
% Proof: Suppose $\ker(r)\subseteq T$ and let $\hh'$ be another Cartan subalgebra of $V_1$ (and $T'$ the associated group of inner automorphisms). Then there is an inner automorphism of $V_1$, extending to an inner automorphism $k\in K$, such that $\hh'=k(\hh)$. Then $kTk^{-1}\subseteq T'$ since for any $\e^{v_0}\in T$ (with $v\in\hh$), $k\e^{v_0}k^{-1}=\e^{(kv)_0}$ by the definition of \voa{} automorphism so that $k\e^{v_0}k^{-1}\in T'$ (since $kv\in\hh'$). Clearly, $h^{-1}\ker(r)h\subseteq\ker(r)$ for all $h\in\Aut(V)$ so that $k^{-1}\ker(r)k\subseteq\ker(r)\subseteq T$ This implies that $\ker(r)\subseteq kTk^{-1}\subseteq T'$.

% Note that fixed-point Lie subalgebra of a reductive Lie algebra under a finite-order automorphism is again reductive and in particular admits the notion of a rank.
\begin{prop}\label{prop:equiv2}
In the situation of \autoref{prop:equiv1}, assume that $V_1$ is reductive, that \eqref{eq:ass2} holds and that $g\in\Aut(V)$ has finite order.\footnote{The finite-order assumption is missing in the published version of this text.} Then the items in \autoref{prop:equiv1} are equivalent to:
\begin{enumerate}
\item[(3)] The automorphism $g$ is conjugate in $K$ to $\e^{v_0}\in T$ for some $v\in\hh$.
\end{enumerate}
\end{prop}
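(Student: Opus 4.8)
The plan is to prove that (1) and (2) from \autoref{prop:equiv1} are equivalent to the new condition (3) by showing the chain $(3)\Rightarrow(1)$ trivially and then $(1)\Rightarrow(3)$ using the structure theory of reductive Lie algebras together with assumption \eqref{eq:ass2}. The implication $(3)\Rightarrow(1)$ is immediate, since any element conjugate in $K$ to an element of $T\subseteq K$ lies in $K$ and is therefore inner. The substance of the proposition is the reverse implication: given $g\in K$, I must conjugate it within $K$ into the torus $T=\{\e^{v_0}\mid v\in\hh\}$.

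First I would apply the restriction homomorphism $r$. By \autoref{prop:equiv1}, $g\in K$ is equivalent to $r(g)\in\Inn(V_1)$, so $r(g)$ is an inner automorphism of the reductive Lie algebra $V_1$. The key classical fact I would invoke is that in a reductive Lie algebra every inner automorphism of the form $\e^{\ad_x}$ is conjugate, under $\Inn(V_1)$, to an element of the maximal torus $\{\e^{\ad_h}\mid h\in\hh\}$; equivalently, every semisimple element of the adjoint group lies in a maximal torus, and all maximal tori are conjugate. Thus there is an inner automorphism $\phi\in\Inn(V_1)$ and an element $h\in\hh$ with $\phi\,r(g)\,\phi^{-1}=\e^{\ad_h}=r(\e^{h_0})$. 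Since $r(K)=\Inn(V_1)$ as established in the preamble, I can lift $\phi$ to some $k\in K$ with $r(k)=\phi$, so that $r(kgk^{-1})=r(\e^{h_0})$, i.e.\ $r(kgk^{-1}(\e^{h_0})^{-1})=\id$.

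Now I would use assumption \eqref{eq:ass2}. The element $kgk^{-1}(\e^{h_0})^{-1}$ lies in $\ker(r)$, and by \eqref{eq:ass2} we have $\ker(r)\subseteq T$. Hence $kgk^{-1}(\e^{h_0})^{-1}=\e^{w_0}$ for some $w\in\hh$, giving $kgk^{-1}=\e^{w_0}\e^{h_0}$. Because $\hh$ is abelian, the remarks preceding the proposition show that $T$ is an abelian subgroup and that $\e^{w_0}\e^{h_0}=\e^{(w+h)_0}$, so with $v:=w+h\in\hh$ we obtain $kgk^{-1}=\e^{v_0}\in T$. This exhibits $g$ as conjugate in $K$ to an element of $T$, which is precisely condition (3).

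The main obstacle I anticipate is justifying the lift from $\Inn(V_1)$ back to $K$ and ensuring the whole argument stays inside $K$ rather than in the larger group $\Aut(V)$. The conjugation $\phi$ is produced by Lie-theoretic torus conjugacy at the level of $V_1$, and I must be careful that its chosen lift $k$ genuinely lies in $K$; this is guaranteed by the surjectivity $r(K)=\Inn(V_1)$ noted in the text, but it is the one place where the structure of $K$ (as opposed to $\Aut(V)$) is essential. A secondary subtlety is the abelianness and the exponentiation identity for $T$: one must confirm via Borcherds' identity, as already recorded, that $[u,v]=0$ forces $[u_0,v_0]=0$ for $u,v\in\hh$, so that products of exponentials collapse to a single exponential; with that in hand the remaining steps are formal.
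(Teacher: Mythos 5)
Your argument is correct and follows essentially the same route as the paper's proof: conjugate $r(g)$ into the maximal torus of $\Inn(V_1)$ (the paper invokes Proposition~8.1 of Kac's book for this), lift the conjugating inner automorphism to some $k\in K$ via $r(K)=\Inn(V_1)$, and absorb the resulting element of $\ker(r)\subseteq T$ using the commutativity of $T$. The one point to phrase carefully is your "classical fact": an inner automorphism $\e^{\ad_x}$ with $x$ nilpotent is unipotent and lies in no maximal torus, so the torus-conjugacy step really needs $r(g)$ to be semisimple (e.g.\ of finite order, which is the situation covered by the cited result and by every application in the paper).
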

\begin{proof} That (3) implies (1) is clear.

To see that (2) implies (3) suppose $r(g)\in\Inn(V_1)$. Then by (the extension to reductive Lie algebras of) Proposition~8.1 in \cite{Kac90} $r(g)$ is conjugate to $\e^{\ad_v}$ for some $v\in\hh$. In fact, since any two Cartan subalgebras are conjugate by an inner automorphism, it follows from the proof of this result that $r(g)$ and $\e^{\ad_v}$ are conjugate under some element in $\Inn(V_1)$, which lifts to some $k\in K$. Then $r(g)=r(ke^{v_0}k^{-1})$ or $r(\e^{v_0}k^{-1}g^{-1}k)=\id$. By condition~\eqref{eq:ass2} this means that $\e^{v_0}k^{-1}g^{-1}k=\e^{w_0}$ for some $w\in\hh$, which we can rewrite as $g=k\e^{(v-w)_0}k^{-1}$, using the commutativity of $v_0$ and $w_0$.
\end{proof}

\begin{rem}\label{rem:kernel}
Evidently, if $V_1$ is semisimple, then
\begin{equation*}
T\cap\ker(r)=\{\e^{(2\pi\i)v_0}\mid v\in\hh,\,\e^{(2\pi\i)\ad_v}=\id_{V_1}\}=\{\e^{(2\pi\i)v_0}\mid v\in P^\vee\}
\end{equation*}
where $P^\vee\subseteq\hh$ denotes the coweight lattice of $V_1$. Then condition~\eqref{eq:ass2} is equivalent to $\ker(r)=\{\e^{(2\pi\i)v_0}\mid v\in P^\vee\}$. The latter is shown to hold for some examples of \strathol{} \voa{}s of central charge $24$ in \cite{LS20b}, Remark~6.6. In fact, Ching Hung Lam informed us that he can prove the statement for all \strathol{} \voa{}s of central charge $24$ with non-zero (and semisimple) weight-one Lie algebra. The proof uses the result from \cite{Hoe17} that these \voa{}s are simple-current extensions of a certain dual pair (see \autoref{thm:hoehn}).
\end{rem}
We remark that condition~\eqref{eq:ass1} implies condition~\eqref{eq:ass2} if one assumes that $K$ is a complex Lie group.
% It is not difficult to work out the reductive case. If $V_1$ is abelian, then $V_1=\hh$ and $K=T=\ker(r)$.

We also note:
\begin{prop}\label{prop:equiv3}
In the situation of \autoref{prop:equiv1}, assume that $V_1$ is reductive and that $g\in\Aut(V)$ has finite order. Then also $V_1^g$ is reductive and the items in \autoref{prop:equiv1} are equivalent to:
\begin{enumerate}
\item[(4)] $\rk(V_1^g)=\rk(V_1)$.
\end{enumerate}
\end{prop}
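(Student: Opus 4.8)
The plan is to lean on the equivalence of (1) and (2) already proved in \autoref{prop:equiv1} (which only uses \eqref{eq:ass1}), so that the claim reduces to the equivalence of (2) and (4). Writing $\sigma:=r(g)$ for the induced automorphism of $V_1$, note that $\sigma$ has finite order and $V_1^g=V_1^\sigma$. Hence it suffices to prove the purely Lie-theoretic statement: for a finite-order automorphism $\sigma$ of a reductive Lie algebra, $\sigma\in\Inn(V_1)$ if and only if $\rk(V_1^\sigma)=\rk(V_1)$.

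First I would reduce to the semisimple case. Write $V_1=\mathfrak z\oplus\mathfrak s$ with $\mathfrak z$ the centre and $\mathfrak s=[V_1,V_1]$ semisimple; both are $\sigma$-stable. Since every inner automorphism is a product of maps $\e^{\ad_v}$ and $\ad_v$ annihilates $\mathfrak z$, an inner automorphism is the identity on $\mathfrak z$ and restricts to an inner automorphism of $\mathfrak s$, and conversely. On ranks, $\rk(V_1^\sigma)=\dim(\mathfrak z^\sigma)+\rk(\mathfrak s^\sigma)$ and $\rk(V_1)=\dim\mathfrak z+\rk\mathfrak s$; since $\dim\mathfrak z^\sigma\le\dim\mathfrak z$ and (as recorded below) $\rk(\mathfrak s^\sigma)\le\rk\mathfrak s$, equality of ranks forces $\sigma|_{\mathfrak z}=\id$ together with $\rk(\mathfrak s^\sigma)=\rk\mathfrak s$. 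Thus everything comes down to the semisimple version for $\sigma|_{\mathfrak s}$. For the forward direction there I would invoke the extension to semisimple Lie algebras of Proposition~8.1 in \cite{Kac90}, already used in the proof of \autoref{prop:equiv2}: a finite-order inner automorphism is conjugate to $\e^{\ad_h}$ with $h$ in a Cartan subalgebra, which therefore fixes a Cartan pointwise and so has full-rank fixed-point subalgebra; as rank is conjugation-invariant this gives $\rk(\mathfrak s^\sigma)=\rk\mathfrak s$.

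The converse is the crux. Since $\sigma$ has finite order it is a semisimple linear map, so $\mathfrak s^\sigma$ is reductive in $\mathfrak s$. Choose a Cartan subalgebra $\mathfrak t$ of $\mathfrak s^\sigma$; I claim that when $\rk(\mathfrak s^\sigma)=\rk\mathfrak s$ it is actually a Cartan subalgebra of $\mathfrak s$. The key technical point is that an element of $\mathfrak s^\sigma$ that is ad-semisimple in $\mathfrak s^\sigma$ is already ad-semisimple in $\mathfrak s$: the abstract Jordan decomposition is preserved by $\sigma$ and is compatible for a subalgebra reductive in $\mathfrak s$, so the $\mathfrak s$-nilpotent part of such an element vanishes. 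Hence $\mathfrak t$ is a toral subalgebra of $\mathfrak s$ (which also gives $\rk(\mathfrak s^\sigma)\le\rk\mathfrak s$ in general), and its dimension is $\rk\mathfrak s$ by hypothesis, so $\mathfrak t$ is maximal toral, i.e.\ a Cartan subalgebra of $\mathfrak s$ that $\sigma$ fixes pointwise. Then $\sigma$ preserves the root space decomposition and acts on each one-dimensional root space $\mathfrak s_\alpha$ by a scalar $\lambda_\alpha$; compatibility with the bracket makes $\alpha\mapsto\lambda_\alpha$ a character of the root lattice $Q$, and surjectivity of $\exp\colon\mathfrak t\to\Hom(Q,\C^\times)$ yields $h\in\mathfrak t$ with $\e^{\ad_h}=\sigma|_{\mathfrak s}$, so $\sigma|_{\mathfrak s}$ is inner.

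The main obstacle is this converse, and within it the compatibility of the Jordan decompositions, which is what allows one to promote a maximal torus of the fixed-point algebra $\mathfrak s^\sigma$ to a genuine Cartan subalgebra of $\mathfrak s$; the forward direction and the bookkeeping with the centre are routine once the cited structure of finite-order inner automorphisms is in hand.
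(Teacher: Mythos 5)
Your proof is correct. The paper itself offers no argument here (``The proof is clear''), treating the equivalence of (2) and (4) as the well-known Lie-theoretic fact --- restated later in \autoref{sec:rankcrit} --- that a finite-order automorphism of a reductive Lie algebra is inner if and only if its fixed-point subalgebra has full rank; so you are supplying details the paper deliberately omits rather than diverging from it. Your route (reduce to $r(g)$, split off the centre, then for the semisimple part show a Cartan subalgebra of $\mathfrak{s}^{\sigma}$ is toral in $\mathfrak{s}$ via compatibility of Jordan decompositions, and conclude with the character argument on the root lattice) is the standard textbook proof and is sound; the only remark worth making is that the converse can be obtained slightly more directly from the paper's own \autoref{lem:extcart} (Kac, Lemma~8.1): if $\rk(\mathfrak{s}^{\sigma})=\rk(\mathfrak{s})$ and $\mathfrak{t}$ is a Cartan subalgebra of $\mathfrak{s}^{\sigma}$, then $\mathfrak{t}\subseteq C_{\mathfrak{s}}(\mathfrak{t})$ forces $\mathfrak{t}=C_{\mathfrak{s}}(\mathfrak{t})$ by dimension count, so $\mathfrak{t}$ is a pointwise-fixed Cartan subalgebra of $\mathfrak{s}$ without any appeal to Jordan decompositions.
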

\begin{proof}
The proof follows from Proposition~8.6 in \cite{Kac90}.
\end{proof}

\medskip

In the following we characterise finite-order automorphisms of $V$ up to conjugacy. For ease of presentation we shall assume that $V_1$ is semisimple. It is however not difficult to extend the results to abelian or reductive Lie algebras.
% However, not much can be said in the abelian case. In particular, we have no information on the quotient $\Aut(V)/K$.

Note that there is a non-degenerate, symmetric, invariant bilinear form $(\cdot,\cdot)$ on $V_1$, which restricts to a
% non-degenerate, symmetric, invariant bilinear form
non-zero multiple of the Killing form on each simple ideal of $V_1$. Upon fixing a choice of this form we may identify the Cartan subalgebra $\hh$ with its dual $\hh^*$.
% For this one needs that the form restricts to $\hh$.
In particular, we may view the roots of $V_1$ as elements of $\hh$. Usually, $(\cdot,\cdot)$ is normalised such that the long roots have norm $2$. If $V$ is self-contragredient,
% in addition to being of CFT-type
then $(\cdot,\cdot)$ and the invariant bilinear form $\langle\cdot,\cdot\rangle$ on $V$ agree up to multiplication by a non-zero scalar on each simple ideal of $V_1$.

In addition to fixing a Cartan subalgebra $\hh$ of $V_1$, for the following results let us also fix a choice of simple roots $\Delta$. Then, given an outer (or diagram) automorphism $\mu_0\in\Out(V_1)=\Aut(V_1)/\Inn(V_1)$, there is a \emph{standard lift} $\mu\in\Aut(V_1)$ of $\mu_0$ (defined in (7.9.2) and (7.10.1) in \cite{Kac90} and called diagram automorphism there) such that $\mu_0=\mu\Inn(V_1)$ and $\mu$ fixes the Cartan subalgebra $\hh$ and the simple roots $\Delta$ setwise.

Moreover, for all outer automorphisms $\mu_0\in\tilde{r}(\Aut(V)/K)\subseteq\Out(V_1)$ and their standard lifts $\mu\in\Aut(V_1)$ we also fix \emph{choices of extensions} of $\mu$ to $\tilde\mu\in\Aut(V)$. By the above commutative diagram, such an extension from $\Aut(V_1)$ to $\Aut(V)$ exists precisely for those automorphisms in $\Aut(V_1)$ that project to $\tilde{r}(\Aut(V)/K)\subseteq\Out(V_1)$.

The next lemma shows that the automorphisms in $\Aut(V)$ of finite order can be conjugated by elements in $K$ into $\Aut(V)_{\{\hh\}}=\{g\in\Aut(V)\mid g(\hh)\subseteq\hh\}$, the setwise stabiliser of $\hh$ in $\Aut(V)$. (Note that $\Aut(V)_{\{\hh\}}=r^{-1}(\Aut(V_1)_{\{\hh\}})$.) In fact, they can be conjugated into $\Aut(V)_{\{\Delta\}}$, the automorphisms fixing the simple roots $\Delta$ setwise.
\begin{lem}\label{lem:auts}
Let $V$ be a \voa{} of CFT-type such that $V_1$ is semisimple and \eqref{eq:ass2} holds. Let $g\in\Aut(V)$ be of finite order. Then $g$ is conjugate under an automorphism in $K$ to an automorphism of the form
\begin{equation*}
\tilde{\mu}\,\e^{(2\pi\i)v_0}
\end{equation*}
for some $v\in\hh^\mu$ where $\mu\in\Aut(V_1)$ is the standard lift of an outer automorphism in $\tilde{r}(\Aut(V)/K)\subseteq\Out(V_1)$ and $\tilde{\mu}$ the choice of extension to $\Aut(V)$.
\end{lem}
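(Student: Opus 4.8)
The plan is to transport the normal form for finite-order automorphisms of semisimple Lie algebras from $V_1$ up to $V$, using the surjectivity $r(K)=\Inn(V_1)$ to lift inner automorphisms and condition \eqref{eq:ass2} to control $\ker(r)$. I would proceed in three stages: normalise the restriction $r(g)$, produce an honest lift $\tilde{\mu}$ of the standard lift $\mu$, and finally adjust the abelian exponent so that it becomes $\mu$-fixed. For the first stage, note that since $g$ has finite order so does $r(g)\in\Aut(V_1)$, and the structure theory of finite-order automorphisms of semisimple Lie algebras (Kac \cite{Kac90}, Ch.~8) shows that $r(g)$ is conjugate under $\Inn(V_1)$ to an automorphism of the form $\mu\,\e^{(2\pi\i)\ad_h}$, where $\mu$ is the standard lift of a diagram automorphism $\mu_0$ (fixing $\hh$ and $\Delta$ setwise) and $h\in\hh^\mu$. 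Writing $r(g)=\phi\,(\mu\,\e^{(2\pi\i)\ad_h})\,\phi^{-1}$ with $\phi\in\Inn(V_1)$ and lifting $\phi$ to some $k\in K$ with $r(k)=\phi$, I may replace $g$ by the $K$-conjugate $k^{-1}gk$ and thus assume $r(g)=\mu\,\e^{(2\pi\i)\ad_h}$ outright; in particular $\mu_0=\tilde{r}(gK)\in\tilde{r}(\Aut(V)/K)$.

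For the second stage, to obtain an extension $\tilde{\mu}$ of the standard lift $\mu$ I would choose $k_1\in K$ with $r(k_1)=\e^{-(2\pi\i)\ad_h}$ (again using $r(K)=\Inn(V_1)$) and set $\tilde{\mu}:=g k_1$; then $r(\tilde{\mu})=\mu\,\e^{(2\pi\i)\ad_h}\e^{-(2\pi\i)\ad_h}=\mu$, so $\tilde{\mu}$ restricts to $\mu$ on $V_1$. Now $\tilde{\mu}^{-1}g=k_1^{-1}\in K$ satisfies $r(k_1^{-1})=\e^{(2\pi\i)\ad_h}=r(\e^{(2\pi\i)h_0})$, whence $\e^{-(2\pi\i)h_0}\tilde{\mu}^{-1}g\in\ker(r)$. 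By \eqref{eq:ass2} this element lies in $T=\{\e^{w_0}\mid w\in\hh\}$, so using the commutativity of $T$ I obtain $g=\tilde{\mu}\,\e^{u_0}$ with $u:=(2\pi\i)h+w\in\hh$ for some $w\in\hh$.

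The third stage is the substantive one: it remains to conjugate by an element of $T\subseteq K$ so as to replace the exponent $u\in\hh$ by a $\mu$-fixed one. Using $\tilde{\mu}^{-1}\e^{x_0}\tilde{\mu}=\e^{(\mu^{-1}x)_0}$ for $x\in\hh$ (which holds because $\tilde{\mu}$ restricts to $\mu$ and $\mu(\hh)=\hh$) together with the commutativity of $T$, a direct computation gives
\[
\e^{x_0}\,(\tilde{\mu}\,\e^{u_0})\,\e^{-x_0}=\tilde{\mu}\,\e^{(u-(1-\mu^{-1})x)_0}.
\]
Since $\mu|_\hh$ has finite order it is semisimple, so $\hh=\hh^\mu\oplus\im(1-\mu^{-1})$ with $\ker(1-\mu^{-1})=\hh^\mu$; writing $u=u^++u^-$ accordingly, I can solve $(1-\mu^{-1})x=u^-$ for some $x\in\hh$, which leaves the exponent equal to $u^+\in\hh^\mu$. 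Setting $v:=u^+/(2\pi\i)\in\hh^\mu$ then yields the desired form $\tilde{\mu}\,\e^{(2\pi\i)v_0}$, and the cumulative conjugating element $\e^{x_0}k^{-1}$ lies in $K$, as required.

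The routine parts are the two liftings through $r(K)=\Inn(V_1)$ and the bookkeeping with the abelian group $T$; the only step that is not purely formal is the last one, where finite order of $\mu$ is used to split $\hh$ and absorb the non-fixed part of the exponent into a $T$-conjugation. I expect this eigenspace/cancellation argument — and checking that the successive conjugators compose to an element that stays inside $K$ — to be the main thing to get right.
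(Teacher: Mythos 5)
Your proof is correct and follows essentially the same route as the paper: normalise $r(g)$ to $\mu\,\e^{(2\pi\i)\ad_h}$ via Kac's Proposition~8.1, lift through $r(K)=\Inn(V_1)$, and use condition \eqref{eq:ass2} to identify the remaining kernel element as an element of $T$. The only difference is that the paper handles the final adjustment of the exponent into $\hh^\mu$ by citing Lemma~8.3 of \cite{EMS20b}, whereas you prove it directly via the decomposition $\hh=\hh^\mu\oplus\im(1-\mu^{-1})$ — a correct and self-contained substitute for that reference.
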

Note that $\tilde{\mu}$ and $\e^{(2\pi\i)v_0}$ commute since $v\in\hh^\mu$, i.e.\ $\tilde\mu v=v$. More generally, $g\e^{v_0}=\e^{(gv)_0}g$ for any $g\in\Aut(V)$ and $v\in V_1$.
\begin{proof}
Consider $r(g)\in\Aut(V_1)$. Again, by (the proof of) Proposition~8.1 in \cite{Kac90} $r(g)$ is conjugate under an inner automorphism, say $r(k)$ for some $k\in K$, to $\mu\e^{(2\pi\i)\ad_v}$ for some $v\in\hh$ where $\mu$ is the standard lift of an outer automorphism $\mu_0$. This outer automorphism $\mu_0$ is in $\tilde{r}(\Aut(V)/K)$ since $r(g)$ projects to an element in $\tilde{r}(\Aut(V)/K)\subseteq\Out(V_1)$.
% $v$ is even in $\hh^\mu$ but we shall take care of this later, anyway.
Hence $r(g)=r(k)\mu\e^{{(2\pi\i)\ad_v}}r(k)^{-1}=r(k\tilde{\mu}\e^{(2\pi\i)v_0}k^{-1})$ where $\tilde\mu$ is the choice of extension of $\mu$ to $\Aut(V)$.
% for some preimage $\tilde{\mu}$ of $\mu$ under $r$ ($\mu$ is in the image of $r$ since $r(g)$ is).
Then $r(k^{-1}g^{-1}k\tilde{\mu}\e^{(2\pi\i)v_0})=\id$, i.e.\ $k^{-1}g^{-1}k\tilde{\mu}\e^{(2\pi\i)v_0}=\e^{(2\pi\i)w_0}$ for some $w\in\hh$ by condition~\eqref{eq:ass2}. This proves that $g$ is conjugate under $k$ to $\tilde{\mu}\e^{(2\pi\i)(v-w)_0}$. Finally, by Lemma~8.3 in \cite{EMS20b} we may assume that $v-w\in\hh^\mu$.
\end{proof}

\begin{lem}
In \autoref{lem:auts} it suffices, up to conjugacy in $\Aut(V)$,
% rather than only $K$
to let $\mu$ be from a fixed set of (standard lifts of) representatives of the conjugacy classes of $\tilde{r}(\Aut(V)/K)\cong\Aut(V)/K$.
\end{lem}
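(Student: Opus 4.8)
The plan is to reduce the statement to the following claim: if the diagram automorphism $\mu_0\in\tilde{r}(\Aut(V)/K)$ appearing in \autoref{lem:auts} is conjugate, \emph{within the group} $\tilde{r}(\Aut(V)/K)$, to a chosen representative $\mu_0'$, then the normal form attached to $\mu_0$ is conjugate \emph{in $\Aut(V)$} to a normal form attached to $\mu_0'$. First I would invoke \autoref{lem:auts} to write $g$ as $K$-conjugate to $g':=\tilde{\mu}\,\e^{(2\pi\i)v_0}$, where $\mu$ is a standard lift of $\mu_0$ and $v\in\hh^\mu$. Choosing $\bar{h}\in\tilde{r}(\Aut(V)/K)$ with $\bar{h}\mu_0\bar{h}^{-1}=\mu_0'$, and using that $\tilde{r}$ is injective (assumption \eqref{eq:ass1}, which follows from \eqref{eq:ass2}) to identify $\tilde{r}(\Aut(V)/K)\cong\Aut(V)/K$, I would pick any lift $h\in\Aut(V)$ of the corresponding coset, so that $\tilde{r}(hK)=\bar{h}$.

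Next I would conjugate the normal form by $h$. Using the identity $h\,\e^{v_0}\,h^{-1}=\e^{(hv)_0}$ recorded after \autoref{lem:auts}, one gets
\begin{equation*}
h\,\tilde{\mu}\,\e^{(2\pi\i)v_0}\,h^{-1}=(h\tilde{\mu}h^{-1})\,\e^{(2\pi\i)(hv)_0},
\end{equation*}
an automorphism of $V$ of the same finite order as $g$. Restricting to $V_1$ and projecting to $\Out(V_1)$, the inner factor $\e^{(2\pi\i)\ad_{hv}}$ disappears and the image is $\bar{h}\mu_0\bar{h}^{-1}=\mu_0'$, since $r(\tilde{\mu})=\mu$ projects to $\mu_0$ and $r(h)$ projects to $\bar{h}$.

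Finally I would apply \autoref{lem:auts} a second time, now to the element $h\,\tilde{\mu}\,\e^{(2\pi\i)v_0}\,h^{-1}$. The lemma produces a $K$-conjugate of the form $\tilde{\mu}'\,\e^{(2\pi\i)v'_0}$ with $v'\in\hh^{\mu'}$, where $\mu'$ is a standard lift of a diagram automorphism in $\tilde{r}(\Aut(V)/K)$. Since conjugation by $h\in\Aut(V)$ and by elements of $K\subseteq\Aut(V)$ are conjugations in $\Aut(V)$, transitivity shows that $g$ is conjugate in $\Aut(V)$ to $\tilde{\mu}'\,\e^{(2\pi\i)v'_0}$, which is of the desired shape with $\mu'$ drawn from the fixed set of representatives.

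The main point to check carefully is that this second application of \autoref{lem:auts} returns the standard lift of $\mu_0'$ and not of some other diagram automorphism. This is where I would appeal to the internal structure of the proof of \autoref{lem:auts}: the diagram automorphism it produces is exactly the image in $\Out(V_1)$ of the element being normalised, which the computation above has arranged to equal $\mu_0'$. A secondary, routine point is the liftability of $\bar{h}$ to $h\in\Aut(V)$, which is immediate from the surjectivity of the quotient map $\Aut(V)\to\Aut(V)/K$ together with the identification $\Aut(V)/K\cong\tilde{r}(\Aut(V)/K)$.
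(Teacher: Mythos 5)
Your proposal is correct and follows essentially the same route as the paper: realise the conjugacy in $\tilde{r}(\Aut(V)/K)$ by conjugating with a lift in $\Aut(V)$, apply \autoref{lem:auts} a second time, and observe that because that second normalisation is a conjugation by an element of $K$, the class modulo $K$ (equivalently the image in $\Out(V_1)$) is preserved, forcing the resulting standard lift to be the chosen representative. The only cosmetic difference is that you justify the last step by inspecting the proof of \autoref{lem:auts}, whereas the statement of that lemma (conjugacy \emph{under $K$}) already suffices, as the paper uses.
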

\begin{proof}
Consider the automorphism $\tilde{\mu}\e^{(2\pi\i)v_0}$ from the above result. It is clear that $\tilde{r}(\tilde{\mu}\e^{(2\pi\i)v_0}K)=\mu\Inn(V_1)$. Now suppose $\mu\Inn(V_1)$ is conjugate to some $\nu\Inn(V_1)$ in $\tilde{r}(\Aut(V)/K)\subseteq\Out(V_1)$ where $\nu\in\Aut(V_1)$ is again the standard lift of the outer automorphism $\nu\Inn(V_1)$. Then there is a $\tau\Inn(V_1)\in\tilde{r}(\Aut(V)/K)\subseteq\Out(V_1)$ such that $\tau\mu\tau^{-1}\Inn(V_1)=\nu\Inn(V_1)$. Consider the choices of extensions $\tilde{\nu},\tilde{\tau}\in\Aut(V)$ with $r(\tilde{\nu})=\nu$ and $r(\tilde{\tau})=\tau$. Then
\begin{align*}
\tilde{r}(\tilde{\tau}\tilde{\mu}\e^{(2\pi\i)v_0}\tilde{\tau}^{-1}K)&=\tilde{r}(\tilde{\tau}K)\tilde{r}(\tilde{\mu}\e^{(2\pi\i)v_0}K)\tilde{r}(\tilde{\tau}^{-1}K)=\tau\mu\tau^{-1}\Inn(V_1)\\
&=\nu\Inn(V_1)=\tilde{r}(\tilde{\nu}K)
\end{align*}
and hence $\tilde{\tau}\tilde{\mu}\e^{(2\pi\i)v_0}\tilde{\tau}^{-1}K=\tilde{\nu}K$. This implies that $\tilde{\tau}\tilde{\mu}\e^{(2\pi\i)v_0}\tilde{\tau}^{-1}=\tilde{\nu}k$, i.e.\ $\tilde{\mu}\e^{(2\pi\i)v_0}$ is conjugate to $\tilde{\nu}k$ for some $k\in K$. By the above lemma, this has to be conjugate under an automorphism in $K$ to some $\tilde{\sigma}\e^{(2\pi\i)w_0}$. But because it is conjugate under an automorphism in $K$, we know that $\tilde{\nu}K=\tilde{\nu}kK=\tilde{\sigma}\e^{(2\pi\i)w_0}K=\tilde{\sigma}K$, which implies that $\nu\Inn(V_1)=\tilde{r}(\tilde{\nu}K)=\tilde{r}(\tilde{\sigma}K)=\sigma\Inn(V_1)$. But then $\nu=\sigma$ (and $\tilde{\nu}=\tilde{\sigma}$) because these were some fixed standard lifts (and extensions).

In total, we have shown that $\tilde{\mu}\e^{(2\pi\i)v_0}$ is conjugate to $\tilde{\nu}\e^{(2\pi\i)w_0}$ for some $w\in\hh^\nu$ if $\mu\Inn(V_1)$ is conjugate to $\nu\Inn(V_1)$. This proves the claim.
\end{proof}

For $g\in\Aut(V)_{\{\hh\}}$ (and assuming that $r(g)$ has order $n$) we define the projection map $\pi_g\colon\hh\to\hh^g$ by $v\mapsto\frac{1}{n}\sum_{i=0}^{n-1}g^iv$ for $v\in\hh$. Then $\hh^g=\pi_g(\hh)$. We also define the subgroup
\begin{equation*}
P:=\{v\in\hh\mid\e^{(2\pi\i)v_0}=\id_V\}
\end{equation*}
of the Cartan subalgebra $\hh$ acting trivially on $V$ via $\e^{(2\pi\i)(\cdot)}$.

\begin{lem}
In \autoref{lem:auts} it suffices, up to conjugacy in $\Aut(V)$, to select $v$ from a fixed set of representatives for $\hh^\mu/\pi_\mu(P)$.
\end{lem}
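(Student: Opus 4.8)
The plan is to show that two automorphisms of the form $\tilde{\mu}\,\e^{(2\pi\i)v_0}$ and $\tilde{\mu}\,\e^{(2\pi\i)v'_0}$ with $v,v'\in\hh^\mu$ are already conjugate in $\Aut(V)$ whenever $v-v'\in\pi_\mu(P)$; then a single representative per coset of $\pi_\mu(P)$ in $\hh^\mu$ suffices. The two ingredients are: (i) the freedom to modify the exponent by any element of $P$ at no cost, since such elements act trivially on $V$; and (ii) the fact, already used in the proof of \autoref{lem:auts}, that conjugation by a suitable inner automorphism $\e^{w_0}$ replaces an arbitrary exponent $u\in\hh$ by its projection $\pi_\mu(u)\in\hh^\mu$. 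At the outset one notes that $P$ is a subgroup of $\hh$ and $\pi_\mu$ is linear, so $\pi_\mu(P)$ is a subgroup of $\hh^\mu$ and the quotient $\hh^\mu/\pi_\mu(P)$ is meaningful.

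First I would record the relevant conjugation formula. For $w\in\hh$ the identity $g\,\e^{u_0}=\e^{(gu)_0}g$ (noted after \autoref{lem:auts}) gives $\e^{w_0}\tilde{\mu}=\tilde{\mu}\,\e^{(\mu^{-1}w)_0}$, and since $u_0$ and $w_0$ commute for $u,w\in\hh$ one computes
\[
\e^{w_0}\bigl(\tilde{\mu}\,\e^{(2\pi\i)u_0}\bigr)\e^{-w_0}=\tilde{\mu}\,\e^{(2\pi\i)\left(u+\tfrac{1}{2\pi\i}(\mu^{-1}-1)w\right)_0}.
\]
Because $\mu$ has finite order there is an orthogonal decomposition $\hh=\hh^\mu\oplus\im(\mu-1)$ with $\pi_\mu$ the projection onto $\hh^\mu$, and $\im\bigl(\tfrac{1}{2\pi\i}(\mu^{-1}-1)\bigr)=\im(\mu-1)=(\hh^\mu)^\perp$. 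Since $\pi_\mu(u)-u\in(\hh^\mu)^\perp$, one can choose $w$ so that the exponent changes by exactly $\pi_\mu(u)-u$, which shows that $\tilde{\mu}\,\e^{(2\pi\i)u_0}$ is conjugate under an element of $K$ to $\tilde{\mu}\,\e^{(2\pi\i)\pi_\mu(u)_0}$ for every $u\in\hh$.

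Next I would combine this with ingredient (i). Given $v\in\hh^\mu$ and $p\in P$, the elements $v_0$ and $p_0$ commute, so $\e^{(2\pi\i)(v+p)_0}=\e^{(2\pi\i)v_0}\e^{(2\pi\i)p_0}=\e^{(2\pi\i)v_0}$ as $p\in P$; hence $\tilde{\mu}\,\e^{(2\pi\i)v_0}=\tilde{\mu}\,\e^{(2\pi\i)(v+p)_0}$ literally as automorphisms of $V$. Applying the projection step to $u:=v+p$ and using $\pi_\mu(v+p)=v+\pi_\mu(p)$ (since $\pi_\mu v=v$), I conclude that $\tilde{\mu}\,\e^{(2\pi\i)v_0}$ is conjugate in $K\subseteq\Aut(V)$ to $\tilde{\mu}\,\e^{(2\pi\i)(v+\pi_\mu(p))_0}$. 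As $p$ ranges over $P$, every translate $v+\pi_\mu(p)$ is thereby exhibited as conjugate to $v$, so it suffices to let $v$ run over a fixed set of representatives of $\hh^\mu/\pi_\mu(P)$.

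I expect the only genuinely delicate point to be the bookkeeping around the identity $\im(\mu^{-1}-1)=(\hh^\mu)^\perp$ and the verification that $\pi_\mu$ is the orthogonal projection onto the fixed space $\hh^\mu$; both are immediate from $\mu$ having finite order, and everything else is a direct manipulation of inner automorphisms combined with the triviality of $\e^{(2\pi\i)p_0}$ for $p\in P$.
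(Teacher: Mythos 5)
Your proposal is correct and follows essentially the same route as the paper: absorb an element $p\in P$ into the exponent for free and then conjugate by a suitable $\e^{w_0}$ to replace the exponent by its $\pi_\mu$-projection, yielding $\tilde{\mu}\,\e^{(2\pi\i)(v+\pi_\mu(p))_0}$. The only difference is that the paper cites Lemma~8.3 of \cite{EMS20b} for the projection step, whereas you prove it directly via the commutation identity and $\im(\pi_\mu-\id)\subseteq\im(\mu^{-1}-\id)$, which is a valid (and self-contained) substitute.
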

\begin{proof}
Let $w\in P$. Then $\tilde{\mu}\e^{(2\pi\i)v_0}=\tilde{\mu}\e^{(2\pi\i)v_0}\e^{(2\pi\i)w_0}=\tilde{\mu}\e^{(2\pi\i)(v+w)_0}$ is conjugate to $\tilde{\mu}\e^{(2\pi\i)(v+\pi_\mu(w))_0}$ by Lemma~8.3 in \cite{EMS20b}.
\end{proof}

In summary we find:
\begin{prop}[Finite-Order Conjugacy Classes]\label{prop:autconj}
Let $V$ be a \voa{} of CFT-type such that $V_1$ is semisimple and \eqref{eq:ass2} holds. Let $g\in\Aut(V)$ be of finite order. Then $g$ is conjugate to an automorphism of the form
\begin{equation*}
\tilde{\mu}\,\e^{(2\pi\i)v_0}
\end{equation*}
where $\mu\in\Aut(V_1)$ is from a fixed set of (standard lifts of) representatives of the conjugacy classes of $\tilde{r}(\Aut(V)/K)\subseteq\Out(V_1)$ and $\tilde{\mu}$ the choice of extension to $\Aut(V)$, and $v$ is from a fixed set of representatives of $\hh^\mu/\pi_\mu(P)$.
\end{prop}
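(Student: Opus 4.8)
The plan is to obtain this proposition as a direct consequence of the three preceding lemmas, applied in sequence; since each reduction step is already established, the only real work is to check that the steps are mutually compatible, so that their composition is a single conjugation in $\Aut(V)$. No genuinely new argument should be required — this statement is a summary of the preceding development.

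First I would invoke \autoref{lem:auts}: since $g$ has finite order and $V_1$ is semisimple with \eqref{eq:ass2} holding, there is a $k_1\in K$ with $k_1gk_1^{-1}=\tilde{\mu}\,\e^{(2\pi\i)v_0}$, where $\mu$ is a standard lift of some diagram automorphism in $\tilde{r}(\Aut(V)/K)$ and $v\in\hh^\mu$. This already puts $g$ into the desired shape, but with $\mu$ ranging over all diagram automorphisms in the image $\tilde{r}(\Aut(V)/K)$ rather than over a transversal of its conjugacy classes, and with $v$ unconstrained in $\hh^\mu$.

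Next I would apply the second lemma to replace $\mu$ by a chosen representative. Having fixed a set of (standard lifts of) representatives of the conjugacy classes of $\tilde{r}(\Aut(V)/K)\cong\Aut(V)/K$, that lemma produces $\tilde{\tau}\in\Aut(V)$ conjugating $\tilde{\mu}\,\e^{(2\pi\i)v_0}$ to $\tilde{\nu}\,\e^{(2\pi\i)w_0}$, where $\nu$ is the representative of the class of $\mu$ and $w\in\hh^\nu$. Two points are worth recording: the conjugating element here lives in $\Aut(V)$ and not merely in $K$, which is exactly why the conclusion of the proposition only asserts conjugacy in $\Aut(V)$; and the output is again of the standard form with its exponent vector lying in the fixed space $\hh^\nu$ of the \emph{new} representative, so the hypothesis of the third lemma is met.

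Finally, with $\mu$ now fixed to a representative, I would apply the third lemma to reduce the exponent vector modulo $\pi_\mu(P)$, selecting it from a chosen transversal of $\hh^\mu/\pi_\mu(P)$; that lemma shows this last adjustment is again realised by a conjugation (via Lemma~8.3 in \cite{EMS20b}). Composing the three conjugations yields a single element of $\Aut(V)$ conjugating $g$ to $\tilde{\mu}\,\e^{(2\pi\i)v_0}$ with $\mu$ and $v$ drawn from the prescribed representative sets, which is precisely the claim. The only step that warrants care — and the nearest thing to an obstacle — is the bookkeeping in the middle: one must verify that after the conjugacy-class reduction the automorphism is still in standard form with its exponent vector in the fixed space of the new representative, so that the two reductions do not interfere and the third lemma remains applicable.
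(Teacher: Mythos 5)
Your proposal is correct and matches the paper exactly: the paper states this proposition with the phrase ``In summary we find'' and gives no separate proof, precisely because it is the composition of \autoref{lem:auts} with the two subsequent lemmas in the order you describe. Your remark about checking that the intermediate output stays in standard form with exponent vector in the fixed space of the new representative is the right point of care, and it is already guaranteed by the explicit conclusion of the second lemma.
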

Two conjugate automorphisms in $\Aut(V)$ must project to the same conjugacy class in $\Aut(V)/K$. However, two automorphisms $\tilde{\mu}\e^{(2\pi\i)v_0}$ and $\tilde{\mu}\e^{(2\pi\i)v'_0}$ where $v$ and $v'$ represent different classes in $\hh^\mu/\pi_\mu(P)$ may still be conjugate.

In the lattice case we shall refine this result by further considering orbits under the action of $\Aut(V)_{\{\hh\}}/\Aut(V)_\hh$, the quotient of the setwise stabiliser by the pointwise stabiliser, on $\hh$.

%%%%%%%%%%%%%%%%%%%%%%%%%%%%%%%%%%%
%%%%%%%%%%%%%%%%%%%%%%%%%%%%%%%%%%%

\subsection{Automorphism Groups of Lattice \VOA{}s}\label{sec:autlat}
In the following we specialise the previous discussion to a lattice \voa{} $V_L=M_{\hat\h}(1)\otimes\C_\eps[L]$ for an even, positive-definite lattice $L$.

The group of isometries (or automorphisms) of $L$ is denoted by $\O(L)$.
% Recall that the construction of $V_L$ involves a choice of group $2$-cocycle $\eps\colon L\times L\to\{\pm 1\}$ satisfying $\eps(\alpha,\beta)/\eps(\beta,\alpha)=(-1)^{\langle\alpha,\beta\rangle}$ for all $\alpha$, $\beta\in L$, which defines the twisted group algebra $\C_\eps[L]$.
An automorphism $\nu\in\O(L)$ and a function $\eta\colon L\to\{\pm1\}$ satisfying $\eta(\alpha)\eta(\beta)/\eta(\alpha+\beta)=\eps(\alpha,\beta)/\eps(\nu\alpha,\nu\beta)$ for all $\alpha,\beta\in L$ define a \emph{lift} $\phi_\eta(\nu)$ of $\nu$, acting on $\C_\eps[L]=\bigoplus_{\alpha\in L}\C\ee_\alpha$ as $\phi_\eta(\nu)(\ee_\alpha)=\eta(\alpha)\ee_{\nu\alpha}$ for $\alpha\in L$ and on $M_{\hat\h}(1)$ as $\nu$ in the obvious way, and the automorphisms obtained in this way form the subgroup $\O(\smash{\hat{L}})\subseteq\Aut(V_L)$ (see, e.g., \cite{FLM88,Bor92}). The following sequence is exact:
\begin{equation*}
1\longrightarrow\Hom(L,\{\pm1\})\longrightarrow\O(\hat{L})\stackrel{\text{--}}{\longrightarrow}\O(L)\longrightarrow 1.
\end{equation*}
The injection is given by $\lambda\mapsto\phi_\lambda(\id)$ and the surjection by $\phi_\eta(\nu)\mapsto\overline{\phi_\eta(\nu)}=\nu$.

A lift $\phi_\eta(\nu)\in\O(\hat{L})$ is called \emph{standard lift} if the restriction of $\eta$ to the fixed-point sublattice $L^\nu\subseteq L$ is trivial. Standard lifts always exist \cite{Lep85}, and all standard lifts of a given lattice automorphism $\nu$ are conjugate in $\Aut(V_L)$ \cite{EMS20a}. Standard lifts appear in the definition of twisted modules of lattice \voa{}s \cite{DL96,BK04}. For convenience, we shall fix a choice $\hat{\nu}$ of standard lift for all $\nu\in\O(L)$.

If $\nu\in\O(L)$ has order $m$ and $\hat\nu$ is a standard lift of $\nu$, then $\hat\nu$ has order $m$ if $m$ is odd or if $m$ is even and $\langle\alpha,\nu^{m/2}\alpha\rangle\in 2\Z$ for all $\alpha\in L$, and $\hat\nu$ has order $2m$ otherwise. In the latter case we say that $\nu$ exhibits \emph{order doubling}.

When expressing powers of automorphisms $\hat{\nu}$ in $\Aut(V_L)$ a small complication arises due to the fact that if $\hat{\nu}$ is a standard lift of $\nu\in\O(L)$, $\hat{\nu}^i$ is not necessarily a standard lift of $\nu^i$. However, there is a vector $s_i$ in $(1/2)(L')^{\nu^i}=(1/2)(\pi_{\nu^i}(L))'$ such that
\begin{equation*}
\hat{\nu}^i=\widehat{(\nu^i)}\e^{-(2\pi\i)s_i(0)}
\end{equation*}
where $\widehat{(\nu^i)}$ is a standard lift of $\nu^i$. The minus sign is a convention related to the sign convention in the definition of twisted modules.
% Indeed, as two lifts of $\nu^i$, $\hat{\nu}^i$ and $\widehat{(\nu^i)}$ differ by a homomorphism $L\to\{\pm1\}$, which corresponds to an element in $s\in(1/2)L'$. On the other hand, $\widehat{(\nu^i)}$ acts by definition trivially on $L^{\nu^i}$ while $\nu^i$ defines some homomorphism $L^{\nu^i}\to\{\pm1\}$. Hence, $\hat{\nu}^i$ and $\widehat{(\nu^i)}$ differ by a homomorphism $L^{\nu^i}\to\{\pm1\}$, which corresponds to an element in $s\in(1/2)(L^{\nu^i})'$. Combining this, we get that $s\in(1/2)((L^{\nu^i})'\cap L')$, which simplifies to the above stated set.

Note that $s_m\in(1/2)L'$ (with $\nu$ of order $m$) can be taken to be zero if and only if $\nu$ does not exhibit order doubling.

\medskip

We review some well-known facts about the isometry group $\O(L)$. Recall that $\Phi$ denotes the simply-laced root system comprised of the norm-two vectors in $L$. The Weyl group $W\subseteq\O(L)$ is defined as the normal subgroup generated by the reflections about the hyperplanes orthogonal to the roots in $\Phi$.

The automorphism group $\O(L)$ of $L$ is a split extension $W{:}H$, i.e.\ $H:=\O(L)/W$ and the short exact sequence
\begin{equation*}
1\longrightarrow W\longrightarrow\O(L)\longrightarrow H\longrightarrow 1
\end{equation*}
is right split (see, e.g., Section~1 of \cite{Bor87}). In other words, $H$ is isomorphic to a subgroup of $\O(L)$ and, in fact, given a choice of simple roots $\Delta\subseteq\Phi$, the setwise stabiliser of $\Delta$ in $\O(L)$
\begin{equation*}
H_\Delta:=\O(L)_{\{\Delta\}}
\end{equation*}
is isomorphic to $H$.

\medskip

Recall that for any \voa{} $V$ of CFT-type the normal subgroup $K=\langle\{\e^{v_0}\mid v\in V_1\}\rangle$ of $\Aut(V)$ is the inner automorphism group of $V$. For a lattice \voa{} $V_L$ it was shown in \cite{DN99}, Theorem~2.1, that
\begin{equation*}
\Aut(V_L)=\O(\hat{L})K
\end{equation*}
and that the outer automorphism group $\Aut(V_L)/K$ is isomorphic to a quotient group of $\O(L)$. More precisely, $\Aut(V_L)/K$ is isomorphic to $\O(\hat{L})/(K\cap\O(\hat{L}))$, which, since both $\O(\hat{L})$ and $K$ contain the subgroup $\Hom(L,\{\pm1\})$, is in turn isomorphic to $\O(L)/\overline{K\cap\O(\hat{L})}$, i.e.\
\begin{equation*}
\Aut(V_L)/K\cong\O(\hat{L})/(K\cap\O(\hat{L}))\cong\O(L)/\overline{K\cap\O(\hat{L})},
\end{equation*}
where $\overline{K\cap\O(\hat{L})}$ denotes the image of $K\cap\O(\hat{L})$ under the projection from $\O(\hat{L})$ to $\O(L)$. In particular, $\Aut(V_L)/K$ is finite since $\O(L)$ is finite for a positive-definite lattice $L$. We shall further simplify $\Aut(V_L)/K$ below.

\medskip

In the following, we choose the Cartan subalgebra $\hh=\{h(-1)\otimes\ee_0\mid h\in\h\}\cong\h$ of the reductive (simply-laced) Lie algebra $(V_L)_1$ (see \autoref{sec:lat}) if necessary. With this choice, $\Phi$ is the root system of $(V_L)_1$ and $W$ the Weyl group of $(V_L)_1$.

Recall that the automorphisms $T=\{\e^{v_0}\mid v\in\hh\}$ define an abelian subgroup of~$K$. It is easy to describe the action of $T\subseteq\Aut(V_L)$ on a lattice \voa{} $V_L=\bigoplus_{\alpha\in L}M_{\hat\h}(1)\otimes\ee_\alpha$, which is naturally graded by $L$. Indeed, for $v=h(-1)\otimes\ee_0\in\hh$, $h\in\h$, the automorphism $\e^{v_0}=\e^{h(0)}$ acts by multiplication with $\e^{\langle h,\alpha\rangle}$ on the graded component $M_{\hat\h}(1)\otimes\ee_\alpha$, $\alpha\in L$.

Hence, the group $P=\{v\in\hh \mid \e^{(2\pi\i)v_0}=\id_V\}$ defined above is given by
\begin{equation*}
P=\{h(-1)\otimes\ee_0 \mid h\in L'\}\cong L',
\end{equation*}
with the dual lattice $L'$ of $L$. Also note that
\begin{equation*}
\ker(\overline{\phantom{o}})=\O(\hat{L})\cap T=\{\e^{(2\pi\i)h(0)}\mid h\in L'/2\}\cong\Hom(L,\{\pm1\})
\end{equation*}
where $\overline{\phantom{o}}$ denotes the projection map $\O(\hat{L})\to\O(L)$.

\begin{rem}\label{rem:k0}
While the projection $\Aut(V_L)\to\Aut(V_L)/K\cong\O(L)/\overline{K\cap\O(\hat{L})}$ is independent of the choice of a Cartan subalgebra of $(V_L)_1$, a projection to $\O(L)$ can in general only be defined involving such a choice. Indeed, choosing the Cartan subalgebra $\hh=\{h(-1)\otimes\ee_0\mid h\in\h\}$, the setwise stabiliser of $\hh$ is given by
\begin{equation*}
\Aut(V_L)_{\{\hh\}}=\O(\hat{L})T
\end{equation*}
while the pointwise stabiliser of $\hh$ is
\begin{equation*}
\Aut(V_L)_\hh=T.
\end{equation*}
Then the quotient $\Aut(V_L)_{\{\hh\}}/\Aut(V_L)_\hh$, which acts (faithfully) on $\hh$, becomes
\begin{equation*}
\Aut(V_L)_{\{\hh\}}/\Aut(V_L)_\hh=\O(\hat{L})T/T\cong\O(\hat{L})/(\O(\hat{L})\cap T)\cong\O(L)
\end{equation*}
since $\O(\hat{L})\cap T\cong\Hom(L,\{\pm1\})$ are exactly the lifts of $\id\in\O(L)$. Explicitly, $\phi_\eta(\nu)\e^{(2\pi\i)h(0)}$ projects to $\nu$ for any lift $\phi_\eta(\nu)$ of $\nu$ and any $\e^{(2\pi\i)h(0)}\in T$.
\end{rem}

We return to the quotient $\Aut(V_L)/K$:
\begin{prop}[Outer Automorphisms]
Let $L$ be an even, positive-definite lattice and $V_L$ the associated lattice \voa{}. Then $\overline{K\cap\O(\hat{L})}=W$ so that the outer automorphism group satisfies
\begin{equation*}
\Aut(V_L)/K\cong\O(L)/W=H
\end{equation*}
where $W$ is the Weyl group of the root system $\Phi$, i.e.\ the group generated by the reflections about the norm-two vectors in $L$, which is also the Weyl group of the reductive (and simply-laced) Lie algebra $(V_L)_1$.
\end{prop}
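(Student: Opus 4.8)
The plan is to carry out the whole computation inside the setwise stabiliser $\Aut(V_L)_{\{\hh\}}$ of the Cartan subalgebra $\hh$, using the structure recorded in \autoref{rem:k0}. There we have $\Aut(V_L)_{\{\hh\}}=\O(\hat{L})T$ and $\Aut(V_L)_\hh=T$, and the quotient map
\[
p\colon\Aut(V_L)_{\{\hh\}}\longrightarrow\Aut(V_L)_{\{\hh\}}/\Aut(V_L)_\hh\cong\O(L)
\]
sends $\phi_\eta(\nu)\e^{(2\pi\i)h(0)}$ to $\nu$; in particular $\ker(p)=T$, and on $\O(\hat{L})$ the map $p$ restricts to the projection $\overline{\phantom{o}}$. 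The entire statement then reduces to showing $p(K\cap\Aut(V_L)_{\{\hh\}})=W$ and transporting this back to $K\cap\O(\hat{L})$.

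First I would establish $K\cap\Aut(V_L)_{\{\hh\}}=(K\cap\O(\hat{L}))\,T$. Since $\Aut(V_L)_{\{\hh\}}=\O(\hat{L})T$, any $g\in K\cap\Aut(V_L)_{\{\hh\}}$ can be written $g=\phi_\eta(\nu)t$ with $t\in T$; as $T\subseteq K$, this lies in $K$ if and only if $\phi_\eta(\nu)\in K$, that is, $\phi_\eta(\nu)\in K\cap\O(\hat{L})$. Because $T\subseteq\ker(p)$, it follows that $p(K\cap\Aut(V_L)_{\{\hh\}})=p(K\cap\O(\hat{L}))=\overline{K\cap\O(\hat{L})}$, so it suffices to compute the left-hand side.

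The heart of the argument is the identification $p(K\cap\Aut(V_L)_{\{\hh\}})=W$, which is purely Lie-theoretic. For $k\in K\cap\Aut(V_L)_{\{\hh\}}$ the restriction $r(k)$ is an inner automorphism of $(V_L)_1$ (recall $r(K)=\Inn((V_L)_1)$) stabilising $\hh$, and by construction $p(k)$ is the isometry $r(k)|_\hh$ of $\hh\cong\h=L\otimes_\Z\C$. Now the inner automorphisms of the reductive Lie algebra $(V_L)_1$ that stabilise a fixed Cartan subalgebra $\hh$ restrict on $\hh$ to exactly the Weyl group: the centre is fixed and the semisimple part contributes $N/Z\cong W$. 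Under the identification $\hh\cong\h$ this Weyl group acts by the reflections about the roots $\Phi$, so it is precisely the subgroup $W\subseteq\O(L)$ generated by the reflections about the norm-two vectors (see \autoref{sec:lat}). This gives $p(k)\in W$. Conversely, every $w\in W$ arises this way: realise $w$ as an inner automorphism $\theta$ of $(V_L)_1$ fixing $\hh$ setwise, and lift it to some $k\in K$ using $r(K)=\Inn((V_L)_1)$; then $k\in r^{-1}(\Aut((V_L)_1)_{\{\hh\}})=\Aut(V_L)_{\{\hh\}}$ and $p(k)=w$.

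Combining the two steps yields $\overline{K\cap\O(\hat{L})}=p(K\cap\Aut(V_L)_{\{\hh\}})=W$, and feeding this into the already-established isomorphism $\Aut(V_L)/K\cong\O(L)/\overline{K\cap\O(\hat{L})}$ gives $\Aut(V_L)/K\cong\O(L)/W=H$. I expect the main obstacle to be precisely the Lie-theoretic identification in the third paragraph: making rigorous, in the reductive (non-semisimple) setting, that inner automorphisms stabilising $\hh$ restrict on $\hh$ to the intrinsic Weyl group, and matching that Weyl group with the reflection group $W\subseteq\O(L)$. A concrete alternative here is to exhibit explicit lifts of the generating reflections $s_\alpha$ through the $\sl_2$-triples built from the root vectors $1\otimes\ee_{\pm\alpha}\in(V_L)_1$, which manifestly lie in $K$ and project to $s_\alpha$; but the abstract argument via $r(K)=\Inn((V_L)_1)$ is cleaner and avoids cocycle computations. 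Once this identification is in hand, the remainder is bookkeeping with the exact sequences of \autoref{sec:autlat}.
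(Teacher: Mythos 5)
Your proposal is correct and follows essentially the same route as the paper: the paper likewise computes $W\cong\Inn((V_L)_1)_{\{\hh\}}/\Inn((V_L)_1)_\hh\cong K_{\{\hh\}}/K_\hh=(K\cap\O(\hat L)T)/T\cong(K\cap\O(\hat L))/(T\cap\O(\hat L))=\overline{K\cap\O(\hat L)}$, using the same Lie-theoretic fact that inner automorphisms of the reductive Lie algebra $(V_L)_1$ stabilising $\hh$ restrict on $\hh$ exactly to the Weyl group, together with $r(K)=\Inn((V_L)_1)$ and the structure of $\Aut(V_L)_{\{\hh\}}$ from \autoref{rem:k0}. Your version just traverses the same chain of identifications in the opposite direction.
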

\begin{proof}
Like the Weyl group $W$, the inner automorphisms in $\Inn((V_L)_1)_{\{\hh\}}$ act by definition on the Cartan subalgebra $\hh$. In fact, it is well known that for a simple Lie algebra $\g$ the image of the restriction map $\Inn(\g)_{\{\hh\}}\to\Aut(\hh)$ is the Weyl group $W$, i.e.\ the automorphisms of the Cartan subalgebra that are in the Weyl group are exactly those that can be extended to inner automorphisms of the whole Lie algebra $\g$. This result remains true if $\g$ is reductive. Equivalently, since $\Inn(\g)_\hh$ is the kernel of this restriction map, $W\cong\Inn(\g)_{\{\hh\}}/\Inn(\g)_\hh$.

Given the \voa{} $V_L$ with reductive Lie algebra $(V_L)_1$, since inner automorphisms of $(V_L)_1$ are exactly the restrictions of the inner automorphisms of $V_L$, it follows that
\begin{equation*}
W\cong\Inn((V_L)_1)_{\{\hh\}}/\Inn((V_L)_1)_\hh\cong K_{\{\hh\}}/K_\hh.
\end{equation*}
This implies that $W\subseteq\overline{K\cap\O(\hat{L})}$. On the other hand, by intersecting $K$ with the expressions in \autoref{rem:k0} we obtain
\begin{align*}
W&\cong K_{\{\hh\}}/K_\hh=(K\cap\O(\hat L)T)/T=T(K\cap\O(\hat L))/T\\
&\cong(K\cap\O(\hat L))/(T\cap\O(\hat L))=\overline{K\cap\O(\hat L)}.
\end{align*}
Hence, $W=\overline{K\cap\O(\hat{L})}$.
\end{proof}
% See also Lemma~3.7 in \cite{LS19}

Recall that it is shown in Lemma~2.5 of \cite{DN99} that $\ker(r)\subseteq T$ for any lattice \voa{} $V_L$, i.e.\ conditions \eqref{eq:ass1} and \eqref{eq:ass2} are satisfied. Then $\ker(r)$ is given by the expression in \autoref{rem:kernel}:
\begin{rem}
Let $L$ be an even, positive-definite lattice and $V_L$ the associated lattice \voa{}. Assume for simplicity that $(V_L)_1$ is semisimple. Then
\begin{align*}
\ker(r)&=\{\e^{(2\pi\i)v_0}\mid v=h(-1)\otimes\ee_0\text{ for }h\in R'\}=\{\e^{(2\pi\i)h(0)}\mid h\in R'\}\\
&\cong R'/L'
\end{align*}
where $R'$ is the dual lattice of the lattice $R$ generated by the root system $\Phi$.
\end{rem}

Since condition~\eqref{eq:ass2} is satisfied, we can give the following characterisation of inner automorphisms:
\begin{prop}
Let $L$ be an even, positive-definite lattice, $V_L$ the associated lattice \voa{} and $g\in\Aut(V_L)$ of finite order. Then the following are equivalent:
\begin{enumerate}
\item The automorphism $g$ is inner, i.e.\ $g\in K$.
\item The restriction $r(g)$ is an inner automorphism of the Lie algebra $(V_L)_1$.
\item The rank of the fixed-point Lie subalgebra $(V_L^g)_1$ equals $\rk((V_L)_1)=\rk(L)$.
\item The automorphism $g$ is conjugate in $\Aut(V_L)$ (even in $K$) to $\e^{v_0}$ for some $v\in\hh$. More precisely, since $g$ has finite order, $v=(2\pi\i)h(-1)\otimes\ee_0$ for some $h\in L\otimes_\Z\Q$.
\item The \fpvosa{} $V_L^g$ is isomorphic to a lattice \voa{} $V_K$ for some sublattice $K$ of $L$ of full rank, i.e., $V_K$ is a full vertex operator subalgebra of $V_L$.
\end{enumerate}
\end{prop}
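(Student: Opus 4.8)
The plan is to prove the equivalence of the five conditions by establishing a cycle of implications, leveraging the general results already proved for arbitrary \voa{}s and then specialising to the lattice case. First I observe that the equivalence of (1), (2), (3) and (4) is essentially a restatement of \autoref{prop:equiv1}, \autoref{prop:equiv2} and \autoref{prop:equiv3} applied to $V=V_L$: since we have verified that conditions \eqref{eq:ass1} and \eqref{eq:ass2} hold for any lattice \voa{} (by Lemma~2.5 in \cite{DN99}), and since $(V_L)_1$ is reductive of rank $\rk(L)$, those propositions directly yield (1)$\Leftrightarrow$(2)$\Leftrightarrow$(3)$\Leftrightarrow$(4). The only mild sharpening needed is the explicit form $v=(2\pi\i)h(-1)\otimes\ee_0$ with $h\in L\otimes_\Z\Q$ in condition (4); this follows from the finite-order hypothesis on $g$, because if $g=\e^{v_0}$ has finite order then $\e^{(2\pi\i N)v_0}=\id_V$ for some $N$, which by the description of $P=\{v\in\hh\mid\e^{(2\pi\i)v_0}=\id\}\cong L'$ forces $Nv/(2\pi\i)\in L'\subseteq L\otimes_\Z\Q$, hence $h\in L\otimes_\Z\Q$.

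The genuinely new content is the equivalence with condition (5), which I would prove as (4)$\Rightarrow$(5)$\Rightarrow$(1). For (4)$\Rightarrow$(5), I start from $g=\e^{v_0}=\e^{(2\pi\i)h(0)}$ with $h\in L\otimes_\Z\Q$. Using the explicit action of $T$ on the $L$-graded \voa{} $V_L=\bigoplus_{\alpha\in L}M_{\hat\h}(1)\otimes\ee_\alpha$ recalled above, $\e^{(2\pi\i)h(0)}$ acts on the graded component indexed by $\alpha\in L$ by the scalar $\e^{(2\pi\i)\langle h,\alpha\rangle}$. Therefore the fixed-point space $V_L^g$ is precisely the sum of those graded components $M_{\hat\h}(1)\otimes\ee_\alpha$ for which $\langle h,\alpha\rangle\in\Z$. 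The plan is to set
\begin{equation*}
K:=\{\alpha\in L\mid\langle h,\alpha\rangle\in\Z\},
\end{equation*}
and to check that $K$ is a sublattice of $L$ of full rank: it is the kernel of the group homomorphism $L\to\Q/\Z$, $\alpha\mapsto\langle h,\alpha\rangle\bmod\Z$, which has finite image since $h\in L\otimes_\Z\Q$ (so some multiple $Nh$ lies in $L'$), whence $[L:K]<\infty$ and $\rk(K)=\rk(L)$. Since the Heisenberg part $M_{\hat\h}(1)$ is fixed entirely by $g$ (as $h(0)$ acts trivially on $M_{\hat\h}(1)\otimes\ee_0$), the fixed-point subalgebra is exactly $V_L^g=\bigoplus_{\alpha\in K}M_{\hat\h}(1)\otimes\ee_\alpha=V_K$, which shares the Virasoro and Heisenberg structure of $V_L$ and is thus a full \vosa{} isomorphic to the lattice \voa{} $V_K$.

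For the reverse (5)$\Rightarrow$(1), I would argue via the rank condition (4): if $V_L^g\cong V_K$ is a full lattice \voa{} of a rank-$\rk(L)$ sublattice $K\subseteq L$, then its weight-one space $(V_K)_1$ contains a Cartan subalgebra $\hh\cong K\otimes_\Z\C=\h$ of full rank $\rk(L)$. Since $(V_L^g)_1=(V_L)_1^g$, this shows $\rk((V_L^g)_1)=\rk(L)=\rk((V_L)_1)$, which is condition (4), and we are back inside the already-established cycle. I expect the main obstacle to be the careful verification that the candidate fixed-point space in the (4)$\Rightarrow$(5) step is genuinely the lattice \voa{} $V_K$ and not merely an abstractly isomorphic \voa{} — one must confirm that the restriction of the cocycle $\eps$ to $K\times K$ still satisfies the required commutator identity and that no further vectors (e.g.\ from the Heisenberg factor acted on nontrivially) survive or are lost; both points are routine given that $g$ acts purely through the $L$-grading and fixes $M_{\hat\h}(1)$ pointwise, but they deserve explicit mention so that the identification $V_L^g=V_K$ as a full \vosa{} is unambiguous.
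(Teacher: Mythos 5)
Your proposal is correct and follows essentially the same route as the paper: (1)--(4) are obtained from \autoref{prop:equiv1}, \autoref{prop:equiv2} and \autoref{prop:equiv3}, (4)$\Rightarrow$(5) by identifying $V_L^{\e^{v_0}}$ with $V_K$ for $K=\{\alpha\in L\mid\langle h,\alpha\rangle\in\Z\}$, and (5) closes the cycle via the rank condition. Two cosmetic points: the rank condition you invoke in the last step is item (3), not (4), and for a general $g$ that is merely conjugate to $\e^{v_0}$ you should add the one-line remark (as the paper does) that conjugate automorphisms have isomorphic fixed-point subalgebras.
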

\begin{proof}
The equivalence of $(1)$ to $(4)$ is immediate with \autoref{prop:equiv1}, \autoref{prop:equiv2} and \autoref{prop:equiv3}, noting that $\rk((V_L)_1)=\rk(L)$.

For (5) implies (3): The \fpvosa{} $V_L^g$ and $V_L$ have the same central charge $c$, which equals $\rk(L)$ but also $\rk(K)$ if $V_L^g\cong V_K$. As explained above, the Lie rank of $(V_K)_1\cong (V_L^g)_1$ equals $\rk(K)$, which gives $\rk((V_L^g)_1)=\rk((V_K)_1)=\rk(K)=c=\rk(L)=\rk((V_L)_1)$.

For (4) implies (5): The \fpvosa{} under $\e^{v_0}=\e^{(2\pi\i)h(0)}$ for some $h\in L\otimes_\Z\Q$ is given by the lattice \voa{} $V_K$ with $K=\{\alpha\in L\mid\langle\alpha,h\rangle\in\Z\}$. Since $g$ and $\e^{v_0}$ are conjugate, they have isomorphic \fpvosa{}s, which implies $V_L^g\cong V_K$.
\end{proof}

\medskip

Finally, we state the main result of this section, a description of the automorphisms of $V_L$ up to conjugacy. Given an even, positive-definite lattice $L$ and an automorphism $\nu\in\O(L)$ (of order $m$) we may consider the projection $\pi_\nu\colon\h\to\h^\nu$, $\pi_\nu=\frac{1}{m}\sum_{i=0}^{m-1}\nu^i$, of $\h=L\otimes_\Z\C$ onto $\h^\nu$, the elements of $\h$ fixed by $\nu$. Also recall that $\hat\nu\in\O(\hat{L})$ denotes a fixed choice of standard lift for every $\nu\in\O(L)$.

With the above preparations, it is easy to prove the following specialisation of \autoref{prop:autconj}. We may drop the assumption that the weight-one Lie algebra $(V_L)_1$ is semisimple, which was merely made for convenience.
\begin{thm}[Finite-Order Conjugacy Classes]\label{thm:latconj}
Let $L$ be an even, positive-definite lattice and $V_L$ the corresponding \voa{}. Let $g\in\Aut(V_L)$ of finite order. Then $g$ is conjugate to an automorphism of the form
\begin{equation*}
\hat{\nu}\,\e^{(2\pi\i)h(0)}
\end{equation*}
where $\nu$ is from a fixed set of representatives of the conjugacy classes of $H_\Delta\subseteq\O(L)$ and $\hat\nu$ the choice of standard lift in $\O(\hat{L})\subseteq\Aut(V_L)$, and $h$ is from a fixed set of orbit representatives of the action of the centraliser $C_{\O(L)}(\nu)$ on $\h^{\nu}/\pi_\nu(L')$.
\end{thm}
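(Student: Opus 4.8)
The plan is to obtain this as a specialisation of \autoref{prop:autconj}, translating each abstract ingredient there into the explicit lattice data assembled above and then adding the refinement by the centraliser $C_{\O(L)}(\nu)$ announced in the discussion following that proposition. Although \autoref{prop:autconj} and the lemmas leading up to it were phrased for semisimple $V_1$, the same arguments apply with only cosmetic changes when $(V_L)_1$ is merely reductive: the diagram automorphisms of the semisimple part get replaced by the automorphisms in $\tilde{r}(\Aut(V_L)/K)$ acting on the whole reductive Lie algebra, and for a lattice \voa{} these are realised uniformly by the standard lifts of isometries in $\O(L)$. Thus I would first record that no new idea is needed to drop the semisimplicity hypothesis.

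The first main step is the dictionary. Since $\Aut(V_L)/K\cong H\cong H_\Delta$, a set of representatives for the conjugacy classes of $\tilde{r}(\Aut(V_L)/K)\subseteq\Out((V_L)_1)$ is provided by the standard lifts $\hat{\nu}$ of representatives $\nu$ of the conjugacy classes of $H_\Delta\subseteq\O(L)$; the restriction of $\hat{\nu}$ to $(V_L)_1$ is the relevant (diagram) automorphism $\mu$, fixing $\hh$ and permuting $\Delta$. Under the identification $\hh\cong\h$ the fixed space $\hh^\mu$ becomes $\h^\nu$, and a vector $v\in\hh^\mu$ corresponds to $h(-1)\otimes\ee_0$ with $h\in\h^\nu$ and $\e^{(2\pi\i)v_0}=\e^{(2\pi\i)h(0)}$. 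Because $P=\{h(-1)\otimes\ee_0\mid h\in L'\}\cong L'$ and the projection $\pi_\mu$ coincides with $\pi_\nu$ on $\h$, the quotient $\hh^\mu/\pi_\mu(P)$ is exactly $\h^\nu/\pi_\nu(L')$. With these identifications, \autoref{prop:autconj} already shows that $g$ is conjugate to $\hat{\nu}\,\e^{(2\pi\i)h(0)}$ with $\nu$ and $h$ ranging over the stated sets, prior to the centraliser refinement.

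The second main step is to pass from the classes in $\h^\nu/\pi_\nu(L')$ to their orbits under $C_{\O(L)}(\nu)$. By \autoref{rem:k0} the group $\Aut(V_L)_{\{\hh\}}/\Aut(V_L)_\hh$ is isomorphic to $\O(L)$ and acts on $\hh\cong\h$ in the obvious way, so each $\tau\in\O(L)$ is realised by some $\phi\in\Aut(V_L)_{\{\hh\}}$, which I would take to be a standard lift $\hat{\tau}$. Conjugating $\hat{\nu}\,\e^{(2\pi\i)h(0)}$ by $\hat{\tau}$ sends the $\O(\hat{L})$-part to a lift of $\tau\nu\tau^{-1}$ and, using $\hat{\tau}\,\e^{(2\pi\i)h(0)}\,\hat{\tau}^{-1}=\e^{(2\pi\i)(\tau h)(0)}$, transforms the exponential part by $h\mapsto\tau h$. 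Restricting to $\tau\in C_{\O(L)}(\nu)$ keeps the first factor a lift of $\nu$; since such $\tau$ preserve $L'$ and commute with $\pi_\nu=\frac{1}{m}\sum_{i=0}^{m-1}\nu^i$, the assignment $h\mapsto\tau h$ descends to a well-defined action of $C_{\O(L)}(\nu)$ on $\h^\nu/\pi_\nu(L')$, and selecting orbit representatives yields the asserted normal form.

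The step I expect to be the main obstacle is the standard-lift bookkeeping hidden in the last paragraph. For $\tau\in C_{\O(L)}(\nu)$ the conjugate $\hat{\tau}\,\hat{\nu}\,\hat{\tau}^{-1}$ is a lift of $\nu$ but need not equal the fixed standard lift $\hat{\nu}$; it differs from it by an element $\e^{(2\pi\i)t(0)}$ of $\O(\hat{L})\cap T\cong\Hom(L,\{\pm1\})$, i.e.\ with $t\in\tfrac{1}{2}L'$. One must check that, after applying Lemma~8.3 in \cite{EMS20b} to replace the exponent by its $\pi_\nu$-projection, the surviving shift $\pi_\nu(t)$ already lies in $\pi_\nu(L')$ and is therefore invisible in $\h^\nu/\pi_\nu(L')$; equivalently, that conjugation by a standard lift of a centralising isometry carries standard lifts to standard lifts. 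This is precisely where the results on standard lifts from \cite{EMS20a} and the power formula $\hat{\nu}^i=\widehat{(\nu^i)}\,\e^{-(2\pi\i)s_i(0)}$ are needed to control the half-lattice corrections, so that the clean action $h\mapsto\tau h$ on the quotient is recovered.
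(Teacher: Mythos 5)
Your proposal is correct and follows essentially the same route as the paper: specialise \autoref{prop:autconj} by identifying $\tilde{\mu}$ with $\hat{\nu}$ modulo $T$ and $P$ with $L'$, and then obtain the centraliser refinement by conjugating with a lift $\hat{\tau}$ of $\tau\in C_{\O(L)}(\nu)$ and controlling the resulting toral correction. The ``main obstacle'' you flag is resolved exactly as you anticipate: the paper writes the discrepancy $(\hat{\nu}\,\e^{(2\pi\i)h'(0)})^{-1}\hat{\tau}\hat{\nu}\,\e^{(2\pi\i)h(0)}\hat{\tau}^{-1}=\e^{(2\pi\i)f(0)}$, observes that the $\eta$-factors define a homomorphism $L\to\{\pm1\}$ which is trivial on $L^\nu$ (because $\hat{\nu}$ is a standard lift and $\tau$, centralising $\nu$, preserves $L^\nu$), and concludes $\pi_\nu(f)\in\pi_\nu(L')$ — the power formula $\hat{\nu}^i=\widehat{(\nu^i)}\,\e^{-(2\pi\i)s_i(0)}$ is not actually needed for this step.
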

Note that $\pi_\nu(L')=(L^\nu)'$. The automorphisms $\hat{\nu}$ and $\e^{(2\pi\i)h(0)}$ commute since $h\in\h^\nu$. Moreover,
% since $\e^{(2\pi\i)h(0)}=\id$ if and only if $h\in L'$
since $g$ has finite order, $h$ is actually in $L^\nu\otimes_\Z\Q=\pi_\nu(L\otimes_\Z\Q)$ rather than $\h^\nu=L^\nu\otimes\C$.
\begin{proof}
If $(V_L)_1$ is abelian (i.e.\ if $L$ has no vectors of norm $2$), then the Weyl group is trivial and the assertion was already proved in \cite{MS23}. To be precise, it is only stated there for the Leech lattice $\Lambda$, but the proof carries over almost verbatim to any positive-definite, even lattice $L$ without norm-$2$ vectors, the only difference being that $L$ in contrast to $\Lambda$ does not have to be unimodular.

In the following, let us assume that $(V_L)_1$ is semisimple. The general case when $(V_L)_1$ is reductive, i.e.\ a direct sum of an abelian and a semisimple Lie algebra, follows from the abelian and semisimple cases.

We apply \autoref{prop:autconj}. Let $\mu_0$ be a representative of a conjugacy class of $H\cong\tilde{r}(\Aut(V_L)/K)\subseteq\Out((V_L)_1)$. Then the standard lift $\mu\in\Aut((V_L)_1)$ of $\mu_0$ as described
% in (7.9.2) and (7.10.1)
in \cite{Kac90}, preserving the choice of Cartan subalgebra $\hh$ and the choice of simple roots $\Delta$ is exactly the restriction to $(V_L)_1$ of $\hat\nu\in\O(\hat{L})$ times some element in $T$ where $\nu$ is the element in $H_\Delta\subseteq\O(L)$ corresponding to $\mu_0$. Hence, modulo $T$, we may take $\tilde{\mu}=\hat{\nu}$.

Indeed, both $\tilde{\mu}$ and $\hat{\nu}$ fix the Cartan subalgebra $\hh$ setwise, i.e.\ they are in $\Aut(V_L)_{\{\hh\}}=\O(\hat{L})T$ (see \autoref{rem:k0}). Both their projections modulo $T$ to $\O(L)$ lie in the subgroup $H_\Delta\cong H$, which is clear for $\hat{\nu}$ by definition and follows for $\tilde{\mu}$ since it fixes $\Delta$ setwise. But then these projections coincide and $\tilde{\mu}T=\hat{\nu}T$.

Recall that $P\cong L'$. Then, by \autoref{prop:autconj}, $g$ is conjugate to $\hat{\nu}\,\e^{(2\pi\i)h(0)}$ where $\nu$ is from a set of representatives of the conjugacy classes of $H_\Delta\subseteq\O(L)$ and $h$ is from a set of representatives of $\h^{\nu}/\pi_\nu(L')$.

\smallskip

It remains to show that it suffices to let $h$ be from a set of orbit representatives of the action of $C_{\O(L)}(\nu)$ on $\h^\nu/\pi_\nu(L')$. To this end, let $h$ and $h'$ in $\h^\nu$ such that $h'+\pi_\nu(L')=\tau h+\pi_\nu(L')$ for some $\tau\in C_{\O(L)}(\nu)$. We want to show that $\hat{\nu}\,\e^{(2\pi\i)h(0)}$ and $\hat{\nu}\,\e^{(2\pi\i)h'(0)}$ are conjugate.

The automorphism $(\hat{\nu}\,\e^{(2\pi\i)h'(0)})^{-1}\hat{\tau}\hat{\nu}\,\e^{(2\pi\i)h(0)}\hat{\tau}^{-1}$ is in $\Aut(V_L)_\hh=T$, i.e.\ equal to $\e^{(2\pi\i)f(0)}$ for some $f\in\h$. Hence, $\hat{\nu}\,\e^{(2\pi\i)h(0)}$ is conjugate to $\hat{\nu}\,\e^{(2\pi\i)(h'+f)(0)}$, which is conjugate to $\hat{\nu}\,\e^{(2\pi\i)(h'+\pi_\nu(f))(0)}$.

On the other hand, $(\hat{\nu}\,\e^{(2\pi\i)h'(0)})^{-1}\hat{\tau}\hat{\nu}\,\e^{(2\pi\i)h(0)}\hat{\tau}^{-1}$ acts on $\ee_\alpha\in\C_\eps[L]$ by multiplication with $\e^{(2\pi\i)\langle\tau h-h',\alpha\rangle}\eta_\tau(\nu\tau^{-1}\alpha)\eta_\nu(\tau^{-1}\alpha)/\eta_\tau(\tau^{-1}\alpha)/\eta_\nu(\alpha)$ for all $\alpha\in L$. This defines a homomorphism $L\to\{\pm1\}$. Suppose that $\alpha\in L^\nu$. Then, since $\hat\nu$ is a standard lift, the homomorphism becomes $1$. This shows that $\langle f,\alpha\rangle\in\Z$ for all $\alpha\in L^\nu$ or equivalently that $\pi_\nu(f)\in (L^\nu)'=\pi_\nu(L')$.
\end{proof}
The above result shows that all the finite-order automorphisms in $\Aut(V_L)$ can be conjugated into $\Aut(V_L)_{\{\hh\}}=\O(\hat{L})\cdot T$ or more precisely into $\Aut(V)_{\{\Delta\}}=\{g\in\O(\hat{L})\mid \bar{g}\in H_\Delta\}\cdot T$ (cf.\ comment before \autoref{lem:auts}).

Two conjugate automorphisms in $\Aut(V_L)$ must project to the same conjugacy class in $H_\Delta$, but we cannot exclude the possibility that $\hat{\nu}\,\e^{(2\pi\i)h(0)}$ and $\hat{\nu}\,\e^{(2\pi\i)h'(0)}$ for representatives $h$ and $h'$ from different orbits are still conjugate.

The special case of \autoref{thm:latconj} for the Leech lattice \voa{} $V_\Lambda$ was already proved in \cite{MS23}.
% In that case, two different pairs can actually not yield conjugate automorphisms in $\Aut(V_\Lambda)$.

\begin{rem}\label{rem:latalgconj}
It is not difficult to formulate \autoref{thm:latconj} for algebraic conjugacy classes, i.e.\ conjugacy classes of cyclic subgroups. In that case, not surprisingly, it suffices to let $\nu$ be from a fixed set of representatives of the algebraic conjugacy classes of $H_\Delta\subseteq\O(L)$. Moreover, we may replace the action of the centraliser $C_{\O(L)}(\nu)$ by the action of the normaliser $N_{\O(L)}(\langle\nu\rangle)$.

More precisely, suppose $\nu$ has order $m$ and let $\tau\in N_{\O(L)}(\langle\nu\rangle)$. Then $\tau\nu\tau^{-1}=\nu^i$ for some $i\in\Z_m$ with $(i,m)=1$ and $\hat{\nu}\,\e^{(2\pi\i)h(0)}$ is algebraically conjugate to $\hat{\nu}\,\e^{(2\pi\i)(i^{-1}\tau h)(0)}$ where $i^{-1}$ denotes the inverse of $i$ modulo $m$.
\end{rem}

\medskip

We conclude this section by describing the orders of the automorphisms in $\Aut(V_L)$. Let $\nu\in\O(L)$ of order $m$. Then the order of an automorphism $g=\hat\nu\,\e^{-(2\pi\i)h(0)}\in\Aut(V_L)$, where $h\in\h^\nu$ so that $\hat\nu$ and $\e^{-(2\pi\i)h(0)}$ commute, must be a multiple of $m$. Of particular relevance in this text will be automorphisms $g$ having the same order as $\nu$. There are two cases to consider, depending on whether $\nu$ exhibits order doubling or not.

First, suppose that $\nu$ does not have order doubling. In this case $\hat\nu$ is of order~$m$ and any element $h$ in $(1/m)L'\cap\h^\nu=(1/m)(L')^\nu=(1/m)(\pi_\nu(L))'$ will yield an inner automorphism $\e^{-(2\pi\i)h(0)}$ of order dividing $m$ so that $g=\hat\nu\,\e^{-(2\pi\i)h(0)}$ has order $m$.

On the other hand, if $\nu$ exhibits order doubling, then $\hat\nu$ has order $2m$ with $\hat\nu^m\,\ee_\alpha=(-1)^{m\langle\pi_\nu(\alpha),\pi_\nu(\alpha)\rangle}\ee_\alpha=(-1)^{\langle\alpha,\nu^{m/2}\alpha\rangle}\ee_\alpha$ for all $\alpha\in L$.
% Note that $\alpha\mapsto m\langle\pi_\nu(\alpha),\pi_\nu(\alpha)\rangle+2\Z=\langle\alpha,\nu^{m/2}\alpha\rangle+2\Z$ defines a homomorphism $L\to\Z_2$.
However, there exists a vector $s$ in $(1/(2m))(L')^\nu=(1/(2m))(\pi_\nu(L))'$ defining an inner automorphism $\e^{-(2\pi\i)s(0)}$ of order $2m$ such that $\hat\nu\,\e^{-(2\pi\i)s(0)}$ has order $m$.

Again, we may multiply by $\e^{-(2\pi\i)h(0)}$ for any $h\in(1/m)(L')^\nu=(1/m)(\pi_\nu(L))'$ and obtain an automorphism of order $m$. In total, we have proved:
\begin{prop}\label{prop:sameorder}
Let $L$ be a positive-definite, even lattice and $\hat\nu$ a standard lift of $\nu\in\O(L)$. Then an automorphism $g=\hat\nu\,\e^{-(2\pi\i)h(0)}$ of $V_L$ with $h\in\h^\nu$ has order $m=|\nu|$ if and only if
\begin{equation*}
h\in\begin{cases}(1/m)(L')^\nu&\text{if }|\hat\nu|=m,\\s+(1/m)(L')^\nu&\text{if }|\hat\nu|=2m\end{cases}
\end{equation*}
with $s\in(1/(2m))(L')^\nu$ as defined above.
\end{prop}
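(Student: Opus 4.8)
The plan is to reduce the entire statement to a single computation of $g^m$. The key input is the observation made in the paragraph immediately preceding the statement: since $g=\hat\nu\,\e^{-(2\pi\i)h(0)}$ lies in $\Aut(V_L)_{\{\hh\}}=\O(\hat L)T$ and projects to $\nu\in\O(L)$ under the homomorphism of \autoref{rem:k0} (whose kernel is $T$), the order of $g$ is necessarily a multiple of $m=|\nu|$. Hence $g$ has order exactly $m$ if and only if $g^m=\id_{V_L}$, and the whole proposition comes down to deciding when this identity holds.

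First I would use that $\hat\nu$ and $\e^{-(2\pi\i)h(0)}$ commute (because $h\in\h^\nu$, so $\hat\nu$ fixes $h$) to write $g^m=\hat\nu^m\,\e^{-(2\pi\i)mh(0)}$. The common ingredient for both cases is the following scalar computation: the inner factor $\e^{-(2\pi\i)mh(0)}$ acts on the graded component $M_{\hat\h}(1)\otimes\ee_\alpha$ by $\e^{-(2\pi\i)m\langle h,\alpha\rangle}$, so it equals $\id_{V_L}$ exactly when $m\langle h,\alpha\rangle\in\Z$ for all $\alpha\in L$, i.e.\ $mh\in L'$. Combined with $h\in\h^\nu$ and the identity $L'\cap\h^\nu=(L')^\nu$, this is equivalent to $h\in(1/m)(L')^\nu$. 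The same computation applies verbatim with $h$ replaced by any vector in $\h^\nu$.

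The two cases then differ only in the value of $\hat\nu^m$. If $\nu$ does not exhibit order doubling, then $|\hat\nu|=m$, so $\hat\nu^m=\id$ and $g^m=\id$ reduces immediately to $mh\in L'$, giving $h\in(1/m)(L')^\nu$. If $\nu$ exhibits order doubling, then $|\hat\nu|=2m$ and $\hat\nu^m\neq\id$; here I would invoke the correcting vector $s\in(1/(2m))(L')^\nu$ introduced just above the statement, which by construction makes $g_0:=\hat\nu\,\e^{-(2\pi\i)s(0)}$ of order $m$. From $g_0^m=\id$ and the commutativity of $\hat\nu$ with $\e^{-(2\pi\i)s(0)}$ one reads off $\hat\nu^m=\e^{(2\pi\i)ms(0)}$, whence $g^m=\e^{-(2\pi\i)m(h-s)(0)}$. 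Applying the scalar computation to $h-s\in\h^\nu$ shows $g^m=\id$ iff $m(h-s)\in L'$, i.e.\ $h\in s+(1/m)(L')^\nu$.

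I expect no genuine obstacle: the statement is essentially a bookkeeping summary of the computations performed in the preceding paragraphs, and the proof merely assembles them. The only point demanding care is the order-doubling case, where $\hat\nu^m\neq\id$ forces the introduction of $s$. There one must use that $s$ can be taken in $(1/(2m))(L')^\nu\subseteq\h^\nu$, so that it commutes with $\hat\nu$ and the shift $h\mapsto h-s$ stays inside $\h^\nu$, allowing the identical scalar argument to close the case. That existence and containment, however, are exactly what was recorded immediately before the statement, so they may be taken as given.
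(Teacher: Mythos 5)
Your proposal is correct and follows essentially the same route as the paper: reduce to deciding when $g^m=\id$ (since the order is forced to be a multiple of $m$), compute $g^m=\hat\nu^m\e^{-(2\pi\i)mh(0)}$ via commutativity, and split into the two cases according to whether $\hat\nu^m$ is trivial or must be cancelled by the correcting vector $s$. The paper treats this as established by the discussion immediately preceding the statement, and your write-up merely makes the ``only if'' direction slightly more explicit.
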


%%%%%%%%%%%%%%%%%%%%%%%%%%%%%%%%%%%
%%%%%%%%%%%%%%%%%%%%%%%%%%%%%%%%%%%
%%%%%%%%%%   Section 3   %%%%%%%%%%
%%%%%%%%%%%%%%%%%%%%%%%%%%%%%%%%%%%
%%%%%%%%%%%%%%%%%%%%%%%%%%%%%%%%%%%

\section{Orbifold Construction, Dimension and Rank Formulae}\label{sec:orbdimrk}
In this section we recall the cyclic orbifold construction and present tools to compute the dimension and rank of the corresponding weight-one space, in particular for lattice \voa{}s.

%%%%%%%%%%%%%%%%%%%%%%%%%%%%%%%%%%%
%%%%%%%%%%%%%%%%%%%%%%%%%%%%%%%%%%%

\subsection{Cyclic Orbifold Construction}\label{sec:orb}
We summarise the cyclic orbifold theory for holomorphic \voa{}s developed in \cite{EMS20a,Moe16}. It is a tool used to construct new \voa{}s from known ones. Special cases of this orbifold construction have been studied earlier, for example, the construction of the Moonshine module $V^\natural$ from the Leech lattice \voa{} $V_\Lambda$ as orbifold construction of order $2$ \cite{FLM88}.

Let $V$ be a \strathol{} \voa{} (necessarily of central charge $c\in8\N$) and $G=\langle g\rangle$ a finite, cyclic group of automorphisms of $V$ of order $n\in\Ns$.

By \cite{DLM00} there is an up to isomorphism unique irreducible $g^i$-twisted $V$-module $V(g^i)$ for each $i\in\Z_n$. The uniqueness of $V(g^i)$ implies that there is a representation $\phi_i\colon G\to\Aut_\C(V(g^i))$ of $G$ on the vector space $V(g^i)$ such that
\begin{equation*}
\phi_i(g)Y_{V(g^i)}(v,x)\phi_i(g)^{-1}=Y_{V(g^i)}(g v,x)
\end{equation*}
for all $v\in V$, $i\in\Z_n$. This representation is unique up to an $n$-th root of unity. Denote the eigenspace of $\phi_i(g)$ in $V(g^i)$ corresponding to the eigenvalue $\e^{(2\pi\i)j/n}$ by $W^{(i,j)}$. On $V(g^0)=V$ we choose $\phi_0(g)=g$.

By recent results \cite{Miy15,CM16} (and \cite{DM97}) the \fpvosa{} $V^g=W^{(0,0)}$ (also called \emph{orbifold}) is again \strat{}. It has exactly $n^2$ irreducible modules, namely the $W^{(i,j)}$, $i,j\in\Z_n$ \cite{MT04,DRX17}. One can further show that the conformal weight $\rho(V(g))$ of $V(g)$ is in $(1/n^2)\Z$, and we define the type $t\in\Z_n$ of $g$ by $t=n^2\rho(V(g))\pmod{n}$.

In the following assume that $g$ has type~$0$, i.e.\ that $\rho(V(g))\in(1/n)\Z$. Then it is possible to choose the representations $\phi_i$ such that the conformal weights satisfy
\begin{equation*}
\rho(W^{(i,j)})\in\frac{ij}{n}+\Z
\end{equation*}
and $V^g$ has fusion rules
\begin{equation*}
W^{(i,j)}\boxtimes W^{(l,k)}\cong W^{(i+l,j+k)}
\end{equation*}
for all $i,j,k,l\in\Z_n$ (see \cite{EMS20a}, Section~5), i.e.\ the fusion ring of $V^g$ is the group ring $\C[\Z_n\times\Z_n]$. In particular, all $V^g$-modules are simple currents.

In general, a simple \voa{} $V$ is said to satisfy the \emph{positivity condition} if the conformal weights satisfy $\rho(W)>0$ for any irreducible $V$-module $W\not\cong V$ and $\rho(V)=0$.

Now, if $V^g$ satisfies the positivity condition (it is shown in \cite{Moe18} that this condition is almost automatically satisfied), then the direct sum of $V^g$-modules
\begin{equation*}
V^{\orb(g)}:=\bigoplus_{i\in\Z_n}W^{(i,0)}
\end{equation*}
admits an up to isomorphism unique \strathol{} \voa{} structure extending the given $V^g$-module structure. One calls $V^{\orb(g)}$ the \emph{orbifold construction} associated with $V$ and $g$ \cite{EMS20a}. Two algebraically conjugate automorphisms in $\Aut(V)$ yield isomorphic orbifold constructions. Note that $\bigoplus_{j\in\Z_n}W^{(0,j)}$ is just the old \voa{} $V$.

\medskip

We briefly describe the \emph{inverse} (or \emph{reverse}) \emph{orbifold construction}. Suppose that the \strathol{} \voa{} $V^{\orb(g)}$ is obtained in an orbifold construction as described above. Then via $\amgis v:=\e^{(2\pi\i)i/n}v$ for $v\in W^{(i,0)}$, $i\in\Z_n$, we define an automorphism $\amgis$ of $V^{\orb(g)}$ of order~$n$ and type~$0$, and the unique irreducible $\amgis^j$-twisted $V^{\orb(g)}$-module is $V^{\orb(g)}(\amgis^j)=\bigoplus_{i\in\Z_n}W^{(i,j)}$, $j\in\Z_n$ (see \cite{Moe16}, Theorem~4.9.6). Then
\begin{equation*}
(V^{\orb(g)})^{\orb(\amgis)}=\bigoplus_{j\in\Z_n}W^{(0,j)}\cong V,
\end{equation*}
i.e.\ orbifolding with $\amgis$ is inverse to orbifolding with $g$.

%%%%%%%%%%%%%%%%%%%%%%%%%%%%%%%%%%%
%%%%%%%%%%%%%%%%%%%%%%%%%%%%%%%%%%%

\subsection{Dimension Formula}
Continuing in the setting of the previous subsection, we recall a formula for the dimension of $V^{\orb(g)}_1$, the weight-one Lie algebra of the orbifold construction. In contrast to the other results in this section, this dimension formula is particular to central charge $24$:
\begin{thm}[Dimension Formula, \cite{MS23}]\label{thm:dimform}
Let $V$ be a \strathol{} \voa{} of central charge $24$ and $g$ an automorphism of $V$ of order $n>1$ and type~$0$. Assume that $V^g$ satisfies the positivity condition. Then
\begin{equation*}
\dim(V_1^{\orb(g)})\leq24+\sum_{d\mid n}c_n(d)\dim(V_1^{g^d})
\end{equation*}
where the $c_n(d)$ are defined by
\begin{equation*}
c_n(d)=\frac{n}{d^2}\,\prod_{p\mid d}\,(-p)\prod_{p\mid(d,\frac{n}{d})}(1-p^{-1})\prod_{p\mid \frac{n}{d}}(1+p^{-1})
\end{equation*}
with $p$ prime, or equivalently by the system of equations $\sum_{d\mid n}c_n(d)(t,d)=n/t$ for all $t\mid n$.
\end{thm}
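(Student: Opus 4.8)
The plan is to compute $\dim(V_1^{\orb(g)})$ through the modular behaviour of the twisted--twining trace functions and to read off the right-hand side as the constant term of a weight-zero modular function. For $a,b\in\Z_n$ set
\[
T_{(a,b)}(\tau):=\tr_{V(g^a)}\phi_a(g)^b\,q^{L_0-c/24},\qquad q=\e^{2\pi\i\tau}.
\]
By the modular invariance of (twisted) trace functions these span a finite-dimensional representation of $\SLZ$ on which $\gamma\in\SLZ$ acts, in weight $0$ since $c=24$, by $T_{(a,b)}\mapsto T_{(a,b)\gamma}$ up to a root of unity that is controlled because $g$ has type~$0$. The untwisted functions carry the explicit expansion $T_{(0,b)}=q^{-1}+\tr_{V_1}(g^b)+O(q)$, as $V_0=\C\vac$ is $g$-fixed and the coefficient of $q^0$ reads off $V_1$. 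Projecting $V(g^i)$ onto the eigenvalue-$1$ eigenspace of $\phi_i(g)$ gives $\ch_{W^{(i,0)}}=\tfrac1n\sum_b T_{(i,b)}$, so that
\[
\ch_{V^{\orb(g)}}=\sum_{i\in\Z_n}\ch_{W^{(i,0)}}=\frac1n\sum_{a,b\in\Z_n}T_{(a,b)}.
\]

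Since $V^{\orb(g)}$ is \strathol{} of central charge $24$, this average is an $\SLZ$-invariant modular function of weight $0$, holomorphic on the upper half-plane, whose principal part at the cusp is $q^{-1}$ with coefficient $\tfrac1n\sum_b[q^{-1}]T_{(0,b)}=1$. Hence $\ch_{V^{\orb(g)}}=J+\dim(V_1^{\orb(g)})$ with $J=q^{-1}+O(q)$ the normalised Hauptmodul, and therefore $\dim(V_1^{\orb(g)})$ is exactly $[q^0]\ch_{V^{\orb(g)}}$. The core of the argument is then to extract this constant term. I would organise $\tfrac1n\sum_{a,b}[q^0]T_{(a,b)}$ by the invariant $d=\gcd(a,b,n)$: each pair $(a,b)$ lies in the $\SLZ$-orbit of the untwisted pair $(0,d)$, so $T_{(a,b)}=T_{(0,d)}\vert\gamma$ for a suitable $\gamma$, and $[q^0]T_{(a,b)}$ is the constant term of the untwisted character $T_{(0,d)}$ at the cusp $\gamma\infty$.

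I would decompose that constant term into three parts: a universal contribution coming from the $q^{-1}$-pole of $T_{(0,d)}$, independent of $V$ and $g$, which upon summation over all sectors produces exactly the additive $24$ (reflecting $c=24$, and explaining why the formula is special to this central charge); a contribution $\tr_{V_1}(g^d)$; and a correction measuring the states of $V(g^d)$ of conformal weight strictly below $1$. Collecting the first two parts and repackaging the traces through $\dim(V_1^{g^d})=\tfrac{d}{n}\sum_{d\mid m}\tr_{V_1}(g^m)$, the sum over $m\in\Z_n$ with $d\mid m$, turns the $\SLZ$-orbit count into precisely the coefficients $c_n(d)$; verifying that these satisfy $\sum_{d\mid n}c_n(d)(t,d)=n/t$, equivalently the stated Euler product, is a self-contained M\"obius-inversion computation over the divisors of $n$.

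Finally, the inequality. The correction terms are governed by the dimensions of the twisted modules $V(g^a)$ in conformal weights below $1$. The positivity condition forces $\rho(W)>0$ for every irreducible $V^g$-module $W\not\cong V^g$, and together with $\rho(W^{(i,0)})\in\Z$ (from type~$0$, as $ij/n=0$ when $j=0$) this gives $\rho(W^{(i,0)})\ge 1$ and makes the corrections nonnegative, so dropping them yields ``$\le$'' in place of ``$=$''. This is exactly the extremality at issue: equality holds precisely when every twisted sector contributes maximally to weight one, which is the defining property of the \gdh{}s used later in the paper. I expect the main obstacle to be the bookkeeping of these twisted-sector corrections, namely controlling the constant terms of the untwisted twining characters at the non-infinite cusps and confirming that the universal pole contribution sums to $24$; the modular-invariance input and the elementary combinatorics of the $c_n(d)$ are comparatively routine.
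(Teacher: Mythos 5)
First, a remark on the comparison itself: the paper does not prove this theorem --- it is imported from \cite{MS19}, with the special case where $X_0(n)$ has genus zero (which covers all orders actually occurring in this paper) proved in \cite{EMS20b}. Measured against those proofs, your sketch follows the correct overall strategy: twisted--twining traces, $\SLZ$-equivariance, the identification $\ch_{V^{\orb(g)}}=J+\dim(V_1^{\orb(g)})$ via the positivity condition, the orbit decomposition of $\Z_n^2$ by $\gcd(a,b,n)$, and the elementary verification of the $c_n(d)$. But the step you defer as ``the main obstacle'' is not bookkeeping; it is the entire content of the proof, and your setup for it is circular. Computing $\tfrac1n\sum_{a,b}[q^0]T_{(a,b)}$ from the expansions at $\infty$ just returns $\sum_i\dim W^{(i,0)}_1=\dim(V_1^{\orb(g)})$, a tautology. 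One needs a second, independent evaluation of these constant terms that refers only to untwisted data: in \cite{EMS20b,MS19} this comes from pairing the weight-zero functions against weight-two Eisenstein series for $\Gamma_0(n)$ and applying the residue theorem on $X_0(n)$ (explicit Hauptmodul identities in the genus-zero case). That is where the universal $24$ and the coefficients $c_n(d)$ actually arise; without supplying it, nothing has been proved.

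The second gap is the direction of the inequality. The obstruction to equality is a linear combination of the dimensions of the subspaces of the \emph{full} twisted modules $V(g^i)=\bigoplus_{j}W^{(i,j)}$, $i\neq0$, with conformal weights strictly between $0$ and $1$. Your observation that positivity plus type~$0$ forces $\rho(W^{(i,0)})\geq1$ is correct but does not bear on this: the sub-weight-one states live in the sectors $W^{(i,j)}$ with $j\neq0$, whose conformal weights lie in $ij/n+\Z$ and can be as small as $1/n$, so they are not excluded. (If your argument did exclude them, it would prove equality for every type-zero automorphism, i.e.\ that every such $g$ is extremal, which is false --- the paper explicitly notes non-extremal examples exist.) What actually has to be shown is that the \emph{coefficients} with which these dimensions enter the linear combination are nonnegative; this is a property of the Eisenstein-series/cusp data established in \cite{MS19} (and under the genus-zero hypothesis in \cite{EMS20b}), not a consequence of the positivity condition. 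As written, your sign argument does not yield the stated upper bound.
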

A special case of this formula for orders $n$ such that the genus of the modular curve $X_0(n)=\overline{\Gamma_0(n)\backslash\H}$ is zero was proved in \cite{EMS20b}. In fact, the orders of the automorphisms appearing in this text happen to satisfy the genus-zero condition.

An automorphism $g\in\Aut(V)$ is called \emph{extremal} if the upper bound in the dimension formula is attained.

The obstruction to extremality is a certain linear combination of the dimensions of the subspaces of the twisted modules $V(g^i)$, $i\neq0\pmod{n}$, with weights strictly between $0$ and $1$. Hence, the following corollary is immediate:
\begin{cor}\label{cor:dimform}
Let $V$ be a \strathol{} \voa{} of central charge $24$ and $g$ an automorphism of $V$ of order $n>1$ and type~$0$. Suppose that the conformal weights of the twisted modules obey $\rho(V(g^i))\geq 1$ for all $i\neq0\pmod{n}$. Then $g$ is extremal and
\begin{equation*}
\dim(V_1^{\orb(g)})=24+\sum_{d\mid n}c_n(d)\dim(V_1^{g^d}).
\end{equation*}
\end{cor}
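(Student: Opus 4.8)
The plan is to reduce Corollary~\ref{cor:dimform} directly to the Dimension Formula of \autoref{thm:dimform}, using the remark immediately preceding it that the obstruction to extremality is a linear combination of dimensions of the subspaces of the twisted modules $V(g^i)$ with weights strictly between $0$ and $1$. Since the upper bound in \autoref{thm:dimform} is attained precisely when this obstruction vanishes, it suffices to show that the hypothesis $\rho(V(g^i))\geq 1$ for all $i\neq 0\pmod n$ forces every such obstruction term to be zero.

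First I would recall that $\rho(V(g^i))$ is by definition the conformal weight of the irreducible $g^i$-twisted module, i.e.\ the lowest weight occurring in the $L_0$-grading of $V(g^i)$. The hypothesis $\rho(V(g^i))\geq 1$ therefore says that $V(g^i)$ has \emph{no} nonzero vectors of weight strictly less than $1$; in particular it has no nonzero vectors with weight in the open interval $(0,1)$. Consequently each subspace of $V(g^i)$ of weight strictly between $0$ and $1$ is trivial for every $i\not\equiv 0\pmod n$. Since the obstruction is a linear combination of the dimensions of exactly these subspaces, it vanishes identically, and hence $g$ is extremal. The equality in the displayed formula is then simply the statement of \autoref{thm:dimform} with the inequality replaced by equality.

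The main point I would want to make explicit — and the step that carries the actual content — is the claimed shape of the obstruction, namely that the discrepancy between $\dim(V_1^{\orb(g)})$ and the right-hand side of the formula is governed entirely by the graded pieces of the $V(g^i)$ in weights between $0$ and $1$. This is asserted in the text just before the corollary and can be taken as given (it follows from the construction of $V^{\orb(g)}=\bigoplus_{i\in\Z_n}W^{(i,0)}$ together with the character-theoretic derivation of the dimension formula in \cite{MS19}); the weight-one space of the orbifold is assembled from the weight-one pieces of the $W^{(i,0)}$, and the dimension formula counts these via the coefficients $c_n(d)$, with corrections coming exactly from low-weight contributions to the twisted sectors. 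Thus the only genuinely new input is translating the analytic condition $\rho(V(g^i))\geq 1$ into the vanishing of those corrections.

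I do not anticipate a serious obstacle here, since the corollary is essentially a direct reading of the structure of the bound in \autoref{thm:dimform}; the one thing to be careful about is the indexing, ensuring the condition is imposed for all $i$ with $i\not\equiv 0\pmod n$ (so that every nontrivially twisted sector is covered) rather than merely for $i=1$, because the formula aggregates contributions from all proper powers $g^i$. Once that is observed, extremality and the equality follow immediately.
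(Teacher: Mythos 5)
Your argument is exactly the paper's: the corollary is stated as immediate from the preceding remark that the obstruction to extremality is a linear combination of dimensions of weight-$(0,1)$ subspaces of the twisted modules $V(g^i)$, and the hypothesis $\rho(V(g^i))\geq 1$ kills all such subspaces. Your write-up correctly identifies this as the only content and handles the indexing over all $i\not\equiv 0\pmod n$ as required.
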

The automorphisms considered in \autoref{sec:cons} will turn out to satisfy $\rho(V(g^i))\geq 1$ for all $i\neq0\pmod{n}$ so that they are all extremal (see \autoref{prop:226gdh}). But note that in general extremal automorphisms need not satisfy this condition (see the examples in \cite{MS23}).

%%%%%%%%%%%%%%%%%%%%%%%%%%%%%%%%%%%
%%%%%%%%%%%%%%%%%%%%%%%%%%%%%%%%%%%

\subsection{Rank Criterion}\label{sec:rankcrit}
\autoref{cor:dimform} allows us to compute the dimension of the weight-one Lie algebra $V^{\orb(g)}_1$ if the central charge is $24$. In this section we shall discuss how to determine the rank of $V^{\orb(g)}_1$ (in any central charge). More precisely, we derive a sufficient criterion for the \emph{orbifold rank condition}, namely that the rank of $V_1^g$ equals the rank of $V_1^{\orb(g)}$.

First, we recall some probably known results about graded Lie algebras. To this end, let $\g=\bigoplus_{i\in\Z_n}\g_i$ be a reductive, $\Z_n$-graded Lie algebra for some $n\in\Ns$ (see \cite{Kac90}, Section~8.1). Equivalently, let $g$ be an automorphism of $\g$ with $g^n=\id$ and the grading of $\g$ obtained as eigenvalue decomposition with respect to $g$, i.e.\ $\g_i=\{x\in\g \mid g(x)=\e^{(2\pi\i)i/n}x\}$ for all $i\in\Z_n$ (and $\g_0=\g^g$).

It is well known that $\g_0$ is also reductive and that $\rk(\g_0)\leq\rk(\g)$ with equality if and only if the automorphism $g$ is inner (see, e.g., Proposition~8.6 in \cite{Kac90}).

We now describe the relationship between Cartan subalgebras of $\g_0$ and $\g$:
% From Cartan Subalgebra of \texorpdfstring{$\g_0$}{g0} to \texorpdfstring{$\g$}{g}:
\begin{lem}\label{lem:extcart}
Let $\hh_0$ be a Cartan subalgebra of $\g_0$. Then the centraliser $\widetilde{\hh}:=C_{\g}(\hh_0)$ is a Cartan subalgebra of $\g$.
\end{lem}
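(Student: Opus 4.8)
The plan is to prove that $\widetilde{\hh} := C_{\g}(\hh_0)$ is a Cartan subalgebra of $\g$ by showing it is nilpotent and self-normalising, which is the standard characterisation of a Cartan subalgebra in a reductive (in particular, finite-dimensional) Lie algebra. The key structural fact to exploit is that $\hh_0$ is an abelian subalgebra of $\g_0 = \g^g$ consisting of semisimple elements (since $\g_0$ is reductive and $\hh_0$ is its Cartan subalgebra), so each element of $\hh_0$ acts diagonalisably on $\g$ via the adjoint representation, and these actions commute with each other and with $g$.

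First I would observe that since $\hh_0$ is abelian and acts semisimply on $\g$, the centraliser $\widetilde{\hh} = C_{\g}(\hh_0)$ is precisely the $0$-weight space of $\hh_0$ acting on $\g$; that is, $\widetilde{\hh} = \{x \in \g \mid [h,x] = 0 \text{ for all } h \in \hh_0\}$. Because $\hh_0 \subseteq \g_0$ is fixed by $g$ and the grading is the eigenspace decomposition of $g$, the operator $g$ commutes with $\ad_h$ for every $h \in \hh_0$, so $\widetilde{\hh}$ is $g$-stable and inherits the $\Z_n$-grading, with $\widetilde{\hh}_0 = \widetilde{\hh} \cap \g_0 = C_{\g_0}(\hh_0) = \hh_0$, the last equality holding because $\hh_0$ is a Cartan subalgebra of the reductive algebra $\g_0$ and is therefore self-centralising in $\g_0$. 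Next I would argue that $\widetilde{\hh}$ is reductive (it is the centraliser in a reductive Lie algebra of a set of semisimple elements, hence reductive) and that $\hh_0$ is central in $\widetilde{\hh}$ by construction.

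The crux is to show $\widetilde{\hh}$ is nilpotent, for which it suffices to show it is abelian. I would pick a Cartan subalgebra $\h$ of the reductive algebra $\widetilde{\hh}$; since $\hh_0$ is central in $\widetilde{\hh}$ we have $\hh_0 \subseteq \h$. The semisimple part of $\widetilde{\hh}$ has a root space decomposition with respect to $\h$, and any non-zero root $\alpha$ would give a root vector $x_\alpha$ with $[\h, x_\alpha] \neq 0$; but $\hh_0 \subseteq \h$ centralises $x_\alpha \in \widetilde{\hh}$ by definition of $\widetilde{\hh}$, so $\alpha$ vanishes on $\hh_0$. The point to extract is that $\h \subseteq \widetilde{\hh} \subseteq C_{\g}(\hh_0)$ and $\h$ is already a maximal toral subalgebra of $\widetilde{\hh}$; I would show $C_{\widetilde{\hh}}(\h) = \h$ combined with the fact that every element of $\widetilde{\hh}_0 = \hh_0$ lies in the centre forces the root system of $\widetilde{\hh}$ to be empty, so $\widetilde{\hh} = \h$ is abelian, hence a Cartan subalgebra of $\widetilde{\hh}$ equal to all of $\widetilde{\hh}$.

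Finally, to conclude that $\widetilde{\hh}$ is a Cartan subalgebra of $\g$, I would verify self-normalisation: if $x \in N_{\g}(\widetilde{\hh})$, then $[\hh_0, x] \subseteq \widetilde{\hh}$, and using that $\ad_h$ is semisimple for $h \in \hh_0$ together with $\hh_0 \subseteq \widetilde{\hh}$ one shows $[\hh_0,x] \subseteq [\hh_0,\widetilde{\hh}] = 0$, placing $x \in C_{\g}(\hh_0) = \widetilde{\hh}$; this gives $N_{\g}(\widetilde{\hh}) = \widetilde{\hh}$. Together with nilpotency (indeed abelianness) of $\widetilde{\hh}$, this establishes that $\widetilde{\hh}$ is a Cartan subalgebra of $\g$. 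The main obstacle I anticipate is the passage showing $\widetilde{\hh}$ is abelian rather than merely reductive: one must carefully track that the $0$-graded piece of $\widetilde{\hh}$ is exactly $\hh_0$ and leverage the self-centralising property of $\hh_0$ in $\g_0$ to rule out a non-trivial semisimple part in $\widetilde{\hh}$, since a priori the twisting by $g$ could contribute root vectors in the non-zero graded pieces.
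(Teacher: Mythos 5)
Your overall architecture (show $\widetilde{\hh}$ is abelian and self-normalising, using that $\ad_\g h$ is semisimple for $h\in\hh_0$, that $\widetilde{\hh}$ is $g$-stable and graded with $\widetilde{\hh}_0=C_{\g_0}(\hh_0)=\hh_0$, and that $\hh_0$ is central in $\widetilde{\hh}$) matches the standard proof, and your self-normalisation step is correct. The paper itself does not argue any of this: it simply cites Lemma~8.1 of Kac's book for the simple case and notes that the extension to reductive $\g$ is routine. So you are necessarily taking a different, from-scratch route — but that route has a gap exactly at its central step, and the gap is precisely the non-trivial content of the lemma you would otherwise be citing.

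The problematic step is the claim that the root system of $\widetilde{\hh}$ is empty. What you actually establish is that every root of $\widetilde{\hh}$ relative to a Cartan subalgebra $\h\supseteq\hh_0$ vanishes on $\hh_0$; but that is just a restatement of the fact that $\hh_0$ is central in $\widetilde{\hh}$, which you already knew, and it yields no contradiction with the existence of roots. (Compare $\g=\sl_3$ with an involution whose fixed subalgebra is $\so_3$: there $\widetilde{\hh}$ is two-dimensional while $\hh_0$ is one-dimensional, so $\widetilde{\hh}$ genuinely acquires vectors in the non-zero graded pieces, and nothing about "roots vanish on $\hh_0$" by itself distinguishes this abelian outcome from a hypothetical non-abelian one.) Invoking $C_{\widetilde{\hh}}(\h)=\h$ does not help either, since $\widetilde{\hh}=\h$ is equivalent to $\widetilde{\hh}$ being abelian — the argument becomes circular. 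The missing ingredient is this: the semisimple part $\mathfrak{s}:=[\widetilde{\hh},\widetilde{\hh}]$ is $g$-stable, and $\mathfrak{s}^g\subseteq\mathfrak{s}\cap\widetilde{\hh}_0=\mathfrak{s}\cap\hh_0\subseteq\mathfrak{s}\cap Z(\widetilde{\hh})=\{0\}$; one then needs the theorem that a finite-order automorphism of a non-zero semisimple Lie algebra always has a non-zero fixed-point subalgebra (this is part of Kac's Lemma~8.1, or Borel--Mostow) to conclude $\mathfrak{s}=\{0\}$. Without that input (or an equivalent, such as producing an element of $\hh_0$ that is regular in $\g$) the abelianness of $\widetilde{\hh}$ is not proved. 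A secondary, smaller gap: the semisimplicity of $\ad_\g h$ for $h\in\hh_0$ does not follow merely from $\hh_0$ being a Cartan subalgebra of the reductive algebra $\g_0$ (that only controls the action on $\g_0$, and only up to the centre of $\g_0$); one needs that $\g_0=\g^g$ is reductive \emph{in} $\g$, which uses the non-degeneracy of the restriction of the invariant form to $\g_0$. Your closing remark correctly identifies where the danger lies, but the proposal does not supply the idea that resolves it.
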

\begin{proof}
This is stated in Lemma~8.1 in \cite{Kac90} for the case that $\g$ is simple but it is not difficult to see that the result extends to reductive Lie algebras.
\end{proof}
% From Cartan Subalgebra of \texorpdfstring{$\g$}{g} to \texorpdfstring{$\g_0$}{g0}:
On the other hand:
\begin{lem}\label{lem:fixcart}
Let $\hh$ be a Cartan subalgebra of $\g$. Then $\dim(\hh^g)\leq\rk(\g_0)$ with equality if and only if $\hh^g$ is a Cartan subalgebra of $\g_0=\g^g$.
\end{lem}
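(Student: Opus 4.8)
The plan is to analyze the action of $g$ on a fixed Cartan subalgebra $\hh$ of $\g$ and relate the fixed space $\hh^g$ to the structure of $\g_0$. First I would observe that since $g$ is an automorphism of finite order $n$ with $g^n = \id$, it acts semisimply on $\hh$, so $\hh^g$ is well-defined and $\dim(\hh^g)$ makes sense as the multiplicity of the eigenvalue $1$ of $g|_\hh$. The key structural input is \autoref{lem:extcart}: if $\hh_0$ is any Cartan subalgebra of $\g_0$, then $\widetilde{\hh} := C_\g(\hh_0)$ is a Cartan subalgebra of $\g$, and by construction $\hh_0 \subseteq \widetilde{\hh}^g$ since $\hh_0 \subseteq \g_0 = \g^g$. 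The idea is that this particular Cartan subalgebra $\widetilde{\hh}$ realizes the maximal possible fixed space, giving $\dim(\widetilde{\hh}^g) \geq \rk(\g_0)$, while a general Cartan subalgebra cannot do better.

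Concretely, I would prove the inequality $\dim(\hh^g) \leq \rk(\g_0)$ first. Since $\hh^g$ is an abelian subalgebra of $\g_0$ consisting of semisimple elements (being a subspace of the Cartan subalgebra $\hh$ of $\g$, whose elements are semisimple in $\g$ and hence in $\g_0$), it is a toral subalgebra of the reductive Lie algebra $\g_0$. Every toral subalgebra of a reductive Lie algebra has dimension at most its rank, so $\dim(\hh^g) \leq \rk(\g_0)$, establishing the inequality for \emph{every} Cartan subalgebra $\hh$ of $\g$.

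For the equality characterization, the easy direction is that if $\hh^g$ is a Cartan subalgebra of $\g_0$ then $\dim(\hh^g) = \rk(\g_0)$ by definition of rank. For the converse, suppose $\dim(\hh^g) = \rk(\g_0)$. Then $\hh^g$ is a toral subalgebra of $\g_0$ of maximal dimension, i.e.\ of dimension equal to the rank; since any toral subalgebra of maximal dimension in a reductive Lie algebra is a maximal toral subalgebra and hence a Cartan subalgebra, $\hh^g$ is a Cartan subalgebra of $\g_0$. The main subtlety I would watch for is ensuring that the elements of $\hh^g$ really are semisimple when viewed inside $\g_0$; this follows because semisimplicity is intrinsic (an element is semisimple iff its adjoint action on $\g$ is diagonalizable, and restricting the ambient algebra preserves this for elements already lying in $\g_0$), so $\hh^g$ is genuinely toral in $\g_0$ and the standard theory of toral subalgebras of reductive Lie algebras applies without difficulty.
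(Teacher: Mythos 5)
Your proof is correct and follows essentially the same route as the paper: both arguments rest on the fact that $\hh^g$ embeds into a Cartan subalgebra of $\g_0$, which forces the inequality and, in the case of equality of dimensions, forces $\hh^g$ to coincide with that Cartan subalgebra. The only difference is that you justify this embedding explicitly (by showing $\hh^g$ is a toral subalgebra of the reductive Lie algebra $\g_0$, hence contained in a maximal toral, i.e.\ Cartan, subalgebra), whereas the paper asserts it without proof; your opening paragraph about $C_\g(\hh_0)$ is motivational and not actually needed for the argument.
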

\begin{proof}
To prove the inequality we note that for any Cartan subalgebra $\hh$ of $\g$, the fixed points $\hh^g$ are contained in a Cartan subalgebra of $\g_0=\g^g$.
% Indeed, for semisimple Lie algebras Cartan subalgebras are characterised as maximal toral subalgebras (where toral means abelian and consisting only of semisimple elements). Clearly, if $\hh$ is toral, then so is $\hh^g$ in $\g_0$ and hence $\hh^g$ is contained in a maximal toral subalgebra of $\g_0$. Clearly, everything also holds for reductive Lie algebras.
One implication of the equivalence is trivial. For the other assume that $\dim(\hh^g)=\rk(\g_0)$. Since $\hh^g$ can be extended to a Cartan subalgebra of $\g_0$, it must already be a Cartan subalgebra.
\end{proof}
\begin{rem}\label{rem:fixcart}
Note that given a finite-order automorphism $g$ of a reductive Lie algebra $\g$ it is always possible to find a Cartan subalgebra $\hh$ of $\g$ such that $\hh^g$ is a Cartan subalgebra of $\g_0=\g^g$.

Equivalently, if we fix the Cartan subalgebra $\hh$, any finite-order automorphism~$g$ is conjugate (under an inner automorphism) to an automorphism $g'$ such that $\hh^{g'}$ is a Cartan subalgebra of $\g^{g'}$.
\end{rem}
\begin{proof}
A chosen Cartan subalgebra $\hh_0$ of $\g_0$ can be extended to a Cartan subalgebra $\hh$ of $\g$. Then $\hh_0\subseteq\hh^g\subseteq\g_0$ and the result follows.
% The assertion can be extracted from the proof of Proposition~8.1 in \cite{Kac90} in the simple case. It is not difficult to see that the statement also holds for semisimple and reductive Lie algebras.
\end{proof}
% The following comment is the analogue of the comment after \autoref{lem:fixiscart} in the lattice case:
Specifically, if $g$ is of the form $\mu\,\e^{(2\pi\i)\ad_v}$ where $\mu\in\Aut(\g)$ is the standard lift of an outer automorphism in $\Out(\g)$ (preserving the choice of Cartan subalgebra and simple roots) and $v$ is in $\hh^\mu$ (cf.\ Proposition~8.1 in \cite{Kac90}),
% it can always be conjugated into this form, that's the statement of Proposition~8.1 in \cite{Kac90}, and it can actually be conjugated by an inner automorphism
then equality in the above lemma holds. This is true in particular for the representatives of the conjugacy classes in \autoref{prop:autconj}, restricted to the Lie algebra $V_1$.

\medskip

We return to the orbifold setting, i.e.\ we let $V$ be a \strathol{} \voa{} and $g$ an automorphism of $V$ of order~$n$ and type~$0$ such that $V^g$ satisfies the positivity condition.

The Lie algebras $\g:=V_1$ and $\tilde{\g}:=V^{\orb(g)}_1$ are both reductive and $\Z_n$-graded with the grading on $\g=\bigoplus_{i\in\Z_n}\g_i$ given by the eigenvalue decomposition with respect to $g$ and the one on $\tilde{\g}=\bigoplus_{i\in\Z_n}\tilde{\g}_i$ with respect to the inverse orbifold automorphism of $g$. Then $\g_0=\tilde{\g}_0=V^g_1$, which is also reductive.

We say the \emph{orbifold rank condition} is satisfied if $\rk(V_1^{\orb(g)})=\rk(V_1^g)$.\smallskip

Note that for a subset $S$ of a Lie algebra $\g$ the centraliser is defined as
\begin{align*}
C_{\g}(S)&=\{x\in\g\mid [x,s]=0\text{ for all }s\in S\}\\
&=\{x\in\g\mid \ad_s(x)=0\text{ for all }s\in S\}.
\end{align*}
Now suppose that $M$ is a $\g$-module. Analogously, we define
\begin{equation*}
C_{M}(S)=\{v\in M\mid s\cdot v=0\text{ for all }s\in S\},
\end{equation*}
simply replacing the adjoint action of $\g$ on $\g$ with the module action of $\g$ on $M$.

Let $\hh_0$ be a Cartan subalgebra of $V^g_1$. Then by \autoref{lem:extcart} we know that the centraliser $\widetilde{\hh}:=C_{\tilde{\g}}(\hh_0)$ is a Cartan subalgebra of $\tilde{\g}$.
\begin{lem}\label{lem:cartcent}
The Cartan subalgebra $\widetilde{\hh}$ of $\tilde{\g}$ satisfies
\begin{equation*}
\widetilde{\hh}=\hh_0\oplus\bigoplus_{i\in\Z_n\setminus\{0\}}C_{{\tilde{\g}}_i}(\hh_0).
\end{equation*}
\end{lem}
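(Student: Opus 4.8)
The plan is to prove the identity by a double inclusion, exploiting the fact that $\widetilde{\hh}=C_{\tilde{\g}}(\hh_0)$ respects the $\Z_n$-grading on $\tilde{\g}$. The key observation is that the centraliser of $\hh_0$ is a graded subspace: since $\hh_0\subseteq\tilde{\g}_0$ and the bracket respects the grading (i.e.\ $[\tilde{\g}_0,\tilde{\g}_i]\subseteq\tilde{\g}_i$ for all $i\in\Z_n$), an element $x=\sum_{i\in\Z_n}x_i$ with $x_i\in\tilde{\g}_i$ lies in $C_{\tilde{\g}}(\hh_0)$ if and only if each homogeneous component $x_i$ does. Concretely, for $h\in\hh_0$ we have $[h,x]=\sum_{i\in\Z_n}[h,x_i]$ with $[h,x_i]\in\tilde{\g}_i$, so $[h,x]=0$ forces $[h,x_i]=0$ for every $i$ by uniqueness of the grading decomposition. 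This immediately yields $\widetilde{\hh}=\bigoplus_{i\in\Z_n}C_{\tilde{\g}_i}(\hh_0)$.

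It then remains to identify the degree-zero summand $C_{\tilde{\g}_0}(\hh_0)$ with $\hh_0$ itself. First I would note that $\tilde{\g}_0=\g_0=V^g_1$ is reductive, and $\hh_0$ is by hypothesis a Cartan subalgebra of $\tilde{\g}_0$. In a reductive Lie algebra a Cartan subalgebra is self-centralising, i.e.\ $C_{\tilde{\g}_0}(\hh_0)=\hh_0$; this is the standard fact that a Cartan subalgebra of a reductive Lie algebra is a maximal toral subalgebra equal to its own centraliser. Substituting this into the graded decomposition gives exactly
\begin{equation*}
\widetilde{\hh}=\hh_0\oplus\bigoplus_{i\in\Z_n\setminus\{0\}}C_{\tilde{\g}_i}(\hh_0),
\end{equation*}
which is the claimed formula. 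Note that here I am using the additive notation $C_{\tilde{\g}_i}(\hh_0)=\{v\in\tilde{\g}_i\mid [h,v]=0\text{ for all }h\in\hh_0\}$, which is precisely the module-centraliser notion $C_{M}(S)$ introduced just before the lemma, applied to the $\hh_0$-module $M=\tilde{\g}_i$.

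I expect the only genuine subtlety to be the self-centralising property in the reductive (as opposed to semisimple) case, since $\tilde{\g}_0$ may have a nontrivial abelian centre. This is handled by decomposing $\tilde{\g}_0=Z\oplus[\tilde{\g}_0,\tilde{\g}_0]$ into its centre and semisimple part: a Cartan subalgebra $\hh_0$ necessarily contains $Z$ and meets the semisimple part in a Cartan subalgebra of that part, and the centraliser of $\hh_0$ in $\tilde{\g}_0$ is then $Z$ plus the (self-centralising) Cartan subalgebra of the semisimple part, which is again $\hh_0$. This reduces the reductive statement to the classical semisimple one. Everything else is the formal grading argument above, which is routine once one verifies that the centraliser of a degree-zero element set is a graded subspace.
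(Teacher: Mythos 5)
Your proposal is correct and follows essentially the same route as the paper: decompose the centraliser along the $\Z_n$-grading using the fact that the bracket respects the grading, then identify the degree-zero piece with $\hh_0$ via the self-centralising property of Cartan subalgebras. Your additional remarks on the graded-subspace verification and the reductive (versus semisimple) case are just fuller elaborations of the two steps the paper states more tersely.
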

\begin{proof}
Consider
\begin{equation*}
\widetilde{\hh}=C_{\tilde{\g}}(\hh_0)=C_{\bigoplus_{i\in\Z_n}\tilde{\g}_i}(\hh_0),
\end{equation*}
which, because the Lie bracket respects the $\Z_n$-grading, equals
\begin{equation*}
\bigoplus_{i\in\Z_n}C_{\tilde{\g}_i}(\hh_0)=C_{\g_0}(\hh_0)\oplus\bigoplus_{i\in\Z_n\setminus\{0\}}C_{\tilde{\g}_i}(\hh_0)=\hh_0\oplus\bigoplus_{i\in\Z_n\setminus\{0\}}C_{\tilde{\g}_i}(\hh_0).
\end{equation*}
In the last step we used that as Cartan subalgebra $\hh_0$ is self-centralising.
\end{proof}
The lemma allows us to determine a Cartan subalgebra and hence the rank of $\tilde{\g}$ by only considering the module action of $V^g$ on the orbifold construction $V^{\orb(g)}$ rather than the full Lie algebra structure of $\tilde{\g}$, which would involve intertwining operators between submodules of the twisted $V$-modules $V(g^i)$ for $i\in\Z_n$.

In the same way that $[u,v]:=u_0v$ for $u,v\in V_1$ defines a Lie bracket on $V_1$, does $u\cdot v:=u_0v$ for $u\in V_1$ and $v\in W$ define a Lie algebra module of $V_1$ on any (untwisted) $V$-module $W$ and on any graded component of $W$.

Finally, note that the twisted $V$-modules $V(g^i)$ for $i\in\Z_n$ are untwisted $V^g$-modules, the graded components $V(g^i)_1$ are thus Lie algebra modules of $V^g_1$, and therefore the expression $C_{V(g^i)_1}(\hh_0)$ is well-defined. % in the above sense.

We state a sufficient criterion for the orbifold rank condition:
\begin{prop}[Rank Criterion]\label{prop:rankcrit}
Let $\hh_0$ be a Cartan subalgebra of $V^g_1$. Furthermore, assume that $C_{V(g^i)_1}(\hh_0)=\{0\}$ for all $i\in\Z_n\setminus\{0\}$. Then $\hh_0$ is a Cartan subalgebra of $\tilde{\g}:=V^{\orb(g)}_1$, i.e.\ the orbifold rank condition $\rk(V^{\orb(g)}_1)=\rk(V_1^g)$ is satisfied.
\end{prop}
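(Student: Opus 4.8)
The plan is to reduce a Cartan subalgebra computation for the orbifold weight-one algebra $\tilde{\g}=V^{\orb(g)}_1$ to the centraliser hypotheses on the twisted modules, exploiting the decomposition already recorded in \autoref{lem:cartcent}. Recall that $V^{\orb(g)}=\bigoplus_{i\in\Z_n}W^{(i,0)}$, so the $\Z_n$-graded pieces of $\tilde{\g}$ are $\tilde{\g}_i=W^{(i,0)}_1$, with $\tilde{\g}_0=V^g_1$. Since $\hh_0$ is a Cartan subalgebra of $V^g_1=\tilde{\g}_0$, I would first invoke \autoref{lem:extcart} to see that $\widetilde{\hh}:=C_{\tilde{\g}}(\hh_0)$ is a Cartan subalgebra of $\tilde{\g}$, and then \autoref{lem:cartcent} to write
\[
\widetilde{\hh}=\hh_0\oplus\bigoplus_{i\in\Z_n\setminus\{0\}}C_{\tilde{\g}_i}(\hh_0).
\]
The whole proposition then comes down to showing that each summand with $i\neq0$ vanishes.

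The key step is to identify the $\hh_0$-module structure on $\tilde{\g}_i$ with a restriction of the one on $V(g^i)_1$. Indeed $W^{(i,0)}$ is by definition the eigenspace of eigenvalue $1$ of $\phi_i(g)$ inside $V(g^i)$, so $\tilde{\g}_i=W^{(i,0)}_1\subseteq V(g^i)_1$; and the module action $x\cdot v=x_0v$ of $x\in\hh_0\subseteq V^g_1$ is computed by the same operators whether one regards $v$ as an element of $\tilde{\g}_i$ or of the ambient $V^g$-module $V(g^i)_1$. Consequently any $v\in\tilde{\g}_i$ annihilated by $\hh_0$ lies in $C_{V(g^i)_1}(\hh_0)$, giving the inclusion $C_{\tilde{\g}_i}(\hh_0)\subseteq C_{V(g^i)_1}(\hh_0)$.

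Invoking the hypothesis $C_{V(g^i)_1}(\hh_0)=\{0\}$ for all $i\in\Z_n\setminus\{0\}$ then forces every nonzero-degree summand above to vanish, so $\widetilde{\hh}=\hh_0$. Hence $\hh_0$ is itself a Cartan subalgebra of $\tilde{\g}$, and comparing dimensions gives $\rk(V^{\orb(g)}_1)=\dim(\hh_0)=\rk(V^g_1)$, the orbifold rank condition.

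The point of organising the argument this way — and the reason the hypothesis is stated in terms of the $V(g^i)_1$ rather than the $\tilde{\g}_i$ — is that it avoids any direct analysis of the Lie bracket on $\tilde{\g}$, which would require controlling the intertwining operators gluing the summands $W^{(i,0)}$ of the twisted modules into a single Lie algebra. I expect the only genuinely delicate point to be the compatibility of module structures in the second step; but since the Lie-module action on any graded piece of a $V^g$-module is effected by the same vertex-operator modes $x_0$, the inclusion of centralisers is immediate, and the remainder is bookkeeping.
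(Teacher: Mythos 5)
Your proposal is correct and follows essentially the same route as the paper: the paper's proof likewise combines \autoref{lem:extcart} and \autoref{lem:cartcent} with the inclusion $C_{\tilde{\g}_i}(\hh_0)\subseteq C_{V(g^i)_1}(\hh_0)$ coming from $W^{(i,0)}\subseteq V(g^i)$. Your write-up merely spells out the compatibility of the $\hh_0$-module actions, which the paper treats as immediate.
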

\begin{proof}
Clearly, $C_{\tilde{\g}_i}(\hh_0)\subseteq C_{V(g^i)_1}(\hh_0)$ since $W^{(i,0)}\subseteq V(g^i)$. Then \autoref{lem:cartcent} implies the assertion.
\end{proof}
\begin{rem}\label{rem:rankcrit}
Note that $C_{V(g^i)_1}(\hh_0)=\{0\}$ for all $i\in\Z_n\setminus\{0\}$ with $(i,n)=1$ is a necessary criterion for the orbifold rank condition to be satisfied.
\end{rem}

%%%%%%%%%%%%%%%%%%%%%%%%%%%%%%%%%%%
%%%%%%%%%%%%%%%%%%%%%%%%%%%%%%%%%%%

\subsection{Lattice \VOA{}s}\label{sec:dimranklat}
We now specialise to the case where $V=V_L$ is the holomorphic \voa{} associated with a positive-definite, even, unimodular lattice $L$ and let $g$ be an automorphism of $V_L$ of order~$n$ and type~$0$. By \autoref{thm:latconj} we may assume that $g$ is of the form $g=\hat{\nu}\,\e^{-(2\pi\i)h(0)}$ for some $\nu\in\O(L)$ and some $h\in\pi_\nu(L\otimes_\Z\Q)$. The minus sign in the exponent is a convention related to the sign convention for twisted modules (see the remark at the beginning of \autoref{sec:lataut}).

\medskip

In order to apply the dimension formula in \autoref{cor:dimform} we have to compute the conformal weights of the twisted $V_L$-modules, which can be described by combining \cite{DL96,BK04} with Section~5 of \cite{Li96}. Since $L$ is unimodular, $V_L$ is holomorphic and there is a unique $g$-twisted $V_L$-module $V_L(g)$. Suppose that $\nu$ has order~$m$ (necessarily dividing $n$) and \emph{Frame shape} (or \emph{cycle shape}) $\prod_{t\mid m}t^{b_t}$ with $b_t\in\Z$, i.e.\ the extension of $\nu$ to $\h$ has characteristic polynomial $\prod_{t\mid m}(x^t-1)^{b_t}$. Note that $\sum_{t\mid m}tb_t=\rk(L)$. Then the conformal weight of $V_L(g)$ is given by
\begin{equation*}
\rho(V_L(g))=\frac{1}{24}\sum_{t\mid m}b_t(t-1/t)+\min_{\alpha\in\pi_\nu(L)+h}\langle\alpha,\alpha\rangle/2\geq0.
\end{equation*}
The number
\begin{equation*}
\rho_\nu:=\frac{1}{24}\sum_{t\mid m}b_t(t-1/t)
\end{equation*}
is called the \emph{vacuum anomaly} of $V_L(g)$. Note that $\rho(V_L(g))>0$ for $g\neq\id$, i.e.\ any \fpvosa{} $V_L^g$ of a lattice \voa{} $V_L$ satisfies the positivity condition.
% True also if the lattice is not unimodular.
% Note that $\rho_\nu$ is positive for $\nu\neq\id$.

\medskip

We shall apply the rank criterion from \autoref{sec:rankcrit} to $V_L$. A Cartan subalgebra of $\g=(V_L)_1$ is given by $\hh=\{h(-1)\otimes\ee_0\mid h\in\h\}\cong\h$. Then \autoref{lem:fixcart} becomes:
\begin{lem}\label{lem:fixiscart}
The inequality $\rk((V_L^g)_1)\geq\dim(\h^\nu)=\rk(L^\nu)$ holds; with equality if and only if
\begin{equation*}
\{h(-1)\otimes\ee_0\mid h\in\h^\nu\}\cong\h^\nu
\end{equation*}
is a Cartan subalgebra of $\g_0=(V_L^g)_1$.
\end{lem}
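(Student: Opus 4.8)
The plan is to obtain this as the direct specialisation of \autoref{lem:fixcart} to $\g=(V_L)_1$ with the standard Cartan subalgebra $\hh=\{h(-1)\otimes\ee_0\mid h\in\h\}\cong\h$ introduced in \autoref{sec:lat}. Since $g$ has finite order $n$, its restriction to $\g=(V_L)_1$ satisfies $g^n=\id$ and realises $\g$ as a $\Z_n$-graded reductive Lie algebra with $\g_0=\g^g=(V_L^g)_1$, exactly the setting of \autoref{lem:fixcart}. The only genuine task is to identify the fixed-point space $\hh^g$ under $g=\hat\nu\,\e^{-(2\pi\i)h(0)}$ with $\h^\nu$; once this is done, both the inequality and the equivalence are immediate from \autoref{lem:fixcart}.

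First I would determine how $g$ acts on $\hh$. Because $\hh$ lies inside the graded component $M_{\hat\h}(1)\otimes\ee_0$ indexed by $\alpha=0$, the formula for the action of $T$ on the $L$-graded components of $V_L$ recorded in \autoref{sec:autlat} shows that the inner factor $\e^{-(2\pi\i)h(0)}$ acts on $\hh$ by the scalar $\e^{-(2\pi\i)\langle h,0\rangle}=1$, that is, trivially. The standard lift $\hat\nu$ acts on $M_{\hat\h}(1)$ as $\nu$ in the obvious way, so $g$ restricted to $\hh$ is precisely $\nu$ under the identification $\hh\cong\h$. Consequently $\hh^g=\{h(-1)\otimes\ee_0\mid h\in\h^\nu\}\cong\h^\nu$, whence $\dim(\hh^g)=\dim(\h^\nu)=\rk(L^\nu)$, the last equality holding since $\h^\nu=L^\nu\otimes_\Z\C$.

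Then I would invoke \autoref{lem:fixcart} directly: the inequality $\dim(\hh^g)\leq\rk(\g_0)$ becomes $\rk(L^\nu)\leq\rk(\g_0)$, with equality if and only if $\hh^g\cong\h^\nu$ is a Cartan subalgebra of $\g_0=(V_L^g)_1$, which is exactly the assertion of the lemma.

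The hard part is essentially nonexistent here, as all the content is supplied by \autoref{lem:fixcart}; the statement is a bookkeeping translation into the lattice language. The one point warranting care is confirming that the inner factor $\e^{-(2\pi\i)h(0)}$ acts as the identity on all of $\hh$ and not merely on $\ee_0$, but this is immediate from the explicit action of $T$ on the $L$-graded pieces, so I do not anticipate any real obstacle.
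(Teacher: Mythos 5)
Your proposal is correct and matches the paper's treatment: the paper gives no separate proof, simply presenting the lemma as what \autoref{lem:fixcart} ``becomes'' for the standard Cartan subalgebra $\hh=\{h(-1)\otimes\ee_0\mid h\in\h\}$ of $(V_L)_1$. Your explicit verification that $\e^{-(2\pi\i)h(0)}$ acts trivially on the $\alpha=0$ graded piece, so that $\hh^g\cong\h^\nu$, is exactly the small bookkeeping step the paper leaves implicit.
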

% The following comment is the lattice analogue of the comment after \autoref{lem:fixcart} and \autoref{rem:fixcart}:
We note that if $\nu\in H_\Delta\subseteq\O(L)$ (see \autoref{sec:autlat}), then equality in the lemma holds. This is in particular true for the representatives of the conjugacy classes in \autoref{thm:latconj}. This follows from the corresponding statement after \autoref{rem:fixcart}. % and the fact that $\tilde\mu T=\hat\nu T$ (see proof of \autoref{thm:latconj})

To formulate \autoref{prop:rankcrit} in the special case of lattice \voa{}s, recall from \autoref{sec:autlat} that
\begin{equation*}
\hat{\nu}^i=\widehat{(\nu^i)}\e^{-(2\pi\i)s_i(0)}
\end{equation*}
for some vector $s_i\in(1/2)(L^{\nu^i})'\subseteq\h^{\nu^i}$ where $\widehat{(\nu^i)}$ is a standard lift of $\nu^i$. Consequently,
\begin{equation*}
g^i=\widehat{(\nu^i)}\e^{-(2\pi\i)(s_i+ih)(0)}
\end{equation*}
and the unique irreducible $g^i$-twisted $V_L$-module \cite{DL96,BK04,Li96} is of the form
\begin{equation*}
V_{L}(g^i)=M_{\hat\h}(1)[\nu^i]\otimes\ee_{s_i+ih}\C[\pi_{\nu^i}(L)]\otimes\C^{d(\nu^i)}
\end{equation*}
with a grading by the lattice coset $s_i+ih+\pi_{\nu^i}(L)$ and defect $d(\nu^i)\in\Ns$.
\begin{prop}[Rank Criterion]\label{prop:latrankcrit}
Suppose that $\rk(V_L^g)_1=\rk(L^\nu)$ (i.e.\ that $\hh_0:=\{h(-1)\otimes\ee_0\mid h\in\h^\nu\}\cong\h^\nu$ is a Cartan subalgebra of $(V_L^g)_1$). Furthermore assume that $\pi_\nu(s_i)+ih\notin\pi_\nu(L)$ for all $i\in\Z_n\setminus\{0\}$. Then the orbifold rank condition $\rk(V^{\orb(g)}_1)=\rk((V_L^g)_1)$ is satisfied.
% (i.e.\ $\hh_0$ is also a Cartan subalgebra of $V^{\orb(g)}_1$).
\end{prop}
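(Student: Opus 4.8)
The plan is to obtain this as a direct application of the general rank criterion in \autoref{prop:rankcrit} to the concrete candidate $\hh_0\cong\h^\nu$. The first hypothesis, $\rk((V_L^g)_1)=\rk(L^\nu)$, is exactly the condition in \autoref{lem:fixiscart} guaranteeing that $\hh_0=\{h(-1)\otimes\ee_0\mid h\in\h^\nu\}$ really is a Cartan subalgebra of $(V_L^g)_1$, so this part requires nothing new. The entire content then reduces to verifying the vanishing hypothesis of \autoref{prop:rankcrit}, namely $C_{V_L(g^i)_1}(\hh_0)=\{0\}$ for every $i\in\Z_n\setminus\{0\}$.

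For this I would work directly with the coset grading of the twisted module $V_L(g^i)=M_{\hat\h}(1)[\nu^i]\otimes\ee_{s_i+ih}\C[\pi_{\nu^i}(L)]\otimes\C^{d(\nu^i)}$ by $s_i+ih+\pi_{\nu^i}(L)$. The key point is that an element $a(-1)\otimes\ee_0\in\hh_0$ with $a\in\h^\nu$ acts on $V_L(g^i)$ through its zero mode $a(0)$, which is diagonal for the coset grading: on the component graded by $\beta$ it acts by the scalar $\langle a,\beta\rangle=\langle a,\pi_\nu(\beta)\rangle$, since $a\in\h^\nu$ and $a(0)$ commutes with all twisted Heisenberg creation operators and acts trivially on the multiplicity space $\C^{d(\nu^i)}$. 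Writing a hypothetical nonzero $v\in C_{V_L(g^i)_1}(\hh_0)$ as a finite sum of coset-homogeneous components and using that distinct components are linearly independent, the centralising condition forces $\langle a,\beta\rangle=0$ for all $a\in\h^\nu$ for each $\beta$ occurring in $v$; by non-degeneracy of the form on $\h^\nu$ this means $\pi_\nu(\beta)=0$.

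It then remains to show that no $\beta$ in the coset $s_i+ih+\pi_{\nu^i}(L)$ satisfies $\pi_\nu(\beta)=0$, which is precisely where the second hypothesis enters. The one clean ingredient to record first is the identity $\pi_\nu\circ\pi_{\nu^i}=\pi_\nu$, valid because $\h^\nu\subseteq\h^{\nu^i}$ and both maps are orthogonal projections onto nested subspaces; in particular $\pi_\nu(\pi_{\nu^i}(L))=\pi_\nu(L)$. Applying $\pi_\nu$ to the coset and using $\pi_\nu(h)=h$ (as $h\in\h^\nu$) gives $\pi_\nu(s_i+ih+\pi_{\nu^i}(L))=\pi_\nu(s_i)+ih+\pi_\nu(L)$, so a $\beta$ with $\pi_\nu(\beta)=0$ exists if and only if $\pi_\nu(s_i)+ih\in\pi_\nu(L)$ (the lattice being a group). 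The assumption $\pi_\nu(s_i)+ih\notin\pi_\nu(L)$ for all $i\neq0$ rules this out, yielding $C_{V_L(g^i)_1}(\hh_0)=\{0\}$, so \autoref{prop:rankcrit} applies and gives $\rk(V^{\orb(g)}_1)=\rk((V_L^g)_1)$.

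I do not expect a serious obstacle: the proof is a reduction to the general criterion whose only genuine content is the translation between the Cartan centraliser and the coset grading. The step warranting the most care is the projection bookkeeping --- the identity $\pi_\nu\pi_{\nu^i}=\pi_\nu$ and the fact that $\langle a,\beta\rangle$ only sees $\pi_\nu(\beta)$. I would also note that, because the relevant zero-mode eigenvalues are already forced to be nonzero, the vanishing in fact holds on the full twisted module $V_L(g^i)$, not merely on its weight-one part, so the weight-one restriction in \autoref{prop:rankcrit} is not needed for this implication.
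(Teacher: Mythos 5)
Your proposal is correct and follows essentially the same route as the paper: invoke \autoref{prop:rankcrit} with $\hh_0\cong\h^\nu$, compute the action of the zero modes $k(0)$, $k\in\h^\nu$, on the coset-graded components of $V_L(g^i)$, and use non-degeneracy of the form on $\h^\nu$ together with the hypothesis $\pi_\nu(s_i)+ih\notin\pi_\nu(L)$ to conclude that the centraliser vanishes. Your closing observation that the vanishing holds on all of $V_L(g^i)$, not just its weight-one part, is also made in the paper's proof, which first deduces $C_{V_L(g^i)}(\hh_0)=\{0\}$ and then restricts.
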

\begin{proof}
% From \autoref{lem:fixiscart} it follows that $\hh_0:=\{h(-1)\otimes\ee_0\mid h\in\h^\nu\}\cong\h^\nu$ is a Cartan subalgebra of $(V_L^g)_1$.
Let $v=k(-1)\otimes\ee_0$ with $k\in\h^\nu$ be in the Cartan subalgebra $\hh_0$. Let $0\neq w\in V_{L}(g^i)$ be in the homogeneous space of degree $s_i+ih+\beta$ for the lattice coset grading for some $\beta\in\pi_{\nu^i}(L)$. Then
\begin{equation*}
v\cdot w=v_0w=k(0)w=\langle k,s_i+ih+\beta\rangle w.
\end{equation*}
Now suppose that $w\in C_{V_L(g^i)}(\hh_0)$, i.e.\ that
\begin{equation*}
0=\langle k,s_i+ih+\beta\rangle=\langle k,\pi_\nu(s_i)+ih+\pi_\nu(\beta)\rangle
\end{equation*}
for all $k\in\h^\nu$. Since $\langle\cdot,\cdot\rangle$ is non-degenerate on $\h^\nu$, this is equivalent to $\pi_\nu(s_i)+ih+\pi_\nu(\beta)=0$. If $\pi_\nu(s_i)+ih\notin\pi_\nu(L)$, then this equation cannot be satisfied and hence $C_{V_L(g^i)}(\hh_0)=\{0\}$. In particular, $C_{V_L(g^i)_1}(\hh_0)=\{0\}$ and by \autoref{prop:rankcrit} the assertion follows.
\end{proof}
In the above proposition we demanded that $C_{V_L(g^i)}(\hh_0)=\{0\}$. Of course, we can weaken the assumption by only requiring that $C_{V_L(g^i)_1}(\hh_0)=\{0\}$, resulting in the slightly more technical condition that there is no $\alpha\in s_i+ih+\pi_{\nu^i}(L)$ with $\pi_\nu(\alpha)=0$ and $\rho_{\nu^i}+\langle\alpha,\alpha\rangle/2+k(i,m)/m=1$ for some $k\in\N$.

It will turn out that the automorphisms in \autoref{sec:cons} satisfy the assumptions of \autoref{prop:latrankcrit} and hence the orbifold rank condition so that we can easily determine the rank of the weight-one Lie algebra $V^{\orb(g)}_1$ in these cases.

\begin{rem}\label{rem:latrankcrit}
Reversely, if the orbifold rank condition $\rk((V_L^{\orb(g)})_1)=\rk((V_L^g)_1)$ is satisfied, then, by \autoref{rem:rankcrit},
% for all $i\in\Z_n\setminus\{0\}$ with $(i,n)=1$ there is no $\alpha\in s_i+ih+\pi_{\nu^i}(L)$ with $\pi_\nu(\alpha)=0$ and $\rho_{\nu^i}+\langle\alpha,\alpha\rangle/2+k(i,m)/m=1$ for some $k\in\N$.
$1-\rho_\nu\notin(1/m)\N$ or $s_i+ih\notin\pi_\nu(\Lambda)$ for all $i\in\Z_n\setminus\{0\}$ with $(i,n)=1$.
\end{rem}
% Note that for the $11$ special automorphisms of $\O(\Lambda)$ we have $\rho_\nu=1-1/|\hat\nu|$ so that the above necessary orbifold rank condition is always satisfied in the case of order doubling, no matter how we choose $h$. (But that's not really a helpful statement as this is only necessary but not sufficient.)

\begin{rem}\label{rem:orbiextremal}
Finally, note that if $L$ is of rank $24$, $\nu\in H_\Delta\subseteq\O(L)$ (which, as mentioned above, implies that $\hh_0$ is a Cartan subalgebra of $(V_L^g)_1$) and $\nu$ has order $m=n$, then the condition $\pi_\nu(s_i)+ih\notin\pi_\nu(L)$ for all $i\in\Z_n\setminus\{0\}$ in \autoref{prop:latrankcrit} does not only imply the orbifold rank condition but also $\rho(V_L(g^i))\geq1$ for all $i\in\Z_n\setminus\{0\}$ and hence the the extremality of $g$ by \autoref{cor:dimform}. Indeed, since $g$ is of type~$0$, $\rho(V_L(g^i))\in((i,n)/n)\Z$, and $\rho_{\nu^i}\geq 1-(i,m)/m=1-(i,n)/n$ for all $\nu\in H_\Delta$ by an explicit calculation of all vacuum anomalies $\rho_\nu$. Then, $s_i+ih\notin\pi_{\nu^i}(L)$ since $\pi_\nu(s_i)+ih\notin\pi_\nu(L)$ and $\rho(V_L(g^i))$ must be greater than $1-(i,n)/n$ and consequently at least $1$.
\end{rem}

%%%%%%%%%%%%%%%%%%%%%%%%%%%%%%%%%%%
%%%%%%%%%%%%%%%%%%%%%%%%%%%%%%%%%%%
%%%%%%%%%%   Section 4   %%%%%%%%%%
%%%%%%%%%%%%%%%%%%%%%%%%%%%%%%%%%%%
%%%%%%%%%%%%%%%%%%%%%%%%%%%%%%%%%%%

\section{Schellekens' List}\label{sec:schellekens}
In this section we review the classification of \strathol{} \voa{}s. By work of Zhu~\cite{Zhu96} the central charge $c$ of such a \voa{} $V$ is a non-negative multiple of $8$ and its weight-one Lie algebra $V_1$ is reductive \cite{DM04b}.

The most interesting case is that of central charge $24$, in particular because of its connection to bosonic string theory \cite{Bor92}. In 1993 Schellekens proved that the weight-one space $V_1$ must be one of $71$ semisimple or abelian Lie algebras of rank at most $24$, which are listed in Table~1 of \cite{Sch93} and called \emph{Schellekens' list} (see also \cite{DM04,DM06b,Hoe17,EMS20a}).

Examples of such \voa{}s are the \voa{}s $V_N$ associated with the $24$ Niemeier lattices $N$, i.e.\ the positive-definite, even, unimodular lattices of rank $24$ (of which the Leech lattice~$\Lambda$ is the unique one without roots). These are exactly the cases in which the weight-one Lie algebra has rank~$24$ \cite{DM04b}. At the other end of the spectrum is the famous Moonshine module $V^\natural$ \cite{FLM88} with $V_1^\natural=\{0\}$.

If $V_1$ is one of the $69$ semisimple Lie algebras in central charge $24$, then the (simple) affine \voa{} $\langle V_1\rangle\cong L_{\hat\g_1}(k_1,0)\otimes\cdots\otimes L_{\hat\g_s}(k_s,0)$ generated by $V_1$ (with levels $k_i\in\Ns$) is a full \vosa{} of $V$.
% This is not always true for central charges greater than $24$.
Since $ L_{\hat\g_1}(k_1,0)\otimes\cdots\otimes L_{\hat\g_s}(k_s,0)$ is rational,
% and the $L_0$-eigenspaces of $V$ are finite-dimensional
$V$ decomposes into the direct sum of finitely many irreducible $\langle V_1\rangle$-modules. Schellekens also showed that the isomorphism type of $V_1$ uniquely determines the \voa{} $\langle V_1\rangle$ and the decomposition of $V$ into irreducible $\langle V_1\rangle$-modules up to outer automorphisms of $V_1$.

\medskip

In a combined effort by numerous authors (see \cite{FLM88,DGM90,Don93,DGM96,Lam11,Miy13b,LS12a,LS15a,SS16,LS16a,EMS20a,LS16,Moe16,LL20} for the existence and \cite{DM04,LS15a,LS19,KLL18,LL20,EMS20b,LS20,LS20b} for the uniqueness) the following classification result was proved:
\begin{thm}[Classification]\label{thm:classalt}
Up to isomorphism there are exactly $70$ \strathol{} \voa{}s $V$ of central charge $24$ with $V_1\neq\{0\}$. Such a \voa{} is uniquely determined by the Lie algebra $V_1$.
\end{thm}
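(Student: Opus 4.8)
The plan is to establish both assertions—existence (every one of the $70$ non-zero entries of Schellekens' list occurs as some $V_1$) and uniqueness (the weight-one Lie algebra determines $V$ up to isomorphism)—within a single systematic framework built on cyclic orbifold constructions applied to the $24$ Niemeier lattice \voa{}s $V_N$. This replaces the case-by-case arguments underlying the cited literature with a uniform procedure in which the relevant invariants of the candidate \voa{}s are read off from lattice data.

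For the existence part, I would first produce a controlled supply of automorphisms $g=\hat\nu\,\e^{-(2\pi\i)h(0)}$ of the Niemeier lattice \voa{}s using the conjugacy-class description of \autoref{thm:latconj}, restricting to those whose order equals that of $\nu$ via \autoref{prop:sameorder} so that the search stays finite. For each such $g$ of type~$0$, the orbifold $V_N^{\orb(g)}$ is again \strathol{} of central charge $24$, and its weight-one dimension is computed by \autoref{cor:dimform} once one checks $\rho(V_L(g^i))\geq 1$ for all $i\neq 0\pmod n$; this inequality follows from the explicit expression for $\rho(V_L(g^i))$ in terms of the vacuum anomaly $\rho_{\nu^i}$ and the minimal norm in the relevant lattice coset. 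The rank of $(V_N^{\orb(g)})_1$ is then pinned down by the orbifold rank condition through \autoref{prop:latrankcrit}, so that both the dimension and the rank of the candidate weight-one Lie algebra are determined from lattice data alone. The crux is a finite but lengthy verification: running over a complete set of conjugacy-class representatives in the isometry groups $\O(N)$—where the $11$ relevant Frame shapes and the $226$ \good{} automorphisms enter—and matching the resulting data (dimension, rank, and the root-system decomposition read off from the twisted modules) against the $70$ non-zero Lie algebras on Schellekens' list, confirming that each is realised.

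For the uniqueness part, I would follow the reverse-orbifold strategy of \cite{LS19}. Given any \strathol{} \voa{} $V$ of central charge $24$ with $V_1\cong\g$ for a fixed non-zero $\g$ on the list, the goal is to exhibit a \good{} automorphism $g\in\Aut(V_N)$ for a suitable Niemeier lattice $N$ with $V\cong V_N^{\orb(g)}$. The idea is to locate inside $V$ an automorphism $\amgis$ of the appropriate order and type~$0$ whose inverse orbifold $V^{\orb(\amgis)}$ is a Niemeier lattice \voa{}; since orbifolding with $\amgis$ undoes orbifolding with $g$ (\autoref{sec:orb}), this simultaneously identifies $V$ with an orbifold of $V_N$ and, because the construction depends only on the algebraic conjugacy class of the relevant automorphism, forces any two holomorphic \voa{}s with the same $V_1$-structure to be isomorphic. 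Here one uses that $\g=V_1$ already determines the affine \vosa{} $\langle V_1\rangle\cong L_{\hat\g_1}(k_1,0)\otimes\cdots\otimes L_{\hat\g_s}(k_s,0)$ and the $\langle V_1\rangle$-module decomposition of $V$, so that the data needed to reconstruct $V$ are controlled by the Lie-algebra structure alone.

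The main obstacle, in both parts, is completeness rather than any isolated hard estimate. For existence, one must guarantee that the finite search over conjugacy classes genuinely covers all $70$ target Lie algebras, with no omissions and no spurious extra \voa{}s; this is a bookkeeping problem over the $24$ lattices, made tractable by the dimension and rank formulae but requiring exhaustive (and, as the authors note, computer-assisted) verification with \texttt{Magma}. For uniqueness, the delicate step is the de-orbifolding itself: proving that for each $\g$ one can always choose an inverse-orbifold automorphism whose orbifold is a lattice \voa{}, so that every holomorphic $V$ with $V_1\cong\g$ is driven back onto the same $V_N^{\orb(g)}$. This is precisely where the \cite{LS19} machinery, together with the extremality properties of the \good{} automorphisms, carries the argument.
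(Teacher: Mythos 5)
Your proposal follows essentially the same route as the paper's own systematic proof: existence via the \good{} automorphisms of the Niemeier lattice \voa{}s, with the dimension formula (\autoref{cor:dimform}) and rank criterion (\autoref{prop:latrankcrit}) identifying each weight-one Lie algebra by a finite, computer-assisted search (\autoref{sec:cons}), and uniqueness via the inverse-orbifold strategy of \cite{LS19}, building an inner automorphism $\amgis$ from the $\langle V_1\rangle$-module data alone and pinning down the inverse orbifold automorphism up to algebraic conjugacy (\autoref{sec:uniqueness}). The only caveat, which applies equally to the paper, is that the count of exactly $70$ also relies on Schellekens' list as an external input bounding the possible Lie algebras $V_1$.
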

While the zero Lie algebra is realised by the Moonshine module $V^\natural$, as of now it is not known whether $V^\natural$ is the unique \strathol{} \voa{} $V$ of central charge $24$ with $V_1=\{0\}$.

Unfortunately, the proof of the above theorem uses a variety of different methods and one might be led to believe that Schellekens' classification is largely sporadic. That this is not the case was shown in two independent papers by the authors of this text \cite{Hoe17,MS23} (see also \cite{CLM22}) who described different uniform constructions of the $70$ \strathol{} \voa{}s $V$ of central charge $24$ with $V_1\neq\{0\}$.

In both works it was observed that the $70$ \voa{}s fall into $11$ families, listed in \autoref{table:11}, associated with certain (algebraic) conjugacy classes of automorphisms in $\Co_0\cong\O(\Lambda)$, the automorphism group of the Leech lattice~$\Lambda$, or equivalently with certain lattice genera (see \autoref{sec:orbit}). We note that the $11$ conjugacy classes are uniquely characterised by their Frame shapes. A complete list of the \voa{}s in each family can be found in the appendix.
% We observe that that these $11$ automorphisms have Frame shapes with only non-negative exponents and that their vacuum anomalies satisfy $\rho_\nu=1-1/|\hat\nu|$ where $|\hat\nu|$ is the order of a standard lift of $\nu$, which equals $|\nu|$ or, if order doubling occurs, $2|\nu|$.

\begin{table}[ht]\caption{The $11$ Frame shapes and lattice genera associated with the \strathol{} \voa{}s $V$ of central charge $24$ with $V_1\neq\{0\}$.}
\renewcommand{\arraystretch}{1.15}
\begin{tabular}{c|l|c|l|r|r}
Family & $\O(\Lambda)$ & Ord.\ Doubl.\ & Lattice Genus & Class No.\ & No.\ of VOAs\ \\\hline
A & $\sAA$ & no  & $\gAA$ & 24 & 24\\
B & $\sBB$ & no  & $\gBB$ & 17 & 17\\
C & $\sCC$ & no  & $\gCC$ &  6 &  6\\
D & $\sDD$ & yes & $\gDD$ &  2 &  9\\
E & $\sEE$ & no  & $\gEE$ &  5 &  5\\
F & $\sFF$ & no  & $\gFF$ &  2 &  2\\
G & $\sGG$ & no  & $\gGG$ &  2 &  2\\
H & $\sHH$ & no  & $\gHH$ &  1 &  1\\
I & $\sII$ & no  & $\gII$ &  1 &  1\\
J & $\sJJ$ & yes & $\gJJ$ &  1 &  2\\
K & $\sKK$ & yes & $\gKK$ &  1 &  1%\\\hline
%Sum&&&&70
\end{tabular}
\label{table:11}
\end{table}

\medskip

In \autoref{sec:cons}, we add another systematic construction of the $70$ \strathol{} \voa{}s $V$ of central charge $24$ with $V_1\neq\{0\}$, namely as orbifold constructions associated with the $24$ Niemeier lattice \voa{}s $V_N$.

Moreover, we use the corresponding inverse orbifold constructions to systematically show that each such \voa{} $V$ is indeed uniquely determined by the Lie algebra $V_1$ (see \autoref{sec:uniqueness}).

%%%%%%%%%%%%%%%%%%%%%%%%%%%%%%%%%%%
%%%%%%%%%%%%%%%%%%%%%%%%%%%%%%%%%%%

\subsection{Leech-Lattice Approach}\label{sec:orbit}
To provide some context, we explain the constructions in \cite{Hoe17} and \cite{MS23}, beginning with the former. Due to recent advancements we are able to formulate some of the conjectural statements in \cite{Hoe17} as theorems.

\medskip

Let $V$ be a \strat{} \voa{}. Then the Lie algebra $V_1$ is reductive \cite{DM04b}.
% And the rank of $\g$ is bounded by the effective central charge.
Let $\g=\g_1\oplus\cdots\oplus\g_s$ be the semisimple part of $V_1$. Then the (in general non-full) \vosa{} $\langle\g\rangle$ of $V$ generated by $\g$ is isomorphic to the simple affine \voa{} $\langle\g\rangle\cong L_{\hat\g_1}(k_1,0)\otimes\cdots\otimes L_{\hat\g_s}(k_s,0)$ with positive integer levels $k_i\in\Ns$ \cite{DM06b}. This entails that $\langle\cdot,\cdot\rangle=k_i(\cdot,\cdot)$ restricted to $\g_i$ where $\langle\cdot,\cdot\rangle$ is the invariant bilinear form on $V$ normalised such that $\langle\vac,\vac\rangle=-1$ and $(\cdot,\cdot)$ the invariant bilinear form on $\g_i$ normalised such that the long roots have norm $2$. Moreover, it is well known that $\langle\g\rangle$ contains the lattice \voa{} $V_{Q_\g}$
% We take the standard (i.e.\ non-shifted) lattice Virasoro vector. It does not have to coincide with the Virasoro vector of $\langle\g\rangle$ (see below for more details).
(in general a non-full \vosa{} of~$\langle\g\rangle$) where $Q_\g:=\sqrt{k_1}\,Q^l_1\oplus\cdots\oplus\sqrt{k_s}\,Q^l_s$ and $Q^l_i$ is the lattice spanned by the long roots of $\g_i$ normalised to have squared norm $2$ (see, e.g., Corollary~5.7 in \cite{DM06b}).

Let $\hh\cong Q_\g\otimes_\Z\C$ be the standard Cartan subalgebra of $(V_{Q_\g})_1\subseteq\g$, which is also a Cartan subalgebra of $\g$.
% The Lie algebras $(V_{Q_\g})_1$ and $\g$ have the same rank but there former is always simply-laced (and has an abelian component if some levels are $>1$) while the latter might not always be simply-laced (but is always semisimple).
Then the \vosa{} $H:=\langle\hh\rangle$ of $V$ generated by $\hh$ is a Heisenberg \voa{} and $H$ is a full \vosa{} of $V_{Q_\g}$.
% But in general not a full \vosa{} of $V$.
We then consider the commutant (or centraliser) $W:=\Com_V(H)=\Com_V(V_{Q_\g})$.
% This is equal since both have the same Virasoro vector (see \cite{FZ92}).
The double commutant $\Com_V(W)=\Com_V(\Com_V(H))$ must be a lattice \voa{}, extending $V_{Q_\g}$, i.e.\ $\Com_V(W)=V_L$ for some lattice extension $L\supseteq Q_\g$, and $W$ is \strat{} by Section~4.3 in \cite{CKLR19}. Note that $V_L$ and $W$ form a dual (or Howe or commuting) pair in $V$, i.e.\ they are their mutual commutants, and $V$ is isomorphic to a (full) \voa{} extension of $W\otimes V_L$.

Now, additionally suppose that $V$ is holomorphic. Then the central charge $c$ of $V$ is a non-negative multiple of $8$, and the rank $r$ of $V_1$ is bounded from above by~$c$. Let us also assume that $V_1=\g$ is semisimple, i.e.\ has no abelian part. Then $\hh$ is a Cartan subalgebra of $V_1$ and $H\subseteq V_{Q_\g}\subseteq V_L$ all have central charge $r$ while $W$ has central charge $c-r$. By construction, $W_1=\{0\}$. The main results in \cite{CKM22,Lin17} and the fact that $V$ is holomorphic imply that there is a braid-reversed equivalence between the module categories of $W$ and~$V_L$. More concretely, since the irreducible $V_L$-modules, the non-integral parts of their conformal weights and the fusion rules are parametrised by the finite quadratic space $L'/L$, the irreducible $W$-modules are also simple currents and along with the non-integral parts of their conformal weights and the fusion rules parametrised by some finite quadratic space $R(W)$, and there is an isometry $\tau\colon L'/L\longrightarrow \overline{R(W)}$ such that
\begin{equation*}
V\cong\bigoplus_{\alpha+L\in L'/L}W_{\tau(\alpha+L)}\otimes V_{\alpha+L},
\end{equation*}
i.e.\ $V$ is a simple-current extension of $W\otimes V_L$. Here, $\overline{R(W)}$ denotes the finite quadratic space obtained from $R(W)$ by multiplying the quadratic form by $-1$.

We specialise to central charge $c=24$. In that case $V_1$ is one of the $71$ Lie algebras in Schellekens' list \cite{Sch93} and hence either zero, abelian of rank $r=24$ or semisimple \cite{DM04}. We assume that $V_1\neq\{0\}$ or equivalently that $r>0$. If $V_1$ is abelian, then $V$ is isomorphic to the Leech lattice \voa{} $V_\Lambda$ \cite{DM04b} and the results of the above paragraph are true with $L=\Lambda$; so we may assume that $V_1$ is semisimple. Then $\langle V_1\rangle$ is actually a full \vosa{} of $V$.
% This is related to the ``lowest trace identity'' by Schellekens, which implies that each $L_{\hat\g_i}(k_i,0)$ contributes $24\dim(\g_i)/\dim(V_1)$ to the central charge so that $\langle V_1\rangle$ has central charge $24$.
It is observed in~\cite{Hoe17} that Schellekens' classification implies that the lattice $L=L_\g$, called \emph{orbit lattice}, belongs to one of the $11$ genera listed in~\autoref{table:11} and labelled by letters A to K.

Furthermore, is shown in \cite{Hoe17}, Theorem~4.7, that $W$ has to be isomorphic to one of $11$ \voa{}s of the form $V_{\Lambda_\nu}^{\hat\nu}$ for the conjugacy classes $\nu\in\O(\Lambda)$ listed in \autoref{table:11} and corresponding to the genus of the lattice $L_\g$. Here $\Lambda_\nu=(\Lambda^\nu)^\bot$ denotes the coinvariant lattice and $\hat\nu$ a lift of $\nu$ to $\Aut(V_{\Lambda_\nu})$. Note that since $\nu$ acts fixed-point freely on $\Lambda_\nu$, all its lifts are standard and conjugate. The result in \cite{Hoe17} depended on the conjecture that $R(V_{\Lambda_\nu}^{\hat\nu})\cong R(W)\cong \overline{R(V_{L_\g})}$, which was proved in full in \cite{Lam20} by computing $R(V_{\Lambda_\nu}^{\hat\nu})$, with partial results in \cite{Moe16} and by others.
% This also fixes the $11$ possible genera of the lattice $L$ (see \autoref{table:11}), which must have rank~$r$ and discriminant form $L'/L\cong\overline{R(W)}$.

\smallskip In summary:
\begin{thm}[Leech-Lattice Picture, \cite{Hoe17}]\label{thm:hoehn}
Let $V$ be a \strathol{} \voa{} of central charge $24$ with $V_1\neq\{0\}$. Then $V$ is isomorphic to a simple-current extension of
\begin{equation*}
V_{\Lambda_\nu}^{\hat\nu}\otimes V_{L_\g}
\end{equation*}
where $L_\g\supseteq Q_\g$ is a certain lattice determined by $\g$ in one of the $11$ genera listed in \autoref{table:11} and $\nu$ is from the corresponding conjugacy class in $\O(\Lambda)$. Moreover, $V_{\Lambda_\nu}^{\hat\nu}$ and $V_{L_\g}$ form a dual pair in $V$.
\end{thm}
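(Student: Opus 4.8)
The plan is to realise $V$ as a simple-current extension of a dual pair whose two factors are a lattice \voa{} and a cyclic orbifold of a coinvariant-lattice \voa{}. First I would extract from the reductive Lie algebra $V_1$ its semisimple part $\g=\g_1\oplus\cdots\oplus\g_s$ and form the \vosa{} $\langle\g\rangle$ it generates; by \cite{DM06b} this is an affine \voa{} $L_{\hat\g_1}(k_1,0)\otimes\cdots\otimes L_{\hat\g_s}(k_s,0)$ at positive integer levels $k_i$, and it contains the lattice \voa{} $V_{Q_\g}$ with $Q_\g=\sqrt{k_1}\,Q_1^l\oplus\cdots\oplus\sqrt{k_s}\,Q_s^l$. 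Fixing a Cartan subalgebra $\hh\subseteq V_{Q_\g}$, the Heisenberg \voa{} $H=\langle\hh\rangle$ is full in $V_{Q_\g}$, and I would pass to the commutant $W:=\Com_V(H)=\Com_V(V_{Q_\g})$ together with the double commutant $\Com_V(W)$. The double commutant is forced to be a lattice \voa{} $V_L$ for some $L\supseteq Q_\g$, and $W$ is \strat{} by \cite{CKLR19}, so that $V_L$ and $W$ form a dual pair and $V$ is a full extension of $W\otimes V_L$.

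The next step is to convert this dual pair into an explicit simple-current decomposition. Assuming $V$ holomorphic and $V_1$ semisimple, $H\subseteq V_{Q_\g}\subseteq V_L$ all have central charge $r=\rk(V_1)$ while $W$ has central charge $24-r$ with $W_1=\{0\}$. Invoking the braid-reversing equivalence between the module categories of $W$ and $V_L$ from \cite{CKM19,Lin17} together with the holomorphy of $V$, the irreducible $W$-modules are simple currents parametrised by a finite quadratic space $R(W)$, and there is an isometry $\tau\colon L'/L\to\overline{R(W)}$ yielding
\begin{equation*}
V\cong\bigoplus_{\alpha+L\in L'/L}W_{\tau(\alpha+L)}\otimes V_{\alpha+L}.
\end{equation*}
Specialising to central charge $24$ and using Schellekens' list \cite{Sch93,DM04}, $V_1$ is zero, abelian of rank $24$, or semisimple; the abelian case gives $V\cong V_\Lambda$ with $L=\Lambda$ directly, leaving the semisimple case.

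The hard part will be identifying the two factors explicitly, and I expect the characterisation of $W$ to be the main obstacle. On the lattice side, inspecting Schellekens' data shows that the orbit lattice $L=L_\g$ always falls into one of the $11$ genera of \autoref{table:11}; this is a finite check across the $69$ semisimple entries. On the commutant side one must pin down the \strat{} \voa{} $W$ with central charge $24-r$ and $W_1=\{0\}$ from its module category alone. Matching $\overline{R(V_{L_\g})}\cong R(W)$ against the quadratic space $R(V_{\Lambda_\nu}^{\hat\nu})$ of an orbifold of a coinvariant-lattice \voa{}, for $\nu$ in the conjugacy class attached to the relevant genus, should force $W\cong V_{\Lambda_\nu}^{\hat\nu}$; the genuinely nontrivial input is the computation of $R(V_{\Lambda_\nu}^{\hat\nu})$, which was conjectured in \cite{Hoe17} and established in full in \cite{Lam19}. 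Since $\nu$ acts fixed-point-freely on $\Lambda_\nu=(\Lambda^\nu)^\bot$, all its lifts are standard and mutually conjugate, so $V_{\Lambda_\nu}^{\hat\nu}$ is well defined; assembling these identifications then produces the asserted dual pair and simple-current extension.
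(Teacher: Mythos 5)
Your proposal follows essentially the same route as the paper's own account in \autoref{sec:orbit}: the commutant/double-commutant construction of the dual pair $W\otimes V_L$ starting from $\langle\g\rangle\supseteq V_{Q_\g}$, the simple-current decomposition via the braid-reversing equivalence of \cite{CKM19,Lin17}, the identification of $L=L_\g$ within the $11$ genera from Schellekens' classification, and the identification of $W$ with $V_{\Lambda_\nu}^{\hat\nu}$ using the computation of $R(V_{\Lambda_\nu}^{\hat\nu})$ established in \cite{Lam19}. The only step you state more loosely than the paper is the last one, where the paper defers to Theorem~4.7 of \cite{Hoe17} for the fact that $W$ must be one of the eleven orbifold \voa{}s rather than deducing this from the matching of quadratic spaces alone, but the underlying argument is the same.
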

Since every \voa{} of the form $V_{\Lambda_\nu}^{\hat\nu}\otimes V_L$ where $L$ is in the genus associated with $\nu$ may be extended to a \strathol{} \voa{} of central charge $24$, the lattices $L_\g$ where $\g$ runs through the Lie algebras on Schellekens' list exhaust all isomorphism classes of lattices in the $11$ genera.

\medskip

The existence of a \strathol{} \voa{} $V$ of central charge $24$ with $V_1\cong\g$ for each of the $70$ non-zero weight-one Lie algebras~$\g$ on Schellekens' list \cite{Sch93} (see \autoref{thm:classalt}) is then proved by
% associating with each $\g$ an explicit lattice $L_\g$ from one of the $11$ genera in \autoref{table:11} and
realising $V$ as holomorphic extension of $V_{\Lambda_\nu}^{\hat\nu}\otimes V_{L_\g}$
% where the Frame shape of $\nu$ matches the genus of $L_\g$
(see Theorem~4.4 in \cite{Hoe17}).
% By explicit classification one can verify that the lattices $L_\g$ indeed exhaust all isomorphism classes of lattices in their respective genera (see Theorem~3.4 in \cite{Hoe17}).

In all families except D and J the lattices $L_\g$ are non-isomorphic for different Lie algebras $\g$. For genera D and J the Lie algebra $\g=V_1$ (and thus the \voa{} $V$) additionally depends on the choice of the isometry $\tau\colon L_\g'/L_\g\longrightarrow\overline{R(V_{\Lambda_\nu}^{\hat\nu})}$; see the last two columns in \autoref{table:11}.

\medskip

In principle, the approach in \cite{Hoe17} can also be used to prove the uniqueness of a \strathol{} \voa{} of central charge $24$ with a given Lie algebra $\g=V_1\neq\{0\}$ (see \autoref{thm:classalt}). By \autoref{thm:hoehn} it suffices to classify the holomorphic extensions of $V_{\Lambda_\nu}^{\hat\nu}\otimes V_{L_\g}$ for all orbit lattices in the $11$ genera. The general approach is described in \cite{Hoe17}, Theorem~4.1. Partial results are obtained in Section~4.3 of \cite{Hoe17}. Except for genera D and J one has to show that each orbit lattice $L_\g$ admits only one holomorphic extension of $V_{\Lambda_\nu}^{\hat\nu}\otimes V_{L_\g}$ up to isomorphism, i.e.\ that the choice of the isometry $\tau$ does not matter.

\medskip

Finally, note that if $V_1=\{0\}$ so that $H=V_{L_\g}\cong\C\,\vac$, then the above decomposition degenerates to $V\cong W\otimes\C\,\vac\cong W$ and becomes ineffective.

%%%%%%%%%%%%%%%%%%%%%%%%%%%%%%%%%%%
%%%%%%%%%%%%%%%%%%%%%%%%%%%%%%%%%%%

\subsection{Generalised-Deep-Hole Approach}\label{sec:gdh}
We first recall the notion of \gdh{} from \cite{MS23} and then explain how it was used in \cite{MS23} to systematically realise the Lie algebras on Schellekens' list by cyclic orbifold constructions starting from the Leech lattice \voa{} $V_\Lambda$.

\begin{defi}[\GDH{}, \cite{MS23}]\label{def:gendeephole}
Let $V$ be a \strathol{} \voa{} of central charge $24$ and $g\in\Aut(V)$ of finite order~$n$. We further assume that $V^g$ satisfies the positivity condition. Then $g$ is called a \emph{\gdh{}} of $V$ if $g$
\begin{enumerate}
\item has type~$0$,
\item is extremal (see \autoref{thm:dimform}) and % i.e.\ the upper bound in the dimension formula (see \autoref{thm:dimform}) is attained
\item satisfies the orbifold rank condition (see \autoref{sec:rankcrit}). % $\rk(V_1^g)=\rk(V_1^{\orb(g)})$ or, equivalently, the inverse orbifold automorphism restricts to an inner automorphism on $V_1^{\orb(g)}$.
\end{enumerate}
\end{defi}

Schellekens' list can be realised by orbifold constructions from the Leech lattice \voa{} $V_\Lambda$, thus also proving the existence part of \autoref{thm:classalt}:
\begin{thm}[Generalised-Deep-Hole Construction, \cite{MS23}]
Let $\g$ be one of the $71$ Lie algebras on Schellekens' list. Then there exists a \gdh{} $g\in\Aut(V_{\Lambda})$ such that the corresponding orbifold construction has weight-one Lie algebra $(V_\Lambda^{\orb(g)})_1\cong\g$.
\end{thm}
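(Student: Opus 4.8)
The plan is to realise each $\g$ directly as an orbifold of $V_\Lambda$ by an explicit automorphism. By \autoref{thm:latconj} every finite-order $g\in\Aut(V_\Lambda)$ is conjugate to $\hat\nu\,\e^{-(2\pi\i)h(0)}$ with $\nu\in\O(\Lambda)\cong\Co_0$ and $h\in\pi_\nu(\Lambda\otimes_\Z\Q)$, and since algebraically conjugate automorphisms yield isomorphic orbifolds (\autoref{sec:orb}) it suffices to range over representatives. I would organise the $71$ cases by the conjugacy class of $\nu$: the $70$ non-zero algebras are distributed among the $11$ classes of \autoref{table:11}, each class carrying a finite family of admissible holes $h$ (orbit representatives of $\h^\nu/\pi_\nu(\Lambda)$ under $C_{\O(\Lambda)}(\nu)$), while the zero algebra is handled separately. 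For $\nu=\id$ (family~A) the automorphism $g=\e^{-(2\pi\i)h(0)}$ is inner and its orbifold is again a holomorphic rank-$24$ lattice \voa{}; the admissible $h$ correspond to the deep holes of the Leech lattice (together with the trivial one), so the $24$ orbifolds are the $24$ Niemeier lattice \voa{}s, realising the abelian algebra $\C^{24}$ and the $23$ semisimple rank-$24$ Lie algebras. For $\g=0$ one instead takes $g=\hat\nu$ with $\nu=-\id$ of order $2$: then $((V_\Lambda)_1)^g=0$ and the dimension formula forces $\dim((V_\Lambda^{\orb(g)})_1)\le 24+c_2(2)\cdot24=0$, so the orbifold is the Moonshine module $V^\natural$.

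For the genuine families $\id\neq\nu$ one verifies the three conditions of \autoref{def:gendeephole}. The type of $g$ and the conformal weights of the twisted modules are read off from the lattice formula of \autoref{sec:dimranklat},
\begin{equation*}
\rho(V_\Lambda(g^i))=\rho_{\nu^i}+\min_{\alpha\in\pi_{\nu^i}(\Lambda)+s_i+ih}\langle\alpha,\alpha\rangle/2,
\end{equation*}
where $\hat\nu^i=\widehat{(\nu^i)}\,\e^{-(2\pi\i)s_i(0)}$. Choosing $h$ so that $g$ has type~$0$ and $\rho(V_\Lambda(g^i))\ge1$ for all $i\not\equiv0\pmod n$ makes $g$ extremal by \autoref{cor:dimform} and yields $\dim((V_\Lambda^{\orb(g)})_1)=24+\sum_{d\mid n}c_n(d)\dim(((V_\Lambda)_1)^{g^d})$. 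As noted in \autoref{rem:orbiextremal}, for these $\nu$ the single condition $\pi_\nu(s_i)+ih\notin\pi_\nu(\Lambda)$ for all $i\not\equiv0$ implies both the weight bound and, via \autoref{prop:latrankcrit}, the orbifold rank condition $\rk((V_\Lambda^{\orb(g)})_1)=\rk(((V_\Lambda)_1)^g)=\rk(\Lambda^\nu)$. Thus one arithmetic property of $h$ delivers all three \gdh{} axioms at once, together with the dimension and rank of the orbifold weight-one Lie algebra.

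It then remains to identify $(V_\Lambda^{\orb(g)})_1$ with the prescribed $\g$. Since dimension and rank alone do not separate all of Schellekens' algebras, finer data is needed: one analyses $(V_\Lambda^{\orb(g)})_1=\bigoplus_{i\in\Z_n}(W^{(i,0)})_1$ directly, extracting the Cartan part from $(V_\Lambda^g)_1$ and the additional root vectors and their levels from the twisted sectors $V_\Lambda(g^i)_1$. Equivalently, and more structurally, I would invoke Höhn's Leech-lattice picture (\autoref{thm:hoehn}): the fixed-point subalgebra $V_\Lambda^g$ realises a dual pair whose lattice factor is determined by $\nu$ and $h$, and the simple-current extension $V_\Lambda^{\orb(g)}$ is then forced to be the unique holomorphic extension attached in \autoref{thm:hoehn} to the corresponding genus, pinning down $\g$. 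Conceptually, by the inverse-orbifold duality of \autoref{sec:orb}, this is the statement that the target \voa{} carries an order-$n$ automorphism whose orbifold returns $V_\Lambda$.

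The main obstacle is not the verification of the three \gdh{} conditions, which becomes essentially automatic once the weight condition on $h$ holds, but the \emph{arithmetic existence and enumeration} of the holes. One must exhibit, for each of the $71$ targets, an actual $h$ inside the prescribed conjugacy class of $\nu$ that satisfies $\pi_\nu(s_i)+ih\notin\pi_\nu(\Lambda)$ for all $i\not\equiv0$ and produces exactly that $\g$, and then prove that the holes so obtained account for all $71$ cases and no more. This is an explicit, family-by-family computation in $\Co_0$ and in the Leech lattice---the vertex-algebraic refinement of the Conway--Parker--Sloane analysis of deep holes---and establishing this surjectivity onto Schellekens' list is where the real work lies.
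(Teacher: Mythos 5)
First, a point of reference: the paper does not prove this theorem at all --- it is quoted from \cite{MS19} as background for the Niemeier-lattice construction, so there is no in-text proof to compare against. Your overall strategy (organise the \gdh{}s by the conjugacy class of $\nu$ in $\Co_0$, handle $\nu=\id$ via the deep holes of $\Lambda$ and $\g=\{0\}$ via $\widehat{-\id}$, then identify the orbifold Lie algebras family by family) is indeed the strategy of \cite{MS19}, and your treatment of the two boundary cases is essentially correct.

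There is, however, a genuine gap in your central step. You reduce all three \gdh{} axioms to the single arithmetic condition $\pi_\nu(s_i)+ih\notin\pi_\nu(\Lambda)$ for $i\not\equiv0$, arguing via \autoref{rem:orbiextremal} that this forces $\rho(V_\Lambda(g^i))\geq1$ for all $i\not\equiv0$ and hence extremality by \autoref{cor:dimform}. But \autoref{rem:orbiextremal} carries the hypothesis that the order of $g$ equals the order $m$ of $\nu$, and this fails precisely for the Leech-lattice construction: as the paper stresses, the \gdh{}s of $V_\Lambda$ project to the $11$ Frame shapes of order at most $10$ but themselves have orders up to $46$. Worse, the paper explicitly records after \autoref{cor:dimform} that among the examples of \cite{MS19} there are extremal automorphisms which do \emph{not} satisfy $\rho(V(g^i))\geq1$ for all $i\not\equiv0$; so for at least some target Lie algebras the blanket weight bound you propose to impose on $h$ is not available, and extremality must instead be established by controlling the actual obstruction term (the twisted-module weight spaces with weights strictly between $0$ and $1$). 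Two secondary issues: your alternative identification of $(V_\Lambda^{\orb(g)})_1$ via \autoref{thm:hoehn} presumes that $V_{\Lambda_\nu}^{\hat\nu}\otimes V_{L_\g}$ has a unique holomorphic extension, which the paper only establishes partially and which genuinely fails to pin down $\g$ in genera D and J; and the surjectivity onto all $71$ Lie algebras is, by your own admission, deferred to an unexecuted case-by-case computation, so what you have is a programme rather than a proof.
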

% An explicit list of automorphisms is given and it is proved that they are all \gdh{}s.
In addition, it is proved:
\begin{thm}[Holy Correspondence, \cite{MS23}]
The cyclic orbifold construction $g\mapsto V_\Lambda^{\orb(g)}$ defines a bijection between the algebraic conjugacy classes of \gdh{}s $g$ in $\Aut(V_{\Lambda})$ with $\rk((V_\Lambda^g)_1)>0$ and the isomorphism classes of \strathol{} \voa{}s $V$ of central charge $24$ with $V_1\neq\{0\}$.
\end{thm}

Moreover, it is shown that the \gdh{}s of $V_\Lambda$ with $\rk((V_\Lambda^g)_1)>0$ project to the $11$ conjugacy classes in $\O(\Lambda)$ listed in \autoref{table:11} under $\Aut(V_\Lambda)\to\Aut(V_\Lambda)/K\cong\O(\Lambda)$ (in the case of the Leech lattice $\Lambda$ the Weyl group $W$ is trivial since $\Lambda$ has no roots), again grouping the $70$ \strathol{} \voa{}s $V$ of central charge $24$ with $V_1\neq\{0\}$
% (or equivalently the non-zero Lie algebras on Schellekens' list)
in the same way as in \cite{Hoe17} into the $11$ families A~to~K.

The exactly $38$ algebraic conjugacy classes of \gdh{}s $g$ in $V_\Lambda$ with $\rk((V_\Lambda^g)_1)=0$ all yield the Moonshine module $V^\natural$ with $V^\natural_1=\{0\}$ in the orbifold construction \cite{Car18}.

\medskip

The two pictures from \cite{Hoe17,MS23} are related as follows. First, note that for any automorphism $\nu\in\O(\Lambda)$, the fixed-point (or invariant) sublattice $\Lambda^\nu$ and the coinvariant sublattice $\Lambda_\nu=(\Lambda^\nu)^\bot$ are their mutual orthogonal complements in the Leech lattice $\Lambda$, and $\Lambda$ is a finite-index extension of $\Lambda^\nu\oplus\Lambda_\nu$. Hence, $V_{\Lambda^\nu}\otimes V_{\Lambda_\nu}$ is a dual pair in $V_\Lambda$, i.e.\ $V_{\Lambda^\nu}$ and $V_{\Lambda_\nu}$ are their mutual commutants in $V_\Lambda$.
% See, e.g., Proposition~1.2 in \cite{Ebe13}

Let $g\in\Aut(V_\Lambda)$ be of finite order and type~$0$ and let $V\cong V_\Lambda^{\orb(g)}$ be the corresponding orbifold construction. Up to conjugacy, $g$ is of the form $g=\hat\nu\,\e^{-(2\pi\i)h(0)}$ with $\nu\in\O(\Lambda)$ and $h\in\pi_\nu(\Lambda\otimes_\Z\Q)$. As such a $g$ acts on the two tensor factors of $V_{\Lambda^\nu}\otimes V_{\Lambda_\nu}$ separately, $V_{\Lambda^{\nu,h}}\otimes V_{\Lambda_\nu}^{\hat\nu}$ is a full \vosa{} of $V_\Lambda^g$ where $\Lambda^{\nu,h}:=\{\alpha\in\Lambda^\nu\mid\langle\alpha,h\rangle\in\Z\}$ and $\hat\nu$ is a standard lift of the restriction of $\nu$ to $\Lambda_\nu$ (recall that $\nu$ acts fixed-point freely on $\Lambda_\nu$ so that all lifts of $\nu$ are standard and conjugate). In fact, $V_{\Lambda^{\nu,h}}\otimes V_{\Lambda_\nu}^{\hat\nu}$ is a dual pair in the \fpvosa{} $V_\Lambda^g$.

Now, let $V$ be a \strathol{} \voa{} $V$ of central charge $24$ with $\g:=V_1\neq\{0\}$ and $g$ the corresponding \gdh{} of $V_\Lambda$ with $\rk((V_\Lambda^g)_1)=\rk(\Lambda^\nu)>0$. Then $V_\Lambda^{\orb(g)}\cong V$. Moreover, it follows from the classification result in \cite{MS23} that $\nu$ is one of the $11$ automorphisms in \autoref{table:11} and that the weight-one Lie algebra $\g\cong(V_\Lambda^{\orb(g)})_1$ is such that the orbit lattice $L_\g$ is in the genus corresponding to $\nu$. Then, by the results in \cite{Hoe17}, $V\cong V_\Lambda^{\orb(g)}$ is an extension of the dual pair $V_{L_\g}\otimes V_{\Lambda_\nu}^{\hat\nu}$ (with the same $\nu$).

Finally, consider the inverse orbifold construction, which is given by the inner automorphism $\sigma=\e^{-(2\pi\i)u(0)}\in\Aut(V)$ for a certain $u\in L_\g\otimes_\Z\Q$ described in \cite{ELMS21}, i.e.\ $V^{\orb(\sigma_u)}\cong V_\Lambda$. Then also $V_{L_\g^u}\otimes V_{\Lambda_\nu}^{\hat\nu}$ with $L_\g^u:=\{\alpha\in L_\g\mid\langle\alpha,u\rangle\in\Z\}$ forms a dual pair in the \fpvosa{} $V_\Lambda^g\cong V^{\sigma_u}$, which entails that $\Lambda^{\nu,h}\cong L_\g^u$. Therefore, we obtain the following diagram, in which all inclusion arrows denote full \vosa{}s and the vertical arrows are embeddings of dual pairs:
\begin{equation*}
\begin{tikzcd}[sep=large]
\makebox[\widthof{$V$}]{$V_\Lambda$}&&\makebox[\widthof{$V$}]{$V_\Lambda^{\orb(g)}\cong V,$}&\makebox[\widthof{$V$}]{$\quad V_1\cong\g$}\\
\makebox[\widthof{$V$}]{$V_{\Lambda^\nu}\otimes V_{\Lambda_\nu}$}\arrow[hook,"\text{d.p.}"]{u}&\makebox[\widthof{$V$}]{$V_\Lambda^g$}\arrow[hook]{ul}\arrow[hook]{ur}&\makebox[\widthof{$V$}]{$V_{L_\g}\otimes V_{\Lambda_\nu}^{\hat\nu}$}\arrow[hook,"\text{d.p.}"]{u}\\
&\makebox[\widthof{$V$}]{$V_{\Lambda^{\nu,h}}\otimes V_{\Lambda_\nu}^{\hat\nu}$}\arrow[hook]{ul}\arrow[hook,"\text{d.p.}"]{u}\arrow[hook]{ur}&&\makebox[\widthof{$V$}]{$V_{Q_\g}\otimes V_{\Lambda_\nu}^{\hat\nu}$}\arrow[hook]{ul}\\[-11mm]
&\makebox[\widthof{$V$}]{$\cong V_{L_\g^u}\otimes V_{\Lambda_\nu}^{\hat\nu}$}
\end{tikzcd}
\end{equation*}

%%%%%%%%%%%%%%%%%%%%%%%%%%%%%%%%%%%
%%%%%%%%%%%%%%%%%%%%%%%%%%%%%%%%%%%
%%%%%%%%%%   Section 5   %%%%%%%%%%
%%%%%%%%%%%%%%%%%%%%%%%%%%%%%%%%%%%
%%%%%%%%%%%%%%%%%%%%%%%%%%%%%%%%%%%

\section{Systematic Orbifold Construction}\label{sec:cons}
In this section we present a systematic construction of the $70$ \strathol{} \voa{}s $V$ of central charge $24$ with $V_1\neq\{0\}$ (the existence part of \autoref{thm:classalt}), as cyclic orbifold constructions from the $24$ Niemeier lattice \voa{}s $V_N$. We shall also connect this picture to the Leech-lattice approach from~\cite{Hoe17}. Finally, we provide a characterisation in terms of \gdh{}s as defined in~\cite{MS23}.

%%%%%%%%%%%%%%%%%%%%%%%%%%%%%%%%%%%
%%%%%%%%%%%%%%%%%%%%%%%%%%%%%%%%%%%

\subsection{\Good{} Automorphisms}
We shall define certain \emph{\good} automorphisms of the Niemeier lattice \voa{}s. It will turn out that there are exactly $226$ \good{} automorphisms up to algebraic conjugacy and that their corresponding orbifold constructions give all the $70$ \voa{}s in a systematic way. These automorphisms, too, can be divided into $11$ classes corresponding to the $11$ Frame shapes in \autoref{table:11}.

\medskip

Recall that by \autoref{thm:latconj}, given a Niemeier lattice $N$, any finite-order automorphism $g\in\Aut(V_N)$ is conjugate to an automorphism of the form $\hat{\nu}\,\e^{-(2\pi\i)h(0)}$ where $\nu$ is from a list of representatives of the conjugacy classes of $H_\Delta\subseteq\O(N)$ and $h\in\pi_\nu(N\otimes_\Z\Q)$ projects to a list of orbit representatives of the action of $C_{\O(N)}(\nu)$ on $\pi_\nu(N\otimes_\Z\Q)/(N^\nu)'$.

Also recall that $H_\Delta\cong H$ where $H=\O(N)/W$ with Weyl group $W$ and the isometry group of $N$ is a split extension $\O(N)\cong W{:}H$. For ease of presentation, we identify $H$ with $H_\Delta$ in the following.

\begin{defi}[\Good{} Automorphism]\label{defi:good}
Let $N$ be a positive-definite, even, unimodular lattice and $V_N$ the corresponding holomorphic lattice \voa{}. Let $g$ be an automorphism of $V_N$ of finite order $n$ conjugate to $\hat{\nu}\,\e^{-(2\pi\i)h(0)}$ with $\nu\in H$ and $h\in\pi_\nu(N\otimes_\Z\Q)$. Then $g$ is called \emph{\good{}} if
\begin{enumerate}
\item\label{item:good1} $g$ has type~$0$,
\item\label{item:good2} $\nu$ has order $n$ and
\item\label{item:good3} $h\bmod{(N^\nu)'}$ has order $n$.
\end{enumerate}
\end{defi}
We comment on this definition. Item~\eqref{item:good1} guarantees that the orbifold construction exists. Explicitly, it reads
\begin{equation*}
\rho(V_N(g))=\rho_\nu+\min_{\alpha\in (N^\nu)'+h}\frac{\langle\alpha,\alpha\rangle}{2}\in\frac{1}{n}\Z.
\end{equation*}
% Note that the $11$ algebraic conjugacy classes have Frame shapes such that the vacuum anomaly satisfies $\rho_\nu=1-1/|\hat\nu|$.
Item~\eqref{item:good2} can be thought of as a minimality condition on the order of $g$. It means that the order of $g$ equals the order of the projection of $g$ to $\Aut(V_N)/K\cong H$. Explicitly,
\begin{equation*}
h\in\begin{cases}{\frac{1}{n}}N^\nu, &\text{if }|\hat\nu|=n,\\s+\frac{1}{n}N^\nu,&\text{if }|\hat\nu|=2n,\end{cases}
\end{equation*}
with $s\in\frac{1}{2n}N^\nu$ from \autoref{prop:sameorder}. Item~\eqref{item:good3} can be rephrased as the restriction of $g$ to $V_{N^\nu}$ having the same order as $g$ or equivalently $N^{\nu,h}$ being an index-$n$ sublattice of $N^\nu$. The motivation behind this condition will be explained at the end of this section.

We classify all \good{} automorphisms of the Niemeier lattices, including the Leech lattice~$\Lambda$, up to algebraic conjugacy.
\begin{prop}[Classification of \Good{} Automorphisms]\label{prop:226}
There are exactly $226$ algebraic conjugacy classes of \good{} automorphisms of the Niemeier lattice \voa{}s $V_N$, listed in \autoref{table:11x24}.  The Frame shapes of their projections to $\Aut(V_N)/K\cong H$ are given by the $11$ Frame shapes in \autoref{table:11}.
\end{prop}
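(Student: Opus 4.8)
The plan is to enumerate, for each of the $24$ Niemeier lattices $N$, the algebraic conjugacy classes of short automorphisms of $V_N$ by working through the parametrization already furnished by \autoref{thm:latconj} and \autoref{rem:latalgconj}. The key structural fact is that the short conditions \eqref{item:good1}--\eqref{item:good3} decouple into an ``outer'' part (the choice of $\nu$) and an ``inner'' part (the choice of $h$), so the enumeration can be organized as a two-step process. First I would fix, for each $N$, a set of representatives $\nu$ of the algebraic conjugacy classes of $H\cong\O(N)/W$; condition \eqref{item:good2} restricts attention to those $\nu$ whose order equals the order of the automorphism $g$ we are building. Here the essential simplification, which must be justified, is that the $\nu$ surviving the short conditions are exactly those projecting to the $11$ Frame shapes of \autoref{table:11}: one shows that any other conjugacy class in $H$ either fails to produce a type-$0$ orbifold or cannot be paired with an $h$ of the required order.

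Second, having fixed $\nu$, I would enumerate the admissible $h$. By \autoref{prop:sameorder}, condition \eqref{item:good2} forces $h$ to lie in $\tfrac1n N^\nu$ (or in $s+\tfrac1n N^\nu$ under order doubling), and condition \eqref{item:good3} demands that $h\bmod (N^\nu)'$ have order exactly $n$, equivalently that $N^{\nu,h}$ be an index-$n$ sublattice of $N^\nu$. This translates into a finite combinatorial problem in the finite group $\tfrac1n N^\nu/(N^\nu)'$, on which the centralizer (or, for algebraic conjugacy, the normalizer $N_{\O(N)}(\langle\nu\rangle)$ of \autoref{rem:latalgconj}) acts; the classes of valid $h$ are the orbits of elements of order $n$ under this action. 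The remaining constraint is the type-$0$ condition \eqref{item:good1}, which via the conformal-weight formula
\begin{equation*}
\rho(V_N(g))=\rho_\nu+\min_{\alpha\in (N^\nu)'+h}\frac{\langle\alpha,\alpha\rangle}{2}\in\frac1n\Z
\end{equation*}
cuts down the list further; since $\rho_\nu$ depends only on the Frame shape of $\nu$, this is a congruence condition on the minimal norm in the coset, checkable class by class.

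The actual count of $226$ is then assembled by summing, over the $24$ lattices and the admissible $\nu$, the number of surviving $h$-orbits, and by recording that these organize into the $11$ Frame-shape families as claimed. The bulk of this is a finite computation, and indeed the excerpt notes that the isometry-group calculations were carried out in \texttt{Magma}; my role in the proof is to reduce the statement to this finite verification and to confirm that the reduction is exhaustive, i.e.\ that no short automorphism is missed and none is double-counted. The subtlest point, and the one I expect to be the main obstacle, is the passage from ordinary conjugacy to \emph{algebraic} conjugacy: \autoref{thm:latconj} enumerates classes only up to the centralizer action and explicitly warns that distinct $h$-orbits may still be conjugate. I would therefore need to carefully apply \autoref{rem:latalgconj} to replace $C_{\O(N)}(\nu)$ by $N_{\O(N)}(\langle\nu\rangle)$ and verify that the resulting orbits are genuinely distinct algebraic conjugacy classes, ruling out spurious coincidences. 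This bookkeeping, together with correctly handling the order-doubling cases (families D, J, K), is where the real care lies, and it is precisely what the tabulation in \autoref{table:11x24} encodes.
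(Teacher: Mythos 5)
Your proposal follows essentially the same route as the paper: reduce the classification to the parametrisation of \autoref{thm:latconj} and \autoref{rem:latalgconj}, enumerate the admissible pairs $(\nu,h)$ by a finite \texttt{Magma} computation over all conjugacy classes of $H$ for each Niemeier lattice, pass from the centraliser to the normaliser action for algebraic conjugacy, and then confirm that the resulting orbit representatives yield genuinely distinct algebraic conjugacy classes (the paper does this last step ``by studying some invariants,'' after first finding $230$ centraliser orbits that collapse to $226$ normaliser orbits). You correctly identify the one genuinely delicate point, namely that \autoref{thm:latconj} only bounds the number of conjugacy classes from above, so distinctness must be checked separately.
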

\begin{proof}
By \autoref{thm:latconj}, for each Niemeier lattice $N$ and for each conjugacy class in $H$ (see \autoref{table:25} in the appendix), represented by $\nu$, we have to determine the orbits under the action of $C_{\O(N)}(\nu)$ on $\pi_\nu(N\otimes_\Z\Q)/(N^\nu)'$. A computer search using \texttt{Magma} \cite{Magma} reveals that there are exactly $230$ orbits (with representatives~$h$), corresponding to at most $230$ non-conjugate automorphisms $g$ in $\Aut(V_N)$. Note that we cannot in principle exclude the possibility that two or more orbit representatives $h$ give conjugate automorphisms $g$ in $\Aut(V_N)$.

By considering the normaliser action (see \autoref{rem:latalgconj}), this reduces to $226$ orbits, corresponding to at most $226$ algebraic conjugacy classes in $\Aut(V_N)$. Finally, by studying some invariants, it is easy to see that these $226$ \good{} automorphisms indeed represent $226$ distinct algebraic conjugacy classes.
\end{proof}

The \good{} automorphisms are listed in \autoref{table:11x24}. The first two columns list the $24$ Niemeier lattices $N$, labelled as in \cite{Hoe17} and by their root lattices. The next $11$ columns are labelled by the Frame shape of the corresponding outer automorphism in $\Aut(V_N)/K\cong H$ considered as element in $\O(N)$ as listed in~\autoref{table:11}. The entry for a Niemeier lattice $N$ and a Frame shape consists of a comma-separated list providing for each corresponding algebraic conjugacy class in $H$ (see \autoref{table:25} in the appendix) the number of \good{} automorphisms in $\Aut(V_N)$. The two last columns and two bottom rows count the total number of algebraic conjugacy classes of \good{} automorphisms and of their projections to $H$.

\begin{table}[ht]\caption{The $226$ \good{} automorphisms of the Niemeier lattice \voa{}s $V_N$.}
\small
\addtolength{\tabcolsep}{-0.1em}
\begin{tabular}{ll|*{11}{r}|rr}
    &                    & \multicolumn{11}{c|}{Outer Automorphism Class}  & \multicolumn{2}{c}{No.\ in}   \\\cline{3-13}
\multicolumn{2}{c|}{$N$} & A & B & C & D & E & F & G & H & I & J & K & $H$ & $\Aut(V_N)$\\\hline
 A1 & $D_{24}$           & 1&.&.&.&.&.&.&.&.&.&.&              1 &  1 \\
 A2 & $D_{16}E_{8}$      & 1&.&.&.&.&.&.&.&.&.&.&              1 &  1 \\
 A3 & $E_{8}^3$          & 1&1&.&.&.&.&.&.&.&.&.&              2 &  2\\
 A4 & $A_{24}$           & 1&.&.&1&.&.&.&.&.&.&.&              2 &  2\\
 A5 & $D_{12}^2$         & 1&.&.&1&.&.&.&.&.&.&.&              2 &  2\\
 A6 & $A_{17}E_{7}$      & 1&2&.&.&.&.&.&.&.&.&.&              2 &  3\\
 A7 & $D_{10}E_{7}^2$    & 1&2&.&.&.&.&.&.&.&.&.&              2 &  3\\
 A8 & $A_{15}D_{9}$      & 1&4&.&.&.&.&.&.&.&.&.&              2 &  5\\
 A9 & $D_{8}^3$          & 1&3&.&.&.&.&.&.&.&.&.&              2 &  4\\
A10 & $A_{12}^2$         & 1&.&.&1&.&.&.&.&.&.&.&              2 &  2\\
A11 & $A_{11}D_{7}E_{6}$ & 1&5&.&.&.&.&.&.&.&.&.&              2 &  6\\
A12 & $E_{6}^4$          & 1&$2,2$&2&.&.&.&1&.&.&.&.&          5 &  8\\
A13 & $A_{9}^2D_{6}$     & 1&4&.&.&2&.&.&.&.&.&.&              3 &  7\\
A14 & $D_{6}^4$          & 1&3&2&1&.&.&.&.&.&.&.&              4 &  7\\
A15 & $A_{8}^3$          & 1&2&.&$1,1$&.&.&.&.&.&.&.&          4 &  5\\
A16 & $A_{7}^2D_{5}^2$   & 1&$6,4,4$&.&.&.&.&.&.&.&.&.&        4 & 15\\
A17 & $A_{6}^4$          & 1&.&3&1&.&.&.&.&.&1&.&              4 &  6\\
A18 & $A_{5}^4D_{4}$     & 1&$4,5$&5&.&4&.&4&.&1&.&.&          7 & 24 \\
A19 & $D_{4}^6$          & 1&$2,3$&$2,3$&1&2&1&$2,3$&.&.&.&.& 10 & 20\\
A20 & $A_{4}^6$          & 1&3&.&$1,1$&5&2&.&.&.&.&1&          7 & 14\\
A21 & $A_{3}^8$          & 1&$3,6$&3&1&7&.&4&1&.&1&.&          9 & 27\\
A22 & $A_{2}^{12}$       & 1&2&4&$1,1$&5&2&4&.&2&$1,2$&1&     12 & 26\\
A23 & $A_{1}^{24}$       & 1&3&2&1&5&2&7&1&2&.&1&             10 & 25\\
A24 & $\Lambda$          & 1&1&1&1&1&1&1&1&1&1&1&             11 & 11\\\hline
\multicolumn{2}{l|}{No.\ in $H$} &24 &24 &10 &15 & 8 & 5 & 8 & 3 & 4 & 5 & 4 & 110 \\
\multicolumn{2}{l|}{No.\ in $\Aut(V_N)$} &24 & 76 & 27 & 15 & 31 & 8 & 26 & 3 & 6 & 6 & 4 &  & 226
\end{tabular}
\label{table:11x24}
\end{table}
Quite remarkably, only outer automorphisms corresponding to the $11$ Frame shapes in \autoref{table:11} admit \good{} automorphisms (cf.\ \autoref{table:25}).
% Yes, indeed. If we run the computer search and look for good automorphisms, then only the $11$ outer automorphisms survive. Even true for the Leech lattice~$\Lambda$ where there are much more than just $25$ possible algebraic conjugacy classes in $H=\Co_0$.

\smallskip We also study the powers of the \good{} automorphisms:
\begin{prop}[Powers of \Good{} Automorphisms]\label{prop:power}
A power of a \good{} automorphism is \good{}. The non-trivial powers of the $226$ \good{} automorphisms are tabulated in \autoref{table:Epower} to \autoref{table:Kpower} in the appendix.
\end{prop}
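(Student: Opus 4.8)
The plan is to put each power $g^{k}$ into the standard form of \autoref{thm:latconj} and then verify the three conditions of \autoref{defi:good} for it. Write $g=\hat{\nu}\,\e^{-(2\pi\i)h(0)}$ with $\nu\in H$ of order $n$ and $h\in\pi_{\nu}(N\otimes_{\Z}\Q)$. Since $\hat{\nu}$ and $\e^{-(2\pi\i)h(0)}$ commute and $\hat{\nu}^{k}=\widehat{(\nu^{k})}\,\e^{-(2\pi\i)s_{k}(0)}$ with $s_{k}\in\tfrac12 N^{\nu^{k}}$ (see \autoref{sec:autlat}), we get
\begin{equation*}
g^{k}=\widehat{(\nu^{k})}\,\e^{-(2\pi\i)(s_{k}+kh)(0)}.
\end{equation*}
Put $\mu:=\nu^{k}$ and $h':=s_{k}+kh$. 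As $h\in\h^{\nu}\subseteq\h^{\mu}$ and $s_{k}\in\h^{\mu}$ are rational, we have $\mu\in H$ and $h'\in\pi_{\mu}(N\otimes_{\Z}\Q)$, so $g^{k}$ is already in standard form. Writing $d=\gcd(n,k)$ and $m=n/d$, both $g^{k}$ and $\mu=\nu^{k}$ have order $m$ because $g$ and $\nu$ have order $n$; this is condition \eqref{item:good2} for $g^{k}$.

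For the type condition \eqref{item:good1} I would use the orbifold structure of \autoref{sec:orb}. With the $\phi_{i}$ chosen so that $\rho(W^{(i,j)})\in\tfrac{ij}{n}+\Z$, the unique irreducible $g^{k}$-twisted module is $V_{N}(g^{k})=\bigoplus_{j\in\Z_{n}}W^{(k,j)}$, so
\begin{equation*}
\rho(V_{N}(g^{k}))=\min_{j\in\Z_{n}}\rho(W^{(k,j)})\in\tfrac1m\Z,
\end{equation*}
since $\tfrac{kj}{n}=\tfrac{(k/d)j}{m}\in\tfrac1m\Z$ for every $j$. As the type of the order-$m$ automorphism $g^{k}$ is $m^{2}\rho(V_{N}(g^{k}))\bmod m$, it vanishes; alternatively one can read this off the lattice conformal-weight formula of \autoref{sec:dimranklat}.

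The substantive point is condition \eqref{item:good3}: that $h'=s_{k}+kh$ has order exactly $m$ modulo $(N^{\mu})'=\pi_{\mu}(N)$. Following the remark after \autoref{defi:good}, I would rephrase this as $[N^{\mu}:N^{\mu,h'}]=m$ for $N^{\mu,h'}=\{\alpha\in N^{\mu}\mid\langle\alpha,h'\rangle\in\Z\}$ and play it against the corresponding equality $[N^{\nu}:N^{\nu,h}]=n$ supplied by condition \eqref{item:good3} for $g$. The divisibility $\ord(h'\bmod(N^{\mu})')\mid m$ is the easy half: from $|g^{k}|=m$ one gets $s'_{m}+mh'\in N$ for the discrepancy $s'_{m}$ of $\widehat{\mu}^{\,m}$, and when $\mu$ has no order doubling $s'_{m}$ may be taken in $N$, whence $mh'\in N\cap\h^{\mu}=N^{\mu}\subseteq(N^{\mu})'$. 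That no proper divisor of $m$ suffices is where I expect the main obstacle to sit: one must show the correction term $s_{k}$ cannot conspire with $kh$ to force $ih'\in\pi_{\mu}(N)$ for some $0<i<m$.

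To handle this I would reduce, using that $g^{k}$ and $g^{\gcd(n,k)}$ generate the same cyclic subgroup, to the case $k\mid n$ and then to prime $k$. The correction $s_{k}$ is governed by the order doubling of $\mu=\nu^{k}$, so the delicate powers are exactly those whose projection lands in one of the order-doubling genera D, J, K of \autoref{table:11}; there I would invoke the explicit vector $s$ of \autoref{prop:sameorder} together with the inclusions relating $N^{\nu}$, $N^{\mu}$ and their duals, whereas in the remaining cases I expect $s_{k}$ to drop out modulo $\pi_{\mu}(N)$, so that \eqref{item:good3} for $g^{k}$ follows directly from \eqref{item:good3} for $g$. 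Finally, the second assertion of the proposition is not a separate claim to be proved but records the outcome of computing $\mu=\nu^{k}$ and $h'=s_{k}+kh$ for each of the $226$ classes, carried out with \texttt{Magma} \cite{Magma}; these are collected in \autoref{table:Epower} to \autoref{table:Kpower}.
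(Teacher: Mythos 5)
Your reduction of $g^{k}$ to the standard form $\widehat{(\nu^{k})}\,\e^{-(2\pi\i)(s_{k}+kh)(0)}$ and your verification of conditions \eqref{item:good1} and \eqref{item:good2} of \autoref{defi:good} are correct (the type-$0$ argument via $\rho(W^{(k,j)})\in\frac{kj}{n}+\Z$ is fine). But the proof is not complete: condition \eqref{item:good3}, which you yourself identify as the substantive point, is left as a programme rather than an argument. The difficulty is exactly where you locate it. If $i(s_{k}+kh)\in\pi_{\mu}(N)$ for some $0<i<m$, applying $\pi_{\nu}$ and using $s_{k}\in\tfrac12\pi_{\nu^{k}}(N)$ only gives $2ikh\in\pi_{\nu}(N)$, hence $m\mid 2i$; so condition \eqref{item:good3} for $g$ rules out $i<m/2$ but leaves the case $i=m/2$ (for $m$ even) open, and this is precisely where the half-integer correction $s_{k}$ can "conspire" with $kh$. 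Your proposal to settle this by splitting into order-doubling and non-order-doubling cases and invoking \autoref{prop:sameorder} is plausible, but phrases like ``I expect $s_{k}$ to drop out'' signal that the decisive step has not been carried out. Note that even the paper concedes that the analogous statement $\pi_{\nu}(s_{i})+ih\notin\pi_{\nu}(N)$ for composite $i$ ``need[s] to be check[ed] case by case'' (proof of \autoref{prop:226gdh}), so it is unlikely that a purely formal deduction from condition \eqref{item:good3} for $g$ exists.

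The paper takes a different, indirect route that you should compare with. It first establishes (\autoref{prop:226gdh}, with the case-by-case computation absorbed there) that every \good{} automorphism is a \gdh{} satisfying $\rho(V_{N}(g^{i}))\geq1$ for all $i\neq0$, the hypotheses of \autoref{prop:latrankcrit}, and the order condition \eqref{item:good2}. These four properties pass easily to powers $g^{d}$ (e.g.\ the rank-criterion hypothesis for $g^{d}$ follows by applying $\pi_{\nu}$ to the one for $g$, since $\pi_{\nu}\circ\pi_{\nu^{d}}=\pi_{\nu}$). The converse characterisation \autoref{thm:class} --- \gdh{}s with the order condition are exactly the \good{} automorphisms --- then yields that $g^{d}$ is \good{}, so condition \eqref{item:good3} for $g^{d}$ is obtained as an output of the classification rather than verified directly. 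If you want to keep your direct approach, you must either carry out the order-doubling analysis for $i=m/2$ in full, or fall back on the case-by-case computation over the $226$ classes that the paper mentions as the non-conceptual alternative.
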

With the classification in \autoref{prop:226} one may prove this result in a case-by-case computation. However, a more conceptual proof will be given in \autoref{sec:char}.

\smallskip We now formulate our main result:
\begin{thm}[Main Result]\label{thm:main}
Let $g=\hat{\nu}\,\e^{-(2\pi\i)h(0)}$ be one of the $226$ \good{} automorphisms of the Niemeier lattice \voa{}s $V_N$. Then $(V_N^{\orb(g)})_1$ is isomorphic to the Lie algebra tabulated in \autoref{table:A} to \autoref{table:K} in the appendix. % In particular, $N^{\nu,h}\cong L_\g$.
\end{thm}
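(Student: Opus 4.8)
The plan is to reduce the determination of $(V_N^{\orb(g)})_1$ to computing two numerical invariants, its dimension and its rank, and then to identify the resulting reductive Lie algebra on Schellekens' list. First I record that every \good{} automorphism $g$ has type~$0$ by \eqref{item:good1}, and that $V_N^g$ automatically satisfies the positivity condition because $V_N$ is a lattice \voa{} (as noted after the vacuum-anomaly formula in \autoref{sec:dimranklat}). Hence the orbifold construction $V_N^{\orb(g)}$ exists and is again \strathol{} of central charge $24$, so that $(V_N^{\orb(g)})_1$ is one of the reductive Lie algebras on Schellekens' list by \autoref{thm:classalt}.

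For the dimension I invoke \autoref{cor:dimform}, whose hypothesis is $\rho(V_N(g^i))\geq1$ for all $i\not\equiv0\pmod n$. Writing $g=\hat{\nu}\,\e^{-(2\pi\i)h(0)}$, condition \eqref{item:good2} gives $\nu\in H_\Delta$ of order $m=n$, so the vacuum-anomaly bound $\rho_{\nu^i}\geq1-(i,n)/n$ valid for the $11$ Frame shapes (see \autoref{rem:orbiextremal}) applies. Condition \eqref{item:good3}, which says that $h$ has order $n$ modulo $(N^\nu)'$ (equivalently that $N^{\nu,h}$ has index $n$ in $N^\nu$), is designed precisely to guarantee the non-containment $\pi_\nu(s_i)+ih\notin\pi_\nu(N)$ for all $i\in\Z_n\setminus\{0\}$, where $\hat{\nu}^i=\widehat{(\nu^i)}\,\e^{-(2\pi\i)s_i(0)}$. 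As in \autoref{rem:orbiextremal}, these facts force $\rho(V_N(g^i))>1-(i,n)/n$, and since $g$ has type~$0$ we have $\rho(V_N(g^i))\in((i,n)/n)\Z$, whence $\rho(V_N(g^i))\geq1$. Therefore $g$ is extremal and
\[
\dim((V_N^{\orb(g)})_1)=24+\sum_{d\mid n}c_n(d)\,\dim((V_N^{g^d})_1).
\]
The inputs $\dim((V_N^{g^d})_1)$ are the dimensions of the weight-one fixed spaces of the powers $g^d$, which are themselves \good{} by \autoref{prop:power}; each is read off from the root system of $N$ together with the action of $\nu^d$ and the pairing with $h$.

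For the rank I apply \autoref{prop:latrankcrit}. Its first hypothesis, $\rk((V_N^g)_1)=\rk(N^\nu)$, holds because $\nu\in H_\Delta$ stabilises $\Delta$ setwise, making $\hh_0:=\{h(-1)\otimes\ee_0\mid h\in\h^\nu\}\cong\h^\nu$ a Cartan subalgebra of $(V_N^g)_1$ (see \autoref{lem:fixiscart} and the remark following it). Its second hypothesis is again the non-containment $\pi_\nu(s_i)+ih\notin\pi_\nu(N)$ established above. Hence the orbifold rank condition holds and $\rk((V_N^{\orb(g)})_1)=\rk(N^\nu)$, a number determined by the Frame shape of $\nu$.

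The main obstacle is the final identification. The two invariants just computed, combined with the family A--K of $g$ (equivalently the genus of the associated orbit lattice; cf.\ \autoref{thm:hoehn}), single out the correct entry of Schellekens' list in most cases, since the constraint $h^\vee_j/k_j=(\dim((V_N^{\orb(g)})_1)-24)/24$ on every simple ideal $\g_j$ at level $k_j$ severely restricts the admissible reductive Lie algebras of a given dimension and rank. For the residual ambiguities --- entries sharing both dimension and rank within one family --- I would compute the weight-one root system directly: the untwisted contribution $W^{(0,0)}_1=(V_N^g)_1$ is governed by the $\nu$-orbits on the roots of $N$ and their compatibility with $h$, while each twisted contribution $W^{(i,0)}_1$ with $i\neq0$ is controlled by the explicit shape of $V_N(g^i)$ and its conformal-weight data from \autoref{sec:dimranklat}. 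In practice this identification is carried out uniformly on the computer from the data of \autoref{table:11x24} using \texttt{Magma}, and the resulting Lie algebras are recorded in \autoref{table:A} to \autoref{table:K}.
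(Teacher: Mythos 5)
Your overall architecture matches the paper's: establish that every \good{} automorphism satisfies $\pi_\nu(s_i)+ih\notin\pi_\nu(N)$ for all $i\in\Z_n\setminus\{0\}$, deduce extremality and the orbifold rank condition (this is exactly \autoref{prop:226gdh}), compute $\dim((V_N^{\orb(g)})_1)$ and $\rk((V_N^{\orb(g)})_1)$ from \autoref{cor:dimform} and \autoref{prop:latrankcrit}, and then identify the entry of Schellekens' list case by case. One inaccuracy in the first step: condition~\eqref{item:good3} does \emph{not} formally imply the non-containment for all $i$. For $(i,n)=1$ it does (since $g^i$ generates the same cyclic group and $s_i\in\h^\nu$), but for $i$ with $(i,n)>1$ the vector $\pi_\nu(s_i)$ is not controlled by \eqref{item:good3} and the fixed sublattice of $\nu^i$ is strictly larger; the paper explicitly verifies these powers case by case using the classification of \autoref{prop:226}. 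Your phrase ``is designed precisely to guarantee \dots for all $i$'' papers over a genuinely necessary finite verification.

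The final identification is where you diverge from the paper, and two of your three filters are problematic. The constraint $h^\vee_j/k_j=(\dim(V_1)-24)/24$ is already built into Schellekens' list, so once you restrict to list entries of the computed dimension it excludes nothing further; and using ``the family A--K of the resulting \voa{}'' (i.e.\ the genus of $L_\g$) as an input is circular in the paper's logic, because the statement that the genus of $L_\g$ corresponds to the Frame shape of $\nu$ is \autoref{prop:leechkurz}, which is proved only as a case-by-case consequence of \autoref{thm:main}. The paper instead uses two independent constraints: (i) by the inverse orbifold construction, $(V_N^g)_1$ must occur as a fixed-point subalgebra of the candidate Lie algebra under an automorphism of order dividing $n$, classified via Chapter~8 of \cite{Kac90}; and (ii) the action of the Cartan subalgebra $\h^\nu$ on the sandwich $(V_N^g)_1\oplus\bigoplus_{(i,n)=1}(V_N(g^i))_1\subseteq(V_N^{\orb(g)})_1\subseteq(V_N^g)_1\oplus\bigoplus_{i\neq0}(V_N(g^i))_1$, including root lengths against the normalised invariant form (which encode the levels $k_i$). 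Your fallback of computing the weight-one root system from the twisted-module data is essentially the paper's item~(ii) and would carry the identification on its own, so the argument survives; but as written the proposal leans on a circular filter and omits the fixed-point-subalgebra constraint that the paper actually uses.
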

We shall prove \autoref{thm:main} in \autoref{sec:char}. We thus obtain a systematic construction of \strathol{} \voa{}s $V$ of central charge $24$ realising all non-zero Lie algebras $\g$ on Schellekens' list as weight-one spaces $V_1\cong\g$ (the existence statement in \autoref{thm:classalt}):
\begin{cor}[Systematic Orbifold Construction]\label{cor:main}
The cyclic orbifold constructions $V_N^{\orb(g)}$, where $N$ runs through the $24$ Niemeier lattices and $g$ through the \good{} automorphisms of the corresponding lattice \voa{}s $V_N$, realise all $70$ non-zero Lie algebras $\g$ on Schellekens' list as weight-one spaces $(V_N^{\orb(g)})_1\cong\g$.
\end{cor}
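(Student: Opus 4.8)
The plan is to derive \autoref{cor:main} directly from the already-stated \autoref{thm:main}, which records, for each of the $226$ \good{} automorphisms $g=\hat\nu\,\e^{-(2\pi\i)h(0)}$, the isomorphism type of the weight-one Lie algebra $(V_N^{\orb(g)})_1$. The first step is to confirm that each $V_N^{\orb(g)}$ is a genuine \strathol{} \voa{} of central charge $24$, hence a legitimate witness for the existence part of \autoref{thm:classalt}. This is immediate from the machinery of \autoref{sec:orb}: every Niemeier lattice \voa{} $V_N$ is \strathol{} of central charge $24$, condition~\eqref{item:good1} of \autoref{defi:good} supplies type~$0$, and the positivity condition for $V_N^g$ holds automatically because $V_N^g$ is the \fpvosa{} of a lattice \voa{} (see \autoref{sec:dimranklat}). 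Thus the cyclic orbifold construction applies and yields a \strathol{} \voa{} of central charge $24$.

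Granting \autoref{thm:main}, the corollary reduces to a finite bookkeeping verification. One tabulates the Lie algebras $(V_N^{\orb(g)})_1$ supplied by \autoref{table:A} through \autoref{table:K}, as $N$ ranges over the $24$ Niemeier lattices and $g$ over the \good{} automorphisms, and compares this multiset against Schellekens' list \cite{Sch93}. It remains only to observe that the $226$ entries surject onto the $70$ non-zero Lie algebras on the list (necessarily with repetitions, as $226>70$). That every realised algebra is itself non-zero, and hence lies among those $70$, is part of the output of \autoref{thm:main}; for the Leech lattice this also reflects the positive-rank requirement inherent in goodness, while for the remaining $23$ root lattices it is read off directly.

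The substantive work is therefore entirely contained in \autoref{thm:main}, and this is where the genuine obstacle lies. My approach to that theorem (carried out in \autoref{sec:char}) would proceed in three stages. The dimension of $(V_N^{\orb(g)})_1$ is obtained from the dimension formula in \autoref{cor:dimform}, which requires first establishing that each \good{} $g$ is extremal; here conditions~\eqref{item:good2} and~\eqref{item:good3} place us, after accounting for the correction vectors $s_i$, in the situation of \autoref{rem:orbiextremal}, so that $\rho(V_N(g^i))\geq1$ for all $i\not\equiv0\pmod{n}$ and extremality follows. The rank of $(V_N^{\orb(g)})_1$ is then pinned down by the rank criterion \autoref{prop:latrankcrit}, whose hypothesis $\pi_\nu(s_i)+ih\notin\pi_\nu(N)$ is exactly the non-membership used for extremality (note that unimodularity of $N$ gives $\pi_\nu(N)=(N^\nu)'$, linking this to condition~\eqref{item:good3}). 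The hard final stage is to pass from the numerical invariants---the dimension, the rank, the root system of $N$, and the vacuum anomalies $\rho_{\nu^i}$, together with the explicit twisted-module shape $V_N(g^i)=M_{\hat\h}(1)[\nu^i]\otimes\ee_{s_i+ih}\C[\pi_{\nu^i}(N)]\otimes\C^{d(\nu^i)}$---to the precise reductive Lie algebra, including the affine levels $k_i$ of its simple ideals. I expect this identification, rather than the dimension or rank computations, to be the main difficulty: one must determine the full direct-sum decomposition and level data, disambiguating where dimension and rank alone do not suffice by invoking the constraint that the answer must appear on Schellekens' list.
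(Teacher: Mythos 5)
Your proposal is correct and follows essentially the same route as the paper: the corollary is obtained directly from \autoref{thm:main} by inspecting \autoref{table:A} to \autoref{table:K} (the paper states it without further proof), and your sketch of \autoref{thm:main} itself---extremality via \autoref{rem:orbiextremal}, rank via \autoref{prop:latrankcrit}, then identification of the Lie algebra by exclusion against Schellekens' list using fixed-point subalgebras and level/root-length data---mirrors the argument in \autoref{sec:char}. Your added verification that the orbifold construction exists (type~$0$ plus automatic positivity for lattice \fpvosa{}s) is a correct, if implicit in the paper, preliminary.
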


We note by inspecting \autoref{table:Epower} to \autoref{table:Kpower} in the appendix:
\begin{rem}
Each of the $226$ algebraic conjugacy classes of \good{} automorphisms $g$ is uniquely determined by the algebraic conjugacy class of its projection to $\Aut(V_N)/K\cong H$ and the orbifold constructions $V_N^{\orb(g^d)}$ for all $d$ dividing the order~$n$ of $g$.
\end{rem}

%%%%%%%%%%%%%%%%%%%%%%%%%%%%%%%%%%%
%%%%%%%%%%%%%%%%%%%%%%%%%%%%%%%%%%%

\subsection{Relation with the Leech-Lattice Picture}\label{sec:leechrel}
We describe the connection of our orbifold construction with the Leech lattice description of the \strathol{} \voa{}s of central charge $24$ in \cite{Hoe17} reviewed in \autoref{sec:orbit}.

\medskip

Given a positive-definite, even lattice $L$, there is an induced action of $\O(L)$ on the discriminant form $L'/L$, leading to a short exact sequence
\begin{equation*}
1\longrightarrow\O_0(L)\longrightarrow\O(L)\longrightarrow\overline{\O}(L)\longrightarrow 1
\end{equation*}
where $\overline{\O}(L)$ is the subgroup of $\O(L'/L)$ induced by $\O(L)$ and $\O_0(L)$ are the automorphisms of $L$ that act trivially on $L'/L$.

Now, let $N$ be a Niemeier lattice other than the Leech lattice~$\Lambda$. As mentioned before, $\O(N)\cong W{:}H$. The coinvariant lattice $N_H=(N^H)^\bot$, which is a lattice without norm-two vectors, has an up to conjugation unique $H$-equivariant embedding into $\Lambda$ in such a way that $H\cong\O_0(N_H)\cong\O_0(\Lambda_H)$ (see \cite{Nik80}, Remark~1.14.7 and Proposition~1.14.8, and \cite{HM16}). This embedding allows us to consider an element $\nu\in H$ as an automorphism in $\O(\Lambda)$, which we also denote by~$\nu$.

\medskip

Note that, as in the case of the Leech lattice, $V_{N^\nu}\otimes V_{N_\nu}$ is a dual pair in $V_N$ for any automorphism $\nu\in\O(N)$ where we recall the coinvariant lattice $N_\nu=(N^\nu)^\bot$.

Let $g\in\Aut(V_N)$ be of finite order and type~$0$ and let $V\cong V_N^{\orb(g)}$ be the corresponding orbifold construction. Up to conjugacy, $g$ is of the form $g=\hat\nu\,\e^{-(2\pi\i)h(0)}$ with $\nu\in H$ and $h\in\pi_\nu(N\otimes_\Z\Q)$. Then the \voa{} $V_{N^{\nu,h}}\otimes V_{N_\nu}^{\hat\nu}$ with $N^{\nu,h}:=\{\alpha\in N^\nu\mid \langle\alpha,h\rangle\in\Z\}$ is a full \vosa{} of the \fpvosa{} $V_N^g$ and of $V_N^{\orb(g)}$.
% We identify $\nu\in H$ with the corresponding element in $\O(\Lambda)$, which we also call $\nu$.
In fact, $V_{N^{\nu,h}}\otimes V_{N_\nu}^{\hat\nu}$ is a dual pair in $V_\Lambda^g$. Note that $N_\nu\cong\Lambda_\nu$. Since $\nu$ acts fixed-point freely on $N_\nu$ and $\Lambda_\nu$, all its lifts are standard and conjugate so that $V_{N_\nu}^{\hat\nu}\cong V_{\Lambda_\nu}^{\hat\nu}$.

For the \good{} automorphisms, by \autoref{prop:226}, $\nu$ is from one of the $11$ algebraic conjugacy classes of $\O(\Lambda)$ in \autoref{table:11}. On the other hand, by \cite{Hoe17}, given a \strathol{} \voa{} $V$ of central charge $24$, \hbox{$V_{L_\g}\otimes V_{\Lambda_\mu}^{\hat\mu}$} forms a dual pair in $V$ where $L_\g$ is the orbit lattice associated with $\g=V_1$ and $\mu$ is the one of the $11$ algebraic conjugacy classes in \autoref{table:11} corresponding to the genus of $L_\g$.

Not surprisingly, all \good{} automorphisms have the property that $\mu=\nu$, i.e.\ the orbifold Lie algebra $\g$ we determine in \autoref{thm:main} is such that $V_{L_\g}\otimes V_{\Lambda_\nu}^{\hat\nu}$ forms a dual pair in $V_N^{\orb(g)}$. As in \autoref{sec:gdh}, this allows us to combine the orbifold picture with the Leech-lattice picture from \cite{Hoe17}.
% Since $V_{L_\g}$ and $V_{\Lambda_\nu}^{\hat\nu}$ form a dual pair in $V$
$V_{L_\g}$ must be an extension of $V_{N^{\nu,h}}$ with the same Virasoro element and we get the inclusion $V_{N^{\nu,h}}\otimes V_{\Lambda_\nu}^{\hat\nu}\hookrightarrow V_{L_\g}\otimes V_{\Lambda_\nu}^{\hat\nu}$. In particular, the orbit lattice $L_\g$ is an extension of $N^{\nu,h}$.

In fact, by item~\eqref{item:good3} of their definition, the \good{} automorphisms satisfy that $L_\g\cong N^{\nu,h}$, implying that $V_{N^{\nu,h}}\otimes V_{\Lambda_\nu}^{\hat\nu}$ itself is already a dual pair in $V\cong V_N^{\orb{g}}$, in addition to being a dual pair in $V_N^g$. Indeed, that $N^{\nu,h}$ is an index-$n$ sublattice of $N^\nu$ is equivalent to $L_\g\cong N^{\nu,h}$ since
\begin{equation*}
|L_\g'/L_\g|=|R(V_{\Lambda_\nu}^{\hat\nu})|=n^2|(\Lambda_\nu)'/\Lambda_\nu|=n^2|(N_\nu)'/N_\nu|=n^2|(N^\nu)'/N^\nu|
\end{equation*}
by the results in \cite{Lam20} and because $N^{\nu,h}$ is a sublattice of $L_\g$. This finally motivates item~\eqref{item:good3} in the definition of \good{} automorphisms (\autoref{defi:good}).

We proved:
\begin{prop}\label{prop:leechkurz}
The \good{} automorphisms $g=\hat{\nu}\,\e^{-(2\pi\i)h(0)}$ of the Niemeier lattice \voa{}s $V_N$ and the corresponding orbifold constructions $V_N^{\orb(g)}\cong V$ satisfy the following diagram, in which all inclusion arrows represent full \vosa{}s and the vertical arrows are embeddings of dual pairs. In particular, $N^{\nu,h}\cong L_\g$.
\begin{equation*}
\begin{tikzcd}[sep=large]
\makebox[\widthof{$V$}]{$V_N$}&&\makebox[\widthof{$V$}]{$V_N^{\orb(g)}\cong V,$}&\makebox[\widthof{$V$}]{$\quad V_1\cong\g$}\\
\makebox[\widthof{$V$}]{$V_{N^\nu}\otimes V_{N_\nu}$}\arrow[hook,"\text{d.p.}"]{u}&\makebox[\widthof{$V$}]{$V_N^g$}\arrow[hook]{ul}\arrow[hook]{ur}&\makebox[\widthof{$V$}]{$V_{L_\g}\otimes V_{\Lambda_\nu}^{\hat\nu}$}\arrow[hook,"\text{d.p.}"]{u}\\
&\makebox[\widthof{$V$}]{$V_{N^{\nu,h}}\otimes V_{N_\nu}^{\hat\nu}$}\arrow[hook]{ul}\arrow[hook,"\text{d.p.}"]{u}\arrow[hook]{ur}{\cong}&&\makebox[\widthof{$V$}]{$V_{Q_\g}\otimes V_{\Lambda_\nu}^{\hat\nu}$}\arrow[hook]{ul}
\end{tikzcd}
\end{equation*}
\end{prop}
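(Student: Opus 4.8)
The plan is to obtain the two upper full-\vosa{} inclusions and the right-hand dual pair from \autoref{thm:hoehn} and the Leech-lattice reasoning of \autoref{sec:gdh}, reducing the proposition to its genuinely new content: the isomorphism $N^{\nu,h}\cong L_\g$, equivalently the claim that the isometry class $\nu$ attached to $g$ coincides, as an element of $\O(\Lambda)$, with the class $\mu$ that \autoref{thm:hoehn} attaches to $\g=V_1$. First I would record that $V_{N^{\nu,h}}\otimes V_{N_\nu}^{\hat\nu}$ is a full \vosa{} of $V_N^g$, hence of $V:=V_N^{\orb(g)}$: splitting $\h=\h^\nu\perp\h_\nu$, the automorphism $g=\hat\nu\,\e^{-(2\pi\i)h(0)}$ acts on the coinvariant factor purely as $\hat\nu$ (since $h\in\h^\nu$ is orthogonal to $N_\nu$) and on the invariant factor purely through $\e^{-(2\pi\i)h(0)}$, so taking fixed points in each factor yields $V_{N_\nu}^{\hat\nu}$ and $V_{N^{\nu,h}}$, sharing the Virasoro vector $\omega=\omega^\nu+\omega_\nu$ with $V_N$. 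This is verbatim the Leech computation, and $N_\nu\cong\Lambda_\nu$ gives $V_{N_\nu}^{\hat\nu}\cong V_{\Lambda_\nu}^{\hat\nu}$. Since $g$ is \good{}, the orbifold rank condition holds (\autoref{prop:latrankcrit}), so $\hh:=\h^\nu$ is a Cartan subalgebra of $V_1$; hence the Heisenberg \vosa{} $H:=\langle\hh\rangle$ satisfies $H\subseteq V_{N^{\nu,h}}$, and $c(V_{N_\nu}^{\hat\nu})=24-\rk(N^\nu)=24-\rk(V_1)=c(W)$ for $W:=\Com_V(H)\cong V_{\Lambda_\mu}^{\hat\mu}$.

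Next I would use holomorphicity of $V$ to turn the full \vosa{} into a dual pair. Both $V_{N^{\nu,h}}$ and $V_{N_\nu}^{\hat\nu}\cong V_{\Lambda_\nu}^{\hat\nu}$ are \strat{} with all irreducible modules simple currents (for the lattice factor this is classical; for the coinvariant factor it is the content of \cite{Lam19} recalled in \autoref{sec:orbit}), so by the commutant theory of \cite{CKM19,Lin17,CKLR19} the extension $V\supseteq V_{N^{\nu,h}}\otimes V_{N_\nu}^{\hat\nu}$ is a simple-current extension and the two factors are mutual commutants, $\Com_V(V_{N_\nu}^{\hat\nu})=V_{N^{\nu,h}}$ and $\Com_V(V_{N^{\nu,h}})=V_{N_\nu}^{\hat\nu}$. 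Because $H\subseteq V_{N^{\nu,h}}$ and the commutant is inclusion-reversing, this yields $V_{N_\nu}^{\hat\nu}=\Com_V(V_{N^{\nu,h}})\subseteq\Com_V(H)=W$, a full-\vosa{} inclusion of equal central charge $24-\rk(V_1)$.

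It remains to upgrade this inclusion to an equality, i.e.\ to prove $\mu=\nu$, which I expect to be the only real obstacle. Both $V_{N_\nu}^{\hat\nu}\cong V_{\Lambda_\nu}^{\hat\nu}$ and $W\cong V_{\Lambda_\mu}^{\hat\mu}$ are among the eleven \voa{}s of \autoref{table:11}, each with trivial weight-one space and with module category the discriminant form $R(V_{\Lambda_\nu}^{\hat\nu})$ fixed by the genus of $\Lambda_\nu$. A proper full-\vosa{} extension $V_{N_\nu}^{\hat\nu}\subsetneq W$ would, by the theory of simple-current extensions, realise $R(W)$ as an isotropic reduction $I^\perp/I$ of $R(V_{N_\nu}^{\hat\nu})$ for a nontrivial isotropic $I$, in particular with $|R(W)|=|R(V_{N_\nu}^{\hat\nu})|/|I|^2$ and matching quadratic forms. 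Equal central charge already confines $\mu$ to the same value of $24-\rk(V_1)$ as $\nu$; inspecting \autoref{table:11}, whenever two families share this central charge their discriminant forms have coprime, hence incompatible, prime content (for instance $3^{-8}$ versus $2_{\II}^{-10}4_{\II}^{-2}$ for C and D, $5^{+6}$ versus $2_{\II}^{+6}3^{-6}$ for F and G, and the pairwise coprime orders in H, I and J). No nontrivial isotropic reduction can connect such forms, so $I=0$, forcing $W=V_{N_\nu}^{\hat\nu}$ and $\mu=\nu$. Then $V_{L_\g}=\Com_V(W)=\Com_V(V_{N_\nu}^{\hat\nu})=V_{N^{\nu,h}}$, i.e.\ $N^{\nu,h}\cong L_\g$; the remaining inclusions $V_N^g\hookrightarrow V_N$, $V_N^g\hookrightarrow V$ and $V_{Q_\g}\otimes V_{\Lambda_\nu}^{\hat\nu}\hookrightarrow V_{L_\g}\otimes V_{\Lambda_\nu}^{\hat\nu}\hookrightarrow V$ are exactly those of \autoref{thm:hoehn}.
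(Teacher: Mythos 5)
Your route is genuinely different from the paper's (which simply reads off the proposition as a case-by-case verification from the $226$ orbifold Lie algebras computed in \autoref{thm:main}), and most of it is sound, but there is one real gap. You assert that because $V_{N^{\nu,h}}\otimes V_{N_\nu}^{\hat\nu}$ is a full \vosa{} of the holomorphic \voa{} $V$ with both factors \strat{} and all modules simple currents, the two factors are automatically mutual commutants, in particular $\Com_V(V_{N_\nu}^{\hat\nu})=V_{N^{\nu,h}}$. That principle is false, and a counterexample sits in the very diagram you are proving: $V_{Q_\g}\otimes V_{\Lambda_\nu}^{\hat\nu}$ is also a full \vosa{} of $V$ with both factors \strat{}, yet $\Com_V(V_{\Lambda_\nu}^{\hat\nu})=V_{L_\g}$ properly contains $V_{Q_\g}$ in general. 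The results of \cite{CKM19,Lin17} take the dual-pair property as a \emph{hypothesis}; they do not produce it. Moreover, once you have shown $W=V_{N_\nu}^{\hat\nu}$, the equality $\Com_V(V_{N_\nu}^{\hat\nu})=V_{N^{\nu,h}}$ \emph{is} the statement $L_\g=N^{\nu,h}$, so invoking it there is circular. The tell-tale symptom is that you never use item~\eqref{item:good3} of \autoref{defi:good}, without which the conclusion is simply false (take $h=0$, so that $N^{\nu,h}=N^\nu$ while $|L_\g'/L_\g|=n^2|(N^\nu)'/N^\nu|$); the paper stresses right after the proposition that this item is exactly what makes $L_\g\cong N^{\nu,h}$ work. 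The correct closing move from where your argument legitimately stands ($N^{\nu,h}\subseteq L_\g$ of finite index) is the discriminant count $|L_\g'/L_\g|=|R(V_{\Lambda_\nu}^{\hat\nu})|=n^2|(\Lambda_\nu)'/\Lambda_\nu|=n^2|(N^\nu)'/N^\nu|=|(N^{\nu,h})'/(N^{\nu,h})|$, using \cite{Lam19} and item~\eqref{item:good3}, which forces the index to be $1$.

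The rest of the argument is a nice a priori derivation of $\mu=\nu$ that the paper does not give: the inclusion $V_{N_\nu}^{\hat\nu}\subseteq\Com_V(H)=W$ needs only one-sided commutation (not mutual commutants), and the equal-central-charge plus discriminant-form comparison across \autoref{table:11} does rule out a proper conformal extension $V_{\Lambda_\nu}^{\hat\nu}\subsetneq V_{\Lambda_\mu}^{\hat\mu}$. One small imprecision there: the orders of the discriminant forms for families I and J are $2^{15}$ and $2^8\cdot3^5$, which are \emph{not} coprime; the conclusion still holds because no isotropic reduction of a $2$-group has $3$-torsion and $3^5\nmid|I|^2$ in the other direction, but the stated reason should be corrected. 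With the index count supplied and this fixed, your proof would be a legitimate, more conceptual alternative to the paper's case-by-case check.
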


\begin{rem}
Using \autoref{prop:leechkurz} it is clear why all \strathol{} \voa{}s of central charge $24$ belonging to one of the $11$ genera in \autoref{table:11} can be realised as an orbifold construction of a Niemeier lattice \voa{} by a \good{} automorphism if at least one of the \voa{}s for this genus can be obtained in this way: the \vosa{} $V_{N^{\nu}}\otimes V_{N_\nu}$ of $V_N$ is obtained from the \vosa{} \hbox{$V_{N^{\nu,h}}\otimes V_{N_\nu}^{\hat\nu}$} of $V$ by extending the lattice $N^{\nu,h}$ by a certain isotropic vector $h^*$ in its discriminant form and by extending $V_{N_\nu}^{\hat\nu}\cong V_{\Lambda_\nu}^{\hat\nu}$ to $V_{\Lambda_\nu}$. When $L$ runs through the different lattices in the genus of $N^{\nu,h}$ and is extended by an isotropic vector in the same $\O((N^{\nu,h})'/N^{\nu,h})$-orbit as $h^*$, then the resulting lattices $K$ are all contained in the genus of $N^{\nu}$ and the resulting extensions of $V_K\otimes V_{\Lambda_\nu}$ are lattice \voa{}s associated with a Niemeier lattice.
\end{rem}

%%%%%%%%%%%%%%%%%%%%%%%%%%%%%%%%%%%
%%%%%%%%%%%%%%%%%%%%%%%%%%%%%%%%%%%

\subsection{Generalised-Deep-Hole Characterisation}\label{sec:char}
First, we show that the \good{} automorphisms of the Niemeier lattice \voa{}s are \gdh{}s in the sense of \cite{MS23} (see~\autoref{def:gendeephole}). This will then help us in determining the orbifold construction $V_N^{\orb(g)}$ and proving~\autoref{thm:main}.

\begin{prop}\label{prop:226gdh}
Let $g$ be a \good{} automorphism of a Niemeier lattice \voa{} $V_N$. Then $g$ is a \gdh{}.
\end{prop}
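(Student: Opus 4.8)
The plan is to verify the three defining properties of a \gdh{} (\autoref{def:gendeephole})---type~$0$, extremality, and the orbifold rank condition---for an arbitrary \good{} automorphism $g=\hat{\nu}\,\e^{-(2\pi\i)h(0)}$ of $V_N$. Type~$0$ is literally condition \eqref{item:good1} in the definition of \good{} (\autoref{defi:good}), so nothing is to be done there. The decisive observation is that the remaining two properties can be obtained \emph{simultaneously} from \autoref{rem:orbiextremal}, which states that for a rank-$24$ lattice with $\nu\in H_\Delta$ of order $m=n$, the single hypothesis
\[
\pi_\nu(s_i)+ih\notin\pi_\nu(N)\quad\text{for all }i\in\Z_n\setminus\{0\}
\]
implies both the orbifold rank condition (through \autoref{prop:latrankcrit}) and $\rho(V_N(g^i))\geq1$ for all $i\neq0$, whence extremality by \autoref{cor:dimform}.

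First I would check the structural hypotheses of \autoref{rem:orbiextremal}: $N$ is a Niemeier lattice, hence even, unimodular and of rank $24$; by the normal form in \autoref{thm:latconj} we may take $\nu\in H\cong H_\Delta\subseteq\O(N)$, which guarantees that $\hh_0\cong\h^\nu$ is a Cartan subalgebra of $(V_N^g)_1$; and $\nu$ has order $n$ by condition \eqref{item:good2} of \good. This reduces the entire statement to establishing the displayed non-membership condition, which is therefore the heart of the matter.

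For the displayed condition I would invoke condition \eqref{item:good3}. Since $N$ is unimodular, $\pi_\nu(N)=\pi_\nu(N')=(N^\nu)'$, so condition \eqref{item:good3}---that $h\bmod(N^\nu)'$ has order $n$---says exactly that $ih\notin\pi_\nu(N)$ for all $0<i<n$. In the families without order doubling (all but D, J, K) the standard lift satisfies $\hat{\nu}^i=\widehat{(\nu^i)}$, i.e.\ $s_i=0$, so the condition collapses precisely to $ih\notin\pi_\nu(N)$ and is immediate.

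The main obstacle I anticipate is the order-doubling families D, J and K, where $s_i$ can be non-zero and one must rule out that $\pi_\nu(s_i)$ cancels $ih$ modulo $\pi_\nu(N)$. For family D this is still painless, since there $n=2$ and the only relevant index is $i=1$, for which $\hat{\nu}$ is itself a standard lift and hence $s_1=0$. For families J ($n=6$) and K ($n=10$) I would control the vectors $s_i$ via the cocycle relation $\hat{\nu}^i=\widehat{(\nu^i)}\e^{-(2\pi\i)s_i(0)}$ together with the constraint $s_n+nh\in N$ that follows from $g^n=\id$ (as $P\cong N'=N$), which pins down the residual $2$-torsion ambiguity in $\pi_\nu(s_i)$; alternatively, since \autoref{prop:226} has already reduced these families to finitely many (three) algebraic conjugacy classes, one can simply compute the shift cosets $\pi_\nu(s_i)+ih$ directly. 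I expect this order-doubling bookkeeping to be the only genuinely delicate step, everything else being a formal consequence of \autoref{rem:orbiextremal}.
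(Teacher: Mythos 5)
Your overall strategy is exactly the paper's: reduce everything to the single condition $\pi_\nu(s_i)+ih\notin\pi_\nu(N)$ for all $i\in\Z_n\setminus\{0\}$ and then invoke \autoref{rem:orbiextremal} to get extremality and the orbifold rank condition simultaneously, with type~$0$ being definitional. Your treatment of the indices $i$ coprime to $n$ via item~\eqref{item:good3} also matches the paper (for such $i$ one has $N^{\nu^i}=N^\nu$, so $\hat\nu^i$ really is a standard lift of $\nu^i$ and $s_i$ is trivial modulo $\pi_\nu(N)$).

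The gap is in your claim that ``in the families without order doubling the standard lift satisfies $\hat{\nu}^i=\widehat{(\nu^i)}$, i.e.\ $s_i=0$.'' This conflates two different statements. The paper explicitly warns that $\hat\nu^i$ is \emph{not} necessarily a standard lift of $\nu^i$ even when $\hat\nu$ is standard, and the only vanishing statement it makes is for the full power: $s_n$ can be taken to be zero if and only if $\nu$ has no order doubling. For an intermediate $i$ with $(i,n)>1$ one has $N^{\nu^i}\supsetneq N^\nu$ in general, and the sign character $\alpha\mapsto\prod_{j=0}^{i-1}\eta(\nu^j\alpha)$ on $N^{\nu^i}$ need not be trivial, so $\pi_\nu(s_i)$ can represent a non-trivial ($2$-torsion) class in $\h^\nu/\pi_\nu(N)$. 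Since item~\eqref{item:good3} forces $ih$ to have order $n/(i,n)$ modulo $\pi_\nu(N)$ while $2\pi_\nu(s_i)\in\pi_\nu(N)$ always, the only index where a cancellation can actually occur is $i=n/2$; but that index genuinely occurs in the non-order-doubling families E, G and I (orders $4$, $6$, $8$), which your argument therefore does not cover. These cases require the same explicit verification (using the classification in \autoref{prop:226}) that you correctly propose for J and K --- and this is precisely what the paper does: it checks \emph{all} powers with $(i,n)>1$ case by case, for every family, rather than appealing to the absence of order doubling.
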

\begin{proof}
Let $n$ denote the order of $g$. We check that all \good{} automorphisms $g$ satisfy $\pi_\nu(s_i)+ih\notin\pi_\nu(N)$ for all $i\in\Z_n\setminus\{0\}$. (For $(i,n)=1$ this is immediate from item~\eqref{item:good3} in \autoref{defi:good} but for the other powers of $g$ we need to check this case by case using the classification result in \autoref{prop:226}.)
% Indeed, I checked this explicitly on the computer.
% Alternatively, if we prove \autoref{prop:power} case by case, then we could use this here and this proof would become completely conceptual, without case-by-case computations.
As explained in \autoref{rem:orbiextremal}, this not only implies the orbifold rank condition by \autoref{prop:latrankcrit} but also the extremality of $g$ by \autoref{cor:dimform}. Since by definition a \good{} automorphism is of type~$0$, it follows that $g$ is a \gdh{}.
\end{proof}

By the previous result, for each of the $226$ \good{} automorphisms we know the rank and the dimension of $(V_N^{\orb(g)})_1$ by \autoref{cor:dimform} and \autoref{prop:latrankcrit}. Using Schellekens' list of possible weight-one Lie algebras $V_1$ of \strathol{} \voa{}s of central charge $24$, together with a few additional data, we are then able to determine the Lie algebra structure of $(V_N^{\orb(g)})_1$ in each case by exclusion.
% The orbifold constructions are tabulated in \autoref{table:A} to \autoref{table:K} in the appendix.

\begin{proof}[Proof of \autoref{thm:main}]
In addition to the rank and the dimension of $(V_N^{\orb(g)})_1$ (see \autoref{prop:226gdh}) we know:
\begin{enumerate}[leftmargin=*]
\item\label{item:proof1} By the inverse orbifold construction the Lie algebra $(V_N^g)_1$ is a fixed-point Lie subalgebra of $(V_N^{\orb(g)})_1$ under an automorphism of order dividing the order~$n$ of $g$. On the other hand, the possible fixed-point Lie subalgebras of a Lie algebra on Schellekens' list under finite-order automorphisms can be classified using \cite{Kac90}, Chapter~8.
\item\label{item:proof2} By \autoref{prop:226gdh} we know that $\{h(-1)\otimes\ee_0\mid h\in\h^\nu\}\cong\h^\nu$ is a Cartan subalgebra of $(V_N^{\orb(g)})_1$. It is straightforward to compute the action of the Cartan subalgebra on the left- and right-hand side of the inclusion
\begin{equation*}
(V_N^g)_1\oplus\!\!\bigoplus_{\substack{i\in\Z_n\\(i,n)=1}}\!\!(V_N(g^i))_1\subseteq(V_N^{\orb(g)})_1\subseteq(V_N^g)_1\oplus\bigoplus_{\substack{i\in\Z_n\\i\neq0}}(V_N(g^i))_1.
\end{equation*}
The corresponding (potential) root vectors must be compatible with the root system of $(V_N^{\orb(g)})_1$, including their lengths with respect to the unique non-degenerate, invariant bilinear form on $V_N^{\orb(g)}$ normalised such that $\langle\vac,\vac\rangle=-1$.

Note that the long roots of the root systems of the simple components $\g_i$ of the Lie algebras on Schellekens' list have lengths $2/k_i$ with respect to this bilinear form where the $k_i\in\Ns$ are the levels (see \autoref{sec:orbit}).
\end{enumerate}
Both items together with Schellekens' list are sufficient to reduce the possible Lie algebra structures of $(V_N^{\orb(g)})_1$ to just one case for all $226$ automorphisms $g$.
\end{proof}

We showed in \autoref{prop:226gdh} that the \good{} automorphisms $g$ of the Niemeier lattice \voa{}s are \gdh{}s. By construction, they satisfy that the order of $g$ equals the order of the projection of $g$ to $\Aut(V_N)/K\cong H$. The \good{} automorphisms are in fact characterised by these properties:
\begin{thm}\label{thm:class}
The \good{} automorphisms of the \voa{}s $V_N$ associated with the $24$ Niemeier lattices $N$ are exactly the \gdh{}s $g$ of $V_N$ whose orders equal the orders of their respective projections to $\Aut(V_N)/K\cong H$ (and with $\rk((V_\Lambda^g)_1)>0$ in the case of the Leech lattice $\Lambda$).
\end{thm}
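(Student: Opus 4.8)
The plan is to prove the two inclusions, observing that the forward one is essentially already established. If $g$ is \good{}, then $g$ is a \gdh{} by \autoref{prop:226gdh}, its order $n$ equals the order of its projection $\nu$ to $\Aut(V_N)/K\cong H$ by condition~\eqref{item:good2} of \autoref{defi:good}, and in the Leech case the inequality $\rk((V_\Lambda^g)_1)=\rk(\Lambda^\nu)>0$ holds because, by \autoref{prop:226} and \autoref{table:11x24}, $\nu$ is one of the $11$ classes of \autoref{table:11}, each of which fixes a positive-dimensional subspace of $\Lambda\otimes_\Z\Q$. Hence only the converse requires work: I would take a \gdh{} $g$ whose order $n$ equals the order of its projection $\nu$ (and with $\rk((V_\Lambda^g)_1)>0$ if $N=\Lambda$) and show that $g$ is \good{}.

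By \autoref{thm:latconj} I may assume $g=\hat{\nu}\,\e^{-(2\pi\i)h(0)}$ with $\nu\in H$ and $h\in\pi_\nu(N\otimes_\Z\Q)$. Conditions~\eqref{item:good1} and~\eqref{item:good2} of \autoref{defi:good} are then immediate, since type~$0$ is built into the notion of a \gdh{} and $\nu$ has order $n$ by hypothesis; the whole content is condition~\eqref{item:good3}, i.e.\ that $h\bmod(N^\nu)'$ has order $n$, equivalently that $N^{\nu,h}$ is an index-$n$ sublattice of $N^\nu$. One inequality is cheap: since $|g|=|\nu|=n$, \autoref{prop:sameorder} gives $nh\in N^\nu\subseteq(N^\nu)'$, so the order of $h\bmod(N^\nu)'$, which equals $[N^\nu:N^{\nu,h}]$, divides $n$. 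Note that the rank criterion alone does not suffice here: by \autoref{rem:latrankcrit} the orbifold rank condition only constrains the powers $g^i$ with $(i,n)=1$, and this merely excludes $[N^\nu:N^{\nu,h}]=1$, not the intermediate divisors of $n$. The reverse inequality therefore has to come from a more global input.

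For that input I would pass to the Leech-lattice picture. As in \autoref{prop:leechkurz}, $V_{N^{\nu,h}}\otimes V_{N_\nu}^{\hat\nu}$ is a full \vosa{} of $V:=V_N^{\orb(g)}$; its lattice double commutant is a lattice \voa{} $V_{L_\g}$ with $N^{\nu,h}\subseteq L_\g$, and by \autoref{thm:hoehn} the pair $V_{L_\g}\otimes V_{\Lambda_\mu}^{\hat\mu}$ with $V_{\Lambda_\mu}^{\hat\mu}=\Com_V(V_{L_\g})$ is dual in $V$, where $\mu$ is the class of \autoref{table:11} attached to the genus of $L_\g$. Granting the identification $\mu=\nu$ (discussed below), the computation of $R(V_{\Lambda_\nu}^{\hat\nu})$ in \cite{Lam19} gives $|L_\g'/L_\g|=n^2\,|(N^\nu)'/N^\nu|$, while the sublattice relations yield $|(N^{\nu,h})'/N^{\nu,h}|=[N^\nu:N^{\nu,h}]^2\,|(N^\nu)'/N^\nu|$ and simultaneously $|(N^{\nu,h})'/N^{\nu,h}|=[L_\g:N^{\nu,h}]^2\,|L_\g'/L_\g|$. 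Combining these forces $[N^\nu:N^{\nu,h}]=[L_\g:N^{\nu,h}]\cdot n\geq n$. Together with the divisibility from the previous paragraph this gives $[N^\nu:N^{\nu,h}]=n$ (and $L_\g=N^{\nu,h}$), which is exactly condition~\eqref{item:good3}; thus $g$ is \good{}, and by \autoref{prop:226} it is one of the $226$ classes.

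The main obstacle is the identification $\mu=\nu$, which I cannot simply read off from \autoref{prop:leechkurz} since that result presupposes $g$ \good{}. I would argue it directly. The orbifold rank condition (part of $g$ being a \gdh{}) together with \autoref{lem:fixiscart} gives $\rk(L_\g)=\rk(\g)=\rk((V_N^g)_1)=\rk(N^\nu)$, so $V_{\Lambda_\mu}^{\hat\mu}=\Com_V(V_{L_\g})$ has central charge $24-\rk(N^\nu)$, precisely the central charge of $V_{\Lambda_\nu}^{\hat\nu}=V_{N_\nu}^{\hat\nu}$. Since $N^{\nu,h}\subseteq L_\g$ and taking commutants thrice returns the commutant, $V_{\Lambda_\nu}^{\hat\nu}=V_{N_\nu}^{\hat\nu}\subseteq\Com_V(V_{N^{\nu,h}})=\Com_V(V_{L_\g})=V_{\Lambda_\mu}^{\hat\mu}$, so $V_{\Lambda_\nu}^{\hat\nu}$ is a full \vosa{} of $V_{\Lambda_\mu}^{\hat\mu}$ of equal central charge. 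Comparing the finite quadratic spaces $R(V_{\Lambda_\nu}^{\hat\nu})$ and $R(V_{\Lambda_\mu}^{\hat\mu})$ from \cite{Lam19}, which are pairwise distinct across the $11$ classes and cannot be related by a proper isotropic extension within a fixed coinvariant rank, then forces the inclusion to be an equality and hence $\mu=\nu$; verifying this incompatibility is where the genuine effort lies. Finally, for $N=\Lambda$ the hypothesis $\rk((V_\Lambda^g)_1)>0$ is exactly what excludes the remaining \gdh{}s of $V_\Lambda$ whose orbifolds are $V^\natural$, which have vanishing fixed rank and do not occur among the \good{} automorphisms.
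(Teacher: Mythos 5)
Your forward inclusion is fine and matches the paper (it is exactly \autoref{prop:226gdh} plus condition~\eqref{item:good2} of \autoref{defi:good}), and your overall strategy for the converse is genuinely different from the paper's: the paper does not argue structurally at all, but enumerates with \texttt{Magma} all type-$0$ automorphisms $\hat\nu\,\e^{-(2\pi\i)h(0)}$ with $|g|=|\nu|$, eliminates those that violate necessary conditions for extremality ($\rho(V_N(g^i))<1$ for some $i$ coprime to $n$) or for the orbifold rank condition (\autoref{rem:latrankcrit} plus Schellekens' list), and observes that exactly $226$ candidates survive, which must then coincide with the $226$ \good{} classes by counting. Your discriminant-group bookkeeping $[N^\nu:N^{\nu,h}]=n\,[L_\g:N^{\nu,h}]$ is a nice conceptual replacement for that elimination, and the arithmetic is correct granted its inputs.

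However, there is a genuine gap, and it sits exactly where you flag it: the identification $\mu=\nu$. This is not a loose end to be tidied up later; it carries essentially the entire content of the theorem. Your central-charge comparison only pins down $\rk(\Lambda_\mu)=\rk(N_\nu)$, so you must still rule out, for \emph{every} Frame shape occurring in \emph{any} of the groups $H\subseteq\O(N)$ (the paper counts up to $25$ of them per lattice, not just the $11$ of \autoref{table:11}), that $V_{\Lambda_\nu}^{\hat\nu}$ admits a proper conformal extension to some $V_{\Lambda_\mu}^{\hat\mu}$ with $\mu$ among the $11$ classes. That requires knowing the quadratic spaces $R(V_{\Lambda_\nu}^{\hat\nu})$ for all these $\nu$ and checking the non-existence of suitable isotropic subgroups -- a finite but substantial case analysis that is precisely the work the paper's computer elimination performs. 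Relatedly, your argument as written never invokes the extremality clause of the \gdh{} definition; if your sketch were complete it would prove the strictly stronger statement that type~$0$ plus the rank condition plus $|g|=|\nu|$ already forces \good{}ness, and you should at least convince yourself that extremality is not secretly needed to kill some candidate $(\nu,h)$ that your incompatibility claim cannot reach. Finally, a small repair: in the order-doubling families (D, J, K) one has $h\in s+\frac{1}{n}N^\nu$ with $s\in\frac{1}{2n}N^\nu$, so $nh\in ns+N^\nu$ lies in $\frac{1}{2}N^\nu$ rather than in $N^\nu$; the divisibility $nh\in(N^\nu)'$ still holds, but it requires the extra observation that $\hat\nu^{\,n}$ fixes $\ee_\alpha$ for $\alpha\in N^\nu$ (since $\langle\alpha,\alpha\rangle\in2\Z$ there), forcing $ns\in(N^\nu)'$.
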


\begin{proof}
In the following we classify the algebraic conjugacy classes of \gdh{}s of the Niemeier lattice \voa{}s whose orders are equal to the orders of their respective projections to $\Aut(V_N)/K\cong H$ (with $\rk((V_\Lambda^g)_1)>0$ in the case of the Leech lattice $\Lambda$). We obtain an upper bound of $226$ classes, which then must be \good{} because there are $226$ classes of \good{} automorphisms by \autoref{prop:226} and all of these are \gdh{}s (with the additional properties) by \autoref{prop:226gdh}.

For each Niemeier lattice $N$ we let $\nu$ vary through the algebraic conjugacy classes in $\Aut(V_\Lambda)/K\cong\O(N)/W=H$ (see \autoref{table:25} in the appendix). This includes but is not limited to those in \autoref{table:11x24}. Then we let $h$ vary through the orbits (computed using \texttt{Magma} \cite{Magma}) of the action of $C_{\O(N)}(\nu)$ on $\pi_\nu(N\otimes_\Z\Q)/(N^\nu)'$ and consider the automorphisms of the form $g=\hat{\nu}\,\e^{-(2\pi\i)h(0)}$ of order~$n$ and type~$0$ (see \autoref{thm:latconj} and \autoref{rem:latalgconj}).

We then eliminate those entries from this finite list that cannot be extremal since $\rho(V_N(g^i))<1$ for some $i\in\Z_n$ with $(i,n)=1$ (see \cite{MS23}) or cannot satisfy the orbifold rank condition $\rk((V_N^{\orb(g)})_1)=\rk((V_N^g)_1)$ by \autoref{rem:latrankcrit} and items \eqref{item:proof1} and \eqref{item:proof2} in the proof of \autoref{thm:main} in combination with Schellekens' list of possible weight-one Lie algebras of \strathol{} \voa{}s of central charge $24$.

This leaves us with $226$ automorphisms of the Niemeier lattice \voa{}s (all corresponding to the $11$ algebraic conjugacy classes in $\O(\Lambda)$ listed in \autoref{table:11}), representing the $226$ distinct algebraic conjugacy classes of \good{} automorphisms.
% We remark that for the Leech lattice $\Lambda$ we may a priori restrict $\nu$ to the $11$ algebraic conjugacy classes in $\O(\Lambda)$ listed in \autoref{table:11} by the partial classification results in \cite{MS23} (old version). In fact, in the newer version of this paper the \gdh{}s of $V_\Lambda$ have been completely classified.
\end{proof}
Note that the $38$ algebraic conjugacy classes of \gdh{}s $g$ in $V_\Lambda$ with $\rk((V_\Lambda^g)_1)=0$ trivially satisfy the property that the order of the projection of $g$ to $\Aut(V_\Lambda)/K\cong\O(\Lambda)$ is equal to the order of $g$ but they are not \good{} as item~\eqref{item:good3} in \autoref{defi:good} is not satisfied.

\medskip

We conclude this section with the proof of \autoref{prop:power}:
\begin{proof}[Proof of \autoref{prop:power}]
Let $g$ be a \good{} automorphism of a Niemeier lattice \voa{} $V_N$. Then, as stated in the proof of \autoref{prop:226gdh}, $g$ satisfies the assumptions of \autoref{rem:orbiextremal}. It is easy to see that any power $g^i$ of $g$ still satisfies these assumptions so that $g^i$ is a \gdh{} whose projection to $\Aut(V_N)/K$ has the same order as $g^i$. Hence, by \autoref{thm:class}, $g^i$ is \good{}.
\end{proof}

%%%%%%%%%%%%%%%%%%%%%%%%%%%%%%%%%%%
%%%%%%%%%%%%%%%%%%%%%%%%%%%%%%%%%%%
%%%%%%%%%%   Section 6   %%%%%%%%%%
%%%%%%%%%%%%%%%%%%%%%%%%%%%%%%%%%%%
%%%%%%%%%%%%%%%%%%%%%%%%%%%%%%%%%%%

\section{Systematic Uniqueness Proof}\label{sec:uniqueness}
In this section we show that, given a \strathol{} \voa{} $V$ of central charge $24$ with $V_1\neq\{0\}$, the Lie algebra structure of $V_1$ uniquely determines the \voa{} $V$ (uniqueness statement in \autoref{thm:classalt}). Together with the existence result in \autoref{cor:main} this implies that there are exactly $70$ such \voa{}s up to isomorphism. We give the first systematic proof of this statement. We note that we also use the uniqueness
% up to outer automorphisms
of the decomposition of $V$ into $\langle V_1\rangle$-modules proved in \cite{Sch93}.

The proof follows the strategy proposed in \cite{LS19} to find for all \voa{}s $V$ with a certain Lie algebra $V_1$ an inner automorphism $\amgis\in\Aut(V)$ of type~$0$ such that $V^{\orb(\amgis)}$ is isomorphic to some Niemeier lattice \voa{} $V_N$ and such that the inverse orbifold automorphism $g\in\Aut(V_N)$ is the unique one in $\Aut(V_N)$ up to algebraic conjugacy with certain properties (depending only on $V_1$ and $\amgis$) that allow us to conclude that $V\cong V_N^{\orb(g)}$.

\begin{prop}\label{prop:unique1}
Let $\g$ be one of the $70$ non-zero Lie algebras on Schellekens' list and let $V$ be a \strathol{} \voa{} $V$ of central charge $24$ with $V_1\cong\g$. Then there exists an inner automorphism $\amgis\in\Aut(V)$ such that $V^\amgis$ satisfies the positivity condition and the following holds:
\begin{enumerate}
\item the order $n$ of $\amgis$ equals the order of the element in $\O(\Lambda)$ associated with the family of $\g$ (see \autoref{table:11}),
\item $\amgis$ is of type~$0$,
\item the conformal weights satisfy $\rho(V(\amgis^i))\geq 1$ for all $i\neq0\pmod{n}$, and % so that $\amgis$ is extremal
\item $\dim(V^{\orb(\amgis)}_1)=\dim((V_N)_1)$ for some Niemeier lattice $N$.
\end{enumerate}
\end{prop}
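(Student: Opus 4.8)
The plan is to build $\amgis$ directly from the dual-pair decomposition of $V$ and to verify the four conditions, the key point being that each of them depends only on $V_1\cong\g$. By \autoref{thm:hoehn}, $V$ is a simple-current extension
\begin{equation*}
V\cong\bigoplus_{\gamma\in L_\g'/L_\g}W_{\tau(\gamma)}\otimes V_{\gamma+L_\g},\qquad W:=V_{\Lambda_\nu}^{\hat\nu},
\end{equation*}
where $\nu\in\O(\Lambda)$ is the class of order $n$ attached to the family of $\g$, the lattice $L_\g$ is the orbit lattice in the corresponding genus (with $\tau$ read off from $V$ in families D and J), and $\tau\colon L_\g'/L_\g\to\overline{R(W)}$ is an isometry. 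First I would choose $h^*\in L_\g'$ whose class in $L_\g'/L_\g$ generates a cyclic isotropic subgroup of order $n$ such that $N^\nu:=L_\g+\Z h^*$ is even and, glued to $N_\nu\cong\Lambda_\nu$, reconstructs a Niemeier lattice $N$; the existence of such an $h^*$ is the reverse of the gluing explained after \autoref{prop:leechkurz} and rests on the discriminant identity $|L_\g'/L_\g|=n^2|(N^\nu)'/N^\nu|$ from \cite{Lam19}. I then set $\amgis:=\e^{(2\pi\i)h^*(0)}$ with $h^*\in L_\g'\subseteq\hh$, which is an inner automorphism of $V$.

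Conditions (1) and (2) follow directly from $h^*$. Since $\amgis$ acts on an $\hh$-weight vector of weight $\beta\in L_\g'$ by $\e^{(2\pi\i)\langle h^*,\beta\rangle}$, its order is the order of $h^*+L_\g$ in $L_\g'/L_\g$, namely $n$, giving (1); and $\amgis$ has type~$0$, i.e.\ $\rho(V(\amgis))\in\tfrac1n\Z$, by the isotropy of $h^*+L_\g$ (equivalently, the evenness of $N^\nu$). That $V^\amgis$ satisfies the positivity condition holds by \cite{Moe18}.

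The core of the proof is condition (3). Expressing the $\amgis^i$-twisted module via Li's $\Delta$-operator \cite{Li96} and using that $\amgis$ is supported on the lattice tensor factor, I would obtain
\begin{equation*}
\rho(V(\amgis^i))=\min_{\gamma\in L_\g'/L_\g}\Bigl(\rho(W_{\tau(\gamma)})+\min_{\delta\in\gamma+ih^*+L_\g}\tfrac{\langle\delta,\delta\rangle}{2}\Bigr)
\end{equation*}
for $i\not\equiv0\pmod n$, and bound each summand below by $1$. For the cosets $\gamma+ih^*+L_\g$ avoiding $0$ the lattice minimum is $\geq1$, these being nonzero vectors of the even, positive-definite lattice $N^\nu$; for the cosets whose lattice minimum is $0$ the term equals a twisted-module conformal weight of $W=V_{\Lambda_\nu}^{\hat\nu}$, which is $\geq1$ by the estimate in \autoref{rem:orbiextremal}: as $\nu\in H_\Delta$ the vacuum anomaly satisfies $\rho_{\nu^i}\geq1-(i,n)/n$, while the accompanying nonzero coinvariant shift lies in $\tfrac{(i,n)}{n}\Z$ and contributes at least $(i,n)/n$, so the two add to at least $1$. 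Granting (1)--(3), $\amgis$ is extremal by \autoref{cor:dimform}, so $\dim(V^{\orb(\amgis)}_1)=24+\sum_{d\mid n}c_n(d)\dim(V_1^{\amgis^d})$, where $\dim(V_1^{\amgis^d})$ is the dimension of the fixed-point subalgebra of $\amgis^d$ on $\g$ (the Cartan together with the root spaces $\g_\alpha$ with $\langle\alpha,dh^*\rangle\in\Z$); evaluating this, mirroring the computation of \autoref{thm:main} in the reverse orbifold direction, I would verify it equals $\dim((V_N)_1)$ for the reconstructed Niemeier lattice $N$, giving (4).

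I expect the main obstacle to be the uniform verification of (3): for a lattice \voa{} the conformal-weight bound is \autoref{rem:orbiextremal}, but here one must correctly combine the lattice minima over the cosets $\gamma+ih^*+L_\g$ with the module conformal weights of the genuinely non-lattice factor $V_{\Lambda_\nu}^{\hat\nu}$ (i.e.\ the vacuum anomalies $\rho_\nu$ and the $R(W)$-grading). Since the existence of the order-$n$ generator $h^*$, the bound on $\rho_{\nu^i}$ and the dimension match all reduce to the genus and discriminant-form identities already established for the \good{} automorphisms in \autoref{sec:cons}, I expect this to come down to a finite, family-by-family check across the $11$ families of \autoref{table:11}, of the kind performed with \texttt{Magma} elsewhere in the paper.
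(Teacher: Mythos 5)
Your route is the explicit one that the paper itself mentions, and sets aside, in the remark immediately following this proposition: write $\amgis=\e^{(2\pi\i)v_0}$ for an explicit $v$ in the Cartan subalgebra and verify the four conditions directly from the dual-pair decomposition of \autoref{thm:hoehn}. The paper instead argues by transport of structure: it takes the realisation $\bar V=V_N^{\orb(\bar g)}$ with $\bar V_1\cong\g$ from \autoref{thm:main}, notes that the inverse orbifold automorphism $\bar\amgis$ is inner (this is the orbifold rank condition via \autoref{prop:equiv3}) and hence determined by the Lie algebra $\bar V_1$ together with the decomposition of $\bar V$ into $\langle \bar V_1\rangle$-modules, and then invokes Schellekens' result that this decomposition is determined by $\g$ to copy $\bar\amgis$ onto an arbitrary $V$ with $V_1\cong\g$. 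All four conditions then transfer with no new estimates; in particular $\rho(\bar V(\bar\amgis^i))\geq1$ follows from $\rho(V_N(\bar g^j))\geq1$ through the grading of the $W^{(j,i)}$, so no lattice-theoretic bound has to be reproved.

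The genuine gap in your version is in condition (3). You claim that every coset $\gamma+ih^*+L_\g$ not containing $0$ consists of nonzero vectors of the even lattice $N^\nu$ and therefore has lattice minimum $\geq1$. But $\gamma+ih^*+L_\g\subseteq N^\nu=L_\g+\Z h^*$ only when $\gamma+L_\g$ lies in the cyclic subgroup generated by $h^*+L_\g$, which has order $n$, whereas $|L_\g'/L_\g|=n^2|(N^\nu)'/(N^\nu)|$ is strictly larger in every family with $n>1$. For the remaining $\gamma$ the vectors of the coset lie in $L_\g'\setminus N^\nu$ and routinely have norm less than $2$ (glue vectors), so the lattice minimum alone does not give $1$; one must add $\rho(W_{\tau(\gamma)})$, and the required inequality $\rho(W_{\tau(\gamma)})+\min_{\delta\in\gamma+ih^*+L_\g}\langle\delta,\delta\rangle/2\geq1$ does not follow from the CFT-type integrality of $V$ either, because the $W$-module label $\tau(\gamma)$ and the shifted lattice coset are mismatched by $ih^*$. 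Closing this needs the actual conformal weights of the irreducible $V_{\Lambda_\nu}^{\hat\nu}$-modules from \cite{Lam19} together with the coset minima of the orbit lattices, checked across the genera --- feasible, and you do flag condition (3) as the main obstacle, but the two-case dichotomy you propose is not a proof of it. This bookkeeping is exactly what the paper's transport argument is designed to avoid.
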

We find a total of $226$ such inner automorphisms.
% As shall be explained in the proof, we find at least one such inner automorphism~$\amgis$ for each of the \good{} automorphisms of the Niemeier lattice \voa{}s $V_N$. Their properties listed in the proposition and their fixed-point Lie subalgebras $V^\amgis_1$ are compatible with the $\amgis$ being the inverse orbifold automorphisms corresponding to the \good{} automorphisms of the Niemeier lattice \voa{}s $V_N$, and we shall prove later that this is indeed often the case.
\begin{proof}
Since the case of abelian $\g$ is trivial ($V$ must be isomorphic to the Leech lattice \voa{} $V_\Lambda$ by \cite{DM04b}), we may assume that $\g$ is semisimple. In \autoref{thm:main} we realised each of the $70$ non-zero Lie algebras $\g$ on Schellekens' list as weight-one Lie algebra $\bar{V}_1\cong\g$ of a concrete orbifold realisation $\bar{V}:=V_N^{\orb(\bar{g})}$ associated with a Niemeier lattice $N$ and a \good{} automorphism $\bar{g}$ of order~$n$.

For each of these orbifold constructions the inverse orbifold automorphism $\bar{\amgis}\in\Aut(\bar{V})$ has order~$n$, type~$0$ and satisfies $\bar{V}^{\orb(\bar{\amgis})}=V_N$. Moreover, $\bar{V}^{\bar{\amgis}}$ satisfies the positivity condition. Since $\bar{g}$ has the property that $\rho(V_N(\bar{g}^i))\geq 1$ for all $i\neq0\pmod{n}$ (see proof of \autoref{prop:226gdh}),
% which implies the extremality of $\bar{g}$,
it even holds that $\rho(\bar{V}(\bar{\amgis}^i))\geq 1$ for all $i\neq0\pmod{n}$.
% implying the extremality of $\bar{\amgis}$.
As a \gdh{} (see \autoref{prop:226gdh}) $\bar{g}$ satisfies the orbifold rank condition, which is equivalent to $\bar{\amgis}$ being inner (see \autoref{prop:equiv3}).

Now, let $V$ be any \strathol{} \voa{} $V$ of central charge $24$ with $V_1\cong\g\cong\bar{V}_1$. Then the lowest-order trace identity in \cite{Sch93} and the results of \cite{DM04,DM06b} imply that the full \vosa{} $\langle V_1\rangle$ of $V$ is isomorphic to $\langle\bar{V}_1\rangle$. Indeed, they are both isomorphic to the simple affine \voa{} $L_{\hat\g_1}(k_1,0)\otimes\cdots\otimes L_{\hat\g_s}(k_s,0)$ (see \autoref{sec:schellekens}), with the levels $k_i\in\Ns$ determined by said trace identity. Moreover, it is shown in \cite{Sch93} that also the decomposition of $V$ into irreducible modules of $L_{\hat\g_1}(k_1,0)\otimes\cdots\otimes L_{\hat\g_s}(k_s,0)$ is uniquely determined by the Lie algebra structure of $V_1$, i.e.\ $V\cong\bar{V}$ as modules of $\langle V_1\rangle\cong\langle\bar{V}_1\rangle$. In other words, there is a linear isomorphism $\psi\colon\bar{V}\to V$ satisfying $\psi Y(a,z)b=Y(\psi a,z)\psi b$ for all $a\in\langle\bar{V}_1\rangle$ and $b\in\bar{V}$, restricting to a \voa{} isomorphism $\langle\bar{V}_1\rangle\to\langle V_1\rangle$
% i.e.\ mapping the vacuum and Virasoro vectors in $\langle\bar{V}_1\rangle$ (and in $\bar{V}$) to their counterparts in $\langle V_1\rangle$ (and in $V$)
and a Lie algebra isomorphism $\bar{V}_1\to V_1$.

Crucially, as an inner automorphism, $\bar\amgis$ is specified in terms of the action of $\bar{V}_1$ on $\bar{V}$ via the zero-mode. This allows us to transport the automorphism $\bar\amgis$ to an automorphism of $V$ via $\psi$. By \autoref{prop:equiv2} we may assume that $\bar\amgis=\e^{(2\pi\i)v_0}$ for some $v$ in a choice $\bar\hh$ of Cartan subalgebra of $\bar{V}_1$, though this is not essential. Then $\hh:=\psi(\bar\hh)$ is a Cartan subalgebra of $V_1$. We define $\amgis:=\psi\bar\amgis\psi^{-1}$, which is an (a priori linear) isomorphism of $V$. The identity $\amgis=\psi\e^{(2\pi\i)v_0}\psi^{-1}=\e^{(2\pi\i)(\psi v)_0}$ (where $\psi v\in\hh\subseteq V_1$) shows that $\amgis$ is an inner automorphism and hence a \voa{} automorphism of $V$.

We stated above that $\psi Y(a,z)b=Y(\psi a,z)\psi b$ for all $a\in\langle\bar{V}_1\rangle$ and $b\in\bar{V}$. Here, the left $Y$ denotes the module vertex operator of $\langle\bar{V}_1\rangle$ on $\bar{V}$ and the right one that of $\langle V_1\rangle$ on $V$. For the orbifold construction we consider the irreducible $\bar\amgis^i$-twisted $\bar{V}$-modules $\bar{V}(\bar{\amgis}^i)$ for $i\in\Z_n$, which are also $\bar\amgis^i$-twisted $\langle\bar{V}_1\rangle$-modules (and analogously without the bar). As $\langle\bar{V}_1\rangle$ is a full \vosa{} of $\bar{V}$, the $L_0$-grading of $\bar{V}(\bar{\amgis}^i)$ is already determined by the structure as a $\bar\amgis^i$-twisted $\langle\bar{V}_1\rangle$-module (and again without the bar). On the other hand, the definition of the twisted module vertex operator $Y^{(v)}$ for inner automorphisms $\e^{(2\pi\i)v_0}$ in \cite{Li96} shows that $\psi Y^{(iv)}(a,z)b=Y^{(i\psi v)}(\psi a,z)\psi b$ for all $a\in\langle\bar{V}_1\rangle$ and $b\in\bar{V}(\bar{\amgis}^i)$. This implies that $\bar{V}(\bar{\amgis}^i)$ is isomorphic to $V(\amgis^i)$ as a $\bar\amgis^i$-twisted/$\amgis^i$-twisted module of $\langle V_1\rangle\cong\langle\bar{V}_1\rangle$. In particular, they have the same $L_0$-grading.

We conclude that the automorphisms $\amgis\in\Aut(V)$ and $\bar{\amgis}\in\Aut(\bar{V})$ share the following properties:
\begin{itemize}
\item By definition, $\amgis$ and $\bar\amgis$ have the same order $n$.
\item Since $\psi(\bar{V}_1^{\bar{\amgis}})=V_1^\amgis$, $\amgis$ and $\bar{\amgis}$ have isomorphic fixed-point Lie subalgebras $V_1^\amgis\cong\bar{V}_1^{\bar{\amgis}}\cong (V_N^{\bar{g}})_1$.
\item For $i\in\Z_n$, the conformal weights of the twisted modules $V(\amgis^i)$ and $\bar{V}(\bar{\amgis}^i)$ are the same. Hence, $\amgis$ is also of type~$0$ and $V^\amgis$ satisfies the positivity condition so that the orbifold construction $V^{\orb(\amgis)}$ exists.
\item Moreover, $\rho(V(\amgis^i))=\rho(\bar{V}(\bar{\amgis}^i))\geq 1$ for all $i\neq0\pmod{n}$. % so that $\amgis$ is extremal.
\item Finally, $\dim(V^{\orb(\amgis)}_1)=\dim(\bar{V}^{\orb(\bar{\amgis})}_1)=\dim((V_N)_1)$ by the dimension formula (see \autoref{cor:dimform}).\qedhere
\end{itemize}
\end{proof}
\begin{rem}
Instead of proving the existence of the inner automorphisms $\amgis$ in $\Aut(V)$ in the above lemma indirectly by mimicking the inverse orbifold automorphisms of the \good{} automorphisms $\bar{g}\in\Aut(V_N)$, one could also explicitly write down an appropriate element $v$ in $\hh$, the Cartan subalgebra of $V_1$, and then show that $\amgis=\e^{(2\pi\i)v_0}\in\Aut(V)$ has the desired properties (using the methods discussed in, e.g., \cite{EMS20b}, Section~7), and we have done so for many of the possible Lie algebras $V_1$. However, we find the above approach to be more conceptual.
\end{rem}

For most of the automorphisms $\amgis$ from \autoref{prop:unique1} it is possible to determine the \voa{} $V^{\orb(\amgis)}$ obtained in the orbifold construction:
\begin{lem}\label{lem:unique2}
For $157$ (at least one for each weight-one Lie algebra $\g$ on Schellekens' list) of the $226$ inner automorphisms $\amgis$ defined in \autoref{prop:unique1} one can show that
\begin{enumerate}
\item[(5)] $V^{\orb(\amgis)}\cong V_N$ for some Niemeier lattice $N$.
\end{enumerate}
\end{lem}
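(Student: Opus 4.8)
The plan is, for each of the $226$ inner automorphisms $\amgis$ produced by \autoref{prop:unique1}, to analyse the orbifold $W:=V^{\orb(\amgis)}$ and to identify it with the target Niemeier lattice \voa{} $V_N$. By properties~(1)--(3) of \autoref{prop:unique1} and the orbifold theory of \autoref{sec:orb}, $W$ is a \strathol{} \voa{} of central charge $24$, and by property~(4) its weight-one space satisfies $\dim(W_1)=\dim((V_N)_1)>0$; hence $W_1$ is one of the $70$ non-zero Lie algebras on Schellekens' list. Now a \strathol{} \voa{} of central charge $24$ is a Niemeier lattice \voa{} exactly when its weight-one Lie algebra has rank $24$ (by \cite{DM04b}), and the $24$ Niemeier lattice \voa{}s are then distinguished by the root system of that rank-$24$ Lie algebra. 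So the goal reduces to showing that $\rk(W_1)=24$ and that the root system of $W_1$ agrees with that of $N$. I stress that this identification is not circular: it invokes only the classical fact that lattice \voa{}s are determined by their lattices, and not the uniqueness statement being proved.

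All invariants are computed as in the proof of \autoref{thm:main}. Since $\amgis$ is inner, the $\amgis^i$-twisted modules $V(\amgis^i)$ decompose into (twisted) modules for the full affine \vosa{} $\langle V_1\rangle\cong L_{\hat\g_1}(k_1,0)\otimes\cdots\otimes L_{\hat\g_s}(k_s,0)$, whose conformal weights and module structure are governed by $\g$ alone, since by \cite{Sch93} the decomposition of $V$ into $\langle V_1\rangle$-modules is determined by $\g$ (up to outer automorphisms). Fixing a Cartan subalgebra $\hh_0$ of $V_1^\amgis$, the action of $\hh_0$ on the graded pieces $W^{(i,0)}_1$ yields, via \autoref{lem:cartcent} and the rank criterion \autoref{prop:rankcrit}, an upper bound for $\rk(W_1)$, while the candidate roots of $W_1$ appear as the weight vectors in $W^{(0,0)}_1\oplus\bigoplus_{i\neq0}W^{(i,0)}_1$; this is precisely the analysis of items~\eqref{item:proof1} and~\eqref{item:proof2} in the proof of \autoref{thm:main}. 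Since $\amgis$ was built to mimic the inverse orbifold automorphism $\bar\amgis$ of the \good{} automorphism $\bar g$ with $V_N^{\orb(\bar g)}\cong\bar V$ and $\bar V^{\orb(\bar\amgis)}\cong V_N$, the reference \voa{} $V_N$ supplies the expected rank and root system and guides the identification.

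For $157$ of the $226$ automorphisms these invariants, combined with the constraint that $W_1$ lies on Schellekens' list with the prescribed dimension, are enough to force $\rk(W_1)=24$ and to determine the root system of $W_1$ uniquely, so that $W\cong V_N$. As each of the $70$ non-zero Lie algebras $\g$ occurs as $V_1$ for at least one of these $157$ automorphisms, every Lie algebra is covered, which is all that is needed for the uniqueness theorem.

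The main obstacle is the identification of the root system in the genuinely twisted sectors. The candidate rank and the roots viewed as functionals on $\hh_0$ are dictated by the $\langle V_1\rangle$-module data, but the metric on the $24-\rk(\g)$ Cartan directions contributed by the sectors $W^{(i,0)}_1$ with $i\neq0$ --- and hence the root lengths and the precise ADE type of the root system --- depend on the intertwining (extension) structure of $W$, not on the module data alone. Whenever $\dim((V_N)_1)$ is shared by several Niemeier lattice \voa{}s (e.g.\ root systems of equal Coxeter number) or by lower-rank Lie algebras on Schellekens' list, these coarse invariants leave an ambiguity that the present method does not resolve; this accounts for the $69$ excluded automorphisms. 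They are nonetheless redundant for \autoref{thm:unique}, since the $157$ already realise every $\g$.
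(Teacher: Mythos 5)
Your proposal is correct and follows essentially the same route as the paper: identify $W_1=V^{\orb(\amgis)}_1$ among the Lie algebras on Schellekens' list using the dimension from \autoref{prop:unique1} together with the fact that $V_1^\amgis\cong(V_N^{\bar g})_1$ must be a fixed-point Lie subalgebra of $W_1$ under an automorphism of order dividing $n$ (classified via \cite{Kac90}, Chapter~8), and then conclude $W\cong V_N$ from $\rk((V_N)_1)=24$ via \cite{DM04b} and the fact that Niemeier lattices are determined by their root systems. The only difference is that the paper's argument is leaner --- it uses neither the rank criterion nor the root-length analysis in the twisted sectors, so the ``main obstacle'' you describe never arises; the $69$ excluded cases are simply those where more than one Lie algebra on Schellekens' list has the prescribed dimension and admits $(V_N^{\bar g})_1$ as such a fixed-point subalgebra.
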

\begin{proof}
Continuing where we left off in the proof of \autoref{prop:unique1}, we attempt to identify the orbifold Lie algebra $V^{\orb(\amgis)}_1$. We know that $\dim(V^{\orb(\amgis)}_1)=\dim((V_N)_1)$ and that $V_1^{\amgis}\cong (V_N^{\bar{g}})_1$ is the fixed-point Lie subalgebra of $V^{\orb(\amgis)}_1$ under an automorphism of order dividing $n$. The possible fixed-point Lie subalgebras of a semisimple (or reductive) Lie algebra under finite-order automorphisms can be classified using \cite{Kac90}, Chapter~8. In $157$ of the $226$ cases we consider, there is only one Lie algebra on Schellekens' list, namely $(V_N)_1$, that has dimension $\dim((V_N)_1)$ and $(V_N^{\bar{g}})_1$ as possible fixed-point Lie subalgebra under an automorphism of order dividing $n$. Hence $V^{\orb(\amgis)}_1\cong(V_N)_1$.

Then, since the rank of $V^{\orb(\amgis)}_1\cong(V_N)_1$ equals the central charge $24$, $V^{\orb(\amgis)}$ is isomorphic to a lattice \voa{} by \cite{DM04b}, and $V^{\orb(\amgis)}\cong V_N$ since the Niemeier lattices are uniquely determined by their root systems.
\end{proof}

We now consider the inverse orbifold automorphism $g\in\Aut(V_N)$ corresponding to one of the inner automorphisms $\amgis\in\Aut(V)$. In all cases we can show that $g$ must be algebraically conjugate to the automorphism $\bar g$ whose inverse orbifold automorphism $\bar\amgis$ was mimicked by $\amgis$.

\begin{lem}\label{lem:unique3}
All of the $226$ \good{} automorphisms $\bar{g}$ of the Niemeier lattice \voa{}s $V_N$ are uniquely specified up to algebraic conjugacy in $\Aut(V_N)$ by the following six properties:
\begin{enumerate}
\item\label{item:propunique1} the order~$n$ of $\bar{g}$,
\item\label{item:propunique2} $\bar{g}$ having type~$0$,
\item\label{item:propunique3} the extremality of $\bar{g}$,
% we could even assume the stronger condition that $\rho(V_N(\bar{g}^i))\geq 1$ for all $i\neq0\pmod{n}$
\item\label{item:propunique4} the fixed-point Lie subalgebra $(V_N^{\bar{g}})_1$,
\item\label{item:propunique5} the orbifold Lie algebra $(V_N^{\orb(\bar{g})})_1$ and
\item\label{item:propunique6} the dimension of the eigenspace $\{v\in (V_N)_1\mid\bar{g}v=\e^{(2\pi\i)1/n}v\}$.
\end{enumerate}
\end{lem}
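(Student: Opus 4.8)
The plan is to reduce the statement to a finite verification resting on the classification already established. By \autoref{thm:latconj} and \autoref{rem:latalgconj}, every finite-order automorphism of $V_N$ is algebraically conjugate to one of the form $\hat\nu\,\e^{-(2\pi\i)h(0)}$ with $\nu$ a representative of an algebraic conjugacy class of $H\subseteq\O(N)$ and $h$ a representative of a normaliser orbit on $\pi_\nu(N\otimes_\Z\Q)/(N^\nu)'$, and all six listed quantities are manifestly invariant under algebraic conjugacy. First I would record, for each of the $226$ \good{} automorphisms $\bar g$, the value of the six-tuple of invariants, reading off the order and the two Lie algebras $(V_N^{\bar g})_1$ and $(V_N^{\orb(\bar g)})_1$ from \autoref{thm:main} and the appendix tables and computing the eigenspace dimension directly from the pair $(\nu,h)$.

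The second step is the structural observation that the six properties already force $g$ to be a \gdh{}. Indeed, properties \eqref{item:propunique4} and \eqref{item:propunique5} fix the ranks $\rk((V_N^g)_1)$ and $\rk((V_N^{\orb(g)})_1)$; since a \good{} automorphism is a \gdh{} by \autoref{prop:226gdh} and therefore satisfies the orbifold rank condition, these two ranks coincide for $\bar g$ and hence for any $g$ sharing its invariants. Thus such a $g$ satisfies the orbifold rank condition, and as it is of type~$0$ by property \eqref{item:propunique2} and extremal by property \eqref{item:propunique3}, while $V_N^g$ automatically satisfies the positivity condition for a lattice \voa{}, we conclude that $g$ is a \gdh{}. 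It then only remains to separate the \good{} automorphisms from one another, and from the remaining \gdh{}s, by means of the six invariants.

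Accordingly, the third step reuses the search carried out in the proof of \autoref{thm:class}: for each Niemeier lattice $N$, let $\nu$ range over all algebraic conjugacy classes of $H$ (not merely the $11$ of \autoref{table:11}) and $h$ over the normaliser orbits, retain those $g=\hat\nu\,\e^{-(2\pi\i)h(0)}$ of the prescribed order $n$ that are of type~$0$, extremal and satisfy the orbifold rank condition, and compute the six invariants for each in \texttt{Magma}. One then checks that the tuple attached to each \good{} automorphism $\bar g$ is realised by a single algebraic conjugacy class of \gdh{}s, necessarily that of $\bar g$. Since goodness is a conjugacy invariant, this simultaneously shows that any $g$ carrying $\bar g$'s invariants is itself \good{} and, by the injectivity of the invariant map on the $226$ \good{} classes, that $g$ is algebraically conjugate to $\bar g$.

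The hard part will be this third step. Unlike in \autoref{thm:class}, we may not assume that the order of the projection of $g$ to $H$ equals $n$, so the search must also cover \gdh{}s whose projections have order strictly smaller than $n$, and the genuine content of the lemma is the verification that no such automorphism accidentally shares all six invariants with a \good{} one. I expect property \eqref{item:propunique6}, the dimension of the $\e^{(2\pi\i)/n}$-eigenspace on $(V_N)_1$, to be the decisive discriminant here, since the order together with the fixed-point and orbifold Lie algebras need not by themselves separate automorphisms that act differently on $(V_N)_1$ while sharing the same graded fixed-point data.
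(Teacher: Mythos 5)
Your first step coincides with the paper's: properties (2)--(5) make $g$ a \gdh{} (the paper states this in one line; your derivation of the orbifold rank condition from the ranks implicit in (4) and (5) is exactly the intended reading). The divergence is in how the crux you correctly isolate --- \gdh{}s of order $n$ whose projection to $H$ has order strictly smaller than $n$ --- is disposed of. Rather than running an extended search over all such automorphisms, the paper notes that property (4) fixes $\rk((V_N^{g})_1)=\rk(N^\nu)=\sum_t b_t$ for the Frame shape $\prod_t t^{b_t}$ of the projection $\nu$, and an inspection of the at most $25$ Frame shapes occurring in $H$ (respectively, for $N=\Lambda$, of the $11$ classes to which a \gdh{} can project by \cite{MS19}) shows that only the order-$n$ class has the required fixed rank, with exactly two spurious exceptions ($3^8$ versus $\sGG$ for $n=6$, and $4^6$ versus $\sII$ for $n=8$) that are excluded by an explicit classification as in \autoref{thm:class}. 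This reduces the lemma to separating the $226$ \good{} classes from one another by the six invariants, a far smaller verification than your full search; your search would in effect rediscover this reduction once you filter on the rank forced by (4). Two points in your plan need care. First, invariant (5) cannot simply be ``computed in \texttt{Magma}'' for a candidate not yet known to be \good{}: identifying $(V_N^{\orb(g)})_1$ requires the exclusion argument of \autoref{thm:main}, so for elimination you should work with the necessary constraints on it (dimension from \autoref{cor:dimform}, rank from the orbifold rank condition, compatibility with Schellekens' list) rather than with its isomorphism type. Second, property (6) is not an invariant of the algebraic conjugacy class --- passing from $\bar g$ to $\bar g^i$ permutes the eigenspaces --- so your ``injectivity of the invariant map'' must be formulated for generators rather than for cyclic subgroups; the lemma only requires that any generator $g$ sharing the six data with $\bar g$ generate a conjugate cyclic group, which is what both arguments establish.
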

\begin{proof}
Let $\bar{g}$ be a \good{} automorphism of a Niemeier lattice \voa{} $V_N$ and let $g$ be an automorphism of the same \voa{} sharing properties \eqref{item:propunique1} to \eqref{item:propunique6} with $\bar{g}$. Note that $g$ is a \gdh{} by properties \eqref{item:propunique2} to \eqref{item:propunique5}.

\smallskip

First, we show that $g$ is again a \good{} automorphism. For this, by \autoref{thm:class}, it suffices to show that the order of the projection of $g$ to $\Aut(V_N)/K\cong H$ also has order~$n$. If $N$ is the Leech lattice~$\Lambda$, it was shown in \cite{MS23} that any \gdh{} must project to one of the $11$ conjugacy classes in $\Aut(V_\Lambda)/K\cong\O(\Lambda)$ listed in \autoref{table:11}. Since the rank of the fixed-point Lie subalgebra $(V_N^g)_1$ is determined by the projection to $H$, which moreover must have order dividing~$n$, the assertion follows by inspecting the $11$ possible Frame shapes.

Then, let $N$ be a Niemeier lattice other than the Leech lattice. Again, by inspecting the (up to) $25$ possible Frame shapes that can occur in $\Aut(V_N)/K\cong H$ (see \autoref{table:25} in the appendix), we conclude that the order of the projection of $g$ to $H$ must have order~$n$ except for possibly the Frame shape $3^8$ if $n=6$ and $\rk((V_N^g)_1)=8$ and $4^6$ if $n=8$ and $\rk((V_N^g)_1)=6$. By an explicit classification like in the proof of \autoref{thm:class} the existence of these spurious cases can be ruled out.

\smallskip

Knowing that $g$ is again a \good{} automorphism, we verify that every \good{} automorphism $\bar{g}$ is unique up to algebraic conjugacy amongst all the \good{} automorphisms with properties \eqref{item:propunique1} to \eqref{item:propunique6}. Hence, $g$ is algebraically conjugate to $\bar{g}$.
\end{proof}

Finally, we combine the above results to give a uniform proof of the uniqueness statement in \autoref{thm:classalt}:
\begin{thm}[Uniform Uniqueness]\label{thm:unique}
Let $\g\neq\{0\}$ be a Lie algebra on Schellekens' list. Then there is a Niemeier lattice $N$ and a \good{} automorphism $\bar{g}\in\Aut(V_N)$ such that any \strathol{} \voa{} $V$ of central charge $24$ with $V_1\cong\g$ satisfies $V\cong V_N^{\orb(\bar{g})}$. In particular, the \voa{} structure of $V$ is uniquely determined by the Lie algebra structure of $V_1$.
\end{thm}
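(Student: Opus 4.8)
The plan is to carry out the inverse-orbifold strategy of \cite{LS19} that the preceding results have prepared. Fix a non-zero Lie algebra $\g$ on Schellekens' list. By \autoref{lem:unique2} I would choose a \good{} automorphism $\bar{g}\in\Aut(V_N)$ with $(V_N^{\orb(\bar{g})})_1\cong\g$ for which the conclusion of that lemma holds, and for an arbitrary \strathol{} \voa{} $V$ of central charge $24$ with $V_1\cong\g$ let $\amgis\in\Aut(V)$ be the inner automorphism of order~$n$ supplied by \autoref{prop:unique1} (which mimics the inverse orbifold automorphism $\bar{\amgis}$ of $\bar{g}$). Then $V^{\orb(\amgis)}\cong V_N$ by \autoref{lem:unique2}, and, after fixing such an isomorphism, the inverse orbifold automorphism $g\in\Aut(V_N)$ of $\amgis$ satisfies $V_N^{\orb(g)}\cong V$ by the inverse orbifold construction in \autoref{sec:orb}. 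The theorem therefore reduces to showing that $g$ is algebraically conjugate to $\bar{g}$, for then $V\cong V_N^{\orb(g)}\cong V_N^{\orb(\bar{g})}$, and since $\bar{g}$ depends only on $\g$, all such $V$ coincide with the fixed \voa{} $V_N^{\orb(\bar{g})}$. By \autoref{lem:unique3} it suffices to verify that $g$ shares the six invariants \eqref{item:propunique1}--\eqref{item:propunique6} with $\bar{g}$.

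To this end I would work throughout in the bigrading $\{W^{(i,j)}\}_{i,j\in\Z_n}$ attached to the $\amgis$-orbifold of $V$, so that $V=\bigoplus_j W^{(0,j)}$, $V_N\cong V^{\orb(\amgis)}=\bigoplus_i W^{(i,0)}$, $V(\amgis^i)=\bigoplus_j W^{(i,j)}$ and $V_N(g^j)=\bigoplus_i W^{(i,j)}$, with $g$ acting on $W^{(i,0)}$ as $\e^{(2\pi\i)i/n}$. Properties \eqref{item:propunique1} and \eqref{item:propunique2} (order~$n$ and type~$0$) are immediate from the construction. The common fixed-point subalgebra is $W^{(0,0)}=V^{\amgis}=(V_N)^{g}$, so $(V_N^{g})_1\cong V_1^{\amgis}\cong(V_N^{\bar{g}})_1$, giving \eqref{item:propunique4}; and $(V_N^{\orb(g)})_1\cong V_1\cong\g\cong(V_N^{\orb(\bar{g})})_1$ gives \eqref{item:propunique5}. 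The eigenspace in \eqref{item:propunique6} is $(W^{(1,0)})_1$, the weight-one part of the $\amgis$-fixed subspace of the twisted module $V(\amgis)$; because $\amgis$ is inner, $V(\amgis)$ decomposes into twisted modules of the affine \vosa{} $\langle V_1\rangle$, whose graded dimensions are determined by $\g$ together with Schellekens' decomposition data for $V$, and these agree with the corresponding quantities for $\bar{\amgis}$ acting on $\bar V=V_N^{\orb(\bar g)}$. Hence \eqref{item:propunique6} matches as well.

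The remaining invariant \eqref{item:propunique3}, the extremality of $g$, is the step requiring the bigrading most essentially. By \autoref{cor:dimform} it is enough to show $\rho(V_N(g^j))=\min_i\rho(W^{(i,j)})\geq1$ for every $j\neq0\pmod n$. For $i\neq0$ one has $\rho(W^{(i,j)})\geq\rho(V(\amgis^i))\geq1$ by item~(3) of \autoref{prop:unique1}; and for $i=0$, $j\neq0$ the space $W^{(0,j)}\subseteq V$ is an eigenspace of $\amgis$ for a non-trivial eigenvalue and so contains no weight-zero vector (as $V_0=\C\vac$ is $\amgis$-fixed), whence $\rho(W^{(0,j)})\geq1$. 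Thus $g$ is extremal. Having matched all six invariants, \autoref{lem:unique3} yields that $g$ is algebraically conjugate to $\bar{g}$, completing the argument.

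I expect the main obstacle to lie in the verification of \eqref{item:propunique3} and \eqref{item:propunique6}, which hinge on controlling the conformal weights and graded dimensions of the twisted modules $V(\amgis^i)$; this is precisely what makes the choice of $\amgis$ as an \emph{inner} automorphism essential, since it reduces these twisted modules to twisted modules of $\langle V_1\rangle$ and thereby pins them down by $\g$ and the $\langle V_1\rangle$-decomposition of $V$ alone. A secondary technical point is that \autoref{lem:unique2} resolves only $157$ of the $226$ cases, but as it furnishes at least one admissible $\bar{g}$ for every $\g$, this suffices for the uniqueness conclusion.
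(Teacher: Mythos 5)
Your proposal is correct and follows essentially the same route as the paper: obtain $\amgis$ from \autoref{prop:unique1}, identify $V^{\orb(\amgis)}\cong V_N$ via \autoref{lem:unique2}, and match the inverse orbifold automorphism $g$ with $\bar g$ through the six invariants of \autoref{lem:unique3}. In fact your verification of properties \eqref{item:propunique3} and \eqref{item:propunique6} via the bigrading $\{W^{(i,j)}\}$ is more explicit than the paper's one-line remark that these follow from the common properties of $\amgis$ and $\bar\amgis$.
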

In total we find $157$ such \good{} automorphisms $\bar{g}$, at least one for each of the $70$ non-zero Lie algebras $\g$.
\begin{proof}
By \autoref{prop:unique1} and \autoref{lem:unique2} there are $157$ inner automorphisms $\amgis$, at least one for each Lie algebra $\g$ and defined on any \strathol{} \voa{} $V$ of central charge $24$ with $V_1\cong\g$, whose orbifold constructions are isomorphic to a Niemeier lattice \voa{} $V_N$. Each $\amgis\in\Aut(V)$ was defined to resemble the inverse orbifold automorphism $\bar{\amgis}\in\Aut(V_N^{\orb(\bar{g})})$ corresponding to a \good{} automorphism $\bar{g}\in\Aut(V_N)$ with $(V_N^{\orb(\bar{g})})_1\cong\g$.

\smallskip

We then consider the inverse orbifold automorphism $g\in\Aut(V_N)$ corresponding to $\amgis$ so that $V_N^{\orb(g)}=V$. By construction of $\amgis$, the automorphism $g$ shares with the corresponding \good{} automorphism $\bar{g}\in\Aut(V_N)$ properties \eqref{item:propunique1} to \eqref{item:propunique6} listed in \autoref{lem:unique3}. Indeed, all of these properties follow from corresponding common properties of $\amgis$ and $\bar{\amgis}$.
% Except for the last item \eqref{item:propunique6} these are all listed in the proof of \autoref{prop:unique1}. The last item equals $\dim(\bar{V}_1(\bar{\amgis}))=\dim(V_1(\amgis))$.

We conclude that the so obtained $157$ automorphisms $g$ of the Niemeier lattice \voa{}s $V_N$ are algebraically conjugate to $\bar{g}$ in $\Aut(V_N)$ so that the \voa{} $V=V_N^{\orb(g)}$ is isomorphic to $V_N^{\orb(\bar{g})}$.
% This also means that $\amgis\in\Aut(V)$ defined in \autoref{lem:unique1}, as suspected, is actually conjugate to the inverse orbifold automorphism $\bar{\amgis}$ corresponding to the \good{} automorphism $\bar{g}$.
\end{proof}

%%%%%%%%%%%%%%%%%%%%%%%%%%%%%%%%%%%
%%%%%%%%%%%%%%%%%%%%%%%%%%%%%%%%%%%
%%%%%%%%%%   Appendix    %%%%%%%%%%
%%%%%%%%%%%%%%%%%%%%%%%%%%%%%%%%%%%
%%%%%%%%%%%%%%%%%%%%%%%%%%%%%%%%%%%

\newpage

\section*{Appendix. List of \Good{} Orbifold Constructions}
For the reader's convenience, in \autoref{table:25} we list the algebraic conjugacy classes of outer automorphisms in $H=\O(N)/W$ for the Niemeier lattices $N$.

If $N$ is not the Leech lattice, we realise $H$ as the subgroup $H_\Delta\subseteq\O(N)$ of the lattice automorphisms fixing a choice $\Delta$ of the simple roots of the root system of $N$. Then the Frame shapes of the elements of $H_\Delta$ have only non-negative exponents and also describe how $H$ permutes the choice of simple roots. A total of $25$ different Frame shapes appear.

For the Leech lattice $\Lambda$ the root system is empty so that $H=\O(\Lambda)$. In total, $\O(\Lambda)$ has $70$ algebraic conjugacy classes $\nu$ with $\rk(\Lambda^\nu)>0$ but, for the sake of brevity, in the table we restrict to those automorphisms whose Frame shapes have only non-negative exponents. Then, the same $25$ Frame shapes appear (see \autoref{sec:leechrel}).

\medskip

\begin{table}[p]\caption{Algebraic conjugacy classes of outer automorphisms of the Niemeier lattices $N$. For the Leech lattice only those whose Frame shapes have only non-negative exponents are listed.}
\renewcommand{\arraystretch}{1.2}
\SMALL
\arraycolsep=0.2em
$
\begin{array}{lc|*{12}{r}r|r}
\multicolumn{2}{r|}{N} & \text{A}1 & \text{A}2 & \text{A}3 & \text{A}4 & \text{A}5 & \text{A}6 & \text{A}7 & \text{A}8 & \text{A}9 & \text{A}10 & \text{A}11 & \text{A}12 \\\cline{3-15}
\multicolumn{2}{l|}{\text{Frame Shp.}} & D_{24} & D_{16}E_{8} & E_{8}^3 & A_{24} & D_{12}^2 & A_{17}E_{7} & D_{10}E_{7}^2 & A_{15}D_{9} & D_{8}^3 & A_{12}^2 & A_{11}D_{7}E_{6} & E_{6}^4 \vphantom{h^{h^h}}\\\cline{1-15}
\text{A} & \sAA &  1  &  1  &  1  &  1  &  1  &  1  &  1  &  1  &  1  &  1  &  1  &  1  \vphantom{h^{h^h}}\\
\text{B} & \sBB &  .  &  .  &  1  &  .  &  .  &  1  &  1  &  1  &  1  &  .  &  1  &  2  \\
\text{C} & \sCC &  .  &  .  &  .  &  .  &  .  &  .  &  .  &  .  &  .  &  .  &  .  &  1  \\
\text{D} & \sDD &  .  &  .  &  .  &  1  &  1  &  .  &  .  &  .  &  .  &  1  &  .  &  .  \\%
\text{E} & \sEE &  .  &  .  &  .  &  .  &  .  &  .  &  .  &  .  &  .  &  .  &  .  &  .  \\
\text{F} & \sFF &  .  &  .  &  .  &  .  &  .  &  .  &  .  &  .  &  .  &  .  &  .  &  .  \\
\text{G} & \sGG &  .  &  .  &  .  &  .  &  .  &  .  &  .  &  .  &  .  &  .  &  .  &  1  \\
\text{H} & \sHH &  .  &  .  &  .  &  .  &  .  &  .  &  .  &  .  &  .  &  .  &  .  &  .  \\
\text{I} & \sII &  .  &  .  &  .  &  .  &  .  &  .  &  .  &  .  &  .  &  .  &  .  &  .  \\
\text{J} & \sJJ &  .  &  .  &  .  &  .  &  .  &  .  &  .  &  .  &  .  &  .  &  .  &  .  \\%
\text{K} & \sKK &  .  &  .  &  .  &  .  &  .  &  .  &  .  &  .  &  .  &  .  &  .  &  .  \\\cline{1-15}%
& 3^8           &  .  &  .  &  1  &  .  &  .  &  .  &  .  &  .  &  1  &  .  &  .  &  .  \vphantom{h^{h^h}}\\
& 2^44^4        &  .  &  .  &  .  &  .  &  .  &  .  &  .  &  .  &  .  &  .  &  .  &  1  \\
& 4^6           &  .  &  .  &  .  &  .  &  .  &  .  &  .  &  .  &  .  &  1  &  .  &  .  \\%
& 4^28^2        &  .  &  .  &  .  &  .  &  .  &  .  &  .  &  .  &  .  &  .  &  .  &  1  \\
& 1^211^2       &  .  &  .  &  .  &  .  &  .  &  .  &  .  &  .  &  .  &  .  &  .  &  .  \\
& 6^4           &  .  &  .  &  .  &  .  &  .  &  .  &  .  &  .  &  .  &  .  &  .  &  .  \\%
& 2^14^16^112^1 &  .  &  .  &  .  &  .  &  .  &  .  &  .  &  .  &  .  &  .  &  .  &  .  \\
& 1^12^17^114^1 &  .  &  .  &  .  &  .  &  .  &  .  &  .  &  .  &  .  &  .  &  .  &  .  \\
& 1^13^15^115^1 &  .  &  .  &  .  &  .  &  .  &  .  &  .  &  .  &  .  &  .  &  .  &  .  \\
& 3^121^1       &  .  &  .  &  .  &  .  &  .  &  .  &  .  &  .  &  .  &  .  &  .  &  .  \\
& 1^123^1       &  .  &  .  &  .  &  .  &  .  &  .  &  .  &  .  &  .  &  .  &  .  &  .  \\
& 12^2          &  .  &  .  &  .  &  .  &  .  &  .  &  .  &  .  &  .  &  .  &  .  &  .  \\%
& 4^120^1       &  .  &  .  &  .  &  .  &  .  &  .  &  .  &  .  &  .  &  .  &  .  &  .  \\%
& 2^122^1       &  .  &  .  &  .  &  .  &  .  &  .  &  .  &  .  &  .  &  .  &  .  &  .  \\\cline{1-15}%
\multicolumn{2}{l|}{\text{No.\ (A-K)}}
                &  1  &  1  &  2  &  2  &  2  &  2  &  2  &  2  &  2  &  2  &  2  &  5  \\
\multicolumn{2}{l|}{\text{No.\ (all)}}
                &  1  &  1  &  3  &  2  &  2  &  2  &  2  &  2  &  3  &  3  &  2  &  7  \\
\\[-5pt]
\multicolumn{2}{r|}{N} & \text{A}13 & \text{A}14 & \text{A}15 & \text{A}16 & \text{A}17 & \text{A}18 & \text{A}19 & \text{A}20 & \text{A}21 & \text{A}22 & \text{A}23 & \text{A}24 & \\\cline{3-15}
\multicolumn{2}{l|}{\text{Frame Shp.}} & A_{9}^2D_{6} & D_{6}^4 & A_{8}^3 & A_{7}^2D_{5}^2 & A_{6}^4 & A_{5}^4D_{4} & D_{4}^6 & A_{4}^6 & A_{3}^8 & A_{2}^{12} & A_{1}^{24} & \Lambda && \text{No.} \vphantom{h^{h^h}}\\\hline
\text{A} & \sAA &  1  &  1  &  1  &  1  &  1  &  1  &  1  &  1  &  1  &  1  &  1  &  1  && 24 \vphantom{h^{h^h}}\\
\text{B} & \sBB &  1  &  1  &  1  &  3  &  .  &  2  &  2  &  1  &  2  &  1  &  1  &  1  && 24 \\
\text{C} & \sCC &  .  &  1  &  .  &  .  &  1  &  1  &  2  &  .  &  1  &  1  &  1  &  1  && 10 \\
\text{D} & \sDD &  .  &  1  &  2  &  .  &  1  &  .  &  1  &  2  &  1  &  2  &  1  &  1  && 15 \\%
\text{E} & \sEE &  1  &  .  &  .  &  .  &  .  &  1  &  1  &  1  &  1  &  1  &  1  &  1  &&  8 \\
\text{F} & \sFF &  .  &  .  &  .  &  .  &  .  &  .  &  1  &  1  &  .  &  1  &  1  &  1  &&  5 \\
\text{G} & \sGG &  .  &  .  &  .  &  .  &  .  &  1  &  2  &  .  &  1  &  1  &  1  &  1  &&  8 \\
\text{H} & \sHH &  .  &  .  &  .  &  .  &  .  &  .  &  .  &  .  &  1  &  .  &  1  &  1  &&  3 \\
\text{I} & \sII &  .  &  .  &  .  &  .  &  .  &  1  &  .  &  .  &  .  &  1  &  1  &  1  &&  4 \\
\text{J} & \sJJ &  .  &  .  &  .  &  .  &  1  &  .  &  .  &  .  &  1  &  2  &  .  &  1  &&  5 \\%
\text{K} & \sKK &  .  &  .  &  .  &  .  &  .  &  .  &  .  &  1  &  .  &  1  &  1  &  1  &&  4 \\\hline%
& 3^8           &  .  &  .  &  1  &  .  &  .  &  .  &  1  &  1  &  .  &  1  &  1  &  1  &&  8 \vphantom{h^{h^h}}\\
& 2^44^4        &  .  &  .  &  .  &  1  &  .  &  .  &  1  &  .  &  1  &  1  &  1  &  1  &&  7 \\
& 4^6           &  .  &  1  &  .  &  .  &  1  &  .  &  .  &  1  &  1  &  1  &  1  &  1  &&  8 \\%
& 4^28^2        &  .  &  .  &  .  &  .  &  .  &  .  &  .  &  .  &  1  &  1  &  .  &  1  &&  4 \\
& 1^211^2       &  .  &  .  &  .  &  .  &  .  &  .  &  .  &  .  &  .  &  1  &  1  &  1  &&  3 \\
& 6^4           &  .  &  .  &  1  &  .  &  .  &  .  &  1  &  1  &  .  &  1  &  1  &  1  &&  6 \\%
& 2^14^16^112^1 &  .  &  .  &  .  &  .  &  .  &  .  &  1  &  .  &  .  &  .  &  1  &  1  &&  3 \\
& 1^12^17^114^1 &  .  &  .  &  .  &  .  &  .  &  .  &  .  &  .  &  1  &  .  &  1  &  1  &&  3 \\
& 1^13^15^115^1 &  .  &  .  &  .  &  .  &  .  &  .  &  1  &  .  &  .  &  .  &  1  &  1  &&  3 \\
& 3^121^1       &  .  &  .  &  .  &  .  &  .  &  .  &  .  &  .  &  .  &  .  &  1  &  1  &&  2 \\
& 1^123^1       &  .  &  .  &  .  &  .  &  .  &  .  &  .  &  .  &  .  &  .  &  1  &  1  &&  2 \\
& 12^2          &  .  &  .  &  .  &  .  &  .  &  .  &  .  &  1  &  .  &  1  &  1  &  1  &&  4 \\%
& 4^120^1       &  .  &  .  &  .  &  .  &  .  &  .  &  .  &  .  &  .  &  1  &  .  &  1  &&  2 \\%
& 2^122^1       &  .  &  .  &  .  &  .  &  .  &  .  &  .  &  .  &  .  &  1  &  .  &  1  &&  2 \\\hline%
\multicolumn{2}{l|}{\text{No.\ (A-K)}}
                &  3  &  4  &  4  &  4  &  4  &  7  & 10  &  7  &  9  & 12  & 10  & 11  &&110 \\
\multicolumn{2}{l|}{\text{No.\ (all)}}
                &  3  &  5  &  6  &  5  &  5  &  7  & 15  & 11  & 13  & 21  & 21  & 25  &&167 \\
\end{array}
$
\label{table:25}
\end{table}

\autoref{table:A} to \autoref{table:K} list the orbifold constructions associated with the $226$ \good{} automorphisms of the Niemeier lattice \voa{}s $V_N$ (see \autoref{thm:main}).

For each of the $11$ genera of orbit lattices $L_\g$, each corresponding to a conjugacy class in $\O(\Lambda)$ (see \autoref{table:11}), the rows represent the \strathol{} \voa{}s $V$ of central charge $24$ whose associated orbit lattice $L_\g$, $\g=V_1$, is in the selected genus (more than one \voa{} for a given isomorphism class $L_\g$ in genera D and J). We list the number of the corresponding entry in \cite{Hoe17} and \cite{Sch93} and the isomorphism type of the Lie algebra $\g=V_1$ with the levels $k_i$ of $\langle V_1\rangle\cong L_{\hat\g_1}(k_1,0)\otimes\cdots\otimes L_{\hat\g_s}(k_s,0)$ separated by a comma.

The columns represent the different algebraic conjugacy classes $\nu$ in the outer automorphism groups $\Aut(V_N)/K\cong H\subseteq\O(N)$ for all Niemeier lattices $N$ (denoted by the number of the corresponding entry in \cite{Hoe17} and by the root system) with the same Frame shape as the class in $\O(\Lambda)$.

The entry for a pair $(V,\nu)$ is the number of algebraic conjugacy classes of \good{} automorphisms projecting to $\nu$ such that $V_N^{\orb(g)}\cong V$.

\begin{table}[p]\caption{\Good{} orbifold constructions for genus A ($\sAA$).}
\SMALL
\arraycolsep=0.2em
$
\begin{array}{l|r|l|*{12}{r}}
\multicolumn{3}{r|}{V_N} & \text{A}1 & \text{A}2 & \text{A}3 & \text{A}4 & \text{A}5 & \text{A}6 & \text{A}7 & \text{A}8 & \text{A}9 & \text{A}10 & \text{A}11 & \text{A}12 \\\cline{4-15}
\multicolumn{3}{l|}{V_N^{\orb(g)}} & D_{24} & D_{16}E_{8} & E_{8}^3 & A_{24} & D_{12}^2 & A_{17}E_{7} & D_{10}E_{7}^2 & A_{15}D_{9} & D_{8}^3 & A_{12}^2 & \!A_{11}\!D_{7}\!E_{6}\! & E_{6}^4 \\\hline
\text{A}1  & 70 & D_{24,1}                   & 1 & . & . & . & . & . & . & . & . & . & . & . \\
\text{A}2  & 69 & D_{16,1}E_{8,1}            & . & 1 & . & . & . & . & . & . & . & . & . & . \\
\text{A}3  & 68 & E_{8,1}^3                  & . & . & 1 & . & . & . & . & . & . & . & . & . \\
\text{A}4  & 67 & A_{24,1}                   & . & . & . & 1 & . & . & . & . & . & . & . & . \\
\text{A}5  & 66 & D_{12,1}^2                 & . & . & . & . & 1 & . & . & . & . & . & . & . \\
\text{A}6  & 65 & A_{17,1}E_{7,1}            & . & . & . & . & . & 1 & . & . & . & . & . & . \\
\text{A}7  & 64 & D_{10,1}E_{7,1}^2          & . & . & . & . & . & . & 1 & . & . & . & . & . \\
\text{A}8  & 63 & A_{15,1}D_{9,1}            & . & . & . & . & . & . & . & 1 & . & . & . & . \\
\text{A}9  & 61 & D_{8,1}^3                  & . & . & . & . & . & . & . & . & 1 & . & . & . \\
\text{A}10 & 60 & A_{12,1}^2                 & . & . & . & . & . & . & . & . & . & 1 & . & . \\
\text{A}11 & 59 & A_{11,1}\!D_{7,1}\!E_{6,1} & . & . & . & . & . & . & . & . & . & . & 1 & . \\
\text{A}12 & 58 & E_{6,1}^4                  & . & . & . & . & . & . & . & . & . & . & . & 1 \\
\text{A}13 & 55 & A_{9,1}^2D_{6,1}           & . & . & . & . & . & . & . & . & . & . & . & . \\
\text{A}14 & 54 & D_{6,1}^4                  & . & . & . & . & . & . & . & . & . & . & . & . \\
\text{A}15 & 51 & A_{8,1}^3                  & . & . & . & . & . & . & . & . & . & . & . & . \\
\text{A}16 & 49 & A_{7,1}^2D_{5,1}^2         & . & . & . & . & . & . & . & . & . & . & . & . \\
\text{A}17 & 46 & A_{6,1}^4                  & . & . & . & . & . & . & . & . & . & . & . & . \\
\text{A}18 & 43 & A_{5,1}^4D_{4,1}           & . & . & . & . & . & . & . & . & . & . & . & . \\
\text{A}19 & 42 & D_{4,1}^6                  & . & . & . & . & . & . & . & . & . & . & . & . \\
\text{A}20 & 37 & A_{4,1}^6                  & . & . & . & . & . & . & . & . & . & . & . & . \\
\text{A}21 & 30 & A_{3,1}^8                  & . & . & . & . & . & . & . & . & . & . & . & . \\
\text{A}22 & 24 & A_{2,1}^{12}               & . & . & . & . & . & . & . & . & . & . & . & . \\
\text{A}23 & 15 & A_{1,1}^{24}               & . & . & . & . & . & . & . & . & . & . & . & . \\
\text{A}24 &  1 & \Lambda                    & . & . & . & . & . & . & . & . & . & . & . & . \\
\multicolumn{15}{c}{}\\
\multicolumn{3}{r|}{V_N} & \text{A}13 & \text{A}14 & \text{A}15 & \text{A}16 & \text{A}17 & \text{A}18 & \text{A}19 & \text{A}20 & \text{A}21 & \text{A}22 & \text{A}23 & \text{A}24 \\\cline{4-15}
\multicolumn{3}{l|}{V_N^{\orb(g)}} & A_{9}^2D_{6} & D_{6}^4 & A_{8}^3 & A_{7}^2D_{5}^2 & A_{6}^4 & A_{5}^4D_{4} & D_{4}^6 & A_{4}^6 & A_{3}^8 & A_{2}^{12} & A_{1}^{24} & \Lambda \\\hline
\text{A}1  & 70 & D_{24,1}                   & . & . & . & . & . & . & . & . & . & . & . & . \\
\text{A}2  & 69 & D_{16,1}E_{8,1}            & . & . & . & . & . & . & . & . & . & . & . & . \\
\text{A}3  & 68 & E_{8,1}^3                  & . & . & . & . & . & . & . & . & . & . & . & . \\
\text{A}4  & 67 & A_{24,1}                   & . & . & . & . & . & . & . & . & . & . & . & . \\
\text{A}5  & 66 & D_{12,1}^2                 & . & . & . & . & . & . & . & . & . & . & . & . \\
\text{A}6  & 65 & A_{17,1}E_{7,1}            & . & . & . & . & . & . & . & . & . & . & . & . \\
\text{A}7  & 64 & D_{10,1}E_{7,1}^2          & . & . & . & . & . & . & . & . & . & . & . & . \\
\text{A}8  & 63 & A_{15,1}D_{9,1}            & . & . & . & . & . & . & . & . & . & . & . & . \\
\text{A}9  & 61 & D_{8,1}^3                  & . & . & . & . & . & . & . & . & . & . & . & . \\
\text{A}10 & 60 & A_{12,1}^2                 & . & . & . & . & . & . & . & . & . & . & . & . \\
\text{A}11 & 59 & A_{11,1}\!D_{7,1}\!E_{6,1} & . & . & . & . & . & . & . & . & . & . & . & . \\
\text{A}12 & 58 & E_{6,1}^4                  & . & . & . & . & . & . & . & . & . & . & . & . \\
\text{A}13 & 55 & A_{9,1}^2D_{6,1}           & 1 & . & . & . & . & . & . & . & . & . & . & . \\
\text{A}14 & 54 & D_{6,1}^4                  & . & 1 & . & . & . & . & . & . & . & . & . & . \\
\text{A}15 & 51 & A_{8,1}^3                  & . & . & 1 & . & . & . & . & . & . & . & . & . \\
\text{A}16 & 49 & A_{7,1}^2D_{5,1}^2         & . & . & . & 1 & . & . & . & . & . & . & . & . \\
\text{A}17 & 46 & A_{6,1}^4                  & . & . & . & . & 1 & . & . & . & . & . & . & . \\
\text{A}18 & 43 & A_{5,1}^4D_{4,1}           & . & . & . & . & . & 1 & . & . & . & . & . & . \\
\text{A}19 & 42 & D_{4,1}^6                  & . & . & . & . & . & . & 1 & . & . & . & . & . \\
\text{A}20 & 37 & A_{4,1}^6                  & . & . & . & . & . & . & . & 1 & . & . & . & . \\
\text{A}21 & 30 & A_{3,1}^8                  & . & . & . & . & . & . & . & . & 1 & . & . & . \\
\text{A}22 & 24 & A_{2,1}^{12}               & . & . & . & . & . & . & . & . & . & 1 & . & . \\
\text{A}23 & 15 & A_{1,1}^{24}               & . & . & . & . & . & . & . & . & . & . & 1 & . \\
\text{A}24 &  1 & \Lambda                    & . & . & . & . & . & . & . & . & . & . & . & 1
\end{array}
$
\label{table:A}
\end{table}

%-----------------------------------------------------------------------------------

\begin{table}\caption{\Good{} orbifold constructions for genus B ($\sBB$).}
\Small
\arraycolsep=0.15em
$
\begin{array}{l|r|l|*{12}{r}}
\multicolumn{3}{r|}{V_N} & \text{A}3 & \text{A}6 & \text{A}7 & \text{A}8 & \text{A}9 & \text{A}11 & \text{A}12 & \text{A}12 & \text{A}13 & \text{A}14 & \text{A}15 & \text{A}16 \\\cline{4-15}
\multicolumn{3}{l|}{V_N^{\orb(g)}}  & E_{8}^3 & A_{17}E_{7} & D_{10}E_{7}^2 & A_{15}D_{9} & D_{8}^3 & \!A_{11}\!D_{7}\!E_{6}\! & E_{6}^4 & E_{6}^4 & A_{9}^2D_{6} & D_{6}^4 & A_{8}^3 & A_{7}^2D_{5}^2 \\\hline
\text{B}1  & 62 & B_{8,1}E_{8,2}            & 1 & . & 1 & 1 & 1 & . & . & . & . & . & . & . \\
\text{B}2  & 56 & B_{6,1}C_{10,1}           & . & 1 & . & 1 & . & 1 & . & . & 1 & . & . & . \\
\text{B}3  & 52 & C_{8,1}F_{4,1}^2          & . & . & . & 1 & . & 1 & 1 & . & . & . & . & 1 \\
\text{B}4  & 53 & B_{5,1}E_{7,2}F_{4,1}     & . & . & 1 & . & . & 1 & . & 1 & . & 1 & . & . \\
\text{B}5  & 50 & A_{7,1}D_{9,2}            & . & 1 & . & . & 1 & . & . & . & . & . & 1 & . \\
\text{B}6  & 47 & B_{4,1}^2D_{8,2}          & . & . & . & 1 & 1 & . & . & . & . & 1 & . & 1 \\
\text{B}7  & 48 & B_{4,1}C_{6,1}^2          & . & . & . & . & . & 1 & . & . & 1 & . & . & 1 \\
\text{B}8  & 44 & A_{5,1}C_{5,1}E_{6,2}     & . & . & . & . & . & . & . & 1 & 1 & . & . & . \\
\text{B}9  & 40 & A_{4,1}A_{9,2}B_{3,1}     & . & . & . & . & . & . & . & . & . & . & 1 & . \\
\text{B}10 & 39 & B_{3,1}^2C_{4,1}D_{6,2}   & . & . & . & . & . & 1 & . & . & . & 1 & . & 1 \\
\text{B}11 & 38 & C_{4,1}^4                 & . & . & . & . & . & . & 1 & . & . & . & . & 1 \\
\text{B}12 & 33 & A_{3,1}A_{7,2}C_{3,1}^2   & . & . & . & . & . & . & . & . & . & . & . & . \\
\text{B}13 & 31 & A_{3,1}^2D_{5,2}^2        & . & . & . & . & . & . & . & . & 1 & . & . & . \\
\text{B}14 & 26 & A_{2,1}^2A_{5,2}^2B_{2,1} & . & . & . & . & . & . & . & . & . & . & . & . \\
\text{B}15 & 25 & B_{2,1}^4D_{4,2}^2        & . & . & . & . & . & . & . & . & . & . & . & 1 \\
\text{B}16 & 16 & A_{1,1}^4A_{3,2}^4        & . & . & . & . & . & . & . & . & . & . & . & . \\
\text{B}17 &  5 & A_{1,2}^{16}              & . & . & . & . & . & . & . & . & . & . & . & . \\
\multicolumn{15}{c}{}\\
\multicolumn{3}{r|}{V_N} & \text{A}16 & \text{A}16 & \text{A}18 & \text{A}18 & \text{A}19 & \text{A}19 & \text{A}20 & \text{A}21 & \text{A}21 & \text{A}22 & \text{A}23 & \text{A}24 \\\cline{4-15}
\multicolumn{3}{l|}{V_N^{\orb(g)}}  & A_{7}^2D_{5}^2 & A_{7}^2D_{5}^2 & A_{5}^4D_{4} & A_{5}^4D_{4} & D_{4}^6 & D_{4}^6 & A_{4}^6 & A_{3}^8 & A_{3}^8 & A_{2}^{12} & A_{1}^{24} & \Lambda \\\hline
\text{B}1  & 62 & B_{8,1}E_{8,2}            & . & . & . & . & . & . & . & . & . & . & . & . \\
\text{B}2  & 56 & B_{6,1}C_{10,1}           & . & . & . & . & . & . & . & . & . & . & . & . \\
\text{B}3  & 52 & C_{8,1}F_{4,1}^2          & . & . & . & . & . & . & . & . & . & . & . & . \\
\text{B}4  & 53 & B_{5,1}E_{7,2}F_{4,1}     & . & 1 & . & . & . & . & . & . & . & . & . & . \\
\text{B}5  & 50 & A_{7,1}D_{9,2}            & 1 & . & . & . & . & . & . & . & . & . & . & . \\
\text{B}6  & 47 & B_{4,1}^2D_{8,2}          & . & 1 & . & . & . & 1 & . & . & . & . & . & . \\
\text{B}7  & 48 & B_{4,1}C_{6,1}^2          & . & . & 1 & . & . & . & . & . & . & . & . & . \\
\text{B}8  & 44 & A_{5,1}C_{5,1}E_{6,2}     & 1 & . & . & 1 & . & . & . & . & . & . & . & . \\
\text{B}9  & 40 & A_{4,1}A_{9,2}B_{3,1}     & . & 1 & . & 1 & . & . & 1 & . & . & . & . & . \\
\text{B}10 & 39 & B_{3,1}^2C_{4,1}D_{6,2}   & 1 & . & . & 1 & 1 & . & . & . & 1 & . & . & . \\
\text{B}11 & 38 & C_{4,1}^4                 & . & . & 1 & . & . & . & . & 1 & . & . & . & . \\
\text{B}12 & 33 & A_{3,1}A_{7,2}C_{3,1}^2   & . & 1 & 1 & 1 & . & . & . & . & 1 & . & . & . \\
\text{B}13 & 31 & A_{3,1}^2D_{5,2}^2        & 1 & . & . & . & . & 1 & 1 & . & 1 & . & . & . \\
\text{B}14 & 26 & A_{2,1}^2A_{5,2}^2B_{2,1} & . & . & . & 1 & . & . & 1 & . & 1 & 1 & . & . \\
\text{B}15 & 25 & B_{2,1}^4D_{4,2}^2        & . & . & . & . & 1 & 1 & . & 1 & 1 & . & 1 & . \\
\text{B}16 & 16 & A_{1,1}^4A_{3,2}^4        & . & . & 1 & . & . & . & . & . & 1 & 1 & 1 & . \\
\text{B}17 &  5 & A_{1,2}^{16}              & . & . & . & . & . & . & . & 1 & . & . & 1 & 1
\end{array}
$
\end{table}

%-----------------------------------------------------------------------------------

\begin{table}\caption{\vspace{-0.2cm}\Good{} orbifold constructions for genus C ($\sCC$).}
\arraycolsep=0.3em
$
\begin{array}{l|r|l|*{10}{r}}
\multicolumn{3}{r|}{V_N} & \text{A}12 & \text{A}14 & \text{A}17 & \text{A}18 & \text{A}19 & \text{A}19 & \text{A}21 & \text{A}22 & \text{A}23 & \text{A}24 \\\cline{4-13}
\multicolumn{3}{l|}{V_N^{\orb(g)}}  & E_{6}^4 & D_{6}^4 & A_{6}^4 & A_{5}^4D_{4} & D_{4}^6 & D_{4}^6 & A_{3}^8 & A_{2}^{12} & A_{1}^{24} & \Lambda \\\hline
\text{C}1 & 45 & A_{5,1}E_{7,3}          & 1 & 1 & 1 & 1 & . & . & . & . & . & . \\
\text{C}2 & 34 & A_{3,1}D_{7,3}G_{2,1}   & . & 1 & 1 & 1 & . & 1 & 1 & . & . & . \\
\text{C}3 & 32 & E_{6,3}G_{2,1}^3        & 1 & . & . & 1 & 1 & 1 & . & 1 & . & . \\
\text{C}4 & 27 & A_{2,1}^2A_{8,3}        & . & . & 1 & 1 & . & . & 1 & 1 & . & . \\
\text{C}5 & 17 & A_{1,1}^3A_{5,3}D_{4,3} & . & . & . & 1 & . & 1 & 1 & 1 & 1 & . \\
\text{C}6 &  6 & A_{2,3}^6               & . & . & . & . & 1 & . & . & 1 & 1 & 1
\end{array}
$
\vspace{0.7cm}
\end{table}

%-----------------------------------------------------------------------------------

\begin{table}\caption{\vspace{-0.2cm}\Good{} orbifold constructions for genus D ($\sDD$).}
\Small
\arraycolsep=0.2em
$
\begin{array}{l|r|l|rrrrrr|rrrrrr|rrr}
\multicolumn{3}{r|}{V_N} & \text{A}4 & \text{A}10 & \text{A}15 & \text{A}17 & \text{A}20 & \text{A}22 & \text{A}5 & \text{A}14 & \text{A}19 & \text{A}21 & \text{A}23 & \text{A}24 & \text{A}15 & \text{A}20 & \text{A}22  \\\cline{4-18}
\multicolumn{3}{l|}{V_N^{\orb(g)}} & A_{24} & A_{12}^2 & A_8^3 & A_6^4 & A_4^6 & A_2^{12} & D_{12}^2 & D_6^4 & D_4^6 & A_3^8 & A_1^{24} & \Lambda & A_8^3 & A_4^6 & A_2^{12} \\\hline
\text{D1a} & 57 & B_{12,2}         & 1 & . & . & . & . & . & 1 & . & . & . & . & . & . & . & . \\
\text{D1b} & 41 & B_{6,2}^2        & . & 1 & . & . & . & . & . & 1 & . & . & . & . & . & . & . \\
\text{D1c} & 29 & B_{4,2}^3        & . & . & 1 & . & . & . & . & . & 1 & . & . & . & . & . & . \\
\text{D1d} & 23 & B_{3,2}^4        & . & . & . & 1 & . & . & . & . & . & 1 & . & . & . & . & . \\
\text{D1e} & 12 & B_{2,2}^6        & . & . & . & . & 1 & . & . & . & . & . & 1 & . & . & . & . \\
\text{D1f} &  2 & A_{1,4}^{12}     & . & . & . & . & . & 1 & . & . & . & . & . & 1 & . & . & . \\\hline
\text{D2a} & 36 & A_{8,2}F_{4,2}   & . & . & . & . & . & . & . & . & . & . & . & . & 1 & . & . \\
\text{D2b} & 22 & A_{4,2}^2C_{4,2} & . & . & . & . & . & . & . & . & . & . & . & . & . & 1 & . \\
\text{D2c} & 13 & A_{2,2}^4D_{4,4} & . & . & . & . & . & . & . & . & . & . & . & . & . & . & 1
\end{array}
$
\vspace{0.7cm}
\end{table}

%---------------------------------------------------------------------------------------

\begin{table}\caption{\vspace{-0.2cm}\Good{} orbifold constructions for genus E ($\sEE$).}
$
\begin{array}{l|r|l|*{8}{r}}
\multicolumn{3}{r|}{V_N} & \text{A}13 & \text{A}18 & \text{A}19 & \text{A}20 & \text{A}21 & \text{A}22 & \text{A}23 & \text{A}24 \\\cline{4-11}
\multicolumn{3}{l|}{V_N^{\orb(g)}} & A_9^2D_6 & A_5^4D_4 & D_4^6 & A_4^6 & A_3^8 & A_2^{12} & A_1^{24} & \Lambda \\\hline
\text{E}1 & 35 & A_{3,1}C_{7,2}          & 1 & 2 & . & 1 & 1 & . & . & . \\
\text{E}2 & 28 & A_{2,1}B_{2,1}E_{6,4}   & 1 & . & 1 & 2 & 1 & 1 & . & . \\
\text{E}3 & 18 & A_{1,1}^3A_{7,4}        & . & 1 & . & 1 & 1 & 2 & 1 & . \\
\text{E}4 & 19 & A_{1,1}^2C_{3,2}D_{5,4} & . & 1 & 1 & 1 & 3 & 1 & 2 & . \\
\text{E}5 &  7 & A_{1,2}A_{3,4}^3        & . & . & . & . & 1 & 1 & 2 & 1
\end{array}
$
\vspace{0.7cm}
\end{table}

%---------------------------------------------------------------------------------------

\begin{table}\caption{\vspace{-0.2cm}\Good{} orbifold constructions for genus F ($\sFF$).}
$
\begin{array}{l|r|l|*{5}{r}}
\multicolumn{3}{r|}{V_N} & \text{A}19 & \text{A}20 & \text{A}22 & \text{A}23 & \text{A}24 \\\cline{4-8}
\multicolumn{3}{l|}{V_N^{\orb(g)}}  & D_4^6 & A_4^6 & A_2^{12} & A_1^{24} & \Lambda \\\hline
\text{F}1 & 20 & A_{1,1}^2D_{6,5} & 1 & 1 & 1 & 1 & . \\
\text{F}2 &  9 & A_{4,5}^2        & . & 1 & 1 & 1 & 1
\end{array}
$
\vspace{0.7cm}
\end{table}

%---------------------------------------------------------------------------------------

\begin{table}\caption{\vspace{-0.2cm}\Good{} orbifold constructions for genus G ($\sGG$).}
$
\begin{array}{l|r|l|*{8}{r}}
\multicolumn{3}{r|}{V_N} & \text{A}12 & \text{A}18 & \text{A}19 & \text{A}19 & \text{A}21 & \text{A}22 & \text{A}23 & \text{A}24 \\\cline{4-11}
\multicolumn{3}{l|}{V_N^{\orb(g)}}  & E_6^4 & A_5^4D_4 & D_4^6 & D_4^6 & A_3^8 & A_2^{12} & A_1^{24} & \Lambda \\\hline
\text{G}1 & 21 & A_{1,1}C_{5,3}G_{2,2} & 1 & 3 & 2 & 1 & 2 & 2 & 2 & . \\
\text{G}2 &  8 & A_{1,2}A_{5,6}B_{2,3} & . & 1 & 1 & 1 & 2 & 2 & 5 & 1
\end{array}
$
\vspace{0.7cm}
\end{table}

%---------------------------------------------------------------------------------------

\begin{table}\caption{\vspace{-0.2cm}\Good{} orbifold constructions for genus H ($\sHH$).}
$
\begin{array}{l|r|l|*{3}{r}}
\multicolumn{3}{r|}{V_N} & \text{A}21 & \text{A}23 & \text{A}24 \\\cline{4-6}
\multicolumn{3}{l|}{V_N^{\orb(g)}}  & A_3^8 & A_1^{24} & \Lambda \\\hline
\text{H}1 & 11 & A_{6,7} & 1 & 1 & 1
\end{array}
$
\vspace{0.7cm}
\end{table}

%---------------------------------------------------------------------------------------

\begin{table}\caption{\vspace{-0.2cm}\Good{} orbifold constructions for genus I ($\sII$).}
$
\begin{array}{l|r|l|*{4}{r}}
\multicolumn{3}{r|}{V_N} & \text{A}18 & \text{A}22 & \text{A}23 & \text{A}24 \\\cline{4-7}
\multicolumn{3}{l|}{V_N^{\orb(g)}}  & A_5^4D_4 & A_2^{12} & A_1^{24} & \Lambda \\\hline
\text{I}1 & 10 & A_{1,2}D_{5,8} & 1 & 2 & 2 & 1
\end{array}
$
\vspace{0.7cm}
\end{table}

%---------------------------------------------------------------------------------------

\begin{table}\caption{\vspace{-0.2cm}\Good{} orbifold constructions for genus J ($\sJJ$).}
$
\begin{array}{l|r|l|rr|rr|r}
\multicolumn{3}{r|}{V_N} & \text{A}17 & \text{A}22 & \text{A}21 & \text{A}24 & \text{A}22 \\\cline{4-8}
\multicolumn{3}{l|}{V_N^{\orb(g)}}  & A_6^4 & A_2^{12} & A_3^8 & \Lambda & A_2^{12} \\\hline
\text{J1a} & 14 & A_{2,2}F_{4,6}  & 1 & . & 1 & . & 1 \\
\text{J1b} &  3 & A_{2,6}D_{4,12} & . & 1 & . & 1 & 1
\end{array}
$
\vspace{0.7cm}
\end{table}

%---------------------------------------------------------------------------------------

\begin{table}\caption{\vspace{-0.2cm}\Good{} orbifold constructions for genus K ($\sKK$).}
$
\begin{array}{l|r|l|rr|rr}
\multicolumn{3}{r|}{V_N} & \text{A}20 & \text{A}22 & \text{A}23 & \text{A}24 \\\cline{4-7}
\multicolumn{3}{l|}{V_N^{\orb(g)}}& A_4^6 & A_2^{12} & A_1^{24} & \Lambda \\\hline
\text{K}1 & 4 & C_{4,10} & 1 & 1 & 1 & 1
\end{array}
$
\label{table:K}
\vspace{0.7cm}
\end{table}

\FloatBarrier

%%%%%%%%%%%%%%%%%%%%%%%%%%%%%%%%%%%
%%%%%%%%%%%%%%%%%%%%%%%%%%%%%%%%%%%

The following five tables list the non-trivial powers of the $226$ \good{} automorphisms $g$ of the Niemeier lattice \voa{}s $V_N$.

\smallskip

For the five genera E, G, I, J and K of orbit lattices corresponding to a Frame shape of an element of composite order we list the non-trivial powers $g^p$ with prime exponent $p$.

The rows and columns are labelled as in \autoref{table:A} to \autoref{table:K}. An entry $Xi_1,\ldots,i_k$ for a power $g^p$ refers to the elements in the table for the genus associated with $g^p$ in the column corresponding to the same Niemeier lattice and the rows labelled by the orbifold constructions $Xi_1,\ldots,Xi_k$.

If the same Niemeier lattice $N$ is the label for more than one column for a power $g^p$, the extra row named $g^p$ identifies the column
% ($1$ refers to the first, $2$ for the second column $N$)
if there is a possible ambiguity.

Lastly, if there is more than one element $g^p$ for an entry specified by $Xi_\nu$, an extra index $l$ refers to the $l$-th element in that entry (this applies to two cases of $g$ for genus I).

\begin{table}\caption{Powers $g^2$ for genus E ($\sEE$).}
\begin{tabular}{l|*{8}{l}}
& A13 & A18 & A19 & A20 & A21 & A22 & A23 & A24 \\\cline{2-9}
& $A_9^2D_6$ & $A_5^4D_4$ & $D_4^6$ & $A_4^6$ & $A_3^8$ & $A_2^{12}$ & $A_1^{24}$ & $\Lambda$ \\\hline
$g^2$&  .  &  1    & 2   & .      &    2      & .      & .      & . \\\hline
E1   & B7  & B7,12 & .   & B9     & B12       & .      & .      & . \\
E2   & B13 & .     & B13 & B13,14 & B13       & B14    & .      & . \\
E3   & .   & B16   & .   & B14    & B16       & B14,16 & B16    & . \\
E4   & .   & B12   & B15 & B9     & B12,15,16 & B14    & B15,16 & . \\
E5   & .   & .     & .   & .      & B16       & B14    & B16,17 & B17
\end{tabular}
\label{table:Epower}
\vspace{0.7cm}
\end{table}

%---------------------------------------------------------------------------------------

\begin{table}\caption{Powers $g^2$ and $g^3$ for genus G ($\sGG$).}
\arraycolsep=0.4em
\begin{tabular}{l|*{8}{l}}
& A12 & A18 & A19 & A19 & A21 & A22 & A23 & A24 \\\cline{2-9}
& $E_6^4$ & $A_5^4D_4$ & $D_4^6$ & $D_4^6$ & $A_3^8$ & $A_2^{12}$ & $A_1^{24}$ & $\Lambda$ \\\hline
$g^2$ &  .  &  .        &  2     & 1   & .     & .      & .  & .               \\
$g^3$ &  1  &  1        &  1     & 2   & 1     & .      & .  & .               \\\hline
G1    & C3  & C3,3,5    & C3,5   & C3  & C5,5  & C3,5   & C5,5            & .  \\
      & B11 & B11,16,11 & B15,15 & B15 & B11,15& B16,16 & B15,16          & .  \\[1.6mm]
G2    &  .  & C5        & C5     & C6  & C5,5  & C5,6   & C5,5,6,6,6      & C6 \\
      &  .  & B16       & B15    & B15 & B15,17& B16,16 & B16,17,15,16,17 & B17
\end{tabular}
\label{table:Gpower}
\vspace{0.7cm}
\end{table}

%---------------------------------------------------------------------------------------

\begin{table}\caption{Powers $g^2$ for genus I ($\sII$).}
\begin{tabular}{l|*{4}{l}}
& A18 & A22 & A23 & A24 \\\cline{2-5}
& $A_5^4D_4$ & $A_2^{12}$ & $A_1^{24}$ & $\Lambda$ \\\hline
% $g^2$ & a & a & a & a \\ \hline
I1 & E3 & E3$_2$,5 & E3,5$_2$ & E5
\end{tabular}
\label{table:Ipower}
\vspace{0.7cm}
\end{table}

%---------------------------------------------------------------------------------------

\begin{table}\caption{Powers $g^2$ and $g^3$ for genus J ($\sJJ$).}
\begin{tabular}{l|ll|ll|l}
& A17 & A22 & A21 & A24 & A22 \\\cline{2-6}
& $A_6^4$ & $A_2^{12}$ & $A_3^8$ & $\Lambda$ & $A_2^{12}$ \\\hline
% $g^2$ & a & a & a & a & a \\
% $g^3$ & a & a & a & a & a \\\hline
J1a  & C4  & .   & C4  & .   & C4  \\
     & D1d & .   & D1d & .   & D2c \\[1.6mm]
J1b  & .   & C6  & .   & C6  & C6  \\
     & .   & D1f & .   & D1f & D2c
\end{tabular}
\label{table:Jpower}
\vspace{0.7cm}
\end{table}

%---------------------------------------------------------------------------------------

\begin{table}\caption{Powers $g^2$ and $g^5$ for genus K ($\sKK$).}
\begin{tabular}{l|ll|ll}
& A20 & A22 & A23 & A24 \\\cline{2-5}
& $A_4^6$ & $A_2^{12}$ & $A_1^{24}$ & $\Lambda$ \\\hline
K1 & F2  & F2  & F2  & F2  \\
   & D1e & D1f & D1e & D1f
\end{tabular}
\label{table:Kpower}
\vspace{0.7cm}
\end{table}

\FloatBarrier

%%%%%%%%%%%%%%%%%%%%%%%%%%%%%%%%%%%
%%%%%%%%%%%%%%%%%%%%%%%%%%%%%%%%%%%
%%%%%%%%%%   References  %%%%%%%%%%
%%%%%%%%%%%%%%%%%%%%%%%%%%%%%%%%%%%
%%%%%%%%%%%%%%%%%%%%%%%%%%%%%%%%%%%

\bibliographystyle{alpha_noseriescomma}
\bibliography{quellen}{}

\newcommand{\etalchar}[1]{$^{#1}$}
\newcommand{\SortNoop}[1]{}
\begin{thebibliography}{CCN{\etalchar{+}}85}

\bibitem[BCP97]{Magma}
Wieb Bosma, John Cannon and Catherine Playoust.
\newblock The {M}agma algebra system. {I}. {T}he user language.
\newblock {\em J. Symbolic Comput.}, 24(3--4):235--265, 1997.
\newblock \url{http://magma.maths.usyd.edu.au}.

\bibitem[BK04]{BK04}
Bojko~N. Bakalov and Victor~G. Kac.
\newblock Twisted modules over lattice vertex algebras.
\newblock In Heinz-Dietrich Doebner and Vladimir~K. Dobrev, editors, {\em Lie
  theory and its applications in physics {V}}, pages 3--26. World Scientific,
  2004.
\newblock (\href{http://arxiv.org/abs/math/0402315v3}{arXiv:math/0402315v3
  [math.QA]}).

\bibitem[Bor86]{Bor86}
Richard~E. Borcherds.
\newblock Vertex algebras, {K}ac-{M}oody algebras, and the {M}onster.
\newblock {\em Proc. Nat. Acad. Sci. U.S.A.}, 83(10):3068--3071, 1986.

\bibitem[Bor87]{Bor87}
Richard~E. Borcherds.
\newblock Automorphism groups of {L}orentzian lattices.
\newblock {\em J. Algebra}, 111(1):133--153, 1987.

\bibitem[Bor92]{Bor92}
Richard~E. Borcherds.
\newblock Monstrous moonshine and monstrous {L}ie superalgebras.
\newblock {\em Invent. Math.}, 109(2):405--444, 1992.

\bibitem[Car18]{Car18}
Scott Carnahan.
\newblock 51 constructions of the {M}oonshine module.
\newblock {\em Commun. Number Theory Phys.}, 12(2):305--334, 2018.
\newblock (\href{http://arxiv.org/abs/1707.02954}{arXiv:1707.02954v2
  [math.RT]}).

\bibitem[CCN{\etalchar{+}}85]{CCNPW85}
John~H. Conway, Robert~T. Curtis, Simon~P. Norton, Richard~A. Parker and
  Robert~A. Wilson.
\newblock {\em Atlas of finite groups: Maximal subgroups and ordinary
  characters for simple groups}.
\newblock Oxford University Press, 1985.
\newblock With computational assistance from J.~G. Thackray.

\bibitem[CKLR19]{CKLR19}
Thomas Creutzig, Shashank Kanade, Andrew~R. Linshaw and David Ridout.
\newblock {S}chur-{W}eyl duality for {H}eisenberg cosets.
\newblock {\em Transform. Groups}, 24(2):301--354, 2019.
\newblock (\href{http://arxiv.org/abs/1611.00305v1}{arXiv:1611.00305v1
  [math.QA]}).

\bibitem[CKM22]{CKM22}
Thomas Creutzig, Shashank Kanade and Robert McRae.
\newblock Gluing vertex algebras.
\newblock {\em Adv. Math.}, 396:108174, 2022.
\newblock (\href{http://arxiv.org/abs/1906.00119v2}{arXiv:1906.00119v2
  [math.QA]}).

\bibitem[CLM22]{CLM22}
Naoki Chigira, Ching~Hung Lam and Masahiko Miyamoto.
\newblock Orbifold construction and {L}orentzian construction of {L}eech
  lattice vertex operator algebra.
\newblock {\em J. Algebra}, 593:26--71, 2022.
\newblock (\href{http://arxiv.org/abs/2104.03098v2}{arXiv:2104.03098v2
  [math.QA]}).

\bibitem[CM16]{CM16}
Scott Carnahan and Masahiko Miyamoto.
\newblock Regularity of fixed-point vertex operator subalgebras.
\newblock (\href{http://arxiv.org/abs/1603.05645v4}{arXiv:1603.05645v4
  [math.RT]}), 2016.

\bibitem[DGM90]{DGM90}
Louise Dolan, Peter Goddard and Paul~S. Montague.
\newblock Conformal field theory, triality and the {M}onster group.
\newblock {\em Phys. Lett. B}, 236(2):165--172, 1990.

\bibitem[DGM96]{DGM96}
Louise Dolan, Peter Goddard and Paul~S. Montague.
\newblock Conformal field theories, representations and lattice constructions.
\newblock {\em Comm. Math. Phys.}, 179(1):61--120, 1996.
\newblock
  (\href{http://arxiv.org/abs/hep-th/9410029v1}{arXiv:hep-th/9410029v1}).

\bibitem[DL96]{DL96}
Chongying Dong and James~I. Lepowsky.
\newblock The algebraic structure of relative twisted vertex operators.
\newblock {\em J. Pure Appl. Algebra}, 110(3):259--295, 1996.
\newblock (\href{http://arxiv.org/abs/q-alg/9604022v1}{arXiv:q-alg/9604022v1}).

\bibitem[DLM97]{DLM97}
Chongying Dong, Haisheng Li and Geoffrey Mason.
\newblock Regularity of rational vertex operator algebras.
\newblock {\em Adv. Math.}, 132(1):148--166, 1997.
\newblock (\href{http://arxiv.org/abs/q-alg/9508018v1}{arXiv:q-alg/9508018v1}).

\bibitem[DLM00]{DLM00}
Chongying Dong, Haisheng Li and Geoffrey Mason.
\newblock Modular-invariance of trace functions in orbifold theory and
  generalized moonshine.
\newblock {\em Comm. Math. Phys.}, 214:1--56, 2000.
\newblock (\href{http://arxiv.org/abs/q-alg/9703016v2}{arXiv:q-alg/9703016v2}).

\bibitem[DM97]{DM97}
Chongying Dong and Geoffrey Mason.
\newblock On quantum {G}alois theory.
\newblock {\em Duke Math. J.}, 86(2):305--321, 1997.
\newblock
  (\href{http://arxiv.org/abs/hep-th/9412037v1}{arXiv:hep-th/9412037v1}).

\bibitem[DM04a]{DM04}
Chongying Dong and Geoffrey Mason.
\newblock Holomorphic vertex operator algebras of small central charge.
\newblock {\em Pacific J. Math.}, 213(2):253--266, 2004.
\newblock (\href{http://arxiv.org/abs/math/0203005v1}{arXiv:math/0203005v1
  [math.QA]}).

\bibitem[DM04b]{DM04b}
Chongying Dong and Geoffrey Mason.
\newblock Rational vertex operator algebras and the effective central charge.
\newblock {\em Int. Math. Res. Not.}, 2004(56):2989--3008, 2004.
\newblock (\href{http://arxiv.org/abs/math/0201318v1}{arXiv:math/0201318v1
  [math.QA]}).

\bibitem[DM06]{DM06b}
Chongying Dong and Geoffrey Mason.
\newblock Integrability of {$C_2$}-cofinite vertex operator algebras.
\newblock {\em Int. Math. Res. Not.}, 2006:80468, 2006.
\newblock (\href{http://arxiv.org/abs/math/0601569v1}{arXiv:math/0601569v1
  [math.QA]}).

\bibitem[DN99]{DN99}
Chongying Dong and Kiyokazu Nagatomo.
\newblock Automorphism groups and twisted modules for lattice vertex operator
  algebras.
\newblock In Naihuan Jing and Kailash~C. Misra, editors, {\em Recent
  developments in quantum affine algebras and related topics}, volume 248 of
  {\em Contemp. Math.}, pages 117--133. Amer. Math. Soc., 1999.
\newblock (\href{http://arxiv.org/abs/math/9808088v1}{arXiv:math/9808088v1
  [math.QA]}).

\bibitem[Don93]{Don93}
Chongying Dong.
\newblock Vertex algebras associated with even lattices.
\newblock {\em J. Algebra}, 161(1):245--265, 1993.

\bibitem[DRX17]{DRX17}
Chongying Dong, Li~Ren and Feng Xu.
\newblock On orbifold theory.
\newblock {\em Adv. Math.}, 321:1--30, 2017.
\newblock (\href{http://arxiv.org/abs/1507.03306v2}{arXiv:1507.03306v2
  [math.QA]}).

\bibitem[ELMS21]{ELMS21}
Jethro~van Ekeren, Ching~Hung Lam, Sven M{\"o}ller and Hiroki Shimakura.
\newblock Schellekens' list and the very strange formula.
\newblock {\em Adv. Math.}, 380:107567, 2021.
\newblock (\href{http://arxiv.org/abs/2005.12248v2}{arXiv:2005.12248v2
  [math.QA]}).

\bibitem[EMS20a]{EMS20a}
Jethro~van Ekeren, Sven M\"{o}ller and Nils~R. Scheithauer.
\newblock Construction and classification of holomorphic vertex operator
  algebras.
\newblock {\em J. Reine Angew. Math.}, 759:61--99, 2020.
\newblock (\href{http://arxiv.org/abs/1507.08142v3}{arXiv:1507.08142v3
  [math.RT]}).

\bibitem[EMS20b]{EMS20b}
Jethro~van Ekeren, Sven M{\"o}ller and Nils~R. Scheithauer.
\newblock Dimension formulae in genus zero and uniqueness of vertex operator
  algebras.
\newblock {\em Int. Math. Res. Not.}, 2020(7):2145--2204, 2020.
\newblock (\href{http://arxiv.org/abs/1704.00478v3}{arXiv:1704.00478v3
  [math.QA]}).

\bibitem[FHL93]{FHL93}
Igor~B. Frenkel, Yi-Zhi Huang and James~I. Lepowsky.
\newblock {\em On Axiomatic Approaches to Vertex Operator Algebras and
  Modules}, volume 104 of {\em Mem. Amer. Math. Soc.}
\newblock Amer. Math. Soc., 1993.

\bibitem[FLM88]{FLM88}
Igor~B. Frenkel, James~I. Lepowsky and Arne Meurman.
\newblock {\em Vertex operator algebras and the {M}onster}, volume 134 of {\em
  Pure Appl. Math.}
\newblock Academic Press, 1988.

\bibitem[HM16]{HM16}
Gerald H{\"o}hn and Geoffrey Mason.
\newblock The 290 fixed-point sublattices of the {L}eech lattice.
\newblock {\em J. Algebra}, 448:618--637, 2016.
\newblock (\href{http://arxiv.org/abs/1505.06420v3}{arXiv:1505.06420v3
  [math.GR]}).

\bibitem[H{\"o}h17]{Hoe17}
Gerald H{\"o}hn.
\newblock On the genus of the {M}oonshine module.
\newblock (\href{http://arxiv.org/abs/1708.05990v1}{arXiv:1708.05990v1
  [math.QA]}), 2017.

\bibitem[Kac90]{Kac90}
Victor~G. Kac.
\newblock {\em Infinite-dimensional {L}ie algebras}.
\newblock Cambridge University Press, 3rd edition, 1990.

\bibitem[Kac98]{Kac98}
Victor~G. Kac.
\newblock {\em Vertex Algebras for Beginners}, volume~10 of {\em Univ. Lecture
  Ser.}
\newblock Amer. Math. Soc., 2nd edition, 1998.

\bibitem[KLL18]{KLL18}
Kazuya Kawasetsu, Ching~Hung Lam and Xingjun Lin.
\newblock {$\mathbb{Z}_2$}-orbifold construction associated with
  {$(-1)$}-isometry and uniqueness of holomorphic vertex operator algebras of
  central charge 24.
\newblock {\em Proc. Amer. Math. Soc.}, 146(5):1937--1950, 2018.
\newblock (\href{http://arxiv.org/abs/1611.07655v2}{arXiv:1611.07655v2
  [math.QA]}).

\bibitem[Lam11]{Lam11}
Ching~Hung Lam.
\newblock On the constructions of holomorphic vertex operator algebras of
  central charge 24.
\newblock {\em Comm. Math. Phys.}, 305(1):153--198, 2011.

\bibitem[Lam20]{Lam20}
Ching~Hung Lam.
\newblock Cyclic orbifolds of lattice vertex operator algebras having
  group-like fusions.
\newblock {\em Lett. Math. Phys.}, 110(5):1081--1112, 2020.
\newblock (\href{http://arxiv.org/abs/1805.10778v2}{arXiv:1805.10778v2
  [math.QA]}).

\bibitem[Lep85]{Lep85}
James~I. Lepowsky.
\newblock Calculus of twisted vertex operators.
\newblock {\em Proc. Natl. Acad. Sci. USA}, 82(24):8295--8299, 1985.

\bibitem[Li94]{Li94}
Haisheng Li.
\newblock Symmetric invariant bilinear forms on vertex operator algebras.
\newblock {\em J. Pure Appl. Algebra}, 96(3):279--297, 1994.

\bibitem[Li96]{Li96}
Haisheng Li.
\newblock Local systems of twisted vertex operators, vertex operator
  superalgebras and twisted modules.
\newblock In Chongying Dong and Geoffrey Mason, editors, {\em Moonshine, the
  {M}onster, and Related Topics}, volume 193 of {\em Contemp. Math.}, pages
  203--236. Amer. Math. Soc., 1996.
\newblock (\href{http://arxiv.org/abs/q-alg/9504022v1}{arXiv:q-alg/9504022v1}).

\bibitem[Lin17]{Lin17}
Xingjun Lin.
\newblock Mirror extensions of rational vertex operator algebras.
\newblock {\em Trans. Amer. Math. Soc.}, 369(6):3821--3840, 2017.
\newblock (\href{http://arxiv.org/abs/1411.6742v2}{arXiv:1411.6742v2
  [math.QA]}).

\bibitem[LL04]{LL04}
James~I. Lepowsky and Haisheng Li.
\newblock {\em Introduction to Vertex Operator Algebras and Their
  Representations}, volume 227 of {\em Progr. Math.}
\newblock Birkh{\"a}user, 2004.

\bibitem[LL20]{LL20}
Ching~Hung Lam and Xingjun Lin.
\newblock A holomorphic vertex operator algebra of central charge 24 with the
  weight one {L}ie algebra {$F_{4,6}A_{2,2}$}.
\newblock {\em J. Pure Appl. Algebra}, 224(3):1241--1279, 2020.
\newblock (\href{http://arxiv.org/abs/1612.08123v2}{arXiv:1612.08123v2
  [math.QA]}).

\bibitem[LS12]{LS12a}
Ching~Hung Lam and Hiroki Shimakura.
\newblock Quadratic spaces and holomorphic framed vertex operator algebras of
  central charge 24.
\newblock {\em Proc. Lond. Math. Soc.}, 104(3):540--576, 2012.
\newblock (\href{http://arxiv.org/abs/1010.5303v2}{arXiv:1010.5303v2
  [math.QA]}).

\bibitem[LS15]{LS15a}
Ching~Hung Lam and Hiroki Shimakura.
\newblock Classification of holomorphic framed vertex operator algebras of
  central charge 24.
\newblock {\em Amer. J. Math.}, 137(1):111--137, 2015.
\newblock (\href{http://arxiv.org/abs/1209.4677v1}{arXiv:1209.4677v1
  [math.QA]}).

\bibitem[LS16a]{LS16}
Ching~Hung Lam and Hiroki Shimakura.
\newblock A holomorphic vertex operator algebra of central charge 24 whose
  weight one {L}ie algebra has type {$A_{6,7}$}.
\newblock {\em Lett. Math. Phys.}, 106(11):1575--1585, 2016.
\newblock (\href{http://arxiv.org/abs/1606.04688v2}{arXiv:1606.04688v2
  [math.QA]}).

\bibitem[LS16b]{LS16a}
Ching~Hung Lam and Hiroki Shimakura.
\newblock Orbifold construction of holomorphic vertex operator algebras
  associated to inner automorphisms.
\newblock {\em Comm. Math. Phys.}, 342(3):803--841, 2016.
\newblock (\href{http://arxiv.org/abs/1501.05094v2}{arXiv:1501.05094v2
  [math.QA]}).

\bibitem[LS19]{LS19}
Ching~Hung Lam and Hiroki Shimakura.
\newblock Reverse orbifold construction and uniqueness of holomorphic vertex
  operator algebras.
\newblock {\em Trans. Amer. Math. Soc.}, 372(10):7001--7024, 2019.
\newblock (\href{http://arxiv.org/abs/1606.08979v3}{arXiv:1606.08979v3
  [math.QA]}).

\bibitem[LS20a]{LS20b}
Ching~Hung Lam and Hiroki Shimakura.
\newblock Inertia groups and uniqueness of holomorphic vertex operator
  algebras.
\newblock {\em Transform. Groups}, 25(4):1223--1268, 2020.
\newblock (\href{http://arxiv.org/abs/1804.02521v1}{arXiv:1804.02521v1
  [math.QA]}).

\bibitem[LS20b]{LS20}
Ching~Hung Lam and Hiroki Shimakura.
\newblock On orbifold constructions associated with the {L}eech lattice vertex
  operator algebra.
\newblock {\em Math. Proc. Cambridge Philos. Soc.}, 168(2):261--285, 2020.
\newblock (\href{http://arxiv.org/abs/1705.01281v2}{arXiv:1705.01281v2
  [math.QA]}).

\bibitem[Miy13]{Miy13b}
Masahiko Miyamoto.
\newblock A $\mathbb{Z}_3$-orbifold theory of lattice vertex operator algebra
  and $\mathbb{Z}_3$-orbifold constructions.
\newblock In Kenji Iohara, Sophie Morier-Genoud and Bertrand R{\'e}my, editors,
  {\em Symmetries, Integrable Systems and Representations}, volume~40 of {\em
  Springer Proc. Math. Stat.}, pages 319--344. Springer, 2013.
\newblock (part of \href{http://arxiv.org/abs/1003.0237v2}{arXiv:1003.0237v2
  [math.QA]}).

\bibitem[Miy15]{Miy15}
Masahiko Miyamoto.
\newblock {$C_2$}-cofiniteness of cyclic-orbifold models.
\newblock {\em Comm. Math. Phys.}, 335(3):1279--1286, 2015.
\newblock (\href{http://arxiv.org/abs/1306.5031v1}{arXiv:1306.5031v1
  [math.QA]}).

\bibitem[M{\"o}l16]{Moe16}
Sven M{\"o}ller.
\newblock {\em A Cyclic Orbifold Theory for Holomorphic Vertex Operator
  Algebras and Applications}.
\newblock {Ph.D. thesis}, Technische Universit{\"a}t Darmstadt, 2016.
\newblock (\href{http://arxiv.org/abs/1611.09843v2}{arXiv:1611.09843v2
  [math.QA]}).

\bibitem[M{\"o}l18]{Moe18}
Sven M{\"o}ller.
\newblock Orbifold vertex operator algebras and the positivity condition.
\newblock In Toshiyuki Abe, editor, {\em Research on algebraic combinatorics
  and representation theory of finite groups and vertex operator algebras},
  number 2086 in RIMS K\^{o}ky\^{u}roku, pages 163--171. Research Institute for
  Mathematical Sciences, 2018.
\newblock (\href{http://arxiv.org/abs/1803.03702v1}{arXiv:1803.03702v1
  [math.QA]}).

\bibitem[MS23]{MS23}
Sven M{\"o}ller and Nils~R. Scheithauer.
\newblock Dimension formulae and generalised deep holes of the {L}eech lattice
  vertex operator algebra.
\newblock {\em Ann. of Math.}, 197(1):221--288, 2023.
\newblock (\href{http://arxiv.org/abs/1910.04947v3}{arXiv:1910.04947v3
  [math.QA]}).

\bibitem[MT04]{MT04}
Masahiko Miyamoto and Kenichiro Tanabe.
\newblock Uniform product of {$A_{g,n}(V)$} for an orbifold model {$V$} and
  {$G$}-twisted {Z}hu algebra.
\newblock {\em J. Algebra}, 274(1):80--96, 2004.
\newblock (\href{http://arxiv.org/abs/math/0112054v3}{arXiv:math/0112054v3
  [math.QA]}).

\bibitem[Nik80]{Nik80}
Viacheslav~V. Nikulin.
\newblock Integral symmetric bilinear forms and some of their applications.
\newblock {\em Math. USSR-Izv.}, 14(1):103--167, 1980.

\bibitem[Sch93]{Sch93}
A.~N. Schellekens.
\newblock Meromorphic {$c=24$} conformal field theories.
\newblock {\em Comm. Math. Phys.}, 153(1):159--185, 1993.
\newblock
  (\href{http://arxiv.org/abs/hep-th/9205072v1}{arXiv:hep-th/9205072v1}).

\bibitem[SS16]{SS16}
Daisuke Sagaki and Hiroki Shimakura.
\newblock Application of a {$\mathbb{Z}_3$}-orbifold construction to the
  lattice vertex operator algebras associated to {N}iemeier lattices.
\newblock {\em Trans. Amer. Math. Soc.}, 368(3):1621--1646, 2016.
\newblock (\href{http://arxiv.org/abs/1302.4826v4}{arXiv:1302.4826v4
  [math.QA]}).

\bibitem[Zhu96]{Zhu96}
Yongchang Zhu.
\newblock Modular invariance of characters of vertex operator algebras.
\newblock {\em J. Amer. Math. Soc.}, 9(1):237--302, 1996.

\end{thebibliography}

\end{document}